\documentclass[11pt, reqno]{amsart}
\usepackage[]{hyperref}
\usepackage{amsmath}
\usepackage{amsfonts}
\usepackage{amssymb}
\usepackage{tikz}
	\usetikzlibrary{arrows,snakes}
	\usetikzlibrary{decorations.pathreplacing} 
	\usetikzlibrary{matrix}
\usepackage{stmaryrd}
\usepackage{multirow}
\usepackage{mathtools}
\usepackage{subfigure}
\usepackage{verbatim}
\usepackage{pinlabel}
\usepackage[all,cmtip,knot]{xy}
\usepackage{enumerate, enumitem, comment}
\usepackage[normalem]{ulem}

    \oddsidemargin  0.0in
    \evensidemargin 0.0in
    \textwidth      6.5in
    \headheight     0.0in
    \topmargin      0.0in
    \textheight=8.5in

\newtheorem{theorem}{Theorem}[section]

\newtheorem{lemma}[theorem]{Lemma}
\newtheorem{proposition}[theorem]{Proposition}

\newtheorem{corollary}[theorem]{Corollary}

\newtheorem{question}[theorem]{Question}
\newtheorem*{namedtheorem}{\theoremname}
\newcommand{\theoremname}{testing}

\theoremstyle{definition}
\newtheorem{definition}[theorem]{Definition}

\newtheorem{construction}[theorem]{Construction}
\theoremstyle{remark}
\newtheorem{remark}[theorem]{Remark}

\def\F{\mathbb{F}}
\def\bbI{\mathbb{I}}
\def\bbL{\mathbb{L}}
\def\bbM{\mathbb{M}}

\def\R{\mathbb{R}}
\def\Z{\mathbb{Z}}
\def\Q{\mathbb{Q}}

\def\cA{\mathcal{A}}
\def\A{\mathcal{A}}

\def\cH{\mathcal{H}}
\def\cI{\mathcal{I}}

\def\P{\mathcal{P}}

\def\cZ{\mathcal{Z}}


\def\bfa{\mathbf{a}}
\def\bfb{\mathbf{b}}
\def\bfs{\mathbf{s}}
\def\bft{\mathbf{t}}
\def\bfx{\mathbf{x}}
\def\bfy{\mathbf{y}}
\def\bfz{\mathbf{z}}
\def\bfI{\mathbf{I}}


\def\mfs{\mathfrak{s}}
\def\mfS{\mathfrak{S}}

\def\bfalpha{\boldsymbol{\alpha}}
\def\bfbeta{\boldsymbol{\beta}}
\def\bfrho{\boldsymbol{\rho}}

\def\CF{\widehat{\mathit{CF}}}
\def\CFhat{\CF}
\def\CFD{\widehat{\mathit{CFD}}}
\def\CFA{\widehat{\mathit{CFA}}}
\def\CFDA{\widehat{\mathit{CFDA}}}
\def\CFAA{\widehat{\mathit{CFAA}}}

\def\CFDD{\widehat{\mathit{CFDD}}}
\def\HFK{\widehat{\mathit{HFK}}}
\def\HFhat{\widehat{\mathit{HF}}}
\def\CFKi{\mathit{CFK}^{\infty}}


\def\sfMod{\mathsf{Mod}}
\def\sfF{\mathsf{F}}

\def\bot{\textup{bot}}
\def\dr{\textup{dr}}

\def\gr{\textup{gr}}
\def\Hom{\textup{Hom}}

\def\id{\textup{id}}
\def\Im{\textup{Im}}

\def\inv{\textup{inv}}
\def\textmid{\textup{mid}}
\def\min{\textup{min}}

\def\Mor{\textup{Mor}}

\def\op{\textup{op}}
\def\ref{\textup{ref}}

\def\sgn{\textup{sgn}}
\def\spinc{\textup{spin}^c}

\def\top{\textup{top}}

\def\Ind{\operatorname{Ind}}

\def\Span{\operatorname{Span}}
\def\Tr{\operatorname{Tr}}
\def\grTr{\operatorname{Tr}_{\operatorname{gr}}}

\def\alg{\cA(\cZ)}
\def\d{\partial}
\def\dd{\partial^\partial}
\def\FDA{\mathcal{F}_{\operatorname{DA}}}
\def\grunref{g'}
\def\grref{g}
\def\Rtilde{\widetilde{R}}
\def\Ztwo{\Z/2\Z}

\newcommand{\co}{\mskip0.5mu\colon\thinspace}

\def\t{\gamma} 
\def\ybottom{y^{\bot}}
\def\ytop{y^{\top}}
\def\id{\mathbb{I}}
\def\CFAAid{\CFAA(\mathbb{I}_{\cZ})}
\def\CFDDid{\CFDD(\mathbb{I}_{\cZ})}

\def\CFAAidzero{\CFAA(\mathbb{I}_{\cZ_0})}
\def\bfsbar{\overline{\bfs}}

\def\dualize{\eta}
\def\homize{\Omega}
\def\dualhomize{\Upsilon}

\author[Jennifer Hom]{Jennifer Hom}
\thanks{The first author was partially supported by NSF grant DMS-1307879.}
\address {School of Mathematics, Georgia Institute of Technology.}
\email{hom@math.gatech.edu}

\author[Tye Lidman]{Tye Lidman}
\thanks{The second author was partially supported by NSF grant DMS-0636643.}
\address {Department of Mathematics, North Carolina State University.}
\email {tlid@math.ncsu.edu}

\author[Liam Watson]{Liam Watson}
\thanks{The third author was partially supported by a Marie Curie Career Integration Grant (HFFUNDGRP)}
\address {D\'{e}partement de math\'{e}matiques, Universit\'{e} de Sherbrooke.}
\email {liam.watson@usherbrooke.ca}

\title{The Alexander module, Seifert forms, and categorification}

\specialcomment{jen}{%
  \begingroup\color{purple}  Jen\; $\blacktriangleright$ \;}{%
	\endgroup}
\specialcomment{cagri}{%
  \begingroup\color{blue}  Liam \; $\blacktriangleright$ \;}{%
  \endgroup}
\specialcomment{tye}{%
  \begingroup\color{green} Tye \; $\blacktriangleright$ \;}{%
  \endgroup}

\numberwithin{equation}{section}

\begin{document}

\begin{abstract}
We show that bordered Floer homology provides a categorification of a TQFT described by Donaldson \cite{Donaldson1999}. This, in turn, leads to a proof that both the Alexander module of a knot and the Seifert form are completely determined by Heegaard Floer theory. 
\end{abstract}

\vspace*{1.375in}

\maketitle

\hfill\begin{footnotesize}{\em Dedicated to the memory of Tim Cochran and Geoff Mess.}\end{footnotesize}

\newpage

\vspace*{\fill}
\tableofcontents
\vspace*{\fill}

\newpage

\section{Introduction}\label{sec:introduction}

Heegaard Floer homology is a TQFT-like invariant of three-manifolds. In its simplest form this is a $\Z/2\Z$-graded $\Z/2\Z$-vector space $\HFhat(Y)$ associated with a closed, connected, oriented three-manifold $Y$ \cite{OSz2004-invariants,OSz2004-properties}.  Defining Heegaard Floer homology requires a choice of Heegaard diagram $(\Sigma,\bfalpha,\bfbeta)$, where $\Sigma$ is a genus $g>0$, based Heegaard surface for $Y$ and $\bfalpha$ and $\bfbeta$ are $g$-tuples of attaching circles specifying the splitting of $Y$ along $\Sigma$.  A nullhomologous knot $K$ in $Y$ gives a refinement of Heegaard Floer homology: The knot induces a filtration on the Heegaard Floer chain complex known as the Alexander filtration and the homology of the associated graded complex gives rise to the knot Floer homology $\HFK(Y,K)$ \cite{OSz2004-knot, Rasmussen2003}. 
(When $Y$ is the three-sphere, or when the ambient manifold $Y$ is clear from the context, the simplified notation $\HFK(K)$ is standard.) 
This invariant of the knot comes equipped with a $\Z$-grading induced by the Alexander filtration.       

These homological invariants are related to familiar classical invariants in low-dimensional topology.  Given a Heegaard diagram $(\Sigma,\bfalpha, \bfbeta)$ for $Y$ define a matrix $A_{ij} = (\alpha_i \cdot \beta_j)$.  This $A$ is a presentation matrix for $H_1(Y;\Z)$ and, if $Y$ is a rational homology sphere, the determinant of $A$ is the order of $H_1(Y;\Z)$ (up to sign).  The $\Z/2\Z$-grading on $\HFhat(Y)$ arises from the signs of the intersections between the $\alpha$- and $\beta$-curves and it follows that $\chi\big(\HFhat(Y)\big) = |H_1(Y;\Z)|$.  In other words, the hat flavor of Heegaard Floer homology {\it decategorifies} to the order of $H_1(Y;\Z)$.  Similarly, if $K$ is a nullhomologous knot in $Y$, then we have 
\[\chi_{\operatorname{gr}}\big(\HFK(Y,K)\big) = \sum_i\chi\big(\HFK(Y,K,i)\big)  t^i = \Delta_K(t)\]  
where $\HFK(Y,K,i)$ denotes the knot Floer homology in Alexander grading $i$ and $\Delta_K(t)$ is the Alexander polynomial of $K$ \cite{OSz2004-knot,Rasmussen2003}.  By analogy with the case of $\chi\big(\HFhat(Y)\big)$, if $V$ is the Seifert matrix coming from a choice of Seifert surface for $K$ then $\Delta_K(t)$ is the determinant of $V - tV^T$. The matrix $V - tV^T$ is a presentation matrix for the first homology of the infinite cyclic cover of the knot complement as a $\Z[t,t^{-1}]$-module; this invariant of $K$ is known as the Alexander module, and $\Delta_K(t)$ naturally arises as  the total order as a $\Z[t,t^{-1}]$-module (recall that the Alexander module is always a torsion module; see Rolfsen \cite{Rolfsen1976}). This paper is concerned with the following question:

\begin{question}\label{QUESTION}
Can the Alexander module and the Seifert form be recovered from Heegaard Floer theory?
\end{question}

This question should be compared with \cite[Problem 1.4]{GeorgiaProceedings}, wherein Ruberman posits that such classical invariants may {\it not} be determined by knot Floer homology. It is indeed the case that the knot Floer homology group of a knot $K$ does not determine the Alexander module of $K$ in general, 
based on explicit examples. Recall that Kanenobu constructs an infinite family\footnote{The knots of this family are now commonly referred to as Kanenobu knots in the literature; see \cite{Watson2007} for generalizations. In the notation used here, the knot $K_i$ corresponds to $K_{0,i}$ from Kanenobu's original construction of a two-parameter family of knots $K_{p,q}$ where $p,q\in\Z$.} of knots $\{K_i\}_{i\in\Z}$ with identical Alexander polynomial for every $i$ but for which the Alexander module certifies that $K_i\simeq K_j$ if and only if $i=j$ \cite{Kanenobu1986}.  Moreover, it has been observed that the knot Floer homology $\HFK(K_i)$ is identical for every $i$ \cite{HW2014} (see also \cite{GW2013}). As a result one has an infinite family of examples for which information from the module structure cannot be extracted from the homology group $\HFK(K_i)$. Moreover, since the examples of Kanenobu are thin knots it follows that the filtered chain homotopy type of $\CFKi(K_i)$ -- a more general invariant determining knot Floer homology -- is independent of the integer $i$ \cite[Theorem 4]{Petkova}.  It is immediate then that more structure must be taken into consideration if one hopes to extract information about the Alexander module from Heegaard Floer homology.

The goal of this paper is to answer Question \ref{QUESTION}. To do so we work with  bordered Floer homology, a refinement of Heegaard Floer homology for three-manifolds with boundary due to Lipshitz, Ozsv\'ath, and Thurston \cite{LOT}. To a surface\footnote{In the context of bordered Heegaard Floer homology, all surfaces are (or must be) equipped with a choice of handle decomposition. This is a key ingredient in the definition of the differential graded algebra assigned to a surface; see Section \ref{sec:algebrabackground}.} $F$, bordered Floer homology assigns a differential graded algebra $\cA(F)$; to a manifold $Y$ with connected parameterized\footnote{That is, we have an orientation preserving diffeomorphism $\phi \co F \rightarrow \d Y$.} boundary $F$, the theory assigns a right $\cA_\infty$-module $\CFA(Y)$  over $\cA(F)$; and to a manifold $Y$ with connected parameterized boundary $-F$, the theory assigns a left differential graded module $\CFD(Y)$ over $\cA(F)$.  There is a suitable duality relating these two objects.  Furthermore, there are bimodules associated with manifolds $W$ with two parameterized boundary components $-F_0$ and $F_1$ \cite{LOTbimodules}.  Again, there are various flavors of these bimodules together with duality theorems relating them. 

For the purpose of this introduction we will focus on the bimodule $\CFDA(W)$, which has a left action (as a differential graded module) by $\cA(-F_1)$ and a right action (as an $\cA_\infty$-module) by $\cA(-F_0)$.
In the context of our main theorem, it is instructive to regard the bimodule $\CFDA(W)$ as a functor from a category of left modules over $\cA(-F_0)$ to a category of left modules over $\cA(-F_1)$ defined by tensoring (in an appropriate sense) by $ \CFDA(W)$ on the left.\footnote{We work with $-F_i$ for consistency with the most common conventions in bordered Floer homology.} 

\subsection{Categorifying Donaldson's TQFT}

Recall that a ($2+1$)-dimensional TQFT is a functor that assigns a vector space to a surface (which we always assume to be oriented and have positive genus), and  a linear map to a cobordism between surfaces. Our interest is in a TQFT considered by Donaldson \cite[Section 4]{Donaldson1999} which we will denote by $\FDA$. Let $F_0$ and $F_1$ be connected, oriented surfaces. If $W$ is a three-manifold with $\partial W = -F_0\amalg F_1$ and $H_1(W,\partial W;\mathbb{Z}) = \mathbb{Z}$, then \[\FDA(W)\co \FDA(F_0)\to\FDA(F_1)\] where $\FDA(F_i) = \Lambda^* H_1(F_i;\Z)$ and the map $\FDA(W)$ is determined by the kernel of the inclusion \[H_1(\partial W;\mathbb{Z}) \to H_1(W;\mathbb{Z})\] viewed as an element in  $\Lambda^*H_1(-F_0\amalg F_1)\cong(\Lambda^*H_1(F_0))^* \otimes\Lambda^*H_1(F_1)$. The map $\FDA(W)$ decomposes as a sum of homogeneous maps $\alpha_{i,W} \co \Lambda^iH_1(F_0) \to \Lambda^iH_1(F_1)$.

Given a (nullhomologous) knot $K$ in an integer homology sphere  $Y$, any choice of Seifert surface $F^\circ$ gives rise to a $0$-framing on $Y\smallsetminus\nu(K)$. Let $W$ be the result of removing a neighbourhood of the capped-off Seifert surface from the result of $0$-framed surgery along $K$. That is,  $W= (Y \smallsetminus \nu (F^\circ)) \cup (D^2 \times I)$. Let $F=F^\circ \cup D^2$ denote the capped-off Seifert surface; then $\partial W\cong -F\amalg F$.  In this setting, the functor $\FDA(W)$ will be denoted by  $\FDA(Y,K,F^\circ)$.  We set $\CFDA(Y,K,F^\circ)=\CFDA(W)$ (or, more briefly, $\CFDA(K,F^\circ)$ if the ambient manifold is clear), and suppress the required parameterizations from the notation. This bimodule decomposes as a direct sum along a $\Z$-grading from the underlying algebra, the components of which are denoted $\CFDA(Y,K,F^\circ,i)$. The (finite) non-trivial support of this algebra $\cA(F)=\bigoplus_{i\in\Z}\cA(F,i)$ is determined by the genus of the Seifert surface. Our main result is the following: 

\begin{theorem}\label{thm:main}
Let $K$ be a knot in an integer homology sphere $Y$ and let $F^\circ$ be a Seifert surface for $K$. Then for each $i \in \mathbb{Z}$ we have
a commutative diagram 
\[
\begin{tikzpicture}
  \matrix (m) [matrix of math nodes,row sep=2em,column sep=5em,minimum width=2em]
  {
     \CFDA(Y,K,F^\circ,i) & \HFK(Y,K,i) \\
    \alpha_{i,W} & a_i \\};
  \path[-latex]
    (m-1-1) edge node [right] {$K_0$} (m-2-1)
            edge node [above]   {$\mathit{HH}_*$} (m-1-2)
    (m-2-1.east|-m-2-2) edge node [above] {$(-1)^i\operatorname{Tr}$} (m-2-2)
    (m-1-2) edge node [right] {$(-1)^i\chi$}  (m-2-2);
\end{tikzpicture}
\]
where $\FDA(Y,K,F^\circ)=\bigoplus_{i\in\Z}\alpha_{i,W}$ as above and $a_i$ is the $i^{\text th}$ coefficient of the symmetrized Alexander polynomial of $K$.
\end{theorem}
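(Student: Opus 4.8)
The plan is to establish the theorem by verifying each edge of the diagram separately, with the real content concentrated in showing that the bordered invariant $\CFDA(Y,K,F^\circ,i)$ genuinely categorifies the homogeneous piece $\alpha_{i,W}$ of Donaldson's TQFT functor. I would first set up the three ``easy'' edges. The right edge $(-1)^i\chi$ is essentially the definition: $\HFK(Y,K,i)$ decategorifies (via its Maslov grading, up to the sign $(-1)^i$ accounting for the grading shift) to the $i$-th Alexander coefficient $a_i$, which is the decategorification statement $\chi_{\gr}(\HFK(Y,K)) = \Delta_K(t)$ quoted in the introduction. The bottom edge $(-1)^i\Tr$ takes the linear map $\alpha_{i,W}\co \Lambda^i H_1(F_0)\to\Lambda^i H_1(F_1)$, identifies the two copies of $H_1(F;\Z)$ (both boundary components are the capped-off Seifert surface $F$, so $\Lambda^iH_1(F_0)\cong\Lambda^iH_1(F_1)$ canonically), and takes its trace; I would compute directly from Donaldson's description of $\alpha_{i,W}$ in terms of the Seifert form that $(-1)^i\Tr(\alpha_{i,W})$ equals $a_i$. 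Here the Seifert matrix $V$ enters because the kernel of $H_1(\partial W)\to H_1(W)$ is cut out by $V - tV^T$ in the appropriate sense, so the trace of the degree-$i$ part recovers the corresponding coefficient of $\det(V-tV^T)=\Delta_K(t)$; this is a finite-dimensional linear-algebra computation that I would not grind through here but which I expect to be routine once Donaldson's formula is unwound.

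The top edge, labeled $\mathit{HH}_*$, asserts that the Hochschild homology of the $\CFDA$ bimodule is the knot Floer homology of $K$. This is where I would invoke the bordered-Floer machinery of Lipshitz--Ozsv\'ath--Thurston: taking Hochschild homology of $\CFDA(W)$ corresponds geometrically to gluing the two boundary components $-F$ and $F$ of $W$ together, i.e.\ capping off the Seifert surface annulus. The glued-up manifold is $0$-surgery on $K$ minus a neighbourhood of the capped Seifert surface, re-glued along $F$; with the grading bookkeeping this recovers exactly the chain complex computing $\HFK(Y,K)$, and matching the $\Z$-grading $i$ on the algebra side with the Alexander grading on the Floer side gives the isomorphism $\mathit{HH}_*(\CFDA(Y,K,F^\circ,i))\cong\HFK(Y,K,i)$. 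I would need the pairing theorem for Hochschild homology and the identification of self-gluing with the knot Floer complex; these are available in the bordered literature but will require care to set up conventions (orientations, $-F$ versus $F$, grading shifts).

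The left edge $K_0$ is the decategorification map from the bimodule $\CFDA(Y,K,F^\circ,i)$ (a chain complex of modules over the relevant algebras) to the linear map $\alpha_{i,W}$; this is the statement that bordered Floer homology categorifies $\FDA$ in the sense made precise earlier in the paper, and I would expect this to be proved module-theoretically by identifying the algebra $\cA(F)$ (on the grading-$i$ summand) with a suitable algebra whose modules model $\Lambda^* H_1(F;\Z)$, and tracking how $\CFDA(W)$ induces, upon passing to $K$-theory or Euler characteristics, precisely the map determined by the inclusion-induced kernel. \textbf{The main obstacle} I anticipate is exactly this left edge together with the commutativity of the whole square: one must show that all four maps are compatible, i.e.\ that decategorifying $\mathit{HH}_*$ on the Floer side (the right-then-bottom composite) agrees with taking the trace of $K_0$ (the top-then... rather, the left-then-bottom composite). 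This amounts to a Hochschild-homology analogue of the statement that ``Euler characteristic of Hochschild homology equals the graded trace,'' $\chi(\mathit{HH}_*(M)) = \grTr(\text{action of }M)$, combined with the identification of $K_0$ with Donaldson's linear-algebraic recipe. I would prove the square commutes by checking it on the chain level before passing to homology, using the fact that both routes compute the same Lefschetz-type number of the self-gluing cobordism, and that the grading conventions line up so that the signs $(-1)^i$ on the two horizontal arrows cancel consistently. Once commutativity is in hand, the corollary that the Alexander module and Seifert form are Floer-theoretic follows because the collection $\{\CFDA(Y,K,F^\circ,i)\}_i$ determines $\FDA(W)$, hence the kernel of $H_1(\partial W)\to H_1(W)$, hence $V-tV^T$ up to the relevant equivalence.
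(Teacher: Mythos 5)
Your architecture matches the paper's: the top edge is quoted from Lipshitz--Ozsv\'ath--Thurston, the bottom edge from Donaldson, the left edge is the genuine content (the paper's Theorem~\ref{thm:donaldson}), and you correctly identify that commutativity then reduces to a graded-trace/Hochschild-Euler-characteristic compatibility, which is exactly the lemma the paper proves by reducing to elementary projective bimodules. So the overall logical flow is right.

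The missing idea is inside the left edge. You frame it as ``identify $\cA(F,i)$ with an algebra whose modules model $\Lambda^*H_1(F)$ and track $\CFDA(W)$ on $K$-theory,'' but there is no direct way to read the Donaldson map off of $\CFDA(W)$. What generalizes from Petkova's single-boundary-component result is a computation of $K_0(\CFDD(W))$: one shows (Theorem~\ref{thm:kernel}) that $[\CFDD(W)]$ recovers the Pl\"ucker point $|\Gamma_W|$ of $\ker\bigl(i_*\co H_1(\partial W)\to H_1(W)\bigr)$ in $\Lambda^*H_1(F_0)\otimes\Lambda^*H_1(F_1)$. To pass from this tensor element to the homomorphism $\alpha_{i,W}\in\Hom\bigl(\Lambda^iH_1(F_0),\Lambda^iH_1(F_1)\bigr)$, the paper needs a categorified Hodge duality (Theorem~\ref{thm:hodge} and Proposition~\ref{prop:hodgehom}): box tensoring with the identity bimodule $\CFAA(\bbI)$ to convert $\CFDD$ to $\CFDA$ decategorifies precisely to the star operator $v\mapsto(x\mapsto\star(x\wedge v))$, and this requires an explicit $\Ztwo$-graded computation of $\CFAA(\bbI)$. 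None of this is anticipated in your proposal, and ``tracking the induced map on $K$-theory'' will not produce $\alpha_{i,W}$ without it. A secondary point: you take the right edge ($\chi_{\gr}\HFK = \Delta_K$) from Rasmussen as given; the paper deliberately does \emph{not}, instead deriving that identity as Corollary~\ref{cor-alex} from the commutativity it establishes, so if you want the same scope of consequences you should prove the commutativity without importing the right edge.
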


In Theorem~\ref{thm:main}, the top arrow is established in \cite[Theorem 14]{LOTbimodules}, while the bottom arrow is due to Donaldson \cite{Donaldson1999}.  To establish Theorem \ref{thm:main} we need to first make the left-most arrow $K_0\co \CFDA(Y,K,F^\circ,i) \mapsto \alpha_{i,W}$ precise. To this end we will prove a more general statement, establishing that bordered Heegaard Floer homology provides a categorification of Donaldson's TQFT. We recall that, given a three-manifold $W$ with $\partial W\cong -F_0\amalg F_1$, the box tensor product allows us to regard  $\CFDA(W)$ as a functor (tensoring on the left) from the category of left $\cA(-F_0)$-modules to the category of left $\cA(-F_1)$-modules. 
The Grothendieck group of the category of $\Ztwo$-graded $\cA(-F)$-modules is isomorphic to $\Lambda^*H_1(F; \Z)$\cite[Theorem 1]{Petkovadecat}, and the following theorem describes the map of Grothendieck groups induced by $\CFDA(W)$.

\begin{theorem}\label{thm:donaldson}
Suppose that $W$ is a three-manifold with two parameterized boundary components $-F_0$ and $F_1$, and that $H_1(W, \d W) = \Z$. Then the functor determined by $\CFDA(W)$ from the category of left $\cA(-F_0)$-modules to the category of left $\cA(-F_1)$-modules induces a map of Grothendieck groups that agrees with Donaldson's TQFT. 
\end{theorem}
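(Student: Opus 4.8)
The plan is to exhibit both Donaldson's assignment $W\mapsto\FDA(W)$ and the decategorification of $W\mapsto\CFDA(W)$ as functors on one and the same cobordism category, and then to check that they agree on a generating set of morphisms. Let $\mathcal{C}$ be the category whose objects are parameterized surfaces and whose morphisms from $F_0$ to $F_1$ are diffeomorphism-rel-boundary classes of three-manifolds $W$ with $\d W=-F_0\amalg F_1$ and $H_1(W,\d W)=\Z$, composed by gluing. On the Donaldson side, functoriality on $\mathcal{C}$ is part of the construction of \cite{Donaldson1999}; on the Floer side, the pairing theorem of \cite{LOTbimodules} gives $\CFDA(W_1\cup W_2)\simeq\CFDA(W_1)\boxtimes\CFDA(W_2)$, so applying \cite{Petkovadecat} and passing to Grothendieck groups yields maps that compose correctly --- provided one first checks that the identification $K_0(\cA(-F)\text{-mod})\cong\Lambda^*H_1(F;\Z)$ is monoidal, i.e.\ that $[\CFDA(W_1)\boxtimes\CFDA(W_2)]=[\CFDA(W_1)]\circ[\CFDA(W_2)]$ under this isomorphism. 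Granting monoidality, and observing that both functors agree on objects (this is precisely Petkova's isomorphism, with the identity cobordism $F\times I$ handled by the identity bimodule), it suffices to verify the theorem on a set of morphisms generating $\mathcal{C}$, since two honest functors that agree on objects and on generators must agree.

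To produce generators I would take a handle decomposition of $W$ relative to $F_0\times I$; after cancelling index-$0$ and index-$3$ handles (using that the boundary components are connected) and choosing the Morse function so that the homology hypothesis $H_1(W,\d W)=\Z$ is respected by the intermediate levels, $W$ becomes a composition of mapping cylinders of surface diffeomorphisms and of a short, explicit list of standard handle cobordisms, with the condition $H_1(W,\d W)=\Z$ constraining which handle cobordisms can occur. The mapping cylinders in turn factor as composites of arcslide cobordisms, following \cite{LOT,LOTbimodules}. For an arcslide --- more generally the mapping cylinder of $\phi\in\mathrm{MCG}(F)$ --- Donaldson's map is the induced action $\Lambda^*\phi_*\co\Lambda^*H_1(F)\to\Lambda^*H_1(F)$, since the subspace $\ker\big(H_1(\d W)\to H_1(W)\big)$ cutting out $\FDA(W)$ is the graph of $\phi_*$; the bimodule $\CFDA$ of an arcslide is given explicitly in \cite{LOTbimodules}, so one computes the class of its graded Euler characteristic and matches it with $\Lambda^*\phi_*$ under Petkova's isomorphism. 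For the remaining handle cobordisms the associated bimodules are again small and explicit, and one checks directly that the signed count of generators reproduces the class of $\ker\big(H_1(\d W)\to H_1(W)\big)$ inside $\big(\Lambda^*H_1(F_0)\big)^*\otimes\Lambda^*H_1(F_1)$. Functoriality then promotes these base cases to arbitrary $W$ in $\mathcal{C}$.

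The main obstacle I expect is the bookkeeping of $\Ztwo$-gradings and signs. Monoidality of the decategorification is delicate because the box tensor product pairs a type-$D$ structure against a type-$A$ structure, so the signs contributed by the two gradings to the Euler characteristic must be reconciled with the Koszul-type signs implicit in the isomorphism $K_0(\cA(-F)\text{-mod})\cong\Lambda^*H_1(F;\Z)$; getting this right is exactly what produces the grading-dependent sign recorded in Theorem~\ref{thm:main}, where the naive comparison of the Euler characteristic with Donaldson's map differs by the sign attached to $\Lambda^iH_1$ under the decategorification isomorphism. A more computational alternative --- working directly from a bordered Heegaard diagram for $W$, presenting $H_1(W)$ and the map $H_1(\d W)\to H_1(W)$ from the intersection data, and recognizing the signed generator count of $\CFDA(W)$ as the minor of that matrix computing the class of $\ker\big(H_1(\d W)\to H_1(W)\big)$, in the spirit of the classical identity $\chi\big(\HFhat(Y)\big)=|H_1(Y;\Z)|$ --- would bypass the reduction to generators but replaces the above sign bookkeeping by an equally careful orientation analysis of the diagram.
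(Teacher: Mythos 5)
Your plan --- exhibit both sides as functors on a cobordism category and check agreement on generators --- is a genuinely different route from the one the paper takes, which never decomposes $W$ into elementary pieces. The paper instead (i) computes $[\CFDD(W)]$ as the Pl\"ucker point $|\Gamma_W|$ for \emph{arbitrary} $W$ at once, by drilling out the connecting arc to reduce to Petkova's one-boundary formula (Theorem~\ref{thm:inakernel}) and showing the drilling morphism $\varphi_{\dr}$ is a $K_0$-equivalence (Theorem~\ref{thm:kernel}, Proposition~\ref{prop:decatsquare}); and then (ii) shows that tensoring with $\CFAA(\mathbb{I}_{\cZ_0})$, which turns a type DD structure into a type DA structure, decategorifies to the Hodge-duality isomorphism $\dualhomize$ (Theorem~\ref{thm:hodge}, Proposition~\ref{prop:hodgehom}). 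Since $\CFDA(W)\simeq\CFAA(\mathbb{I}_{\cZ_0})\boxtimes\CFDD(W)$, the theorem drops out. This buys a short argument that reuses Petkova's theorem wholesale, requires no explicit arcslide or handle bimodule, and confines the sign bookkeeping to a single grading computation on $\CFAAid$ (Lemma~\ref{lem:CFAAidgr}).

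Against that, your proposal leaves several steps as genuine gaps rather than omitted details. First, the ``monoidality'' you flag is not a preliminary to be granted at the outset: it is precisely the content of the paper's categorified Hodge duality (Proposition~\ref{prop:hodgehom}), and establishing it is the bulk of the work --- so the reduction-to-generators strategy does not actually save you from doing it. Second, you do not carry out the base cases: the Euler-characteristic match for the arcslide $\CFDA$, and for whatever 1- and 2-handle cobordism bimodules you need, is asserted but not computed, and the latter are not available in the bordered literature in the explicit form your argument requires. Third, the reduction to generators inside $\mathcal{C}$ is not automatic: you must show that $W$ factors as a composition of elementary cobordisms each of which \emph{itself} satisfies the homological constraint (or at least surjectivity of $i_*$, without which $\FDA$ is undefined on that piece), and the phrase ``choosing the Morse function so that the homology hypothesis is respected by the intermediate levels'' is not an argument. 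None of these is fatal to the strategy in principle, but collectively they amount to most of the proof, and the paper's direct route avoids all three.
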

The proof of this theorem builds on Petkova's work in the case of a single boundary component \cite{Petkovadecat} (see Theorem \ref{thm:kernel}), and appeals to a suitable categorification of Hodge duality arising in bordered Floer theory (see Theorem \ref{thm:hodge}). The proof also requires that we identify a relative $\Ztwo$-grading on the bimodules in question and prove that our grading is invariant (in an appropriate sense). Recall that gradings in bordered Floer homology are by cosets of a non-commutative group \cite[Chapter 10]{LOT}.

\begin{theorem}\label{thm:introZ2}The bordered invariants admit a combinatorial $\Ztwo$-grading, defined for appropriate choices on a bordered Heegaard diagram, that coincides with a reduction modulo 2 of the non-commutative grading and is invariant up to homotopy. This promotes the relevant modules and bimodules to $\Ztwo$-graded objects in a manner that is compatible with the $\Ztwo$-grading in Heegaard Floer homology.  \end{theorem}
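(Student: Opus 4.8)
The plan is to construct the $\Ztwo$-grading directly on the level of generators of a bordered Heegaard diagram, following the recipe that Lipshitz--Ozsv\'ath--Thurston use for the non-commutative grading in \cite[Chapter 10]{LOT}, and then replace the target group (a central extension of $H_1$ of a surface associated to the pointed matched circle) by its image under the ``sign'' or ``parity'' homomorphism to $\Ztwo$. Concretely, recall that to each generator $\bfx$ of $\CFD$ or $\CFA$ one assigns a grading in a coset of the image of $\pi_2(\bfx,\bfx)$ inside a group $G$ built from the algebra $\cA(\cZ)$; the group $G$ carries a distinguished central element and there is a natural homomorphism $G \to \Ztwo$ recording, say, the Maslov component modulo $2$ together with the parity of the $\spinc$/idempotent data. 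First I would pin down this homomorphism $G \to \Ztwo$ (and its analogue for the bimodule grading groups), check it kills the subgroups $\pi_2(\bfx,\bfx)$ so that the coset grading descends to an honest $\Ztwo$-valued grading on generators, and verify it is respected by the differential and by the algebra action and $\cA_\infty$ operations --- this is where the ``appropriate choices on a bordered Heegaard diagram'' enter, since one must make a choice analogous to a base generator in each $\spinc$-structure to rigidify the grading.

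Next I would prove invariance up to homotopy. The standard strategy is to track the $\Ztwo$-grading through the three Heegaard moves (isotopy, handleslide, stabilization) and through change of almost complex structure, showing in each case that the quasi-isomorphism provided by \cite{LOT} is $\Ztwo$-graded for a suitable identification of the grading sets; equivalently, one shows the pairing theorem and the relevant continuation maps are grading-preserving. Since the non-commutative grading is already known to be invariant in this sense, the cleanest route is to show that the mod-$2$ reduction map from the non-commutative grading set to $\Ztwo$ is \emph{natural} with respect to all these quasi-isomorphisms --- i.e.\ that the grading refinements and the base-point choices can be made compatibly --- so that invariance of the $\Ztwo$-grading is \emph{inherited} from invariance of the finer grading rather than reproved from scratch. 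This reduces the work to a compatibility check on the grading groups and their canonical $\Ztwo$-quotients.

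Finally I would establish the compatibility with the $\Ztwo$-grading on $\HFhat$ and $\HFK$: this is a pairing-theorem computation. Given a closed $Y$ (or a knot $K\subset Y$) split along $F$ into pieces with boundary, one compares the $\Ztwo$-grading on $\CFA(Y_1)\boxtimes\CFD(Y_2)$ coming from the two bordered gradings with the absolute $\Ztwo$-grading (the sign of the intersection points) on $\CF(Y)$; because both are determined by the same local sign/parity data at generators, they must agree up to an overall shift, which one normalizes. The analogous statement for bimodules and for the Alexander (knot) filtration follows by the same bookkeeping applied to the arced/doubly-pointed diagrams.

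The main obstacle, I expect, is \textbf{the invariance step}, and specifically making the choices on the Heegaard diagram (the analogue of picking a reference generator per $\spinc$-structure) in a way that is simultaneously (i) enough to rigidify the $\Ztwo$-grading into a genuine function rather than a torsor, and (ii) transported correctly under handleslides and stabilizations, where the set of generators and the grading group both change. Getting a clean, choice-independent statement --- rather than one that is only well-defined up to an ambiguity that then has to be chased through every subsequent pairing --- is the delicate part; the honest way to handle it is probably to phrase everything functorially in terms of the canonical $G \to \Ztwo$ and prove once and for all that this quotient is preserved by the structural maps of the theory.
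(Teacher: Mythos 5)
Your broad strategy matches the paper's: fix grading refinement data, exhibit a homomorphism from each noncommutative grading group $G(\cZ)$ (and its bimodule analogues) to $\Ztwo$ that sends the distinguished central element $\lambda$ to $1$, check it annihilates $g(\pi_2(\bfx,\bfx))$ so the coset grading descends, and then \emph{inherit} invariance from the invariance of the noncommutative $G$-set grading --- this is exactly how the paper organizes Theorem \ref{thm:allthegradings}, using Petkova's homomorphism $f^{\cZ}_\t$ and its bimodule extension $f^{\cZ_L,\cZ_R}_\t$.

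However, there is a real gap in your final step. You assert that the compatibility of this reduced grading with the closed $\Ztwo$-grading on $\CFhat$ is immediate ``because both are determined by the same local sign/parity data at generators.'' That is not true as stated. The closed $\Ztwo$-grading on $\CFhat$ \emph{is} local generator data, namely $\inv(\sigma_\bfx) + \sum_x o(x) \pmod 2$. But the reduction of the noncommutative grading is computed from a \emph{domain} $B\in\pi_2(\bfx,\bfy)$ via the Maslov-type quantity $-e(B)-n_\bfx(B)-n_\bfy(B)$ together with $\partial^\partial B$ and the refinement data --- it is not manifestly a function of the intersection signs and permutation of $\bfx$ alone. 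Showing that this domain-based quantity, after reduction by $f^{\cZ}_\t$, agrees with the local sign formula (and hence pairs correctly) is precisely the technical core of the argument. In the paper this occupies essentially all of Appendix \ref{sec:gradings}: one must fix a specific grading refinement data, cap off a bordered Heegaard diagram to a closed one, extend domains and generators across the cap, and compare Euler measures and local multiplicities term by term (Propositions \ref{prop:sameidem} and \ref{prop:diffidem}). The paper's way around this is to first define a separate, genuinely local combinatorial grading $\gr_A$ (intersection signs plus partial-permutation inversions), for which pairing compatibility \emph{is} a one-line calculation (Proposition \ref{prop:respectspairing}), and then prove the nontrivial identification $\gr_A = m_A$ (Theorem \ref{thm:gradingsagree}). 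Your plan conflates these two gradings; you would need to supply the identification, which is the bulk of the work and does not follow from generalities about sign data.
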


See Theorem \ref{thm:allthegradings} for a more precise statement. While this grading represents a key component of the proof of Theorem \ref{thm:donaldson} and is a new addition to the literature, in the interest of exposition we have opted to collect the details of the relative $\Ztwo$-grading in an appendix. As discussed above, tensoring with a bordered bimodule induces a functor between suitable categorifies of modules.  In \cite[Theorem 3]{LOTfaithful}, Lipshitz, Ozsv\'ath, and Thurston show that for the ungraded bordered bimodule associated to the mapping cylinder of a surface diffeomorphism, this functor decategorifies to the standard action of the mapping class group on $H_1(F; \Ztwo)$. As an application of Theorem \ref{thm:introZ2}, one may repeat their argument to obtain an analogous result for homology with integer rather than $\Ztwo$ coefficients.

\begin{corollary}\label{cor:MCGZ}
Let $F$ be a surface of genus $k$. The action of the mapping class group on the category of left $\cA(-F, 1-k)$-modules decategorifies to the standard action of the mapping class group on $H_1(F; \Z)$.\hfill$\Box$
\end{corollary}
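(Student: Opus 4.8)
The plan is to derive the corollary by feeding the mapping cylinder of a surface diffeomorphism into Theorem~\ref{thm:donaldson}, following the outline of \cite[Theorem~3]{LOTfaithful} but now retaining the finer grading. Fix a mapping class $\phi$ of $F$ and let $W_\phi$ be the product $F\times[0,1]$, regarded as a cobordism from $F$ to $F$ in which the parameterization of the outgoing boundary component is precomposed with $\phi$; then $\partial W_\phi\cong -F\amalg F$, and since $W_\phi$ deformation retracts onto $F$, Lefschetz duality gives $H_1(W_\phi,\partial W_\phi;\Z)\cong H^2(W_\phi;\Z)\cong H^2(F;\Z)\cong\Z$. So $W_\phi$ satisfies the hypothesis of Theorem~\ref{thm:donaldson}, and the functor $\CFDA(W_\phi)\boxtimes(-)$ from left $\cA(-F)$-modules to left $\cA(-F)$-modules decategorifies, via Petkova's identification $K_0\big(\cA(-F)\text{-mod}\big)\cong\Lambda^*H_1(F;\Z)$ \cite{Petkovadecat}, to Donaldson's map $\FDA(W_\phi)$.

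I would then identify $\FDA(W_\phi)$ directly. It is read off from the kernel of $H_1(\partial W_\phi;\Z)\to H_1(W_\phi;\Z)\cong H_1(F;\Z)$; under the inclusions of the two ends, the incoming copy of $F$ maps by the identity and the outgoing copy, via its twisted parameterization, maps by $\phi_*$, so this kernel is the graph of $\phi_*$ inside $H_1(F;\Z)\oplus H_1(F;\Z)$. As in \cite{Donaldson1999}, under $\Lambda^*H_1(-F\amalg F)\cong\big(\Lambda^*H_1(F)\big)^*\otimes\Lambda^*H_1(F)$ the element determined by the graph of $\phi_*$ is the operator $\bigoplus_j\Lambda^j\phi_*$; in particular the homogeneous piece in exterior degree one is $\phi_*\co H_1(F;\Z)\to H_1(F;\Z)$. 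Because $\CFDA(W_\phi)$ splits along the $\Z$-grading of the algebra, with the summand indexed by $1-k$ matching $\Lambda^1H_1(F;\Z)=H_1(F;\Z)$ under Petkova's isomorphism, it follows that $\CFDA(W_\phi,1-k)\boxtimes(-)$ decategorifies to $\phi_*$.

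It remains to see that $\phi\mapsto\big[\CFDA(W_\phi,1-k)\boxtimes(-)\big]$ is the mapping class group action and is multiplicative; this follows from $\CFDA(W_\phi\cup W_\psi)\simeq\CFDA(W_\phi)\boxtimes\CFDA(W_\psi)$ and $\CFDA(W_{\mathrm{id}})\simeq\mathbb{I}_\cZ$ \cite{LOTbimodules} together with the invariance of the induced $K_0$-map under homotopy equivalence of bimodules, which yields a homomorphism $\mathrm{MCG}(F)\to\operatorname{Aut}\big(H_1(F;\Z)\big)$, namely $\phi\mapsto\phi_*$. The one substantive point — and the only place the new $\Ztwo$-grading of Theorem~\ref{thm:introZ2} is required — is the verification that the mapping cylinder bimodules are $\Ztwo$-graded bimodules compatibly with the conventions of that theorem, so that the associated functors genuinely descend to the finer Grothendieck group $\Lambda^*H_1(F;\Z)$ and not merely to the ungraded group $H_1(F;\Ztwo)$ appearing in \cite{LOTfaithful}; this is precisely the upgrade supplied by Theorem~\ref{thm:introZ2}. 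With that in hand there is no further obstacle: the remainder is the bookkeeping of \cite[Theorem~3]{LOTfaithful}, and the corollary follows.
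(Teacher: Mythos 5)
Your proof is correct, and it takes a modestly different route from the one the paper indicates. The paper sketches this corollary by invoking Theorem~\ref{thm:introZ2} and instructing the reader to "repeat the argument" of \cite[Theorem~3]{LOTfaithful}, which involves reopening the decategorification computation of that paper and rerunning it with the finer $\Ztwo$-grading (so that $[M\{1\}]=-[M]$ and one lands in $\Lambda^*H_1(F;\Z)$ rather than its mod-$2$ reduction). You instead observe that the mapping cylinder $W_\phi$ satisfies $H_1(W_\phi,\partial W_\phi)\cong\Z$ and thus falls under the hypotheses of Theorem~\ref{thm:donaldson}, which already does all the decategorification work; from there the corollary is just the computation that $\FDA(W_\phi)$ is $\bigoplus_j\Lambda^j\phi_*$ (the Plücker point of the graph of $\phi_*$), together with the identification of the strands summand $1-k$ with exterior degree one from Remark~\ref{rmk:decatstrands}. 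This packaging is cleaner in that it treats the corollary as a specialization of the paper's main categorification theorem rather than as a parallel computation; the paper's route is more self-contained relative to the prerequisite material since it doesn't require the full Hodge-duality machinery of Sections~\ref{subsec:kernel}--\ref{subsec:functorvalued}, only the $\Ztwo$-grading. Two small caveats on your write-up, neither fatal: (i) depending on orientation and inverse conventions in the parameterization of $F\times\{1\}$, the kernel of $i_*$ may be the graph of $\phi_*^{-1}$ rather than $\phi_*$, which is harmless for the statement; and (ii) you should note that $\CFDA(W_\phi)\boxtimes(-)$ actually preserves the strands-grading-$(1-k)$ summand, which holds because $W_\phi$ is a cobordism between same-genus boundary components, so the functor restricts to ${}^{\cA(-F,1-k)}\sfMod\to{}^{\cA(-F,1-k)}\sfMod$ as the statement requires.
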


Theorem \ref{thm:main} may be viewed as a categorification of Donaldson's TQFT in a particularly strong sense: In establishing the commutativity of the diagram we ultimately prove that  the graded trace $\grTr$ computing $\Delta_K(t)$ from $\FDA(K,F^\circ)$ is categorified by the Hochschild homology computing $\HFK(Y,K)$ from $\CFDA(K,F^\circ)$.  In other words, we prove that $\grTr \circ K_0 = \chi_{\operatorname{gr}} \circ HH_*$, as summarized by a commutative diagram:
\[\begin{tikzpicture}
  \matrix (m) [matrix of math nodes,row sep=4em,column sep=13em,minimum width=2em]
  {
     \CFDA(Y,K,F^\circ) & \HFK(Y,K) \\
     \mathcal{F}_{DA}(Y,K,F^\circ) & \Delta_K(t) \\};
  \path[-latex]
    (m-1-1) edge node [left] {Theorem \ref{thm:main}} node [right] {$K_0$}(m-2-1)
            edge node [above] {$\mathit{HH}_*$} node [below] {\cite[Theorem 14]{LOTbimodules}} (m-1-2)
    (m-2-1.east|-m-2-2) edge node [above] {$\grTr$}
            node [below] {\cite[Proposition 12]{Donaldson1999}} (m-2-2)
    (m-1-2) edge node [right] {$\chi_{\operatorname{gr}}$} node [left] {\cite[Proposition 4.2]{Rasmussen2003}}  (m-2-2);
\end{tikzpicture}\]
As an immediate consequence of this commutativity we obtain an alternate proof that knot Floer homology categorifies the Alexander polynomial.
\begin{corollary}\label{cor-alex}
The Alexander polynomial of a nullhomologous knot $K$ in a homology sphere $Y$ is the graded Euler characteristic of the knot Floer homology groups $\HFK(Y,K)$.\hfill$\Box$
\end{corollary}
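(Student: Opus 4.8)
The plan is to read the statement off the commutative square displayed just before the corollary, the one with $\CFDA(Y,K,F^\circ)$ at the upper left and $\Delta_K(t)$ at the lower right. Its commutativity, i.e.\ the identity $\grTr \circ K_0 = \chi_{\operatorname{gr}} \circ HH_*$, is precisely what is obtained in the course of proving Theorem~\ref{thm:main}: it records that the graded trace computing $\Delta_K(t)$ from $\FDA(Y,K,F^\circ)$ is categorified, grading by grading and with the signs $(-1)^i$ matching on both sides, by the Hochschild homology computing $\HFK(Y,K)$ from $\CFDA(Y,K,F^\circ)$. To deduce the corollary one simply evaluates both composites on the object $\CFDA(Y,K,F^\circ)$. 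Along the top-and-right route, $HH_*\big(\CFDA(Y,K,F^\circ)\big) \cong \HFK(Y,K)$ as $\Z$-graded groups by \cite[Theorem 14]{LOTbimodules} (the grading from the underlying algebra $\cA(F)$ going to the Alexander grading), so this route produces $\chi_{\operatorname{gr}}\big(\HFK(Y,K)\big)$. Along the left-and-bottom route, $K_0\big(\CFDA(Y,K,F^\circ)\big) = \FDA(Y,K,F^\circ)$ by the definition of $K_0$ furnished by Theorem~\ref{thm:donaldson}, and Donaldson's computation of the graded trace of his TQFT on this cobordism \cite[Proposition 12]{Donaldson1999} gives $\grTr\big(\FDA(Y,K,F^\circ)\big) = \Delta_K(t)$. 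Commutativity forces the two values to coincide, so
\[
\chi_{\operatorname{gr}}\big(\HFK(Y,K)\big) \;=\; \Delta_K(t),
\]
as asserted.

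There is essentially nothing else to do: all of the substance lives upstream, in Theorems~\ref{thm:main}, \ref{thm:donaldson} and~\ref{thm:introZ2}, and this corollary is a formal evaluation of an already-established commuting square at one object. The only matter to keep straight is that the $\Z$-grading on $\CFDA(Y,K,F^\circ)$ coming from the underlying algebra is carried by $HH_*$ to the Alexander grading on $\HFK(Y,K)$ and by $K_0$ to the homogeneous decomposition $\FDA(Y,K,F^\circ)=\bigoplus_i\alpha_{i,W}$, so that the displayed identity refines to one in each Alexander grading $i$ (the signs $(-1)^i$ on the two vertical arrows of the diagram in Theorem~\ref{thm:main} being exactly what is needed for the per-grading statements to assemble into the displayed polynomial identity). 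No independence claim is needed: both $\HFK(Y,K)$ and $\Delta_K(t)$ depend only on the pair $(Y,K)$, while the Seifert surface $F^\circ$ enters only as the auxiliary data used to build the cobordism $W$, so any choice of $F^\circ$ yields the same conclusion.

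I do not expect a genuine obstacle here; the one point worth checking is a normalization. One should confirm that \cite[Proposition 12]{Donaldson1999} outputs the \emph{symmetrized} Alexander polynomial, $a_i = a_{-i}$, in agreement with the symmetry $\HFK(Y,K,i)\cong\HFK(Y,K,-i)$; this holds because the Donaldson kernel of $W$, whose boundary is $-F\amalg F$, is invariant under the involution swapping the two copies of $F$. The net effect is an alternate proof of the Ozsv\'ath--Szab\'o--Rasmussen theorem that $\HFK$ categorifies $\Delta_K$: instead of analyzing the Alexander filtration on $\CFhat$ directly, one decategorifies through bordered Floer homology and Donaldson's TQFT.
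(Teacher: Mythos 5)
Your proposal is correct and takes essentially the same route as the paper: Corollary~\ref{cor-alex} is a formal evaluation of the commuting square at $\CFDA(Y,K,F^\circ)$, with all the content residing upstream in Theorem~\ref{thm:donaldson} (giving $K_0(\CFDA)=\FDA$), the pairing $HH_*(\CFDA)\cong\HFK$ of Lipshitz--Ozsv\'ath--Thurston, Donaldson's $\grTr(\FDA)=\Delta_K(t)$, and the final lemma identifying $\grTr\circ K_0$ with $\chi_{\operatorname{gr}}\circ CH_*$, which is exactly what the paper isolates in order to close the square without invoking Rasmussen's theorem. You correctly flag that the independence comes from that lemma rather than from citing \cite{Rasmussen2003}, which is the whole point of calling this an alternate proof.
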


 The original proof of Corollary~\ref{cor-alex} is found in \cite[Proposition 4.2(2)]{Rasmussen2003}; it is worth noting that Rasmussen establishes the result, more generally, for nullhomologous knots in any three-manifold. Rasmussen's approach makes use of Fox free calculus, the mechanics of which are closely tied to the module structure of the universal abelian cover (and hence the Alexander module). Rasmussen's approach is the only one (at least in print) that applies to knots in arbitrary three-manifolds.  An important antecedent in this context and closely related result was the proof of Corollary~\ref{cor-alex} for knots in $S^3$ by Ozsv\'ath and Szab\'o; see \cite[Theorem 1.2 and Corollary 1.3]{OSz2004-properties}.

Finally, using Theorem \ref{thm:main} we answer Question \ref{QUESTION}.

\begin{theorem}\label{thm:seifert}
Given a nullhomologous knot $K$ in a homology sphere $Y$ with a choice of Seifert surface $F^\circ$, $\CFDA(Y,K,F^\circ)$ determines the Seifert form for $F^\circ$ and the Alexander module of $K$.\end{theorem}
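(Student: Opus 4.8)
The plan is to extract from the bimodule $\CFDA(Y,K,F^\circ)$, via the categorification of Donaldson's TQFT established in Theorem \ref{thm:donaldson}, enough linear-algebraic data to reconstruct the maps $\alpha_{i,W}$ for \emph{all} $i$ simultaneously; the content is that the collection $\{\alpha_{i,W}\}$ (equivalently, the kernel of $H_1(\partial W) \to H_1(W)$, a Lagrangian-type subspace of $H_1(-F \amalg F) \cong H_1(F)^* \oplus H_1(F)$) is precisely the data of the Seifert form up to the standard equivalence. First I would recall that for $W = (Y \smallsetminus \nu(F^\circ)) \cup D^2 \times I$ the cover $\widetilde{X}$ of the knot complement is built by gluing $\Z$-many copies of $Y \smallsetminus \nu(F^\circ)$ along copies of $F$, so that a Mayer--Vietoris argument expresses $H_1$ of the infinite cyclic cover as a $\Z[t,t^{-1}]$-module in terms of the two inclusions $i_\pm \co H_1(F) \to H_1(Y \smallsetminus \nu(F^\circ))$; in matrix form these are exactly $V$ and $V^T$ (or $V$ and $V + (\text{intersection form})$, depending on convention), and the presentation matrix $V - tV^T$ of the Alexander module is recovered. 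So it suffices to show $\CFDA(Y,K,F^\circ)$ determines the pair of maps $(i_-, i_+)$ up to simultaneous change of basis on the two sides, i.e.\ up to the $S$-equivalence class of $V$.

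Next I would invoke Theorem \ref{thm:donaldson}: the functor induced by $\CFDA(W)$ on Grothendieck groups is Donaldson's $\FDA(W) \co \Lambda^* H_1(F) \to \Lambda^* H_1(F)$, whose degree-$i$ piece $\alpha_{i,W}$ is determined by the kernel $L = \ker\big(H_1(\partial W) \to H_1(W)\big)$ regarded as an element of $\Lambda^* H_1(-F \amalg F)$. Since $\CFDA(Y,K,F^\circ)$ is the bimodule, it a fortiori determines its class in the relevant Hom-space of Grothendieck groups, hence determines $L$ as a subspace of $H_1(F)^* \oplus H_1(F)$ (after fixing the parameterization of $F$, which is part of the data $F^\circ$). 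The remaining point is purely formal: for this particular $W$, the long exact sequence of the pair $(W, \partial W)$ together with $H_1(W,\partial W) = \Z$ identifies $L$ with the graph-type subspace $\{(i_-^*(x) - i_+^*(x)\ \text{paired appropriately}) : x \in H_1(F)\}$ built from the Seifert form; reading the Seifert matrix $V$ off from $L$ in a chosen symplectic basis, and noting that a different choice of basis or a stabilization of $F^\circ$ changes $V$ by exactly an $S$-equivalence, yields that $V$ is determined up to $S$-equivalence, hence the Alexander module $H_1(\widetilde{X}) = \mathrm{coker}(V - tV^T)$ is determined outright. The Seifert \emph{form} for the fixed surface $F^\circ$ is then determined on the nose, since the parameterization of $F = F^\circ \cup D^2$ is fixed.

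I expect the main obstacle to be the bookkeeping that makes the phrase "determines the kernel $L$" genuinely precise: one must check that passing from the $\cA_\infty$-bimodule to its class in $\Hom$ of Grothendieck groups loses no information relevant to $L$ (it does not, because Donaldson's map is literally equivalent to the data of $L$), and, more delicately, that the $\Z[t,t^{-1}]$-module structure — not merely the underlying abelian group — is recovered. For the latter the key is that the bimodule is graded by the internal $\Z$-grading of $\cA(F) = \bigoplus_i \cA(F,i)$ (the $i$ in $\CFDA(Y,K,F^\circ,i)$), and this grading is exactly what records the powers of $t$; so the graded decomposition $\FDA(Y,K,F^\circ) = \bigoplus_i \alpha_{i,W}$ carries the full module structure, not just the total order $\Delta_K(t)$. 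Once that is in hand, the comparison with the classical Mayer--Vietoris description of the Alexander module (see Rolfsen \cite{Rolfsen1976}) and with the $S$-equivalence classification of Seifert forms is standard, and the theorem follows.
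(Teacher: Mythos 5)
Your proposal correctly identifies the first half of the paper's strategy: Theorem~\ref{thm:donaldson} recovers the Pl\"ucker point $|\Gamma_W|$ (equivalently the kernel $L=\ker(H_1(\partial W)\to H_1(W))$, or the graded collection $\{\alpha_{i,W}\}$) from the bimodule, and $|\Gamma_W|$ presents the Alexander module via \eqref{eqn:infinitecycliccover}. The Alexander-module half of the theorem is therefore in good shape.

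The gap is in the Seifert-form half. The paper is explicit that the Pl\"ucker point alone does \emph{not} determine the Seifert form: Lemma~\ref{lem:intersectiondetermines} shows $V = -\omega(A+B)^{-1}A$, where $\omega$ is the intersection matrix, and $\omega$ is a genuine extra input. When you say you ``read the Seifert matrix $V$ off from $L$ in a chosen symplectic basis,'' you are assuming you already know $\omega$ --- choosing a symplectic basis means putting $\omega$ in standard form, which presupposes you have it. Your subsequent invocation of $S$-equivalence is a red herring: $F^\circ$ is fixed in the statement, so stabilization is not in play, and ``$V$ determined up to $S$-equivalence'' is both not what the theorem asserts and not what you have actually established (two non-$S$-equivalent Seifert matrices for surfaces of the same genus can generate the same row space, differing by a non-symplectic $\mathrm{GL}_{2k}(\Z)$ change of presentation). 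The paper closes this gap with Proposition~\ref{prop:intersectioncat}: the intersection form on $H_1(F)$ is recovered as the decategorification of the functor $\Mor$ on ${}^{\cA(\cZ,1-k)}\sfMod \times {}^{\cA(\cZ,1-k)}\sfMod$, i.e., it is extracted from the algebra $\cA(-F)$ that the bimodule acts over. Combining this with the Pl\"ucker point via Proposition~\ref{prop:intersectiondetermines} is what actually produces $V$. Your draft replaces this entire step with the phrase ``the parameterization of $F = F^\circ \cup D^2$ is fixed,'' which is where the argument would need to be made precise; as written, it is not a proof that the intersection form is determined by the data $\CFDA(Y,K,F^\circ)$.

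A secondary, smaller point: your Mayer--Vietoris description identifies the two inclusions $i_\pm$ with $V$ and $V^T$, but the paper never passes through $i_\pm$; it works directly with $\Gamma_W \subset H_1(F)\oplus H_1(F)$ and the row space of $V-tV^T$. This is fine as an alternative route, but the claim that $L$ is ``the graph-type subspace built from the Seifert form'' again implicitly uses the pairing between $H_1(F)$ and itself, which is the intersection form you have not accounted for.
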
 

\subsection{Properties of the bordered invariants}
For the remainder of this introduction, we fix the ambient three-manifold $Y=S^3$ (and drop it from the notation). Observe that $\CFDA(K,F^\circ)$ is not  an invariant of $F^\circ$ (since it depends on a choice of parameterization), and certainly not an invariant of $K$. However, in certain settings it is possible to extract strong geometric information from this bimodule. 



\begin{theorem}\label{thm:stronger}
There exists pairs $(K_1,F^\circ_1)$ and $(K_2,F^\circ_2)$ such that $K_1$ and $K_2$ cannot be distinguished by their knot Floer homology and $F_1$ and $F_2$ cannot be distinguished by their Seifert forms.  However, the bimodules $\CFDA(K_i,F^\circ_i)$ are not isomorphic as type DA bimodules for any choices of parameterizations.
\end{theorem}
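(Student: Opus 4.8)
The plan is to produce the desired pairs among satellite knots with winding-number-zero patterns --- concretely, untwisted Whitehead doubles --- using that $\CFDA(K,F^\circ)=\CFDA(W)$ is the bordered invariant of the capped-off Seifert surface complement $W$. For a winding-number-zero pattern the Seifert form of the obvious Seifert surface, and hence the Alexander module, is determined by the pattern alone, whereas $W$ contains the companion knot complement along the companion torus; thus the bimodule has access to the companion even though the Seifert form does not. This is the feature I want to exploit.

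First I would fix the construction. Take nontrivial knots $J_1,J_2$ in $S^3$ with $\tau(J_1)=\tau(J_2)$ but $\HFK(J_1)\not\cong\HFK(J_2)$ --- for instance $J_1=4_1$ and $J_2=6_1$, both of which have $\tau=0$ while $\rk\,\HFK(4_1)=5\ne 9=\rk\,\HFK(6_1)$ --- and set $K_i=D_+(J_i,0)$, the $0$-twisted positively clasped Whitehead double, with $F_i^\circ$ the standard once-punctured-torus Seifert surface. Two checks establish the ``decategorified'' hypotheses. First, the Seifert matrix of $F_i^\circ$ equals $\left(\begin{smallmatrix}-1&1\\0&0\end{smallmatrix}\right)$ independently of $J_i$: the self-linkings and the mutual linking of the two core curves are computed from the clasp and the framing, not the companion. (In particular $\Delta_{K_i}=1$, so $F_1$ and $F_2$ even have the same, trivial, Alexander module.) Second, $\HFK(K_1)\cong\HFK(K_2)$ by Hedden's computation of the knot Floer homology of Whitehead doubles, which depends only on the twisting parameter and $\tau(J_i)$. (One could instead exhibit explicit small pairs and verify everything --- including the conclusion below --- by direct machine computation of bordered bimodules.)

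The crux is the claim that $\CFDA(W_i)$, considered up to the change-of-parameterization action --- which replaces a bimodule by its box tensor product with mapping-cylinder bimodules on each side --- determines $\HFK(J_i)$. Cutting $W_i$ along the companion torus $T_i$ writes $W_i=P\cup_{T_i}C_i$ with $C_i=S^3\smallsetminus\nu(J_i)$ and $P$ depending only on the Whitehead pattern, so the pairing theorem gives (schematically) $\CFDA(W_i)\simeq\CFDA(P)\boxtimes\CFD(C_i)$. To read off $\HFK(J_i)$ in a way insensitive to the parameterization I would pass to a parameterization-invariant quantity: either the sutured Floer homology of the Seifert surface complement, which by Zarev's compatibility of bordered-sutured and sutured Floer homology is recoverable from $\CFDA(W_i)$ by tensoring with a fixed cap, and which by Juh\'asz's torus-decomposition formula detects $\HFK(J_i)$; or, more hands-on, the multiset of groups $\{\HFhat(N):N\text{ a self-gluing of }W_i\}$, computed as $HH_*(\CFDA(\phi)\boxtimes\CFDA(W_i))$ over mapping cylinders $\phi$, which is manifestly parameterization-invariant and which contains $\HFhat$ of surgeries on $J_i$ performed inside a fixed ambient knot exterior, determining $\HFK(J_i)$. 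Granting this, $\HFK(4_1)\not\cong\HFK(6_1)$ forces $\CFDA(W_1)\not\simeq\CFDA(W_2)$ for every choice of parameterizations, which is the theorem. Combined with Theorem~\ref{thm:seifert}, it shows that $\CFDA(K,F^\circ)$ is strictly stronger than knot Floer homology together with the Seifert form.

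The main obstacle is precisely this last step: extracting the companion's Heegaard Floer data from $\CFDA(W_i)$ while genuinely controlling the parameterization ambiguity. Since $\CFDA(W_i)$ does depend on the parameterization, one cannot argue naively; one must isolate a parameterization-invariant quantity (sutured Floer homology of the Seifert surface complement, or the self-gluing multiset) and then run an honest computation showing it sees $J_i$ --- the delicate point being to pin down surgery coefficients and ambient manifolds so that they do not secretly depend on the unknown parameterization. Routing the argument through Zarev's framework, so as to reduce to the sutured Floer homology of a satellite's Seifert surface complement and then invoke Juh\'asz's decomposition result along the companion torus, is, I expect, the most economical way to do this cleanly.
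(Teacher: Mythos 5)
Your approach is genuinely different from the paper's, which instead takes $K_1 = P(3,5,3,-2)$ and $K_2 = P(5,3,3,-2)$: these are positive mutants (so their Seifert forms agree by Kirk--Livingston), they have isomorphic knot Floer homology by direct computation, and they are fibered. Fiberedness is the crucial feature, because it makes $W_i$ the mapping cylinder of the monodromy, so if the bimodules were equivalent for some parameterizations then the faithfulness theorem of Lipshitz--Ozsv\'ath--Thurston for the mapping class group action (\cite[Theorem 1]{LOTfaithful}) would force the monodromies, and hence the knots, to coincide. This dispatches the parameterization ambiguity in one stroke and avoids any extraction of companion data from the bimodule.

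Your route through Whitehead doubles has two real gaps. First, the assertion that $\HFK\bigl(D_+(J_1,0)\bigr)\cong \HFK\bigl(D_+(J_2,0)\bigr)$ follows from $\tau(J_1)=\tau(J_2)$ is not correct: Hedden's formula for $\HFK(D_+(J,t),1)$ depends on the full filtered chain homotopy type of $\CFK^\infty(J)$ (via ranks of the homologies of the Alexander subcomplexes, equivalently ranks of large-surgery Floer homologies), not just on $\tau(J)$. For $J_1=4_1$ and $J_2=6_1$ these filtered homotopy types differ, and in fact one expects $\operatorname{rk}\HFK(D_+(4_1,0))\neq\operatorname{rk}\HFK(D_+(6_1,0))$; so your specific example almost certainly fails the hypothesis on knot Floer homology. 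To salvage it you would need companions with equal $\HFK(D_+(\cdot,0))$ but ``visibly different'' bordered $\CFD$ of the complement, which you cannot get from knots with identical $\CFKi$ (those give identical $\CFDA(W_i)$, exactly the content of Theorem~\ref{thm:same}).

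Second, and as you acknowledge, the decisive step --- that $\CFDA(W_i)$, up to box tensor by mapping cylinders on each side, determines $\HFK(J_i)$ --- is not established. Both proposed mechanisms (tensoring with a fixed cap to land in Zarev's bordered-sutured theory and quoting Juh\'asz's decomposition along the companion torus, or comparing the multiset of Hochschild homologies of all self-gluings) require nontrivial work: one must show that some parameterization-invariant output really sees $J_i$ rather than just $P$, pin down the surgery coefficients so they are independent of the unknown parameterization, and argue that the output distinguishes $\HFK(4_1)$ from $\HFK(6_1)$. Absent that, the argument does not close. The paper's fibered-knot/faithfulness argument avoids all of this; your construction, if completed, would give a strictly stronger ``satellite-detection'' statement, but at present it is an outline rather than a proof.
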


Similarly, revisiting the examples of Kanenobu \cite{Kanenobu1986}, it is possible to construct an infinite family of pairs $\{(K_i,F^\circ_i)\}_{i\in\Z}$ with identical knot Floer homology and identical Alexander module\footnote{Kanenobu gives an explicit calculation of the Alexander module for his two parameter family of knots $\{K_{p,q}\}_{p,q\in\Z}$. This second application of Kanenobu's construction is not in contradiction with our first: For this application consider the one-parameter  family $K_i=K_{i,i}$.} that are separated by $\CFDA(K_i,F^\circ_i)$; that is, for $i \neq j$, the bimodules $\CFDA(K_i,F^\circ_i)$ and $\CFDA(K_j,F^\circ_j)$ are distinct for all choices of parameterization. This follows readily from the fact that the $K_i$ (with unique minimal genus Seifert surfaces $F^\circ_i$) are distinct fibered knots for $i\in\Z$. These examples and Theorem \ref{thm:stronger} suggest that the bimodules  provided by bordered Floer homology are particularly strong invariants. On the other hand, it is not the case that $\CFDA(\cdot,\cdot)$ gives rise to a complete invariant in general. 

\begin{theorem}\label{thm:same}
There exist infinitely many pairs $(K_i,F^\circ_i)$, with $K_i \not \simeq K_j$ such that the bimodules $\CFDA(K_i,F^\circ_i)$ and $\CFDA(K_j,F^\circ_j)$ are isomorphic for some choice of parametrization. 
\end{theorem}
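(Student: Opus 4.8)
The plan is to find a family of knots that are pairwise distinct, yet admit Seifert surfaces whose complements (after the $0$-surgery and neighborhood removal described before Theorem~\ref{thm:main}) become diffeomorphic rel boundary once one allows a reparameterization of the boundary surface $F$. Since $\CFDA(K,F^\circ)$ depends only on the bordered three-manifold $W=(Y\smallsetminus\nu(F^\circ))\cup D^2\times I$ together with its boundary parameterization, producing knots $(K_i,F^\circ_i)$ with a common $W$ (up to the choice of parameterization we are free to make) immediately yields isomorphic bimodules. The cleanest source of such examples is \emph{Murasugi sums / plumbings or, more simply, the JSJ-indecomposable building blocks of fibered knots}: one wants distinct knots whose fiber surface complements share a common piece.

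The concrete construction I would use is the one coming from \emph{satellite and cabling operations along a fixed companion}, or most transparently, twist knots and their generalizations where the Seifert surface is a once-punctured torus. First I would take the standard genus-one Seifert surface $F^\circ$ and observe that $W$ is obtained by gluing $D^2\times I$ to $Y\smallsetminus\nu(F^\circ)$; for a genus-one fibered situation $Y\smallsetminus\nu(F^\circ)$ is a surface bundle over $S^1$ with fiber a twice-punctured disk (or similar), and the monodromy is a product of Dehn twists. Then I would exhibit distinct monodromies $\phi_i$ that become conjugate in the mapping class group of the \emph{larger} surface obtained after capping — i.e., the ambiguity in the parameterization of $F$ absorbs the difference. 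A well-known phenomenon of this type: the knots $K_i$ obtained as boundaries of plumbings of Hopf bands in different cyclic orders can have the \emph{same} total space of the associated bundle but genuinely different knot types in $S^3$, because the way the fiber sits in $S^3$ differs while the abstract fibered manifold (and hence $W$) does not. Equivalently one can phrase this via \emph{flype moves} or the non-uniqueness of the $S^3$ in which a given fibered link complement embeds.

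The steps, in order, would be: (1) fix an abstract bordered three-manifold $W_0$ with $\partial W_0\cong -F\amalg F$ and $H_1(W_0,\partial W_0)=\Z$, realized as a capped-off fiber complement; (2) classify, or at least exhibit infinitely many, embeddings $Y\smallsetminus\nu(F^\circ_i)\hookrightarrow S^3$ realizing $W_0$, with resulting knots $K_i$ that are pairwise non-isotopic — checked by a classical invariant not seen by $W_0$ itself, e.g. the actual knot group peripheral system, hyperbolic volume of $S^3\smallsetminus K_i$, or the embedding of the Alexander module into a larger module coming from the ambient sphere; (3) choose the boundary parameterizations so that the identifications $\partial W_0 \to \partial W_0$ match, so that $\CFDA(K_i,F^\circ_i)\cong\CFDA(K_j,F^\circ_j)$ as $DA$ bimodules; (4) conclude there are infinitely many such pairs, for instance by a twisting/cabling parameter that varies the ambient knot without changing the capped fiber complement.

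The main obstacle will be step (2): producing genuinely distinct knots $K_i$ while rigidly fixing the diffeomorphism type \emph{rel parameterized boundary} of $W$. It is easy to get distinct knots with diffeomorphic \emph{unparameterized} complements-minus-surface, but $\CFDA$ sees the parameterization, so one must also arrange that the reparameterizations needed are honest extensions over $W$ (equivalently, lie in the image of $\mathrm{MCG}(W,\partial)\to\mathrm{MCG}(\partial W)$ appropriately), or else absorb them into the allowed choice of parameterization in the statement (the phrase ``for some choice of parametrization'' gives exactly this freedom, which is what makes the theorem tractable). I expect the actual verification that the $K_i$ are distinct to be handled by a single robust invariant — most likely showing the knots are distinct \emph{fibered} knots with the same fiber, so that the monodromies-as-elements-of-$\mathrm{MCG}(S^3\text{-fiber})$ differ even though the abstract mapping tori agree, which is a classical fact (e.g., realized by the untwisted doubles or by Stallings' twisting along fibers). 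Once the family is in hand, the Floer-theoretic content is immediate from the definition of $\CFDA(K,F^\circ)$ as an invariant of the parameterized bordered manifold $W$.
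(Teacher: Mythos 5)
Your approach is fundamentally different from the paper's, and the concrete constructions you propose cannot work. You want to realize distinct knots $K_i$ whose bordered manifolds $W_i=(S^3\smallsetminus\nu(F^\circ_i))\cup D^2\times I$ are isomorphic as bordered manifolds for suitable parameterizations, so that $\CFDA(K_i,F^\circ_i)\cong\CFDA(K_j,F^\circ_j)$ holds for purely topological reasons. But every concrete candidate you name --- plumbings of Hopf bands, Stallings twists, flypes of fiber surfaces --- produces a \emph{fibered} knot with $F^\circ_i$ the fiber, and for fibered knots this strategy is ruled out by the paper's own Proposition~\ref{prop:noparameterization}: the faithfulness of $\CFDA$ on mapping cylinders \cite[Theorem 1]{LOTfaithful} shows that if $\CFDA(K_1,F^\circ_1)\simeq\CFDA(K_2,F^\circ_2)$ for \emph{any} choices of parameterization, then the capped monodromies agree rel $D^2$ and hence $K_1\simeq K_2$. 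So the phrase ``for some choice of parametrization'' gives no room to absorb a difference of monodromies; for fibered knots, $\CFDA(K,F^\circ)$ modulo reparameterization is a \emph{complete} invariant of $(K,F^\circ)$, as the paper notes after Proposition~\ref{prop:noparameterization}. More generally, $W$ together with the required parameterization data (gluing recovers $Y_0(K)$) and the arc $\mathbf{z}$ recovers $(Y_0(K),F,K')$, where $K'$ is the dual knot, and hence the knot exterior $S^3\smallsetminus\nu(K)$; so by Gordon--Luecke two distinct knots never yield isomorphic bordered manifolds. The whole content of the theorem is that the \emph{bimodules} can agree even though the bordered manifolds do not.

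The paper's proof is therefore Floer-theoretic rather than topological. It sets $K_i=P(C_i)$, satellites of a fixed nontrivial nullhomologous pattern $P$ along an infinite family of distinct thin companions $C_i$ having isomorphic $\mathit{CFK}^-$, with $F^\circ_i=P(F^\circ)$ the satellited Seifert surface. Then $W_i$ decomposes as a fixed trimodule piece $X=(S^1\times D^2\smallsetminus\nu(F^\circ))\cup D^2\times I$ glued to $S^3\smallsetminus\nu(C_i)$; since $\CFD(S^3\smallsetminus\nu(C_i))$ is determined by $\mathit{CFK}^-(C_i)$ via \cite[Theorem A.11]{LOT}, it is independent of $i$, and a pairing theorem gives $\CFDA(W_i)\simeq\CFDA(W_j)$, while the $K_i$ remain distinct because the $C_i$ are. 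The missing idea in your proposal is to look for a coincidence in the bordered Floer \emph{invariants} of genuinely different manifolds (here, exploiting the known failure of $\mathit{CFK}^-$ to separate thin knots), rather than a coincidence of the manifolds themselves.
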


\subsection{Related work} 
In addition to Petkova's work  \cite{Petkovadecat} already mentioned above, we note some other related recent work. Tian \cite{Tian2012,Tian2013} gives a categorification of the Burau representation using a different framework than the one considered here. In a different direction, sutured Floer homology provides an alternate extension of Heegaard Floer homology to manifolds with boundary, and the decategorification of this theory has been studied by Friedl, Juh\'asz, and Rasmussen \cite{FJR2011}. Since Hedden, Juh\'asz, and Sarkar have applied sutured Floer homology to distinguishing Seifert surfaces \cite{HJS2013}, it seems that similar applications of bordered Floer homology might be possible in light of Theorem \ref{thm:stronger}. We leave this problem as motivation to search for effective methods for computing and studying the bordered invariant $\CFDA(K,F)$ given a Seifert surface $F$ for a knot $K$.

\subsection{Outline} 
We first review the relevant classical objects in Section \ref{sec:linearalgebra}, namely the Alexander module, Seifert forms, and Donaldson's TQFT. We then introduce the required elements of bordered Floer homology and define our $\Ztwo$-grading in Section \ref{sec:background} before turning to the proofs of Theorem \ref{thm:main}, Theorem \ref{thm:donaldson}, and Theorem \ref{thm:seifert} in Section \ref{sec:decategorification}. The constructions establishing Theorems \ref{thm:stronger} and \ref{thm:same} are part of Section \ref{sec:background} and a complete example recovering the Alexander module and the Seifert form (applying Theorem \ref{thm:seifert}) is included at the end of Section \ref{sec:decategorification}. The technical details regarding the $\Ztwo$-gradings, representing the bulk of the paper, are collected in Appendix \ref{sec:gradings}.

\subsection*{Acknowledgments}The authors thank Sam Lewallen for many enthusiastic conversations about this work, particularly in the early days of the project. This paper also benefited from conversations with Robert Lipshitz, Allison Moore, and Tim Perutz. We gratefully acknowledge the American Institute of Mathematics (AIM) for providing a fantastic working environment as part of a SQuaRE program where some of this work was carried out.


\section{TQFTs, the Alexander module, and the Seifert form}\label{sec:linearalgebra}
This section reviews some of the necessary perspectives on linear algebra, the Alexander module, and the Seifert form. A good reference for this material is \cite{Rolfsen1976}, though for  certain aspects we draw heavily on \cite{Donaldson1999}. When computing singular homology we will always work with $\Z$ coefficients and therefore omit this choice from the notation.  Throughout this paper, for convenience, all surfaces will have positive genus.  

\subsection{Multilinear algebra and Donaldson's TQFT}
Let $G$ be a finitely-generated, free abelian group and fix a subgroup $H \subset G$ of rank $n$.  It will be convenient to record $H$ by its Pl\"ucker point $|H| \in \Lambda^n(G)$ (which we will only define up to sign) as follows.  Choose a basis $h_1,\ldots,h_n$ for $H$.  We define $|H| = h_1 \wedge \ldots \wedge h_n$, which is independent of the choice of basis, up to sign.  While not every element of $\Lambda^n(G)$ arises in this way, any non-zero element of the form $g_1 \wedge \ldots \wedge g_n$ (that is, any decomposable element) specifies a subgroup.  Recall that if $\alpha \in \Lambda^*(G_1)^* \otimes \Lambda^*(G_2)$, then we can think of $\alpha$ as in fact specifying a homomorphism from $\Lambda^*(G_1)$ to $\Lambda^*(G_2)$.

Suppose that $W$ is a compact, connected, oriented three-manifold with boundary $\partial W=-F_0\amalg F_1$.  We restrict attention to the case that $i_*\co H_1(\partial W) \to H_1(W)$ is surjective.\footnote{This condition is satisfied if $H_1(W,\d W) = \mathbb{Z}$, unless some $F_i=\emptyset$, in which case $H_1(W,\d W) = 0$ is sufficient.} 
Following Donaldson \cite{Donaldson1999}, this setup gives rise to a $(2+1)$-dimensional topological quantum field theory which we will denote by $\FDA$.  In particular, to a closed, oriented surface $F$, we assign $\FDA(F)=\Lambda^*H_1(F)$;  and to a cobordism $W\co F_0 \to F_1$, we assign the element 
\[ \FDA(W) = | \ker i_* | \in \Lambda^* H_1(-F_0 \amalg F_1) \subset \Hom(\Lambda^* H_1(F_0),\Lambda^*H_1(F_1)), \]
which is defined up to sign.  For the moment, following Donaldson, we will ignore the sign ambiguity.   
Under this identification, given cobordisms $W_0\co F_0 \to F_1$ and $W_1\co F_1 \to F_2$, we have that 
\[
\FDA(W_1) \circ \FDA(W_0) = \FDA(W_0 \cup_{F_1} W_1).
\]  
Ultimately, we will show that bordered Floer homology provides a natural categorification of this TQFT (cf. Theorem \ref{thm:donaldson}).  In doing so, a choice of signs will be made using the bordered structure. This resolves the ambiguity in Donaldson's construction above in a manner consistent with the choices in Heegaard Floer theory (see, in particular, the conventions of Proposition \ref{prp:conv}, below).  

Let $V$ be a finite-dimensional, oriented vector space of dimension $n$.  For each $0 \leq j \leq n$ and $v \in \Lambda^j V$, we can define a dual vector in $(\Lambda^{n-j}V)^*$ by $x \mapsto \star(x \wedge v)$, and this identification establishes an isomorphism that is essentially Hodge duality.  Given $v \in \Lambda^j V$, we will use $\dualize_V(v)$ to denote the associated dual vector in $(\Lambda^{n-j} V )^*$.  When the vector space $V$ is clear, we will omit the subscript from the map $\dualize$.  Consequently, we obtain a natural identification of $\Lambda^* V \otimes \Lambda^* V'$ with $\Hom(\Lambda^*V,\Lambda^*V')$.  The discussion translates to finitely-generated free abelian groups $L$ with a choice of ordered basis, where we induce a lattice structure on $L$ by choosing the basis to be orthonormal.  From this, we obtain a volume form.  

In the present setting, $V$ will be $H_1(F)$ for a given surface $F$ of genus $k>0$.  Our surfaces will come equipped with a distinguished ordered basis $x_1,\ldots,x_{2k}$ for $H_1(F)$ corresponding to a handle decomposition for $F$; the associated basis for $H_1(-F)$ will be precisely $-x_1,\ldots,-x_{2k}$.  Observe that the two ordered bases induce the same orientation on $H_1(F) = H_1(-F)$.  Thus, $\dualize_{H_1(F)} = \dualize_{H_1(-F)}$.  We will still keep track of orientations as this will help for clarity in certain contexts.       

\subsection{The Alexander module}
Let $Y$ be an integer homology sphere and consider a knot $K \subset Y$. Denote by $X_K$ the exterior of $K$, that is, $X_K=Y\smallsetminus\nu (K)$ where $\nu (K)$ is an open tubular neighborhood of the knot.  The universal abelian cover  $\widetilde{X}_K$ is induced by the abelianization  $\pi_1(X_K) \to H_1(X_K) \cong \Z$. This cyclic group is generated by a meridian for the knot, and the corresponding deck transformations on $\widetilde{X}_K$  give rise to a natural module structure on $H_1(\widetilde{X}_K)$. The Alexander module of $K$ is the $\Z[t,t^{-1}]$-module $H_1(\widetilde{X}_K)$.  Note that this module may be presented by a square matrix the determinant of which, denoted $\Delta_K(t)$, generates the order ideal (also called the Alexander ideal). The generator $\Delta_K(t)\in\Z[t,t^{-1}]$ of this principal ideal, well defined up to units in $\Z[t,t^{-1}]$, is the Alexander polynomial of $K$.


With only minor adjustments, we can just as easily work with the closed manifold $Y_0$ resulting from $0$-framed surgery along $K$.  Indeed, given a Seifert surface $F^\circ$ for $K$ the closed surface $F=F^\circ\cup D^2$, where $D^2$ is a meridional disk for the surgery solid torus, generates $H_2(Y_0)$. It is straightforward to check that $H_1(\widetilde{Y}_0)\cong H_1(\widetilde{X}_K)$ as groups; that these are isomorphic as modules follows from the fact that the knot meridian and its image in $Y_0$ are (geometrically) dual to $F^\circ$ and $F$, respectively. That is, the deck transformations on the universal abelian cover $\widetilde{Y}_0$ are generated by the image of a meridian for $K$ in $H_1(Y_0)$. As a result, given $K\subset Y$, we will treat $H_1(\widetilde{Y}_0)$ as the Alexander module of $K$ and write $\Delta_K(t)=\Delta_{Y_0}(t)$ for the corresponding Alexander polynomial. We consider $H_1(\widetilde{Y}_0)$ rather than $H_1(\widetilde{X}_K)$ because we prefer to work with the closed surface $F$ rather than the surface with boundary $F^\circ$.

Let $F^\circ$ be a genus $k$ Seifert surface for $K$ and, setting $\nu (F^\circ) = F^\circ \times [0,1]$, let $W =  (S^3 \smallsetminus \nu (F^\circ)) \cup D^2 \times I$. From the preceding paragraph, notice that $W$ is the complement of a  neighborhood of the capped-off Seifert surface $F=F^\circ\cup D^2$ in $Y_0$.  In particular, $H_1(W,\partial W) \cong \Z$ as in Donaldson's setup. Since $H_1(\partial W) \cong H_1(F) \oplus H_1(F)$, let $\Gamma \subset H_1(F) \oplus H_1(F)$ be the kernel of $i_*\co H_1(\partial W) \to H_1(W)$.  Note that the rank of $\Gamma$ is $2k$.  We have the $\mathbb{Z}[t,t^{-1}]$-module presentation 
\begin{equation}\label{eqn:infinitecycliccover}
H_1(\widetilde{Y}_0) \cong H_1(F) \otimes \Z[t,t^{-1}]/\sim
\end{equation}
where $x \otimes p(t) \sim -y \otimes tp(t)$ if $(x,y) \in \Gamma$, since $x \otimes 1 + y \otimes t$ corresponds to the inclusion of the boundary of $W$ which is zero if $(x,y)$ is in the kernel of $i_*$.  We conclude that the Pl\"ucker point $|\Gamma|$ determines the Alexander module of $K$, though note that $|\Gamma|$ is certainly not an invariant of $K$.  Rearranging further, 
\begin{align*}
\Lambda^{2k}(H_1(-F) \oplus H_1(F)) & \subset \bigoplus^{k}_{i = -k} \Lambda^{k-i} H_1(-F) \otimes \Lambda^{k+i} H_1(F) \\
										  & \cong \bigoplus^{k}_{i=-k} (\Lambda^{k+i} H_1(F))^* \otimes \Lambda^{k+i} H_1(F)
\end{align*}
so that $|\Gamma|$ in fact corresponds to a family of maps $\alpha_{i,W} \co \Lambda^{k+i} H_1(F) \to \Lambda^{k+i} H_1(F)$ (the identifications above are made explicit in Section \ref{sec:decategorification}).  As a result, 
\begin{equation}\label{eqn:alexanderpoly}
\Delta_K(t) = \sum^{k}_{i = -k} (-1)^i t^i\Tr(\alpha_{i,W})
\end{equation}
as observed by Donaldson. In terms of the symmetrized Alexander polynomial:

\begin{proposition}[Donaldson {\cite[Proposition 12]{Donaldson1999}}]\label{prp:conv} The graded trace $\grTr$ of the map $\FDA(W)$ determines the Alexander polynomial of $K$. That is, \[a_i = (-1)^i\operatorname{Tr}\big(\alpha_{i,W}\co \Lambda^{(i)}H_1(F)\to \Lambda^{(i)}H_1(F)\big)\] where $\Delta_K(t) = a_0+\sum_i a_i(t^i+t^{-i})$ and  $\Lambda^{(i)}H_1(F)$ represents one of two isomorphic copies of $\Lambda^{k\pm i}(H_1(F))$. \end{proposition}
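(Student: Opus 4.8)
The plan is to deduce Proposition~\ref{prp:conv} directly from equation~\eqref{eqn:alexanderpoly} together with the well-known symmetry of the Alexander polynomial, $\Delta_K(t) \doteq \Delta_K(t^{-1})$. First I would recall why the symmetry holds at the level of the maps $\alpha_{i,W}$: the involution of $\partial W = -F \amalg F$ that swaps the two copies of $F$ extends over $W$ (geometrically, it comes from the $I$-symmetry of $F^\circ \times [0,1]$ in the construction $W = (S^3 \smallsetminus \nu(F^\circ)) \cup D^2\times I$), and under the identification $\Lambda^{2k}(H_1(-F)\oplus H_1(F)) \subset \bigoplus_i (\Lambda^{k+i}H_1(F))^*\otimes \Lambda^{k+i}H_1(F)$ this swap carries the $(k+i)$-graded piece to the $(k-i)$-graded piece and replaces each map by its transpose. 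Hence $\alpha_{-i,W}$ is identified with $\alpha_{i,W}^T$ (up to the sign bookkeeping that Donaldson's construction carries), so $\Tr(\alpha_{-i,W}) = \Tr(\alpha_{i,W})$ for every $i$.

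Next I would rewrite the sum in~\eqref{eqn:alexanderpoly} by pairing the $i$ and $-i$ terms:
\[
\Delta_K(t) = \sum_{i=-k}^{k} (-1)^i t^i \Tr(\alpha_{i,W}) = \Tr(\alpha_{0,W}) + \sum_{i=1}^{k} (-1)^i \Tr(\alpha_{i,W})\,(t^i + t^{-i}),
\]
using $\Tr(\alpha_{-i,W}) = \Tr(\alpha_{i,W})$ and $(-1)^{-i} = (-1)^i$. Comparing with the stated normal form $\Delta_K(t) = a_0 + \sum_i a_i(t^i + t^{-i})$ gives $a_0 = \Tr(\alpha_{0,W})$ and $a_i = (-1)^i \Tr(\alpha_{i,W})$ for $i \geq 1$, which is exactly the asserted identity $a_i = (-1)^i \Tr\big(\alpha_{i,W}\colon \Lambda^{(i)}H_1(F)\to\Lambda^{(i)}H_1(F)\big)$, with $\Lambda^{(i)}H_1(F)$ denoting either of the two isomorphic copies $\Lambda^{k\pm i}H_1(F)$ interchanged by the transpose/Hodge-duality symmetry above. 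I would also note that for $|i| > k$ one has $\alpha_{i,W} = 0$ since $\Lambda^{k+i}H_1(F) = 0$, consistent with the genus bound on the degree of $\Delta_K$.

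I expect the main obstacle to be the sign and duality bookkeeping: making precise that the swap involution on $W$ really does intertwine the two copies of $\Lambda^{k\pm i}H_1(F)$ and induces the transpose of $\alpha_{i,W}$, rather than the transpose composed with some extra sign. This is where the choice of orthonormal ordered basis and volume form (and the observation that the ordered bases $x_1,\dots,x_{2k}$ for $H_1(F)$ and $-x_1,\dots,-x_{2k}$ for $H_1(-F)$ induce the same orientation, so $\dualize_{H_1(F)} = \dualize_{H_1(-F)}$) is used to pin things down; the cleanest route is probably to invoke Donaldson's Proposition~12 directly for the computation of the trace and symmetry, and to treat the present statement as a repackaging of~\eqref{eqn:alexanderpoly} into the symmetrized coefficients $a_i$, deferring the sign conventions to the later identification in Section~\ref{sec:decategorification}. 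Everything else is a one-line reindexing of a finite sum.
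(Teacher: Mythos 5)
The paper does not prove this proposition --- it is attributed to Donaldson's Proposition~12 in \cite{Donaldson1999} and quoted without proof --- so your fallback (``invoke Donaldson's Proposition 12 directly'') is precisely what the paper does, and the remainder of your write-up is your own derivation attempt.

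Within that attempt, the reindexing of the finite sum in \eqref{eqn:alexanderpoly} is correct and is indeed a one-liner, but the justification you give for the trace symmetry $\Tr(\alpha_{-i,W})=\Tr(\alpha_{i,W})$ does not hold up. The involution of $\partial W=-F\amalg F$ swapping the two copies of $F$ does \emph{not} extend over $W$ in general: $W=(Y\smallsetminus\nu(F^\circ))\cup D^2\times I$ is the complement of the capped-off Seifert surface in $Y_0(K)$, and there is no reason for this manifold to admit a self-diffeomorphism exchanging its two boundary components. For a fibered knot, $W$ is the mapping cylinder of the monodromy $\phi$, and such an extension would give $\phi\simeq\phi^{-1}$; more generally, it would force every knot to be invertible. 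The $I$-symmetry of $F^\circ\times[0,1]$ is an involution of the collar you remove, not of the complement $W$. The genuine input behind the symmetry is Poincar\'e--Lefschetz duality: $\Gamma_W=\ker\big(i_*\co H_1(\partial W)\to H_1(W)\big)$ is a Lagrangian subgroup of $H_1(\partial W)$ with respect to the intersection form $(-\omega_F)\oplus\omega_F$, and it is this Lagrangian condition --- not a geometric swap --- that forces the traces in complementary exterior degrees to agree. (If one drops the Lagrangian hypothesis, it is easy to produce Pl\"ucker points in $\Lambda^{2k}(H_1(F)\oplus H_1(F))$ with $\Tr(\alpha_i)\neq\Tr(\alpha_{-i})$, so the symmetry is not purely formal.) This is the same duality input behind the classical fact $\Delta_K(t)\doteq\Delta_K(t^{-1})$, and it is what Donaldson's Proposition~12 is packaging. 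So either supply the Lagrangian/Poincar\'e-duality argument in place of the involution, or, as you suggest and as the paper does, simply cite Donaldson.
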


Finally, we remark that Donaldson opts to work with $\Lambda^*H^1(F)$ instead of  $\Lambda^*H_1(F)$ but notes that this choice is a matter of convenience; the latter is more convenient for our purposes. 

\subsection{The Seifert form}
Given a genus $k>0$ Seifert surface $F^\circ$ for $K$, there are two natural bilinear forms we can place on $H_1(F^\circ)$.  The first is simply the intersection form on the surface, $\omega_{F^\circ}$.  The second is the {\em Seifert form}, defined by 
\[
V\co H_1(F^\circ) \times H_1(F^\circ) \to \Z, \quad V(x,y) = lk(x,y^+), 
\]
where $y^+$ denotes the positive push-off of $y$ in $Y$, determined by the orientation of $F^\circ$.  Note that $V$ is an invariant of the pair $(K,F^\circ)$, not $K$.  Given a basis for $H_1(F^\circ)$, then $V$ determines a $2k \times 2k$-matrix, called the {\em Seifert matrix}.  By abuse of notation, we will also write this as $V$.  

Given a basis for $H_1(F^\circ)$, the corresponding matrix $V - tV^T$ gives a presentation matrix for $H_1(\widetilde{X}_K)$ over $\Z[t,t^{-1}]$.  On its own, the Alexander module is not able to recover the Seifert form for a given Seifert surface.  However, we will show that given a presentation matrix of the Alexander module in an appropriate basis for $H_1(F^\circ)$, one can reconstruct the Seifert matrix in that basis, and thus the Seifert form.    

\subsection{Reconstructing the Seifert form from the Alexander module}
Fix a knot $K$ in an integer homology sphere $Y$, a genus $k$ Seifert surface $F^\circ$ for $K$, and let $\omega$ denote the intersection form for $H_1(F^\circ)$, which is symplectic.  Given a basis $\mathcal{B} = \{e_i\}$ for $H_1(F^\circ)$, we have the intersection matrix (where intersections are defined using a right-hand-rule), which we also write as $\omega$.  Finally, let $\Gamma_W$ be the subgroup of $H_1(F) \oplus H_1(F)$ determined by $W = (Y \smallsetminus \nu (F)) \cup D^2 \times I$ as described above.  

Following Donaldson, we fix a basis $\mathcal{C} = \{(\sigma_1,\tau_1),\ldots,(\sigma_{2k},\tau_{2k})\}$ for $\Gamma_W$.  We write $\sigma_i = \sum_j \sigma_{ij} e_j$ and $\tau_i = \sum_j \tau_{ij} e_j$.  We may rephrase \eqref{eqn:infinitecycliccover} by saying that $(\sigma_{ij} + t \tau_{ij})$ gives a presentation matrix for $H_1(\widetilde{Y}_0)$.  For what follows, given a matrix $A + tB$, where $A, B$ have integer entries, we write $A_i, B_i$ for the rows of $A, B$ respectively.  When referring to presentation matrices of the Alexander module, we will mean explicitly that the span of the $(A_i, B_i)$ is $\Gamma_W$, not that the modules presented are isomorphic.  It also follows from the constructions that the Seifert form $V$ has the property that $V - tV^T$ is a presentation matrix for $H_1(\widetilde{Y}_0)$ in this regard with respect to the basis $\mathcal{B}$.  In particular, we observe that the rowspace of $V - tV^T$ is necessarily the same as that of any presentation matrix $A + tB$ for the Alexander module with respect to the basis $\mathcal{B}$.

\begin{lemma}
\label{lem:V=X}
Suppose that $A + tB$ gives a presentation matrix for the Alexander module with respect to the basis $\mathcal{B}$ as described above.  Further, suppose that $A + B = -\omega$.  Then $A$ is the Seifert matrix for $K$ with respect to the basis $\mathcal{B}$.  
\end{lemma}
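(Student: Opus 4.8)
The plan is to use the rigidity of presentation matrices in the strong sense adopted here: the rows of such a matrix are required to form a $\Z$-basis of the fixed rank-$2k$ lattice $\Gamma_W \subset H_1(F)\oplus H_1(F)$, so any two of them differ by left multiplication by an element of $GL_{2k}(\Z)$. Since it is recorded above that $V - tV^T$ is itself a presentation matrix of this kind with respect to $\mathcal{B}$ (its $i$-th row corresponding to the pair $(V_i, -V^T_i)\in\Gamma_W$, where $V_i$ and $V^T_i$ denote $i$-th rows of $V$ and $V^T$), a hypothetical $A + tB$ with the same property must satisfy $A + tB = P(V - tV^T)$ for a unique $P \in GL_{2k}(\Z)$; equivalently $A = PV$ and $B = -PV^T$. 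Everything then reduces to showing that the extra hypothesis $A + B = -\omega$ forces $P = I$.

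To carry this out, first recall the standard relation between a Seifert matrix and the intersection form of the Seifert surface: in the basis $\mathcal{B}$ one has $V - V^T = -\omega$, where the sign is the one determined by the right-hand-rule orientation conventions fixed above (see \cite{Rolfsen1976}; concretely this comes from $lk(e_i,e_j^+) - lk(e_j,e_i^+) = lk(e_i,e_j^+) - lk(e_i,e_j^-) = -\,e_i\cdot e_j$). Substituting $A = PV$ and $B = -PV^T$ into $A + B = -\omega$ gives $P(V - V^T) = -\omega$, hence $P(-\omega) = -\omega$, i.e.\ $P\omega = \omega$. The intersection form of a closed oriented surface is nondegenerate (indeed unimodular in any $\Z$-basis), so $\omega$ is invertible over $\Q$, and multiplying on the right by $\omega^{-1}$ yields $P = I$. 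Therefore $A = PV = V$, which is precisely the claim that $A$ is the Seifert matrix of $K$ in the basis $\mathcal{B}$.

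The main obstacle is not in the algebra---which is essentially a one-line cancellation once set up---but in the bookkeeping of signs and orientations: one must check that the matrix $\omega$ appearing in the hypothesis $A + B = -\omega$ is literally the same (same sign) as the $\omega$ for which $V - V^T = -\omega$ holds, since the whole argument is driven by matching these two occurrences. This amounts to verifying that the right-hand-rule defining the intersection matrix, the linking-number convention defining $V$, and the identification \eqref{eqn:infinitecycliccover} of the Alexander module (which dictates that $(x,y)\in\Gamma_W$ is read off as the row $x + ty$, so that $V - tV^T$---rather than, say, $V^T - tV$---is the relevant presentation matrix) are all mutually consistent; once the conventions of the preceding subsections are in force there is nothing further to do. A secondary, routine point is to make sure ``presentation matrix'' is being used in the strong sense (rows spanning $\Gamma_W$, not merely presenting an isomorphic module), so that the ambiguity is exactly $GL_{2k}(\Z)$ and no larger---but this is exactly the convention stipulated just before the statement.
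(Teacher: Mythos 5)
Your proof is correct, and it takes a genuinely different and cleaner route than the paper's.

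The paper first conjugates $\omega$ into the standard symplectic block form $\oplus_k\left(\begin{smallmatrix}0&1\\-1&0\end{smallmatrix}\right)$, then argues row-by-row: the hypothesis $A+B=-\omega$ forces each row of $A+tB$ to have exactly one entry that is not a multiple of $(1-t)$, sitting in a prescribed column; the same is true of $V-tV^T$, and comparing these column positions shows $A_i+tB_i$ can only be proportional to the $i$-th row of $V-tV^T$, after which the normalization $A+B = V - V^T = -\omega$ pins down the constant. Finally the paper reduces the general case to the standard one by a change of basis. Your argument bypasses both the normal form reduction and the $(1-t)$-divisibility bookkeeping: you observe that the rows of $A+tB$ and of $V-tV^T$ are each $\Z$-bases of the same lattice $\Gamma_W$ (a consequence of the paper's ``strong'' convention together with the fact that a $2k$-element spanning set of a rank-$2k$ free abelian group is automatically a basis), so $A+tB = P(V-tV^T)$ for a unique $P\in GL_{2k}(\Z)$; then $A+B=-\omega$ and $V-V^T=-\omega$ together give $P\omega=\omega$, and invertibility of $\omega$ forces $P=I$. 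The trade-off: your approach is a one-step lattice computation that never touches the explicit entries, while the paper's is more elementary (it never invokes the change-of-basis ambiguity at the level of $\Gamma_W$) and its $(1-t)$-divisibility criterion gives a concrete handle on the matrix entries that some readers may find more transparent. The only point you should make fully explicit --- which you essentially gesture at in your final paragraph --- is that the $2k\times 2k$ shape of $A+tB$ (forced by $A+B=-\omega$) is what upgrades ``span'' to ``basis,'' hence pins the ambiguity down to $GL_{2k}(\Z)$ rather than something larger.
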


\begin{proof}
Let $V$ be the Seifert matrix for $F^\circ$ with respect to the basis $\mathcal{B}$. It is well-known that $V - V^T = -\omega$; see, for example, \cite[Chapter 8]{Rolfsen1976}, noting that Rolfsen uses left-handed coordinates, while we use right-handed coordinates. As discussed, the row space of $V - t V^T$ is the same as the row space of $A + tB$.  We first consider the case where $\omega = \oplus_{k}\left(\begin{smallmatrix}  0 & 1 \\ \text{-}1 & 0 \end{smallmatrix}\right)$.  For compactness, we write $\sigma$ for the permutation of $\{1,\ldots,2k\}$ that switches $2i$ and $2i-1$ for each $i$.  Since $A + B = -\omega$, we have that in each row of $A + tB$, there is exactly one element of that row which is not a multiple of $(1-t)$, namely the entry $(A+tB)_{i,\sigma(i)}$.  The same statements also hold for $V - tV^T$.  

Since the rowspaces of $V - tV^T$ and $A + tB$ agree, we may write each row $A_i + tB_i$ as a linear combination of the rows of $V - tV^T$.  We claim that $A_i + tB_i$ must equal the $i$th row of $V - tV^T$.  Indeed, writing $A_i + tB_i = \sum a_j (V_j - t(V^T)_j)$, if $a_j \neq 0$ for $j \neq i$ it follows immediately from the observation in the previous paragraph that  $(A+tB)_{i,\sigma(j)}$ cannot be a multiple of $(1-t)$.   Again, since $A+B = V - V^T = -\omega$, we see that we must have that the rows $A_i + t B_i$ and $V_i - t (V^T)_i$ agree for each $i$.  In particular, we have $A + tB = V - tV^T$.  This clearly gives the desired result in the case that $\omega$ is in standard form.      

Now, we work with general $\omega$.  Choose $P \in \textup{SL}_{2k}(\Z)$ such that $P\omega P^T = \oplus_{k}\left( \begin{smallmatrix} 0 & 1 \\ \text{-}1 & 0 \end{smallmatrix}\right)$, i.e., we apply a change of basis to obtain the standard symplectic matrix, which we denote by $\Omega$.  Note that we have $PVP^T - tPV^TP^T$ and $PAP^{T} + tPBP^{T}$ have the same rowspace.  Further, $PVP^{T}$ is the Seifert matrix for $K$ in the basis $P(\mathcal{B})$.  We also obtain that $PAP^{T} + tPBP^{T}$ is a presentation matrix for the Alexander module with respect to the basis $P(\mathcal{B})$.  Since $PAP^{T} + PBP^{T} = -\Omega$, we see that $PAP^{T} = PVP^{T}$, by the above argument.  Therefore, $A = V$ and the proof is complete.    
\end{proof}

We can apply Lemma~\ref{lem:V=X} to read off the Seifert form from any presentation matrix for the Alexander module.   

\begin{lemma}\label{lem:intersectiondetermines}
Let $K$ be a knot in an integer homology sphere $Y$, and fix a genus $k$ Seifert surface $F^\circ$ for $K$, together with a basis $\mathcal{B}$ for $H_1(F)$ and let $\omega$ denote the intersection matrix for $H_1(F)$ in this basis.  Let $A + tB$ be a presentation for the Alexander module in terms of the basis $\mathcal{B}$ with the conditions given above (i.e., the span of the $(A_i,B_i)$ is precisely $\Gamma_W$). Then $-\omega (A+B)^{-1} A$ is the Seifert matrix for $K$ with respect to $\mathcal{B}$.
\end{lemma}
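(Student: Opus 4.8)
The plan is to reduce Lemma~\ref{lem:intersectiondetermines} to Lemma~\ref{lem:V=X} by a change of basis that normalizes the presentation matrix so that the hypothesis $A + B = -\omega$ of the earlier lemma holds. The starting observation is that $A+tB$ is a presentation matrix for the Alexander module with the span of the $(A_i, B_i)$ equal to $\Gamma_W$, and that $V - tV^T$ is such a presentation matrix in the basis $\mathcal{B}$ with $V - V^T = -\omega$. By the discussion preceding Lemma~\ref{lem:V=X}, both $A+tB$ and $V - tV^T$ have the same rowspace (namely $\Gamma_W$ read off appropriately), so $A+tB = M(V - tV^T)$ for some $M \in \mathrm{GL}_{2k}(\Z)$; separating the $t^0$ and $t^1$ parts gives $A = MV$ and $B = -MV^T$. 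Hence $A + B = M(V - V^T) = -M\omega$, so $M = -(A+B)\omega^{-1}$ — in particular $A+B$ is invertible, which legitimizes the expression in the statement. Then $V = M^{-1}A = -\omega (A+B)^{-1} A$, which is exactly the claimed formula.

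First I would record that $A+B$ is invertible and identify $M := -(A+B)\omega^{-1}$ as the matrix relating the two presentation matrices, justifying that such an $M$ exists because equal rowspaces of matrices whose rows both form a basis for the same rank-$2k$ lattice $\Gamma_W$ forces the two matrices to differ by left multiplication by an element of $\mathrm{GL}_{2k}(\Z)$. (Strictly, one should be slightly careful: equal rowspace over $\Z[t,t^{-1}]$ together with the $\Gamma_W$ normalization is what gives the integral invertible $M$; this is precisely the content of the parenthetical remark in the statement and the paragraph before Lemma~\ref{lem:V=X}.) Second, I would separate $A + tB = M(V - tV^T)$ into its coefficient matrices to get $A = MV$, $B = -MV^T$, hence $A+B = M(V - V^T) = -M\omega$. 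Third, solve for $M$ and substitute to obtain $V = M^{-1}A = -\omega(A+B)^{-1}A$. Alternatively — and perhaps cleaner for exposition — one can invoke Lemma~\ref{lem:V=X} directly: after left-multiplying $A+tB$ by $-\omega(A+B)^{-1}$ one obtains a presentation matrix $A' + tB'$ with $A' + B' = -\omega$, so Lemma~\ref{lem:V=X} identifies $A' = -\omega(A+B)^{-1}A$ as the Seifert matrix; but since the Seifert matrix is an invariant of $(K, F^\circ, \mathcal{B})$ and does not depend on which presentation matrix we started from, $A' $ is \emph{the} Seifert matrix $V$, and we are done.

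The only genuine subtlety — the step I expect to need the most care — is justifying that the relation between $A+tB$ and $V-tV^T$ is left multiplication by an \emph{integral} invertible matrix, rather than merely a matrix over $\Z[t,t^{-1}]$ or a $\Q$-matrix. This is where the precise normalization (span of the $(A_i,B_i)$ equals $\Gamma_W$, not just ``presents an isomorphic module'') does the work: both matrices have rows forming a $\Z$-basis of the same rank-$2k$ lattice $\Gamma_W \subset H_1(F)\oplus H_1(F)$ under the identification $(x,y)\leftrightarrow$ (coefficient of $t^0$, coefficient of $t^1$), so the change-of-basis matrix lies in $\mathrm{GL}_{2k}(\Z)$ and is independent of $t$. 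Once that is in hand, everything else is a two-line linear-algebra manipulation, and invertibility of $A+B$ comes for free from $A+B = -M\omega$ with $M, \omega$ invertible.
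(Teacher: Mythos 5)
Your proposal is correct, and in particular your ``alternative'' framing at the end is exactly the paper's proof: left-multiply $A + tB$ by $-\omega(A+B)^{-1}$ to obtain a normalized presentation $A' + tB'$ with $A' + B' = -\omega$, then invoke Lemma~\ref{lem:V=X}. The one small difference is how invertibility of $A+B$ is justified: you deduce it from the existence of the integral change-of-basis matrix $M$, whereas the paper gets it immediately by observing that $A+B$ is a presentation matrix for $H_1(Y)=0$ (so has determinant $\pm 1$) — both are fine, but the paper's route is marginally more direct.
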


\begin{proof}
Let $V$ be the Seifert matrix for $K$ with respect to $\mathcal{B}$. As discussed, we have that $V - V^T = -\omega$ and that $V - t V^T$ is a presentation for the Alexander module.

Since $\omega, (A+B) \in  \textup{GL}_{2g}(\mathbb{Z})$ (recall that $(A+B)$ is a presentation matrix for the first homology of the integer homology sphere $Y$),  we have that $-\omega (A+B)^{-1}(A+tB)$ and $(A+tB)$ necessarily have the same rowspace.  Denote this new presentation by $Y + tZ$, where
\[ Y = -\omega (A + B)^{-1} A \qquad \qquad Z = -\omega (A + B)^{-1} B. \]
Note that $Y + Z =-\omega$.  The result now follows from Lemma~\ref{lem:V=X}.  
\end{proof}

\begin{proposition}\label{prop:intersectiondetermines}
Let $K$ be a knot in an integer homology sphere $Y$, and fix a Seifert surface $F^\circ$ for $K$.  Given the Pl\"ucker point $|\Gamma_W|$, up to sign, and the intersection form on $H_1(F)$, we may determine the Seifert form of $F^\circ$.
\end{proposition}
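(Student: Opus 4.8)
The plan is to observe that $|\Gamma_W|$, taken up to sign and together with $\omega$, is exactly the input required by Lemma~\ref{lem:intersectiondetermines}. The first and only substantive step is to recover the subgroup $\Gamma_W \subset H_1(\partial W) = H_1(F) \oplus H_1(F)$ from its Pl\"ucker point. Here $\Gamma_W = \ker\big(i_* \co H_1(\partial W) \to H_1(W)\big)$, and I would first check that this kernel is a \emph{saturated} sublattice: the map $i_*$ is surjective (as noted in Section~\ref{sec:linearalgebra}, since $H_1(W,\partial W) \cong \Z$ and neither boundary component of $W$ is empty), so $H_1(\partial W)/\Gamma_W \cong H_1(W)$, and $H_1(W) \cong H^2(W, \partial W)$ is free by Poincar\'e--Lefschetz duality together with the universal coefficient theorem, using $H_1(W, \partial W) \cong \Z$ (alternatively, a direct Mayer--Vietoris computation for $Y_0 = W \cup \nu(F)$ gives $H_1(W) \cong \Z^{2k}$). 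Hence $|\Gamma_W| = h_1 \wedge \cdots \wedge h_{2k}$ is a \emph{primitive} element of $\Lambda^{2k}(H_1(\partial W))$; as recalled in Section~\ref{sec:linearalgebra}, a nonzero decomposable element of an exterior power determines the subgroup it spans, primitivity makes this independent of the chosen decomposition, and the overall sign is plainly irrelevant. So from $|\Gamma_W|$ up to sign we recover $\Gamma_W$ as a subgroup.

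With $\Gamma_W$ in hand I would fix a basis $\mathcal{B} = \{e_i\}$ for $H_1(F^\circ)$; under the isomorphism $H_1(F^\circ) \cong H_1(F)$ coming from capping off the boundary, this is simultaneously a basis for $H_1(F)$, in which the given intersection form is recorded by the (symplectic) matrix $\omega$. Choosing any $\Z$-basis $\mathcal{C} = \{(\sigma_i, \tau_i)\}_{i=1}^{2k}$ of $\Gamma_W$ and setting $\sigma_i = \sum_j \sigma_{ij} e_j$, $\tau_i = \sum_j \tau_{ij} e_j$ produces integer matrices $A = (\sigma_{ij})$ and $B = (\tau_{ij})$ such that $A + tB$ is a presentation matrix of the Alexander module whose rows $(A_i, B_i)$ span precisely $\Gamma_W$. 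Since $A + B$ presents $H_1(Y) = 0$ and therefore lies in $\textup{GL}_{2k}(\Z)$, Lemma~\ref{lem:intersectiondetermines} applies verbatim and $-\omega (A+B)^{-1} A$ is the Seifert matrix of $F^\circ$ with respect to $\mathcal{B}$; this determines the Seifert form. That the result is a well-defined bilinear form rather than merely a matrix is automatic: Lemma~\ref{lem:intersectiondetermines} returns the same answer for every admissible $\mathcal{C}$, and a change of the basis $\mathcal{B}$ transforms $\omega$ and the output matrix in the way appropriate to a bilinear form.

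The only place where care is needed is the saturation of $\Gamma_W$ in the first step --- without it, the Pl\"ucker point would not pin the sublattice down --- and this is a routine homology calculation. Granting it, the proposition is an immediate repackaging of Lemma~\ref{lem:intersectiondetermines}.
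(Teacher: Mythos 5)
Your proof is correct and follows essentially the same route as the paper: recover $\Gamma_W$ from $|\Gamma_W|$, pick bases $\mathcal{B}$ and $\mathcal{C}$ to produce a presentation matrix $A+tB$, and invoke Lemma~\ref{lem:intersectiondetermines}. The one thing you spell out that the paper leaves implicit in its appeal to ``Donaldson's construction'' is the verification that $\Gamma_W$ is a saturated sublattice (via freeness of $H_1(W)$), which is indeed what makes the passage from $|\Gamma_W|$ back to $\Gamma_W$ unambiguous.
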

\begin{proof}
Choose a basis $\mathcal{B}$ for $H_1(F)$.  The construction of Donaldson described throughout this section describes how to turn the Pl\"ucker point into a presentation matrix for the Alexander module in terms of $\mathcal{B}$, where the result may depend on the choice of basis $\mathcal{C}$ for $\Gamma_W$.  Note that changing the Pl\"ucker point by an overall sign does not change the rowspace of the presentation matrix.  Since we have the intersection form on $H_1(F)$, we can construct the intersection matrix $\omega$ in terms of the basis $\mathcal{B}$.  The result now follows from Lemma~\ref{lem:intersectiondetermines}.  
\end{proof}

\section{Background on the bordered invariants}\label{sec:background}

This section has two principal aims: we review the requisite background from bordered Heegaard Floer homology and give a $\Z/2\Z$-grading on the relevant objects. The identification of this grading with one in the literature (namely, Petkova's \cite{Petkovadecat} specialization of the non-commutative grading of \cite{LOT}), as well as the proof of invariance, is deferred to Appendix \ref{sec:gradings}. In \cite{Petkovadecat}, Petkova also defines a $\Ztwo$-grading on $\cA(\cZ)$ and $\CFD$ which agrees with the definitions below, and also outlines a proof that the two different gradings agree. We extend these definitions to $\CFA$ and bimodules, as well as prove that they behave well with respect to pairing.  

The bordered invariants are defined over the field of two elements, which we denote by $\F$. We use $\Ztwo$ to refer to the grading on these objects.

\subsection{The algebra}\label{sec:algebrabackground} Following \cite{LOT,LOTnotes}, we recall the definition of the algebra associated with a pointed matched circle. We let $[n] = \{ 1, \dots, n \}$ and $[n, n+k] = \{ n, n+1, \dots, n+k \}$.

\begin{definition}
\label{defn:pmc}
A \emph{pointed matched circle} $\cZ$ is a quadruple $(Z, z, \bfa, M)$ consisting of an oriented circle $Z$ together with a basepoint $z \in Z$, a set of $4k$ distinct oriented points $\bfa \subset Z$, and a $2$-to-$1$ map $M\co \bfa \rightarrow [2k]$. We require further that for each $i \in [2k]$, $M^{-1}(i)$ is an oriented $S^0$ (i.e., one positively oriented and one negatively oriented point), and that the result of performing surgery on $(Z, \bfa)$ according to $M$ is connected.
\end{definition}

\noindent The map $M \co \bfa \rightarrow [2k]$ is called the \emph{matching} and we say that the two elements in $M^{-1}(i)$ are \emph{matched} by $M$. Note that attaching two-dimensional 1-handles to $D^2$ along $Z = \partial D^2$ at $\bfa$ specified by $M$ always yields a genus $k$ surface with a single boundary component.  Further, we observe that the orientation of $Z$ induces an order $\lessdot$ on $Z \setminus z$.  

To a pointed matched circle $\cZ$, we associate a closed, oriented surface $F(\cZ)$ (or just $F$ when the pointed matched circle is clear). The surface $F$ is obtained by taking a disk with boundary $Z$ (preserving orientation), attaching two-dimensional $1$-handles according to $M$, and filling in the resulting boundary component with a disk.  

Given a pointed matched circle $\cZ=(Z, z, \bfa, M)$, we can form the pointed matched circle $-\cZ=(-Z, z, -\bfa, M \circ R |_{-\bfa})$, where $-\bfa$ denotes the set $\bfa$ with the orientation of each element reversed and $R$ denotes the (orientation-reversing) identity map $-Z \rightarrow Z$. Given two pointed matched circles $\cZ_1=(Z_1, z_1, \bfa_1, M_1)$ and $\cZ_2=(Z_2, z_2, \bfa_2, M_2)$ where $|\bfa_i| = 4k_i$, we can form the connected sum $\cZ_1 \# \cZ_2 = (Z_1 \# Z_2, z, \bfa_1 \sqcup \bfa_2, M)$, where
\begin{itemize}
	\item $Z_1 \# Z_2$ denotes the connected sum of $Z_1$ and $Z_2$, taken near $z_1$ and $z_2$,
	\item $z$ is a point on $Z_1 \# Z_2$ such that $a_1 \lessdot a_2$ for all $a_1\in \bfa_1$ and $a_2 \in \bfa_2$ where $\lessdot$ is the order on $Z_1 \# Z_2 \smallsetminus z$ induced by the orientation of $Z_1 \# Z_2$, and
	\item $M\co \bfa_1 \sqcup \bfa_2 \rightarrow [2(k_1+k_2)]$ is defined to be
	\begin{align*}
		M (a) &= \left\{
		\begin{array}{ll}
			M_1(a) & \text{if } a \in \bfa_1 \\
			M_2(a)+2k_1 & \text{otherwise}.
		\end{array} \right.
	\end{align*}
\end{itemize}
Geometrically, this corresponds to ``stacking $Z_2$ on top of $Z_1$''.  Note that the connected sum operation is not commutative. See Figure \ref{fig:PMCoperations} for examples. It is straightforward to verify that $F(-\cZ)=-F(\cZ)$ and $F(\cZ_1 \# \cZ_2)=F(\cZ_1) \# F(\cZ_2)$.

\begin{figure}[htb!]
\centering
\labellist
\scriptsize \pinlabel $1$ at 10 34
\pinlabel $2$ at 20 52
\pinlabel $3$ at 12 64
\pinlabel $4$ at 12 76
\pinlabel $-$ at -2 33
\pinlabel $-$ at -2 41
\pinlabel $+$ at -2 49
\pinlabel $-$ at -2 57
\pinlabel $-$ at -2 65
\pinlabel $+$ at -2 73
\pinlabel $+$ at -2 81
\pinlabel $+$ at -2 89

\pinlabel $1$ at 57 87
\pinlabel $2$ at 66 72
\pinlabel $3$ at 59 59
\pinlabel $4$ at 59 47
\pinlabel $-$ at 44 33
\pinlabel $-$ at 44 41
\pinlabel $-$ at 44 49
\pinlabel $+$ at 44 57
\pinlabel $+$ at 44 65
\pinlabel $-$ at 44 73
\pinlabel $+$ at 44 81
\pinlabel $+$ at 44 89

\pinlabel $1$ at 102 11
\pinlabel $2$ at 112 30
\pinlabel $3$ at 104 40
\pinlabel $4$ at 104 52
\pinlabel $5$ at 102 130
\pinlabel $6$ at 112 112
\pinlabel $7$ at 104 101
\pinlabel $8$ at 104 89
\pinlabel $-$ at 90 10
\pinlabel $-$ at 90 18
\pinlabel $+$ at 90 26
\pinlabel $-$ at 90 34
\pinlabel $-$ at 90 42
\pinlabel $+$ at 90 50
\pinlabel $+$ at 90 58
\pinlabel $+$ at 90 66
\pinlabel $-$ at 90 74 
\pinlabel $-$ at 90 82
\pinlabel $-$ at 90 90
\pinlabel $+$ at 90 98
\pinlabel $+$ at 90 106
\pinlabel $-$ at 90 114
\pinlabel $+$ at 90 122
\pinlabel $+$ at 90 130

\endlabellist
\includegraphics[scale=1.5]{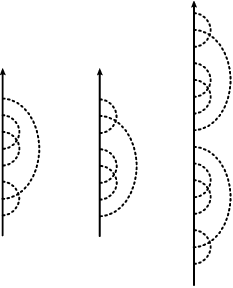}
\caption{Left, a pointed matched circle $\cZ$, cut open at $z$. The dashed lines indicate the matching. Center, $-\cZ$. Right, $\cZ \# -\cZ$.}
\label{fig:PMCoperations}
\end{figure}

Fix a positive integer $n$. A \emph{partial permutation} $(S, T, \sigma)$ is a bijection $\sigma \co S \rightarrow T$ between two subsets $S, T \subset [n]$. One can view a partial permutation graphically in a strands diagram as follows. Consider two columns of $n$ vertices, and place an edge between the $i^\textup{th}$ vertex in the first column and the $j^\textup{th}$ vertex in the second column if $i \in S$, $j \in T$ and $\sigma(i)=j$. Below, we will define an algebra $\cA$ in terms of partial permutations; for a geometric treatment of the algebra in terms of strands diagrams see \cite[Section 3.1.2]{LOT}.

\begin{figure}[htb!]
\centering
\includegraphics[scale=1.2]{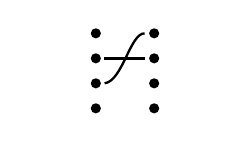}
\caption{A graphical picture of a partial permutation $\sigma \co \{2, 3\} \rightarrow \{ 3, 4 \}$ where $\sigma(2)=4$ and $\sigma(3)=3$.}
\label{fig:partperm}
\end{figure}

Let $(S,T,\sigma)$ be a partial permutation.  An \emph{inversion} is a pair $(i, j)$ (for $i, j \in S$) such that $i<j$ and $\sigma(j) < \sigma(i)$. Denote the number of inversions of $\sigma$ by $\textup{inv}(\sigma)$. Recall that if $\sigma$ is a permutation (rather than merely a partial permutation), then $\inv(\sigma)$ is congruent modulo $2$ to the number of terms in a factorization of $\sigma$ into transpositions. A partial permutation is \emph{upward-veering} if $\sigma(i) \geq i$ for all $i \in S$. Let $\cA(n)$ denote the vector space over $\F$ generated by all upward-veering partial permutations. There is a product and differential on $\cA(n)$, promoting it to a differential algebra \cite[Lemma 3.1]{LOT}.  The multiplication is given by
\[ (S, T, \phi) \cdot (U, V, \psi) = \left\{
	\begin{array}{lll}
		0 & \text{if } T \neq U\text{;}\\
		0 & \text{if } \inv(\psi \circ \phi) \neq \inv(\psi) + \inv(\phi)\text{; and}\\
		(S, V, \psi \circ \phi) & \text{otherwise}.
	\end{array} \right.
\]
Note that if $\inv(\psi)$ or $\inv(\phi)$ is zero, then we have $\inv(\psi \circ \phi) = \inv(\psi) + \inv(\phi)$.
The differential on $\cA(n)$ is
 \[ d(S, T, \phi) = \sum_{\substack{\textup{inversions } (i, j) \\ \inv( \phi \circ \tau_{i,j})=\inv(\phi)-1}}(S, T,  \phi \circ  \tau_{i,j}), \]
where $\tau_{i,j}$ is the transposition interchanging $i$ and $j$. 

Our interest is in a particular subalgebra  $\cA(\cZ) \subset \cA(4k)$ associated with a pointed matched circle $\cZ=(Z, z, \bfa, M)$. Identify $\bfa$ with $[4k]$ by the ordering $\lessdot$. An upward-veering partial permutation $(S, T, \phi)$ is \emph{$M$-admissible} if $M|_S$ and $M|_T$ are injective.

Let $\textup{Fix}(\phi)$ denote the fixed points of $\phi$. For each $U \subset\textup{Fix}(\phi)$, define a new element $\big( (S \setminus U) \cup U_M, (T \setminus U) \cup U_M, \phi_U \big)$ where $U_M= M^{-1} ( M(U)) \setminus U$, $\phi_U|_{S\setminus U}=\phi|_{S\setminus U}$, and $\phi_U|_{U_M}=\textup{id}_{U_M}$. For $(S, T, \phi)$ $M$-admissible, define
\[a(S, T, \phi) = \sum_{U \subset \textup{Fix}(\phi)} \big( (S \setminus U) \cup U_M, (T \setminus U) \cup U_M, \phi_U \big), \]
that is, for each subset  $U$ of the fixed points of $\phi$, replace $U$ with the points $U_M$ that they are matched with and define $\phi_U$ to agree with $\phi$ away from the fixed points $U$  and to be constant on $U_M$.
Then $\cA(\cZ)$ is the subalgebra of $\cA(4k)$ generated by $\{a(S, T, \phi) \ | \ (S, T, \phi) \ M \textup{-admissible} \}$. The algebra $\cA(\cZ)$  decomposes as
\[ \cA(\cZ) = \bigoplus_{i=-k}^k \cA(\cZ, i) \]
where $i=|S|-k$. We refer to $i$ as the \emph{strands grading} of $\cA(\cZ, i)$.

Note that if $(S, T, \phi) \cdot (U, V, \psi) = (S, V, \psi \circ \phi)$, then $a (S, T, \phi) \cdot a(U, V, \psi) = a(S, V, \psi \circ \phi)$. Also note that any element $a(S, S, \textup{id}_S) \in \cA(\cZ)$ is an idempotent, i.e., an element which squares to itself. A \emph{minimal idempotent} is an idempotent  of the form $a(S, S, \textup{id}_S)$, where the partial permutation $(S, S, \textup{id}_S)$ is $M$-admissible. Define the element $\mathbf{I}$ (respectively $\bfI_i$) to be the sum over all distinct minimal idempotents in $\cA(\cZ)$ (respectively $\cA(\cZ,i)$). Let $\mathcal{I}$ (respectively $\cI_i$) denote the ring of idempotents in $\cA(\cZ)$ (respectively $\cA(\cZ,i)$. We will also use the notation $\cI(\cZ)$ and $\cI(\cZ,i)$ when we want to emphasize the choice of pointed matched circle.  Given $\bfs \subset [2k]$, we write $I(\bfs)$ to denote the idempotent $a(S, S, \textup{id}_S)$ where $M(S) = \bfs$.

Given an $M$-admissible partial permutation $(S, T, \phi)$, the map $\phi$ induces a bijection $\overline{\phi}\co M(S) \rightarrow M(T)$ where $\overline{\phi}$ sends $M(i)$ to $M(\phi(i))$  for $i \in S$. It is straightforward to verify that $\overline{\phi}$ is well-defined and that, given $(S, T, \phi)$ and $(T, U, \psi)$, the maps $\overline{\psi \circ \phi}$ and $\overline{\psi} \circ \overline{\phi}$ agree.  We now describe a $\Ztwo$-grading on $\cA(\cZ)$.  

\begin{definition}\label{defn:alggr}
The function $\gr \co \cA(\cZ) \rightarrow \Z/2\Z$ is specified by
\[ \gr(a(S, T, \phi)) = \sum_{i \in S} o(i) + \sum_{i \in T} o(i) + \inv(\overline{\phi}) \pmod 2, \]
where $o(i)$ denotes the orientation of $i \in \bfa$, with the convention that $o(i)=0$ if $i$ is positively oriented and $o(i)=1$ if $i$ is negatively oriented.
\end{definition}

Given $(S', T', \phi')$ such that $a(S', T', \phi')=a(S, T, \phi)$ we have that $\inv(\overline{\phi}) = \inv(\overline{\phi'})$ since, for each $U \subset \textup{Fix }(\phi)$, the map $\overline{\phi_U}$ agrees with $\overline{\phi}$. Further, for $i \in \textup{Fix }(\phi)$, there are two contributions of $o(i)$ in $\gr(a(S,T,\phi))$.  It follows that $\gr(a(S', T', \phi')) = \gr (a(S, T, \phi)) \pmod 2$ so that $\gr$ is well-defined.

\begin{remark}
It is straightforward to verify that the above definition agrees with the $\Z/2\Z$-grading on $\cA(\cZ)$ defined in \cite[Section 7]{Petkovadecat} using the diagram $\textsf{AZ}(\cZ)$ \cite[Section 4]{LOTmorphism}. We provide a direct proof that this grading makes $\cA(\cZ)$ into a differential graded algebra without appealing to $\textsf{AZ}(\cZ)$.
\end{remark}

\begin{lemma}\label{lem:Adiffgr}
The function $\gr$ makes $\cA(\cZ)$ into a differential graded algebra.
\end{lemma}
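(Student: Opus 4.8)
The plan is to verify the two defining conditions of a differential graded algebra separately: that $\gr$ is additive under multiplication, and that $\gr$ drops by $1$ under the differential. For the multiplicative property, suppose $(S,T,\phi)$ and $(U,V,\psi)$ are $M$-admissible with $a(S,T,\phi)\cdot a(U,V,\psi) = a(S,V,\psi\circ\phi) \neq 0$; in particular $T = U$ and $\inv(\psi\circ\phi) = \inv(\psi)+\inv(\phi)$. I would first observe that the same additivity passes to the induced maps on matchings, i.e.\ $\inv(\overline{\psi\circ\phi}) = \inv(\overline{\psi})+\inv(\overline{\phi}) \pmod 2$: this follows because $\overline{\psi\circ\phi} = \overline{\psi}\circ\overline{\phi}$ (noted just before Definition~\ref{defn:alggr}) and because composing injections on the matched set $[2k]$ behaves, with respect to the parity of inversions, exactly as composition of the underlying partial permutations does modulo the extra fixed-point contributions, which cancel in pairs as already argued for well-definedness. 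Then in $\gr(a(S,T,\phi)) + \gr(a(U,V,\psi))$ the orientation sums over $T = U$ appear twice and vanish $\pmod 2$, leaving $\sum_{i\in S}o(i) + \sum_{i\in V}o(i) + \inv(\overline\phi) + \inv(\overline\psi) = \gr(a(S,V,\psi\circ\phi))$, as desired.

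For the differential, I would fix an $M$-admissible $(S,T,\phi)$ and a term $(S,T,\phi\circ\tau_{i,j})$ appearing in $d(S,T,\phi)$, where $(i,j)$ is an inversion of $\phi$ with $\inv(\phi\circ\tau_{i,j}) = \inv(\phi)-1$. The key point is to understand the effect on $a(-)$ and on $\overline{\phi}$. Passing to matchings, $\overline{\phi\circ\tau_{i,j}} = \overline\phi \circ \overline{\tau_{i,j}}$, and $\overline{\tau_{i,j}}$ is the transposition of $M(i)$ and $M(j)$ (these are distinct since $M|_S$ is injective by admissibility), so $\inv(\overline{\phi\circ\tau_{i,j}})$ and $\inv(\overline\phi)$ differ by an odd number and hence by $1 \pmod 2$. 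Since $\tau_{i,j}$ does not change the underlying sets $S$, $T$ (only the bijection between them), the orientation sums $\sum_{i\in S}o(i)+\sum_{i\in T}o(i)$ are unchanged, so $\gr(a(S,T,\phi\circ\tau_{i,j})) = \gr(a(S,T,\phi)) + 1 \pmod 2$, which is exactly the required behavior. One should also check that the differential is compatible with the map $a(-)$ in the sense needed — i.e.\ that $d$ on $\cA(\cZ)$ is the restriction of $d$ on $\cA(4k)$ after applying $a(-)$ — but this is already built into the definition of $\cA(\cZ)$ as a subalgebra, and the fixed-point expansion $a(-)$ commutes appropriately with $\tau_{i,j}$ since $i,j$ are moved points, not fixed points, of $\phi\circ\tau_{i,j}$ in the relevant terms.

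The main obstacle I anticipate is the bookkeeping around the $a(-)$ operation: because $\cA(\cZ)$ is generated by $a(S,T,\phi)$ rather than by partial permutations directly, one must confirm that both the product and the differential are well-defined on these generators and that the parity computations above — carried out at the level of a single representative $(S,T,\phi)$ — are genuinely independent of the chosen representative. The well-definedness of $\gr$ itself has already been handled in the paragraph preceding the lemma (fixed points of $\phi$ contribute $o(i)$ twice and do not affect $\inv(\overline\phi)$), and the analogous argument shows the product and differential formulas descend; but making sure no sign or parity is lost when a fixed point of $\phi$ is created or destroyed by composition with $\tau_{i,j}$ or by composing $\phi$ with $\psi$ requires a careful case check. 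I would organize this by always reducing to the ``reduced'' representative in which $\phi$ has the fewest fixed points, verifying the formulas there, and then invoking the pairwise cancellation of orientation contributions to extend to all representatives.
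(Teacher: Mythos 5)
Your proof takes essentially the same route as the paper's: both verify the two axioms by passing to the induced maps $\overline{\phi}$ on the matched set, using the composition identity $\overline{\psi\circ\phi}=\overline{\psi}\circ\overline{\phi}$ together with the mod-$2$ additivity of inversion count under composition, the cancellation of the orientation sum over $T=U$, and the observation that $\overline{\tau_{i,j}}$ is a genuine transposition of $M(i)\neq M(j)$ so contributes $1\pmod 2$. The closing paragraph's worry about well-definedness of the product and differential on the generators $a(S,T,\phi)$ is somewhat beside the point --- the algebra structure on $\cA(\cZ)$ is already given, and the relevant well-definedness of $\gr$ itself is dispatched in the paragraph preceding the lemma --- but this caution does not weaken the argument, and the core computation is correct and matches the paper's proof.
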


\begin{proof}
We first show that $\gr(a(S, T, \phi) \cdot a(T, U, \psi))=\gr(a(S, T, \phi))+\gr(a(T, U, \psi)) \pmod 2$. Recall that  $\overline{\psi \circ \phi}=\overline{\psi} \circ \overline{\phi}$ and note that $\inv(\overline{\psi} \circ \overline{\phi}) = \inv(\overline{\psi})+\inv(\overline{\phi}) \pmod 2$. Thus, we have
\begin{align*}
\gr \big(a(S, T, \phi) \cdot a(T, U, \psi) \big) &= \gr(a(S, U, \psi \circ \phi)) \\
	&= \sum_{i \in S} o(i) + \sum_{i \in U} o(i) + \inv(\overline{\psi \circ \phi})  \\
	&= \sum_{i \in S} o(i) + \sum_{i \in T} o(i)+\sum_{i \in T} o(i)+\sum_{i \in U} o(i) + \inv(\overline{\psi})+\inv(\overline{\phi}) \\
	&=\gr(a(S, T, \phi))+\gr(a(T, U, \psi)) \pmod 2 .
\end{align*}
To show that $\gr (d(a(S, T, \phi)) = \gr(a(S, T, \phi)) +1 \pmod 2$, we simply notice that for each inversion $(i, j)$ of $\phi$, 
\begin{align*}
\inv (\overline{\phi \circ \tau_{i,j}}) &= \inv(\overline{\tau_{i,j}}) + \inv (\overline{\phi})  \\
	& = 1+ \inv (\overline{\phi}) \pmod 2,
\end{align*}
which implies that $\gr(a(S, T, \phi \circ \tau_{i,j})) = \gr(a(S, T, \phi)) +1 \pmod 2$.
\end{proof}

\subsection{Bordered Heegaard diagrams}\label{sec:borderedHD} Pointed matched circles arise naturally from the boundary of a bordered three-manifold, that is, a manifold with connected boundary suitably parametrized.  More precisely, a {\em parameterization} of a surface is a choice of orientation-preserving diffeomorphism with $F(\cZ)$ for a pointed matched circle $\cZ$.  For the purpose of the invariants used here, these manifolds are described by bordered Heegaard diagrams \cite{LOT}, that is, a 5-tuple $\cH=(\Sigma, \bfalpha^c,\bfalpha^a,\bfbeta,z)$ where:
\begin{itemize}
\item $\Sigma$ is a compact orientable genus $g>0$ surface with a single boundary component;
\item $\bfbeta$ is a $g$-tuple of non-intersecting, essential simple closed curves in $\Sigma$;
\item $\bfalpha^c$ is a $(g-k)$-tuple of non-intersecting, essential simple closed curves in $\Sigma$;
\item $\bfbeta$ and $\bfalpha^c$ generate $g$- and $(g-k)$-dimensional subspaces, respectively, of $H_1(\Sigma;\Q)$;
\item $z$ is a point in the boundary circle $\partial\Sigma$; and
\item $\bfalpha^a$ is an ordered $2k$-tuple of non-intersecting, properly embedded, oriented arcs in $\Sigma$ so that \[\cZ_\cH=(\partial\Sigma,z,\partial\bfalpha^a,M_{\bfalpha^a})\] is a pointed matched circle, where $M_{\bfalpha^a}\co [|\partial \bfalpha^a|] \to [|\bfalpha^a|]$ is the matching determined by inclusion under the identification of $\bfalpha^a$ with $[2k]$).
\end{itemize}
Given a bordered Heegaard diagram, there is a natural way to construct a three-manifold with boundary of genus $k$, as in \cite[Construction 4.6]{LOT}. The collection $\bfalpha^c$ is referred to as the $\alpha$-circles and the collection $\bfalpha^a$ is referred to as the $\alpha$-arcs; where required, elements of $\bfalpha=\bfalpha^c\cup\bfalpha^a$ are called  $\alpha$-curves. Note that the orientation required on $\partial \bfalpha^a$ by Definition \ref{defn:pmc} is induced from the choice of orientation on the collection $\bfalpha^a$. (Alternatively, given an orientation on $\partial \bfalpha^a$ coming from a pointed matched circle $\cZ$, there is a unique compatible choice of orientation on the $\alpha$-arcs of a bordered Heegaard diagram inducing $\cZ$.) Also, observe that the $\alpha$- and $\beta$-curves may intersect (and we require that such intersections be transverse); indeed, these intersections generate the modules arising in bordered Floer homology. The intersections of interest are $g$-tuples of the form 
\begin{equation}\label{eqn:intersect}
\mfS(\cH)=\bigcup_{\sigma}\{(\beta_1\cap\alpha_{\sigma(1)}, \ldots,    \beta_{g}\cap\alpha_{\sigma(g)})\}
\end{equation} 
where the union is taken over injections $\sigma \co [g] \rightarrow [g+k]$ satisfying $\bfalpha^c \subset \{ \alpha_{\sigma(i)} \mid i \in [g]\}$. This ensures that precisely half of the curves in $\bfalpha^a$ are occupied, and each of the circles in $\bfalpha^c$ is occupied, in any given $\bfx\in \mfS(\cH)$. Each such $\bfx$ thus determines a $k$-element subset $\bfs_\bfx \subset [2k]$ by applying the matching to the boundary of the $\alpha$-arcs occupied by $\bfx$. Two examples, in the case $g=k=1$, are given in Figure \ref{fig:CFACFDsample}.

\begin{remark}\label{rmk:diagrams-vs-manifolds}
Following \cite[Chapter 4]{LOT}, the combinatorial data $(\cH,\cZ_\cH)$ determines a pair $(W, F)$, where $W$ is a three-manifold with connected boundary $F$ parameterized by $\cZ_\cH$. On the other hand, given a three-manifold with connected, parameterized boundary, it is possible to choose an associated $\cH$ (and hence $(\cH,\cZ_\cH))$, though not uniquely. Since the bordered invariants considered below do not, up to homotopy, depend on this choice \cite{LOT, LOTbimodules} we will (as in the notation in the introduction) describe these objects in terms of  $(W, F)$.  
\end{remark}

\subsection{Type A  structures} \label{sec:typeAbackground} The type A structure $\CFA(W)$ associated with a bordered manifold $W$ with connected boundary $F$ (by way of a suitable choice of bordered Heegaard diagram $(\cH, \cZ)$) is a right $\cA_\infty$-module over $\cA(\cZ)$ generated (as a vector space over $\F$) by the set of intersection points $\mfS(\cH)$ (see  \eqref{eqn:intersect}). By abuse of notation, and continuing in the vein of Remark \ref{rmk:diagrams-vs-manifolds}, we will identify $g$-tuples of intersection points $\bfx\in\mfS(\cH)$ with generators $\bfx\in\CFA(W)$.

Given $\bfx\in\CFA(W)$, we define 
\[ I_A(\bfx) = I(\bfs_\bfx) \in \cA(F) \]
where $\bfs_{\bfx} \in [2k]$ is as described above. We define a right action of $\cI$ on $\CFA(W)$ by $\bfx \cdot I_A(\bfx) = \bfx$ and $\bfx \cdot \iota =0$ for all other minimal idempotents $\iota$. There are multiplication maps \[m_i\co \CFA(W)\otimes\cA(\cZ)^{\otimes(i-1)}\to \CFA(W),\]  defined for all $i>0$, that satisfy compatibility conditions ensuring multiplication is associative up to homotopy; see \cite[Definition 2.5]{LOT}. This compatibility guarantees, in particular, that $m_1\circ m_1=0$. We say that a type A structure is \emph{bounded} if $m_i \equiv 0$ for $i$ sufficiently large. (This is also called {\em operationally bounded}; compare \cite[Definition 2.2.18 and Remark 2.2.19]{LOTbimodules}). It is always the case that there is a choice of bordered Heegaard diagram such that $\CFA(W)$ is bounded \cite[Proposition 4.25, Lemma 6.5]{LOT}.  In particular, for any choice of bordered Heegaard diagram, $\CFA(W)$ is homotopy equivalent to a bounded type A structure.  

Referring the reader to \cite[Chapter 7]{LOT} for the details, we recall that these operations arise from particular holomorphic curve counts in $\Sigma\times\R\times[0,1]$ with boundary conditions specified by the curves $(\bfalpha,\bfbeta)$ in the (choice of) Heegaard surface. 

In order to define a $\Z/2\Z$-grading on $\CFA(W)$ we study partial permutations in a different context. Our goal is to define the sign of a partial permutation in a way that behaves well under ``gluing'' two partial permutations together. These partial permutations will ultimately be induced by generators in bordered Floer homology (as in \eqref{eqn:intersect}), and our terminology is designed to reflect this relationship.

\begin{definition}
A \emph{bordered partial permutation} $(g, k, B, \sigma)$ is an injection $\sigma \co [g] \rightarrow [g+k]$, where 
\begin{itemize}
	\item $B \subset [g+k]$
	\item $g \geq k$
	\item $|B| =2k$
	\item $[g+k] \setminus B \subset \Im(\sigma)$.
\end{itemize}
\end{definition}

\begin{remark}
It follows that for a bordered partial permutation, $|\Im(\sigma) \cap B| =k$.
\end{remark}

\begin{remark}
In terms of bordered Heegaard diagrams, the domain $[g]$ corresponds to $\bfbeta$, the codomain $[g+k]$ corresponds to $\bfalpha$, and $B \subset [g+k]$ corresponds to $\bfalpha^a$.
\end{remark}

\begin{remark}\label{rmk:footnote4}
For convenience, we will always suppose that the elements of $\bfalpha^a$ are oriented such that for each $\alpha \in \bfalpha^a$, we have $\alpha^- \lessdot \alpha^+$ where $\alpha^\pm$ denotes the positively/negatively oriented point in $\d \alpha$. (This assumption does not cause any loss of generality by Remark \ref{rem:idemconj}.) This choice of orientation specifies an oriented basis for $H_1(F(\cZ))$.  We record the orientation of an intersection point according to a right-hand-rule, where $\alpha$ and $\beta$ are identified with the $x$- and $y$-axis, respectively. 
\end{remark}

Denote by $o(x) \in \{0,1\}$ the orientation of $x \in \bfx$ (compare \eqref{eqn:intersect}), with the convention that $o(x)=0$ if the orientation at $x$ is positive and $o(x)=1$ if the orientation at $x$ is negative. 

We also suppose that an order on $\bfalpha$ and $\bfbeta$ has been fixed. For type A structures, we choose this order so that the circles in $\bfalpha^c$ come before the arcs in $\bfalpha^a$. (For the type D structures defined below in Section \ref{subsec:typeD} we will choose the opposite: the arcs in $\bfalpha^a$ will come before the circles in $\bfalpha^c$.)

\begin{definition}
\label{defn:Asgn}
A \emph{type A partial permutation} is a bordered partial permutation $(g, k, A, \sigma)$ with $A = [g-k+1, g+k]$. The \emph{type A sign} of a type A partial permutation is
\[ \sgn_A(\sigma) = \inv(\sigma) \pmod 2.\]
\end{definition}

We will often abbreviate a type A partial permutation $(g, k, A, \sigma_A)$ by the function $\sigma_A$ when $g$ and $k$ are implicit.  Note that every generator $\bfx\in\CFA(W)$ determines a partial permutation $\sigma_\bfx$ as in \eqref{eqn:intersect}, viewed as a function from the $\beta$-curves to the $\alpha$-curves. 

\begin{definition}\label{defn:grA}
	The \emph{type A grading} of a generator $\bfx \in \CFA(W)$ is 
\[ \gr_A(\bfx) = \sgn_A(\sigma_{\bfx}) + \sum_{x \in \bfx} o(x) \pmod 2. \]
\end{definition}

Note that $\gr_A$ is well-defined up to a possible overall shift if we reorder the $\alpha$- and $\beta$-circles or change their orientations. That this gives rise to a grading on $\CFA(W)$ which is invariant in an appropriate sense is established in Theorem \ref{thm:allthegradings}.    

\subsection{Type D  structures}\label{subsec:typeD} We now define the type D structure $\CFD(W)$ associated with a bordered manifold $W$ with parametrized, connected boundary $F$.  This is generated, as an $\F$-vector space, by the set of intersection points $\mfS(\cH)$. As above, by abuse, we write $\bfx\in\CFD(W)$ for these generators. By contrast with $\CFA$, this invariant is defined over $\cA(-F)=\cA(-\cZ)$.  Let  
\[ I_D(\bfx) = I([2k] \setminus \bfs_\bfx) \in \cA(-F), \] 
where $\bfs_\bfx$ corresponds to the $\alpha$-arcs occupied by $\bfx$. We define a left action of $\cI$ on $\CFA(W)$ by $I_D(\bfx) \cdot \bfx= \bfx$ and $\iota \cdot \bfx =0$ for all other minimal idempotents $\iota$. A type D structure admits a map
\[\delta^1\co \CFD(W) \to \cA(-F)\otimes \CFD(W)\]
satisfying a compatibility condition \cite[Definition 2.18]{LOT}. As a result, $\cA(-F)\otimes_{\mathcal{I}} \CFD(W)$ is a left differential module over  $\cA(-F)$ having specified multiplication $a\cdot(b\otimes \bfx)=ab\otimes \bfx$ and differential $\partial(a\otimes \bfx) = a\cdot \delta^1(\bfx) + da\otimes \bfx$; compatibility ensures that $\partial\circ\partial=0$.  By abuse, when considering a type D structure $N$, we may also use $N$ to refer to the associated differential module.   

The map $\delta^1$ can be iterated inductively to define maps 
	\[ \delta^j \co \CFD(W) \rightarrow \cA(-F)^{\otimes j} \otimes \CFD(W)\]
by fixing $\delta^0 = \operatorname{id}_{\CFD(W)}$ and setting \[\delta^j=\left(\operatorname{id}_{\cA(-F)^{\otimes j-1}}\otimes\delta^1\right)\circ\delta^{j-1} \] for all $j>0$. We say that a type D structure is \emph{bounded} if $\delta^j \equiv 0$ for $j$ sufficiently large.  As in the case of type A structures, $\CFD(W)$ is always, up to homotopy, bounded.

\begin{definition}
A \emph{type D partial permutation} is a bordered partial permutation $(g, k, D, \sigma)$ with $D = [2k]$. The \emph{type D sign} of a type D partial permutation is
\[ \sgn_D(\sigma) = \inv(\sigma) + \sum_{i \in \text{Im}(\sigma)} \# \{ \ j \ | \ j>i, j \notin \text{Im}(\sigma) \}  \pmod 2.\]
\end{definition}

\begin{remark}
Note that $j \notin \Im(\sigma)$ implies that $j \in D$; compare to Definition \ref{defn:sgnDA} below.
\end{remark}

As above, we will often abbreviate a type D partial permutation $(g, k, D, \sigma_D)$ by the function $\sigma_D$ when $g$ and $k$ are implicit.   

\begin{definition}
	The \emph{type D grading} of a generator $\bfx \in \CFD(W)$ is 
\[ \gr_D(\bfx) = \sgn_D(\sigma_{\bfx}) + \sum_{x \in \bfx} o(x) \pmod 2. \]
\end{definition}

Note that $\gr_D$ is well-defined up to a possible overall shift if we reorder or change the orientations on the $\alpha$- and $\beta$-circles. That this gives rise to an invariant grading, in an appropriate sense, on $\CFD(W)$ is established in Theorem~\ref{thm:allthegradings}.

\begin{remark}
When the $\alpha$-arcs are ordered according to the condition $\alpha^-_i \lessdot \alpha^-_j$ (where $\lessdot$ is again the order induced by the orientation of $Z$ and the base point $z$, and $\alpha^-$ denotes the negatively oriented point in $\partial \alpha$), our definition of $\gr_D$ agrees with the definition of the $\Ztwo$-grading $s(\bfx)$ in \cite[Section 7]{Petkovadecat}, up to a possible overall shift. (Recall that per Remark \ref{rmk:footnote4}, the $\alpha$-arcs are oriented such that $\alpha^- \lessdot \alpha^+$.)
\end{remark}

\begin{figure}[ht]
\labellist
\pinlabel $m_2(\bfx\otimes\rho_2)=\bfy$ at 10 5
\pinlabel $\delta^1(\bfa)=\rho_2\otimes\bfb$ at 185 5
\pinlabel $\cZ$ at 66 122
\pinlabel $-\cZ$ at 101 122

\scriptsize \pinlabel $\bfx$ at 37 53
\scriptsize \pinlabel $\bfy$ at 44 84
\scriptsize \pinlabel $\bfb$ at 145 61
\scriptsize \pinlabel $\bfa$ at 145 46
\endlabellist
\includegraphics[scale=1.4]{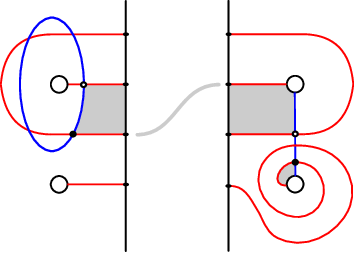}
\caption{Bordered Heegaard diagrams for two solid tori with opposite orientations. On the left the boundary is positively oriented by $F(\cZ)$ compatible with the conventions for a type A structure over $\cA(\cZ)$, and on the right the boundary is given by $-F(\cZ)$ compatible with the conventions for a type D structure over $\cA(\cZ)$. Two sample operations are given where, in each case, the relevant algebra element is $\rho_2 = (\{2\}, \{3\}, 2 \mapsto 3)$.}
\label{fig:CFACFDsample}
\end{figure}

\subsection{Pairing}\label{subsec:pair} A key motivation for the bordered invariants described above is a gluing formula for the Heegaard Floer homology of a closed oriented three-manifold $Y$. More precisely, if $Y=W_1\cup_F W_2$, then the pairing theorem \cite[Theorem 1.3]{LOT} states that
\[\HFhat(Y) \cong H_*\left(\CFA(W_1)\boxtimes\CFD(W_2)\right)\]
for a compatible choice of parameterizations of the (common) boundaries of $W_1$ and $W_2$. Note that if $(\cH_1, \cZ)$ and $(\cH_2, -\cZ)$ describe $(W_1, F)$ and $(W_2, -F)$ respectively, then $\cH_1 \cup_\cZ \cH_2$ describes $Y=W_1 \cup_F W_2$. The product $\boxtimes$ is a special form of the derived tensor product (see \cite[Section 2.4]{LOT}) that, despite a duality between type A and type D structures described below in Section~\ref{subsec:dualityrevisited}, highlights the utility of the latter. In particular, the generators of   $\CFA(W_1)\boxtimes\CFD(W_2)$ are of the form $\bfx\boxtimes\bfy=\bfx\otimes_\cI\bfy$ with differential defined by \[\partial^\boxtimes(\bfx\boxtimes\bfy) = \sum_j m_{j+1}(\bfx\otimes\delta^j(\bfy))\]
which vanishes for $j\gg0$ provided that at least one of $\CFA(W_1)$, $\CFD(W_2)$ is bounded. This tensor product (and hence the homology) can be effectively computed in many settings. 

We review from \cite[Section 5]{OSz2004-properties} the relative $\Ztwo$-grading on $\CFhat$, which agrees with the relative Maslov grading modulo $2$. Given a Heegaard diagram $\cH=(\Sigma, \bfalpha, \bfbeta, z)$ for a closed manifold, we fix an order and orientations on the $\alpha$-circles and similarly for the $\beta$-circles. Then given $\bfx \in \mfS(\cH)$, a generator of $\CFhat$, we have an associated permutation $\sigma_\bfx$ as before, and we define
	\begin{equation}\label{eq:closed-z2-grading} \gr(\bfx) = \inv(\sigma_\bfx) + \sum_{x \in \bfx} o(x) \pmod 2. \end{equation}

We now return to the discussion in the introduction and establish that the gradings defined above on $\CFA(W_1)$ and $\CFD(W_2)$ recover the $\Z/2\Z$-grading on $\HFhat(Y)$ (with respect to which $|\chi(\HFhat(Y))|=|H_1(Y;\Z)|$). Note that this amounts to a statement about the gradings of the $\bfx\otimes_\cI\bfy$ at the chain level and does not depend on $\d^\boxtimes$.  

Given a type A partial permutation $(g_A, k, A, \sigma_A)$ and a type D partial permutation $(g_D, k, D, \sigma_D)$, we define a permutation 
\begin{equation}
\label{eqn:sumdefn}
\sigma_A+\sigma_D\co [g_A+g_D] \rightarrow [g_A+g_D]
\end{equation}
as follows. If $( \Im(\sigma_D) \cap D ) \cup \{ i-(g_A-k) \mid i \in \Im(\sigma_A) \cap A) \} \neq [2k]$, then  $\sigma_A+\sigma_D$ is undefined. If $( \Im(\sigma_D) \cap D ) \cup \{ i-(g_A-k) \mid i \in \Im(\sigma_A) \cap A) \} = [2k]$, then
\begin{align*}
	\sigma_A+\sigma_D(i) &= \left\{
	\begin{array}{ll}
		\sigma_A(i) & \text{if } i\in [g_A] \\
		\sigma_D(i-g_A) +g_A -k & \text{otherwise}.
	\end{array} \right.
\end{align*}
Note that if $\sigma_A$ and $\sigma_D$ are partial permutations corresponding to $\bfx \in \CFA(W_1)$ and $\bfy \in \CFD(W_2)$ respectively, then $\sigma_A+\sigma_D$ is defined exactly when $I_A(\bfx)=I_D(\bfy)$, i.e., when the $\alpha$-arcs occupied by $\bfx$ are complementary to those occupied by $\bfy$. Injectivity and surjectivity of  $\sigma_D +\sigma_A$ follow by hypothesis. See Figure \ref{fig:permutations}.

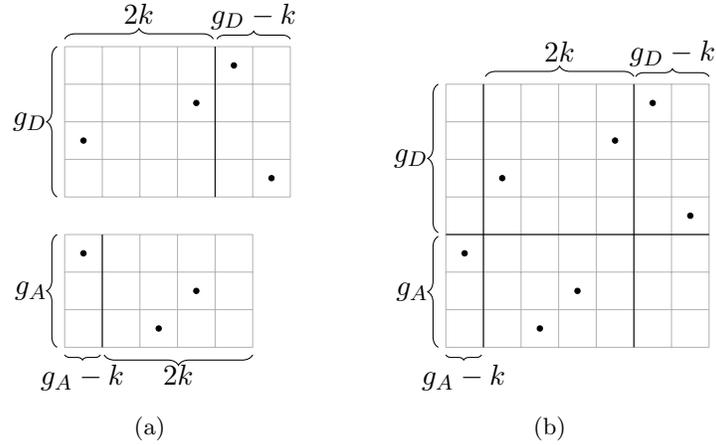
\begin{figure}[htb!]
\centering
\subfigure[]{
\begin{tikzpicture}[scale=0.50]
	\draw[step=1, black!30!white, very thin] (0, 0) grid (5, 3);
	\draw[-] (1, 0) -- (1, 3);
	\filldraw (2.5, 0.5) circle (2pt);
	\filldraw (3.5, 1.5) circle (2pt); 
	\filldraw (0.5, 2.5) circle (2pt); 
	\draw [decorate,decoration={brace,amplitude=4pt}] (-0.2, 0) -- (-0.2, 3) node [midway, left] {$g_A$};
	\draw [decorate,decoration={brace,amplitude=4pt,mirror}] (1.05, -0.2) -- (5, -0.2) node [midway, below] {$2k$};
	\draw [decorate,decoration={brace,amplitude=2pt,mirror}] (0, -0.2) -- (0.95, -0.2) node [midway, below] {$g_A-k$};
	
	\draw[step=1, black!30!white, very thin] (0, 4) grid (6, 8);
	\draw[-] (4, 4) -- (4, 8);
	\filldraw (0.5, 5.5) circle (2pt);
	\filldraw (3.5, 6.5) circle (2pt); 
	\filldraw (4.5, 7.5) circle (2pt); 
	\filldraw (5.5, 4.5) circle (2pt); 
	\draw [decorate,decoration={brace,amplitude=4pt}] (-0.2, 4) -- (-0.2, 8) node [midway, left] {$g_D$};	
	\draw [decorate,decoration={brace,amplitude=4pt}] (0, 8.2) -- (3.95, 8.2) node [midway, above, yshift=2pt] {$2k$};
	\draw [decorate,decoration={brace,amplitude=3pt}] (4.05, 8.2) -- (6, 8.2) node [midway, above] {$g_D-k$};
\end{tikzpicture}
}
\hspace{15pt}
\subfigure[]{
\begin{tikzpicture}[scale=0.50]
	\draw[step=1, black!30!white, very thin] (0, 0) grid (7, 7);
	\draw[-] (1, 0) -- (1, 7);
	\draw[-] (5, 0) -- (5, 7);
	\draw[-] (0, 3) -- (7, 3);
	\filldraw (2.5, 0.5) circle (2pt);
	\filldraw (3.5, 1.5) circle (2pt); 
	\filldraw (0.5, 2.5) circle (2pt); 

	\filldraw (1.5, 4.5) circle (2pt);
	\filldraw (4.5, 5.5) circle (2pt); 
	\filldraw (5.5, 6.5) circle (2pt); 
	\filldraw (6.5, 3.5) circle (2pt); 	
	
	\draw [decorate,decoration={brace,amplitude=4pt}] (-0.2, 0) -- (-0.2, 2.95) node [midway, left] {$g_A$};
	\draw [decorate,decoration={brace,amplitude=4pt}] (-0.2, 3.05) -- (-0.2, 7) node [midway, left] {$g_D$};
	\draw [decorate,decoration={brace,amplitude=4pt}] (1.05, 7.2) -- (5, 7.2) node [midway, above, yshift=2pt] {$2k$};
	\draw [decorate,decoration={brace,amplitude=2pt,mirror}] (0, -0.2) -- (0.95, -0.2) node [midway, below] {$g_A-k$};
	\draw [decorate,decoration={brace,amplitude=3pt}] (5.05, 7.2) -- (7, 7.2) node [midway, above] {$g_D-k$};
\end{tikzpicture}
}
\caption[]{Top left, an example of a type D partial permutation $\sigma_D$. The domain corresponds to the vertical axis. Bottom left, an example of a type A partial permutation $\sigma_A$. Right, the resulting permutation $\sigma_A +\sigma_D$.}
\label{fig:permutations}
\end{figure}

\begin{lemma}
\label{lem:sgnsum}
Let $\sigma_A$, $\sigma_D$ be type A and type D partial permutations respectively.  If $\sigma_A + \sigma_D$ is well-defined, then 
\[ \sgn(\sigma_A+\sigma_D) = \sgn_A (\sigma_A) +\sgn_D(\sigma_D).\]
In particular, the relative sign of a sum is equal to the sum of the relative signs of the components.
\end{lemma}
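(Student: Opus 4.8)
The plan is to unwind all three sign functions into explicit counts of inversions and ``crossing'' terms, and then verify the identity by a careful bookkeeping of pairs. Recall that $\sgn_A(\sigma_A) = \inv(\sigma_A)$, that $\sgn_D(\sigma_D) = \inv(\sigma_D) + \sum_{i \in \Im(\sigma_D)} \#\{j \mid j > i,\ j \notin \Im(\sigma_D)\}$, and that $\sgn(\sigma_A + \sigma_D) = \inv(\sigma_A + \sigma_D)$ since $\sigma_A + \sigma_D$ is an honest permutation. So the entire statement reduces to the mod-$2$ identity
\[
\inv(\sigma_A + \sigma_D) \equiv \inv(\sigma_A) + \inv(\sigma_D) + \sum_{i \in \Im(\sigma_D)} \#\{j \mid j > i,\ j \notin \Im(\sigma_D)\} \pmod 2.
\]
First I would fix the notation carefully: write $g = g_A + g_D$, identify the domain of $\sigma_A + \sigma_D$ with $[g_A] \sqcup [g_D]$ via $i \mapsto i$ on the first block and $i \mapsto g_A + i$ on the second, and similarly split the codomain, recalling that the second block's image is shifted by $g_A - k$.

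Next I would classify the inversions of $\sigma_A + \sigma_D$ into three types according to where the two indices $p < q$ of the pair lie: (i) both in the ``$A$-block'' $[g_A]$; (ii) both in the ``$D$-block''; and (iii) one in each, necessarily $p \in [g_A]$, $q \in [g_A]+g_D$ region, i.e. a ``mixed'' pair. Type (i) pairs contribute exactly $\inv(\sigma_A)$, since the restriction of $\sigma_A + \sigma_D$ to $[g_A]$ is $\sigma_A$ and the shift on the $D$-block codomain is order-preserving and irrelevant here. For type (ii), the restriction to the $D$-block is $\sigma_D$ post-composed with the order-preserving shift by $g_A - k$, so these contribute exactly $\inv(\sigma_D)$. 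The crux is the mixed pairs: a mixed pair $(p, q)$ with $p \in [g_A]$ and $q$ in the $D$-block is an inversion precisely when $\sigma_A(p) > \sigma_D(q - g_A) + g_A - k$. I would count how many such pairs there are for fixed $q$ (equivalently fixed $\sigma_D$-value $v = \sigma_D(q-g_A)$): the $p$'s contributing are those with $\sigma_A(p) > v + g_A - k$, i.e. those whose $\sigma_A$-image exceeds $v + g_A - k$. Using the structure of a type A partial permutation — $\sigma_A$ is an injection $[g_A] \to [g_A + k]$ with $[g_A+k] \setminus \Im(\sigma_A) \subset [g_A - k]$, so $\sigma_A$ hits every value in $[g_A - k + 1, g_A + k]$ — the number of $p$ with $\sigma_A(p) > v + g_A - k$ equals $2k - v$ minus a correction coming from how many values of $\Im(\sigma_A)$ lying below $v+g_A-k$ are ``missing'' below $g_A - k + 1$, but since $\sigma_A$ is surjective onto $[g_A-k+1, g_A+k]$ this count is simply the number of elements of $[g_A+k]$ above $v + g_A - k$ that are in $\Im(\sigma_A)$, which one computes to be $2k - v$. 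Summing over the $D$-block indices $q$ and reducing mod $2$, I expect the mixed contribution to collapse to $\sum_{v \in \Im(\sigma_D)} (2k - v) \equiv \sum_{v \in \Im(\sigma_D)} \#\{j \mid j > v,\ j \notin \Im(\sigma_D)\} \pmod 2$, using that $\#\{j \in [2k] : j > v\} = 2k - v$ and $\#\{j \in \Im(\sigma_D) : j > v\}$ is a symmetric-type count that pairs off mod $2$ — more precisely, $\sum_{v \in \Im(\sigma_D)} \#\{j \in \Im(\sigma_D) : j > v\} = \binom{k}{2}$ is a constant, and one checks the parity matches (or, cleaner, that $2k - v = \#\{j > v : j \in \Im(\sigma_D)\} + \#\{j > v : j \notin \Im(\sigma_D)\}$ for $v \in [2k]$, and the first summand summed over $v \in \Im(\sigma_D)$ is $\binom{k}{2}$; I would fold this constant into the overall-shift ambiguity, or track it explicitly if the lemma is to be an honest equality rather than an equality up to shift — here it is an honest equality, so I would be careful that $\binom{k}{2}$ does in fact vanish mod $2$ against another term, which it does because $|\Im(\sigma_D) \cap D| = k$ combinatorial identities force cancellation).

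The main obstacle I anticipate is precisely this last bookkeeping step: correctly handling the index shift by $g_A - k$ in the codomain and making sure the ``missing values'' of $\Im(\sigma_A)$ (which all lie in $[g_A - k]$) do not contribute spuriously to the mixed count. The cleanest way to organize this is to first reduce to the case where $\sigma_A$ is as ``small'' as possible — observe that since $[g_A + k] \setminus \Im(\sigma_A) \subseteq [g_A - k]$, all the action relevant to mixed inversions happens among the top $2k$ values $[g_A - k + 1, g_A + k]$, which $\sigma_A$ maps onto bijectively from the set of $p$ with $\sigma_A(p) > g_A - k$. So I would split $[g_A]$ as $\{p : \sigma_A(p) \le g_A - k\} \sqcup \{p : \sigma_A(p) > g_A - k\}$; the first set of $p$'s never participates in a mixed inversion (their image is at most $g_A - k$, while every $D$-block image is at least $\sigma_D(\cdot) + g_A - k \ge g_A - k + 1$), so mixed inversions only involve the second set, on which $\sigma_A$ restricts to a bijection onto $[g_A - k + 1, g_A + k]$, and the count becomes the transparent one described above. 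I would then present the mod-$2$ arithmetic linking $\sum(2k - v)$ to $\sum \#\{j > v : j \notin \Im(\sigma_D)\}$ carefully, since this is where an off-by-a-constant error would slip in, and conclude.
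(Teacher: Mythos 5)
Your overall plan — splitting the inversions of $\sigma_A + \sigma_D$ into three blocks (both indices in $[g_A]$, both in the $D$-block, and mixed) — is exactly the decomposition the paper uses, and the first two contributions are correctly identified as $\inv(\sigma_A)$ and $\inv(\sigma_D)$. But there is a genuine error in the structural fact you quote about type A partial permutations, and it invalidates your count of the mixed inversions.

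You write that for a type A partial permutation, $[g_A + k] \setminus \Im(\sigma_A) \subset [g_A - k]$, and deduce that ``$\sigma_A$ hits every value in $[g_A - k + 1, g_A + k]$.'' This is backwards. The defining condition on a bordered partial permutation $(g, k, B, \sigma)$ is $[g+k] \setminus B \subset \Im(\sigma)$; taking $B = A = [g_A - k + 1, g_A + k]$ gives $[g_A - k] \subset \Im(\sigma_A)$. In other words, all the low values (corresponding to $\alpha$-circles) are hit, and the $k$ values missed by $\sigma_A$ all lie in $A$ — so $\sigma_A$ hits exactly $k$ of the $2k$ values in $[g_A - k + 1, g_A + k]$, not all of them. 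The same reversal reappears in your final paragraph (``the missing values of $\Im(\sigma_A)$, which all lie in $[g_A - k]$'').

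Once corrected, the mixed count is cleaner and the mod-$2$ arithmetic you were worried about disappears. The well-definedness hypothesis for $\sigma_A + \sigma_D$ says exactly that $\{\,i - (g_A - k) : i \in \Im(\sigma_A) \cap A\,\}$ and $\Im(\sigma_D) \cap [2k]$ are complementary subsets of $[2k]$. Hence for a fixed $D$-block index $q$ with $v = \sigma_D(q - g_A) \in \Im(\sigma_D)$, the number of $p \in [g_A]$ with $\sigma_A(p) > v + g_A - k$ is not $2k - v$ but rather $\#\{j \in [2k] : j > v,\ j \notin \Im(\sigma_D)\}$, with no correction term at all (and both sides vanish when $v > 2k$, since $\sigma_A(p) \le g_A + k$). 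Summing over $v \in \Im(\sigma_D)$ gives $\sum_{i \in \Im(\sigma_D)} \#\{j : j > i,\ j \notin \Im(\sigma_D)\}$ on the nose. That is how the paper closes the argument: it produces an explicit bijection between mixed inversions and pairs $(i,j)$ with $i \in \Im(\sigma_D)$, $j > i$, $j \notin \Im(\sigma_D)$, so the identity holds over $\Z$, not merely mod $2$, and there is no $\binom{k}{2}$ residue to chase down.
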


\begin{proof}
We will show that the number of inversions in $\sigma = \sigma_A+\sigma_D$ is congruent modulo $2$ to $ \sgn_A (\sigma_A) +\sgn_D(\sigma_D)$. It is clear that $\inv(\sigma|_{[g_A]}) = \inv(\sigma_A)$ and $\inv(\sigma|_{[g_A+1, g_A+g_D]}) = \inv(\sigma_D)$. We now must count inversions of $\sigma$ of the form $(\ell, m)$, where $\ell \in [g_A]$ and $m \in [g_A+1, g_A+g_D]$, which we claim is
\begin{align}
\label{eqn:sgnDsum}
	 \sum_{i \in \text{Im}(\sigma_D)} \# \{ \ j \in [g_D+k] \ | \ j>i, j \notin \text{Im}(\sigma_D) \}.
\end{align}
Indeed, $j \notin \text{Im}(\sigma_D)$ is equivalent to the existence of $\ell \in [g_A]$ such that $\sigma_A+\sigma_D(\ell)=j + g_A -k$. Then $(\ell, m)$ is an inversion of $\sigma_A + \sigma_D$ for each $m \in [g_A+1, g_A + g_D]$ with $\sigma_D(m-g_A)<j$, since $\sigma_A + \sigma_D (\ell) = j + g_A -k > \sigma_D(m-g_A) + g_A -k = \sigma_A + \sigma_D (m)$. The number of such pairs $(\ell, m)$ is equal to the value of \eqref{eqn:sgnDsum}.

Equivalently, the sum of two partial permutations can be visualized as in Figure \ref{fig:permutations}, where an inversion corresponds to a northwest-southeast pair. It is clear that the number of NW-SE pairs is equal to the sum of
\begin{itemize}
	\item the number of NW-SE pairs below the horizontal line (i.e., inversions of $\sigma |_{[g_A]}$)
	\item the number of NW-SE pairs above the horizontal line (i.e., inversions of $\sigma |_{[g_A+1, g_A + g_D]}$)
	\item the number of NW-SE pairs with the NW element above the horizontal line and the SE element below the horizontal line.
\end{itemize}
The last bullet point is equal to the value of \eqref{eqn:sgnDsum}.

Thus, the number of inversions of $\sigma_A+\sigma_D$ is
\[ \inv(\sigma_A) + \inv(\sigma_D) + \sum_{i \in \text{Im}(\sigma_D)} \# \{ \ j \ | \ j>i, j \notin \text{Im}(\sigma_D) \}, \]
completing the proof.
\end{proof}

We now verify that the gradings $\gr_A$ and $\gr_D$ behave well with respect to pairing.  
\begin{proposition}
\label{prop:respectspairing}
Let $\bfx \in \mfS(\cH_1)$ and $\bfy \in \mfS(\cH_2)$ be such that $-\cZ_2= \cZ_1$ where $\cZ_i = \d \cH_i$. Then, up to a possible overall shift independent of both $\bfx$ and $\bfy$, 
\[ \gr(\bfx \otimes_\cI \bfy) = \gr_A(\bfx) + \gr_D(\bfy) \pmod  2. \]
\end{proposition}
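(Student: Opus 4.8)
The plan is to reduce the statement to a purely combinatorial comparison between the permutation $\sigma_{\bfx \otimes_\cI \bfy}$ attached to the glued generator in $\cH_1 \cup_\cZ \cH_2$ and the ``sum'' permutation $\sigma_{\sigma_\bfx} + \sigma_{\sigma_\bfy}$ of Lemma~\ref{lem:sgnsum}. First I would recall the setup: when $-\cZ_2 = \cZ_1$, the diagram $\cH_1 \cup_\cZ \cH_2$ is a closed Heegaard diagram whose $\beta$-curves are the disjoint union of the $\beta$-curves of $\cH_1$ and $\cH_2$, whose $\alpha$-circles are the $\alpha$-circles of $\cH_1$, the $\alpha$-circles of $\cH_2$, and the closed curves obtained by gluing each $\alpha$-arc of $\cH_1$ to the matched $\alpha$-arc of $\cH_2$. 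A generator $\bfx \otimes_\cI \bfy$ of the box tensor product is precisely a generator $\bfz$ of $\CFhat(\cH_1 \cup_\cZ \cH_2)$, and its intersection points are exactly those of $\bfx$ together with those of $\bfy$. In particular $\sum_{z \in \bfz} o(z) = \sum_{x \in \bfx} o(x) + \sum_{y \in \bfy} o(y)$, so the orientation-sum contributions to the three gradings $\gr(\bfx\otimes_\cI\bfy)$, $\gr_A(\bfx)$, $\gr_D(\bfy)$ match up on the nose (modulo our conventions that $\bfalpha^a$ is oriented with $\alpha^- \lessdot \alpha^+$ and that $\dualize_{H_1(F)} = \dualize_{H_1(-F)}$, which guarantee the orientations of the shared intersection points are read the same way on both sides).

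The heart of the argument is then to show $\inv(\sigma_\bfz) \equiv \sgn_A(\sigma_\bfx) + \sgn_D(\sigma_\bfy) \pmod 2$ up to an overall shift. By Lemma~\ref{lem:sgnsum} it suffices to identify $\inv(\sigma_\bfz)$, computed with respect to \emph{some} fixed orders on the $\alpha$- and $\beta$-curves of the closed diagram, with $\sgn(\sigma_\bfx + \sigma_\bfy)$ (an overall shift absorbing any discrepancy in the choice of orderings, since all gradings in sight are relative). To do this I would choose the order on the $\beta$-curves of $\cH_1 \cup_\cZ \cH_2$ to be: all $\beta$-curves of $\cH_1$ first (in their given order), then all $\beta$-curves of $\cH_2$; and the order on the $\alpha$-curves to be: the $\alpha^c$-circles of $\cH_1$, then the glued $\alpha$-arc-circles (in the matching order on $[2k]$), then the $\alpha^c$-circles of $\cH_2$ — this is exactly the block structure pictured in Figure~\ref{fig:permutations}. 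With these choices, and recalling that for type~A the order puts $\bfalpha^c$ before $\bfalpha^a$ while for type~D the order puts $\bfalpha^a$ before $\bfalpha^c$ (the conventions fixed in Sections~\ref{sec:typeAbackground} and~\ref{subsec:typeD}), the permutation $\sigma_\bfz$ literally \emph{is} the block permutation $\sigma_A + \sigma_D$ of \eqref{eqn:sumdefn} after the relabeling that interleaves the codomain blocks; one checks that re-sorting the codomain into the order $[g_A] , [2k], [g_D - k]$ changes $\inv$ only by a constant depending on $g_A$, $g_D$, $k$, hence is absorbed into the overall shift. Then Lemma~\ref{lem:sgnsum} gives $\sgn(\sigma_A + \sigma_D) = \sgn_A(\sigma_\bfx) + \sgn_D(\sigma_\bfy)$, and adding the already-matched orientation sums completes the proof.

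I expect the main obstacle to be the bookkeeping in the previous paragraph: namely, carefully matching the geometric order on the $\alpha$-curves of the glued closed diagram (which mixes circles from $\cH_1$, glued arcs, and circles from $\cH_2$) with the abstract block structure of $\sigma_A + \sigma_D$, while keeping track of exactly how the ``$\bfalpha^c$ before $\bfalpha^a$'' convention for type~A versus ``$\bfalpha^a$ before $\bfalpha^c$'' convention for type~D interact under gluing. The subtle point is that the $\sgn_D$ formula already contains a correction term $\sum_{i \in \Im(\sigma_D)} \#\{ j > i : j \notin \Im(\sigma_D)\}$ precisely to account for the arcs-before-circles reordering, and Lemma~\ref{lem:sgnsum} has been engineered so that this correction term is exactly the count of cross-block inversions; so the real content is to verify that the cross-block inversions of $\sigma_\bfz$ in the glued diagram agree with those counted in \eqref{eqn:sgnDsum}. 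Once the ordering conventions are pinned down this is a direct count, but it is the step where a sign error would most easily creep in, so I would write it out explicitly (as in the proof of Lemma~\ref{lem:sgnsum}, via the NW--SE pair picture of Figure~\ref{fig:permutations}).
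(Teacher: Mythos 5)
Your proposal is correct and takes essentially the same approach as the paper: order the $\beta$-curves of the glued closed diagram as those of $\cH_1$ followed by those of $\cH_2$, order the $\alpha$-curves as ($\alpha$-circles of $\cH_1$, glued $\alpha$-arcs, $\alpha$-circles of $\cH_2$), observe that with these choices $\sigma_{\bfx\otimes_\cI\bfy}$ coincides with $\sigma_A(\bfx)+\sigma_D(\bfy)$ and that the orientation sums add, and then apply Lemma~\ref{lem:sgnsum}. The only point where you are slightly more cautious than necessary is the imagined ``re-sorting of the codomain'': with the conventions that $\bfalpha^c$ precedes $\bfalpha^a$ for type~A and $\bfalpha^a$ precedes $\bfalpha^c$ for type~D, the codomain of $\sigma_A+\sigma_D$ in \eqref{eqn:sumdefn} already coincides on the nose with the chosen ordering of $\alpha$-curves in the closed diagram, so no relabeling or extra shift is required there.
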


\begin{proof}
Assume that $\bfx \otimes_\cI \bfy \neq 0$.  This condition implies that $\sigma_A(\bfx) + \sigma_D(\bfy)$ is well-defined.  There is an ordering of the $\alpha$-circles on $\cH = \cH_1 \cup_\cZ \cH_2$ induced by the ordering on $\cH_1$ and $\cH_2$. More precisely, we order the $\alpha$-circles beginning with the $\alpha$-circles (in order) from $\cH_1$, followed by the glued up $\alpha$-arcs (in order), and then the $\alpha$-circles (in order) from $\cH_2$.  Similarly, we order the $\beta$-circles on $\cH$ by writing the $\beta$-circles from $\cH_1$ followed by the $\beta$-circles from $\cH_2$. Notice that this order corresponds with the ordering used in the definition of $\sigma_A +\sigma_D$ in Equation (\ref{eqn:sumdefn}). In particular, $\sigma(\bfx \otimes \bfy) = \sigma_A(\bfx) +\sigma_D(\bfy)$

It follows from the definition of $-\cZ_1$ that the orientations on the $\alpha$-arcs in $\cH_1$ and in $\cH_2$ induce canonical orientations on the resulting $\alpha$-circles in $\cH$; further, we assume that the $\alpha$- and $\beta$-circles in each $\cH_i$ are oriented the same as in $\cH$.  We see that 
\[ \sum_{x \in \bfx, y \in \bfy} o(\bfx \otimes_\cI \bfy) = \sum_{x \in \bfx} o(x) + \sum_{y \in \bfy} o(y) \pmod 2.\] 
This, together with Lemma \ref{lem:sgnsum}, establishes the result.
\end{proof}

\subsection{Bimodules} All of the forgoing discussion may be promoted to various types of bimodules as described in \cite{LOTbimodules}. This first requires an expansion of the class of bordered Heegaard diagrams in order to describe a bordered three-manifold with a pair of boundary components. Following the notation above, these are 6-tuples $\cH=(\Sigma, \bfalpha^c,\bfalpha^a_L,\bfalpha^a_R,\bfbeta, \bfz)$ where:
\begin{itemize}
\item $\Sigma$ is a compact orientable genus $g>1$ surface with two boundary components $\d_L \Sigma$ and $\d_R \Sigma$;
\item $\bfbeta$ is a $g$-tuple of non-intersecting, essential simple closed curves in $\Sigma$;
\item $\bfalpha^c$ is a $(g-k_L-k_R)$-tuple of non-intersecting, essential simple closed curves in $\Sigma$;
\item $\bfbeta$ and $\bfalpha^c$ generate $g$- and $(g-k_L-k_R)$-dimensional subspaces, respectively, of $H_1(\Sigma;\Q)$;
\item $\bfz$ is a properly embedded arc in $\Sigma$, disjoint from $\bfbeta$ and $\bfalpha$, with boundary $z_L \cup z_R$ where $z_L \in \d_L \Sigma$ and $z_R \in \d_R \Sigma$;  and
\item $\bfalpha^a_L$ and $\bfalpha^a_R$ are ordered $2k_L$- and $2k_R$-tuples, respectively, of non-intersecting, properly embedded, oriented arcs in $\Sigma$ so that \[\cZ_L=(\partial_L\Sigma,z_L,\partial\bfalpha^a_L,M_{\bfalpha^a_L}), \quad \cZ_R=(\partial_R\Sigma, z_R,\partial\bfalpha^a_R,M_{\bfalpha^a_R})\]  are pointed matched circles.
\end{itemize}
We call such an $\cH$ an \emph{arced bordered Heegaard diagram}. Given an arced bordered Heegaard diagram, there is a natural way to construct a \emph{strongly bordered three-manifold with two boundary components}, which is a three-manifold with two boundary components parametrized by $\cZ_1$ and $\cZ_2$ together with a framed arc $\bfz$ connecting $z_1 \in \cZ_1$ to $z_2 \in \cZ_2$ \cite[Construction 5.3]{LOTbimodules}.  As in the case of a single boundary component, the generating set $\mfS(\cH)$ consists of $g$-tuples of intersection points in $\bfalpha\cap\bfbeta$ where:
\begin{itemize}
\item Each $\beta$-circle is occupied exactly once;
\item each $\alpha$-circle is occupied exactly once; and
\item exactly $k_L+k_R$ of the  $\alpha$-arcs in $\bfalpha^a_L\cup\bfalpha^a_R$ are occupied.
\end{itemize}
As before, to a generator $\bfx$, there is an associated injection $\sigma_\bfx \co [g] \rightarrow [g+k_L+k_R]$ satisfying $\bfalpha^c \subset \{ \alpha_{\sigma(i)} \mid i \in [g]\}$, where the $\alpha$-curves are ordered $\bfalpha^a_L$, $\bfalpha^c$, $\bfalpha^a_R$.  For brevity, given a three-manifold with two boundary components, we will use the term bordered in place of strongly bordered. 

Given a bordered three-manifold $W$ with two boundary components which we label $F_L$ and $F_R$ presented by an arced bordered Heegaard diagram $\cH$, a type DA structure is a bimodule $\CFDA(W)$ generated (as a vector space over $\F$) by $\mfS(\cH)$ admitting commuting actions on the left, by $\cA(-F_L)$, of type D and on the right, by $\cA(F_R)$, of type A. The left idempotent action $I_{L, D}(\bfx)$ (respectively right, $I_{R, A}(\bfx)$) is defined as for type D (respectively type A) structures. There is a natural splitting
	\[ \CFDA(\cH) = \bigoplus^{k_L + k_R}_{i = -(k_L + k_R)} \CFDA(\cH, i) \]
where
	\[  \CFDA(\cH, i) = \CFDA(\cH) \cdot \bfI_i(F_R). \]
Given $\bfx \in \CFDA(\cH, i)$, we say that $i$ is the \emph{strands grading} of $\bfx$. Equivalently, the strands grading of $\bfx$ is equal to $|\bfs|-k_R$ where $I(\bfs)=I_{R,A}(\bfx)$.
There are maps that take the form
\[\delta_i^1\co \CFDA(W)\otimes\cA(F_R)^{\otimes (i-1)} \to \cA(-F_L)\otimes \CFDA(W)\] 
which simultaneously generalize the type A and type D operations, again satisfying appropriate compatibility conditions; see \cite[Definition 2.2.43]{LOTbimodules}. An example is shown in Figure \ref{fig:CFDAsample}. From this the $\delta_i^k$ may be defined recursively as before, for all $k>0$, where $\delta_i^0$ take the form of the usual $\cA_\infty$-products.  As in the case of pairing type A with type D structures, there is an analogously defined box tensor product $\boxtimes$ for bimodules which satisfy appropriate pairing theorems \cite{LOTbimodules}.  For example, given $W = W_1 \cup_F W_2$, we have $\CFDA(W) = \CFDA(W_1) \boxtimes_{\cA(F)} \CFDA(W_2)$; there are also pairing theorems for type DA structures with type A structures, etc.

\begin{figure}[ht]
\labellist
\pinlabel $\delta^1_3(\bfx\otimes\rho_3\otimes\rho_2)=\rho_3\otimes\bfy$ at 135 30
\pinlabel $\cZ_R$ at 91 100
\pinlabel $-\cZ_L$ at -8 100
\pinlabel $\bfx=(x_1,x_2)$ at -30 78
\pinlabel $\bfy=(y_1,y_2)$ at -30 53

\scriptsize \pinlabel $x_1$ at 9 93
\scriptsize \pinlabel $y_1$ at 22 62
\scriptsize \pinlabel $x_2$ at 73 70
\scriptsize \pinlabel $y_2$ at 75 62
\endlabellist
\includegraphics[scale=1.4]{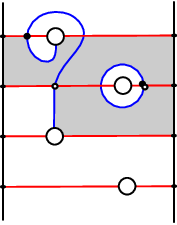}
\caption{A bordered Heegaard diagram in the case $g=2, k_L=k_R=1$. This example encodes a the mapping cylinder of a Dehn twist on the torus; see \cite[Section 10]{LOTbimodules}. Note that the relevant algebra elements for this example are $\rho_2 = (\{2\}, \{3\}, 2 \mapsto 3)$ and $\rho_3 = (\{3\}, \{4\}, 3 \mapsto 4)$.}
\label{fig:CFDAsample}
\end{figure}

The gradings given above extend in a straightforward manner to type DA bimodules, which we now describe. 
\begin{definition}
\label{defn:sgnDA}
A \emph{type DA partial permutation} $(g, k_L, k_R, D, A, \sigma)$ is an injection
	\[ \sigma \co [g] \rightarrow [g + k_L + k_R], \]
such that
\begin{itemize}
	\item $D = [2k_L]$
	\item $A = [g + k_L - k_R + 1, g + k_L + k_R]$
	\item $[g + k_L + k_R] \setminus (D \cup A) \subset \Im (\sigma)$.
\end{itemize}
The \emph{type DA sign} of a type DA partial permutation is
\[ \sgn_{DA}(\sigma) = \inv(\sigma) + \sum_{i \in \Im(\sigma)} \# \{ j \in D \ | \ j>i, j \notin \Im(\sigma) \} + t(g-k_L-k_R) \ \pmod  2, \]
where $t=|\Im(\sigma)\cap A|$.
\end{definition}

\begin{remark}
	The purpose of the $t(g-k_L-k_R)$ term in the above definition will become clear in Proposition \ref{prop:Hochschildgr} where we show that the grading on $\CFDA$ behaves well under Hochschild homology.
\end{remark}

\begin{definition}
\label{defn:grDA}
The \emph{type DA grading} of a generator $\bfx \in \mfS(\cH)$ is
\[ \gr_{DA}(\bfx) = \sgn_{DA}(\sigma_\bfx) + \sum_{x \in \bfx} o(x) \pmod 2. \]
\end{definition}

\begin{definition}
The sum of two type DA partial permutations 
\[ (g_1, k_L, k_\textmid, D_1, A_1, \sigma_1) \qquad \textup{and} \qquad (g_2, k_\textmid, k_R, D_2, A_2, \sigma_2) \]
is the type DA partial permutation $(g_1+g_2, k_L, k_R, D_1, A, \sigma_1 + \sigma_2)$ where $A = [g_1 + g_2 + k_L - k_R + 1, g_1 + g_2 + k_L + k_R]$ and
\begin{align*}
	\sigma_1+\sigma_2(i) &= \left\{
	\begin{array}{ll}
		\sigma_1(i) & \text{if } i\in [g_1] \\
		\sigma_2(i-g_1)+g_1+k_L-k_\textmid & \text{if } i\in [g_1+1, g_1+g_2]
	\end{array} \right.
\end{align*}
if $\{ i-(g_1+k_L-k_\textmid) \mid i \in \Im(\sigma_1) \cap A_1 \} \cup (\Im(\sigma_2) \cap D_2) = [2k_\textmid]$ and undefined otherwise.
\end{definition}

\begin{lemma}\label{lem:CFDAsignaddition}
With $\sigma_1, \sigma_2$ as above, if $\sigma_1 + \sigma_2$ is defined, then 
\[ \sgn_{DA} (\sigma_1 + \sigma_2) = \sgn_{DA} (\sigma_1) + \sgn_{DA} (\sigma_2) + (k_R+ k_\textmid)(g_1+k_L+k_\textmid) \pmod 2. \]
In particular, the relative sign of a sum of two type DA partial permutations is equal to the sum of the relative signs of the components. Moreover, if $k_R=k_\textmid$, then
\[ \sgn_{DA} (\sigma_1 + \sigma_2) = \sgn_{DA} (\sigma_1) + \sgn_{DA} (\sigma_2) \pmod 2. \]
\end{lemma}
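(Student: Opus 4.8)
The plan is to compute $\inv(\sigma_1+\sigma_2)$ directly, exactly as in the proof of Lemma~\ref{lem:sgnsum}, and then bookkeep the remaining correction terms coming from the $\sum_{i\in\Im(\sigma)}\#\{j\in D\mid j>i,\ j\notin\Im(\sigma)\}$ contributions and the $t(g-k_L-k_R)$ contributions. First I would observe that, with the ordering of indices inherited from the two factors (the $[g_1]$-block below the $[g_1+1,g_1+g_2]$-block in the domain, and the $D_1$, then $\Im$-overlap, then the rest, arranged so that $\sigma_1+\sigma_2$ restricts to $\sigma_1$ and a shifted copy of $\sigma_2$), one has $\inv(\sigma_1+\sigma_2)=\inv(\sigma_1)+\inv(\sigma_2)+C$, where $C$ counts the ``cross'' NW--SE pairs with one entry from the $\sigma_1$-block and one from the $\sigma_2$-block. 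By the same argument as in Lemma~\ref{lem:sgnsum}, $C$ equals the number of pairs $(i\in\Im(\sigma_2),\ j\notin\Im(\sigma_2))$ with $j>i$ and $j$ lying in the gluing region $[2k_\textmid]$ — that is, $C=\sum_{i\in\Im(\sigma_2)}\#\{j\in[2k_\textmid]\mid j>i,\ j\notin\Im(\sigma_2)\}$ modulo further terms I will identify below.

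Next I would handle the three sign-correction sums. The term $\sum_{i\in\Im(\sigma_1+\sigma_2)}\#\{j\in D_1\mid j>i,\ j\notin\Im(\sigma_1+\sigma_2)\}$: here $D_1=[2k_L]$ and the relevant $j$'s come only from the $\sigma_1$-part of the diagram (the $\sigma_2$-block sits to the right of $D_1$ after shifting), so this sum splits as the corresponding sum for $\sigma_1$ plus a contribution from those $i\in\Im(\sigma_2)$ (shifted) that still lie above some unoccupied index of $D_1$ — but since $\Im(\sigma_2)$ after shifting lands in $[g_1+k_L-k_\textmid+1,\ \dots]$, i.e. past all of $D_1=[2k_L]$ precisely when $2k_L\le g_1+k_L-k_\textmid$, one must check this inequality holds ($g_1\ge k_L+k_\textmid$ is part of being a bordered partial permutation, so $g_1+k_L-k_\textmid\ge 2k_L$), hence that cross-contribution vanishes and the $D_1$-sum is exactly the $\sigma_1$-sum plus the part of the $C$-count that overlaps $[2k_L]$. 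The real content is then matching the $[2k_\textmid]$-indexed count $C$ against $\sigma_{DA}(\sigma_2)$'s own $\sum\#\{j\in D_2\mid \dots\}$ term (with $D_2=[2k_\textmid]$), which they agree with by construction of the gluing condition $\{i-(g_1+k_L-k_\textmid)\mid i\in\Im(\sigma_1)\cap A_1\}\cup(\Im(\sigma_2)\cap D_2)=[2k_\textmid]$: the unoccupied slots of $\Im(\sigma_2)$ inside $[2k_\textmid]$ are exactly the occupied slots of $\sigma_1$ inside $A_1$ (shifted), so the cross-count $C$ is, after reindexing, the $D_2$-correction of $\sigma_2$ plus a term counting inversions internal to $A_1$ — and tracking that term is where the discrepancy $(k_R+k_\textmid)(g_1+k_L+k_\textmid)$ will emerge.

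Finally I would reconcile the $t(g-k_L-k_R)$ terms: writing $t_1=|\Im(\sigma_1)\cap A_1|$, $t_2=|\Im(\sigma_2)\cap A_2|$, $t=|\Im(\sigma_1+\sigma_2)\cap A|$, one checks directly from the definition of the sum that $t=t_2$ and that $t_1=k_\textmid$ (every slot of $A_1$ not in $\Im(\sigma_1)$ is filled by the gluing condition only up to the overlap, so one must count carefully), whence $t(g_1+g_2-k_L-k_R)$ versus $t_1(g_1-k_L-k_\textmid)+t_2(g_2-k_\textmid-k_R)$ differ by an explicit expression; expanding and reducing mod $2$, combined with the $A_1$-internal inversion term from the previous paragraph, should collapse to $(k_R+k_\textmid)(g_1+k_L+k_\textmid)$. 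The ``moreover'' clause is then immediate: if $k_R=k_\textmid$ then $(k_R+k_\textmid)(g_1+k_L+k_\textmid)=2k_\textmid(\cdots)\equiv 0\pmod 2$.

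\medskip

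\noindent\emph{Main obstacle.} The genuinely delicate step is the middle one: correctly accounting for inversions and unoccupied-slot counts that straddle the three regions $D_1=[2k_L]$, $A_1$ (the left factor's $A$-region, which becomes part of the \emph{interior} glued-up $\alpha$-circles of the composite diagram and is \emph{not} a $D$- or $A$-region of $\sigma_1+\sigma_2$), and $A=A_2$-shifted. Unlike Lemma~\ref{lem:sgnsum}, where the glued region is symmetric between a pure type-D and a pure type-A side, here the $A_1$-region plays a genuinely asymmetric role, and the $t(g-k_L-k_R)$ bookkeeping terms were precisely engineered (cf. the Remark after Definition~\ref{defn:sgnDA}) to make the composition come out clean only up to the stated quadratic correction. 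I expect roughly half the work to be a careful case analysis of which NW--SE crossing pairs in the composite grid get counted by which of the three $\sgn_{DA}$ summands, and verifying that the leftover equals $(k_R+k_\textmid)(g_1+k_L+k_\textmid)$ rather than some other quadratic expression; a diagram analogous to Figure~\ref{fig:permutations} (now with \emph{three} vertical bands in the codomain) would organize this.
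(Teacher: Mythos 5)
The paper offers no detailed argument here --- it declares the lemma a ``straightforward generalization of the proof of Lemma~\ref{lem:sgnsum}'' --- so your strategy (count cross inversions via NW--SE pairs, then track the $D$-correction and the $t(g-k_L-k_R)$ terms separately) is exactly the intended route. However, your plan contains one concrete error and misplaces the source of the quadratic correction, and if carried out as written it would not produce the stated formula.

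\textbf{The error.} You assert $t_1 = k_\textmid$. This is false in general. Write $M = \{\,i-(g_1+k_L-k_\textmid) \mid i \in \Im(\sigma_1)\cap A_1\,\}$ (so $|M|=t_1$) and $N = \Im(\sigma_2)\cap D_2$. Injectivity of $\sigma_1+\sigma_2$ forces $M$ and $N$ to be disjoint, so the gluing condition gives $M\sqcup N = [2k_\textmid]$ and hence $|N| = 2k_\textmid - t_1$. On the other hand, the bordered-partial-permutation constraint gives $|\Im(\sigma_2)\cap(D_2\cup A_2)| = k_\textmid + k_R$, i.e.\ $|N| + t_2 = k_\textmid + k_R$. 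Combining: $t_1 = k_\textmid - k_R + t_2$, which equals $k_\textmid$ only when $t_2 = k_R$.

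\textbf{Where the correction actually comes from.} You locate the quadratic discrepancy partly in a leftover ``$A_1$-internal inversion term'' after matching the cross-count $C$ against the $D_2$-correction of $\sigma_2$. In fact there is no leftover: the cross inversions $C$ are exactly the pairs $(m,n) \in M\times N$ with $m>n$, and the $D_2$-correction of $\sigma_2$ (using $D_2\setminus\Im(\sigma_2) = M$, and noting that the summands from $i\in\Im(\sigma_2)\setminus D_2$ all vanish since those $i$ exceed $2k_\textmid$) is the same count. So $C$ cancels the $D_2$-correction on the nose, and the $D$-correction of $\sigma_1+\sigma_2$ equals that of $\sigma_1$ because the shifted $\sigma_2$-image lands past $D_1 = [2k_L]$ (using $g_1\geq k_L+k_\textmid$). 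The entire discrepancy therefore sits in the strands-grading bookkeeping: with $t=t_2$ and $t_1 = k_\textmid-k_R+t_2$,
\[
t(g_1+g_2-k_L-k_R) - t_1(g_1-k_L-k_\textmid) - t_2(g_2-k_\textmid-k_R)
= 2k_\textmid t_2 - (k_\textmid-k_R)(g_1-k_L-k_\textmid),
\]
which reduces mod $2$ to $(k_R+k_\textmid)(g_1+k_L+k_\textmid)$ as claimed. With the incorrect $t_1 = k_\textmid$ you would get a different answer, so this is not a cosmetic slip --- fix the relation for $t_1$ and drop the hypothesized $A_1$-leftover, and the computation closes cleanly.
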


\begin{proof}
Straightforward generalization of the proof of Lemma \ref{lem:sgnsum}.
\end{proof}

\begin{proposition}
\label{prop:CFDApairing}
Let $\bfx \in \mfS(\cH_1)$ and $\bfy \in \mfS(\cH_2)$ where $\d_R \cH_1 = - \d_L \cH_2$. Then, up to a possible overall shift as in Lemma \ref{lem:CFDAsignaddition},
\[ \gr_{DA} (\bfx \otimes \bfy) = \gr_{DA}(\bfx) + \gr_{DA}(\bfy) \pmod 2. \]
\end{proposition}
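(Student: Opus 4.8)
The plan is to run the argument of Proposition~\ref{prop:respectspairing} with $\gr_A,\gr_D$ replaced by $\gr_{DA}$ and Lemma~\ref{lem:sgnsum} replaced by Lemma~\ref{lem:CFDAsignaddition}; the only genuinely new bookkeeping is that each diagram now carries two families of $\alpha$-arcs, and that the glued-up middle $\alpha$-arcs become $\alpha$-circles of $\cH=\cH_1\cup_\cZ\cH_2$.

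First I would reduce to the chain level, exactly as there. Assume $\bfx\otimes\bfy\neq 0$ in $\CFDA(\cH_1)\boxtimes\CFDA(\cH_2)$. This is precisely the statement $I_{R,A}(\bfx)=I_{L,D}(\bfy)$, i.e. that the middle $\alpha$-arcs occupied by $\bfx$ are complementary to those occupied by $\bfy$; unwinding the definition of the sum of two type DA partial permutations, this is exactly the hypothesis under which $\sigma_\bfx+\sigma_\bfy$ is defined, so $\sigma_\bfx+\sigma_\bfy$ makes sense.

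Next I would fix an ordering and orientations on $\cH$ induced from $\cH_1$ and $\cH_2$: order the $\beta$-circles as those of $\cH_1$ followed by those of $\cH_2$, and order the $\alpha$-curves as $\bfalpha^a_L$ (the left arcs of $\cH_1$), the $\alpha$-circles of $\cH_1$, the glued-up middle $\alpha$-arcs, the $\alpha$-circles of $\cH_2$, and finally $\bfalpha^a_R$ (the right arcs of $\cH_2$). Since all of the middle arcs become $\alpha$-circles in $\cH$, this is a legitimate ordering of the $\alpha$-curves of $\cH$ in the DA convention $\bfalpha^a_L,\bfalpha^c,\bfalpha^a_R$, and it is arranged so that the injection $\sigma_{\bfx\otimes\bfy}$ of $\bfx\otimes\bfy\in\mfS(\cH)$ coincides with $\sigma_\bfx+\sigma_\bfy$ — the index shift $g_1+k_L-k_\textmid$ in the definition of the sum is exactly what identifies the middle-arc positions $A_1$ of $\cH_1$ with the middle-arc positions $D_2$ of $\cH_2$, i.e.\ it records the gluing. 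Because $\d_R\cH_1$ and $-\d_L\cH_2$ are opposite pointed matched circles, the orientations on the middle arcs of $\cH_1$ and of $\cH_2$ glue to well-defined orientations on the resulting $\alpha$-circles of $\cH$; orienting the remaining $\alpha$- and $\beta$-circles of each $\cH_i$ compatibly with $\cH$, the orientation of an intersection point is unaffected by the gluing, so the sum of $o(\cdot)$ over the points of $\bfx\otimes_\cI\bfy$ equals $\sum_{x\in\bfx}o(x)+\sum_{y\in\bfy}o(y)\pmod 2$.

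Finally I would combine these: $\gr_{DA}(\bfx\otimes\bfy)=\sgn_{DA}(\sigma_\bfx+\sigma_\bfy)+\sum o(\cdot)$, and Lemma~\ref{lem:CFDAsignaddition} gives $\sgn_{DA}(\sigma_\bfx+\sigma_\bfy)\equiv\sgn_{DA}(\sigma_\bfx)+\sgn_{DA}(\sigma_\bfy)+(k_R+k_\textmid)(g_1+k_L+k_\textmid)\pmod 2$, whence $\gr_{DA}(\bfx\otimes\bfy)\equiv\gr_{DA}(\bfx)+\gr_{DA}(\bfy)+(k_R+k_\textmid)(g_1+k_L+k_\textmid)\pmod 2$. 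The correction term involves only the genera and arc numbers of $\cH_1,\cH_2$, not $\bfx$ or $\bfy$, so it is the promised overall shift, and it vanishes when $k_R=k_\textmid$ (the case relevant for Hochschild homology). The step I expect to require the most care is checking that the induced ordering above simultaneously realizes the DA-convention on $\cH$ and matches the index conventions in the definition of $\sigma_\bfx+\sigma_\bfy$, so that the appeal to Lemma~\ref{lem:CFDAsignaddition} is legitimate; once that identification is pinned down, the remainder is the orientation bookkeeping just described.
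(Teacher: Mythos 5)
Your proof is correct and takes precisely the route the paper intends: the paper's ``proof'' of this proposition is the single sentence ``Straightforward generalization of the proof of Proposition~\ref{prop:respectspairing},'' and your write-up is exactly that generalization, carefully spelled out — the induced ordering of $\alpha$-curves (left arcs, circles of $\cH_1$, glued middle arcs, circles of $\cH_2$, right arcs) matching the index shifts in the definition of $\sigma_1+\sigma_2$, the orientation bookkeeping, and the invocation of Lemma~\ref{lem:CFDAsignaddition} with the observation that the $(k_R+k_\textmid)(g_1+k_L+k_\textmid)$ correction is independent of $\bfx$ and $\bfy$ and hence an overall shift.
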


\begin{proof}
Straightforward generalization of the proof of Proposition \ref{prop:respectspairing}.
\end{proof}

\begin{remark}
\label{rem:CFDApairing}
The analogue of Lemma~\ref{lem:CFDAsignaddition} holds for the sum of a type A partial permutation with a type DA partial permutation and for the sum of a type DA partial permutation with a type D partial permutation.  Consequently, we have analogues of Proposition~\ref{prop:CFDApairing} in these settings as well.
\end{remark}

There are other bimodules associated to $W$, which we describe in more detail in the subsequent subsection; we develop the flavor here.  For instance, the type AA bimodule $\CFAA(W)$ has a module structure consisting of two commuting right $\cA_\infty$-module structures over $\cA(F_L)$ and $\cA(F_R)$.  In other words, there are operations of the form 
\[m_{1,i,j}\co\CFAA(W)\otimes\big( \cA(F_L)^{\otimes i} \otimes \cA(F_R)^{\otimes j}\big)\to\CFAA(W)\]
with appropriate notions of compatibility as described in \cite[Definition 2.2.55]{LOTbimodules} and \cite[Definition 2.2.38]{LOTbimodules}.  Alternatively, we can think of $\CFAA(W)$ as a type A structure (right $\cA_\infty$-module) over $\cA(F_L) \otimes \cA(F_R)$.  We will not make a distinction between these two viewpoints.  

We also have the type DD bimodule $\CFDD(W)$, which is a type D structure over $\cA(-F_L) \otimes \cA(-F_R)$.   This comes equipped with a type D structure map denoted 
\[\delta^{1,1}\co \CFDD(W) \to \big(\cA(-F_L) \otimes \cA(-F_R)\big)\otimes\CFDD(W).\]  Again, there is an extension of the box tensor product and suitable pairing theorems.  

Moreover, there is a way to convert between type D structures and type A structures using bimodules \cite{LOTbimodules}.   As an example, there is a type AA identity bimodule $\CFAA(\bbI)$ converting $\CFD(W)$ to $\CFA(W)$ by way of the operation $\CFAA(\bbI)\boxtimes -$ thought of as a functor between ${}^{\cA(-F)}\mathsf{Mod}$ and $\mathsf{Mod}_{\cA(F)}$, which are appropriate categories of type D structures over $\cA(-F)$ and type A structures over $\cA(F)$ respectively, defined in Section~\ref{sec:catofmod}.  This duality will be discussed more precisely in Section~\ref{subsec:dualityrevisited}.   

\subsection{Categories of modules}\label{sec:catofmod} Following the notation set out by Lipshitz, Ozsv\'ath and Thurston, we review the relevant categories of modules that have appeared to this point \cite{LOTbimodules}. Given any fixed dg-algebra $\cA$, write $\mathsf{Mod}_{\cA}$ for the category of $\Ztwo$-graded type A structures over $\cA$ (i.e., $\Ztwo$-graded right $\cA_\infty$-modules over $\cA$) which are homotopy equivalent to a bounded structure.  We write ${}^{\cA}\mathsf{Mod}$ for the category of $\Ztwo$-graded left type D structures over $\cA$ which are homotopy equivalent to bounded type D structures; as mentioned above, we will not make the distinction between a type D structure and the associated dg-module.  By abuse, we will refer to the dg-module arising from a bounded type D structure as a bounded dg-module.  As a result, the type D structure $\CFD(W)$ and the type A structure $\CFA(W)$ are members of the categories  ${}^{\cA(-F)}\mathsf{Mod}$ and $\mathsf{Mod}_{\cA(F)}$, respectively. This notation generalizes to bimodules so that for manifolds $W$ with $\partial W = F_L\amalg F_R$
\[\CFDA(W) \in  {}^{\cA(-F_L)}\mathsf{Mod}_{\cA(F_R)}, \
\CFDD(W) \in  {}^{\cA(-F_L),\cA(-F_R)}\mathsf{Mod}, \
\CFAA(W) \in \mathsf{Mod}_{\cA(F_L),\cA(F_R)},\]
categories of bounded, up to homotopy, type DA, type DD, and type AA structures respectively. 

We now define $\CFAA(W)$ (respectively $\CFDD(W)$) using restriction (respectively induction) functors. 

Given a bordered manifold $W$ with disconnected boundary 	$F_L\amalg F_R$ there is a closely related bordered manifold  $W_\dr$ with connected boundary $F_L\# F_R$ such that attaching a 2-handle to $W_\dr$ along the neck of the connected sum yields $W$.  We can also view this in terms of bordered Heegaard diagrams as follows.  Let $\cH$ be an arced bordered Heegaard diagram for $W$.  We can obtain a bordered Heegaard diagram for $W_\dr$ by removing an open neighbourhood of the arc $\bfz$; after an appropriate choice of basepoint, the result is a bordered Heegaard diagram describing $W_\dr$ which has pointed matched circle $\cZ_L \# \cZ_R$ describing $F_L\# F_R$.   

First, $\CFAA(W)$ and $\CFA(W_\dr)$ have the same generating sets.  To define the type AA structure, we note that the above construction gives rise to a dg-algebra morphism  
	\[\varphi_\dr\co \cA(F_L) \otimes \cA(F_R) \to \cA(F_L\# F_R)\] 
coming from an obvious inclusion map.  Using $\varphi_\dr$, we obtain the operations $m_{1,i,j}$ mentioned above for the bimodule $\CFAA(W)$ from the $\cA_\infty$-products \[m_{i+j+1}\co \CFA(W_\dr)\otimes \cA(F_L\# F_R)^{\otimes (i+j)} \to \CFA(W_\dr).\]  More generally, this process converts a type AA structure over $\cA(F_L \# F_R)$ to one over $\cA(F_L) \otimes \cA(F_R)$,  giving rise to a functor 
\[\operatorname{Res}_{\varphi_\dr}\co \mathsf{Mod}_{\cA(F_L\# F_R)} \to \mathsf{Mod}_{\cA(F_L),\cA(F_R)}.\]
In order for this to be well-defined, we need to place a $\Ztwo$-grading on $\CFAA(W)$.  We use $\gr_{AA}$, the $\Ztwo$-grading on $\CFAA(W)$ induced from $\CFA(W_\dr)$ by the restriction functor.  We remark that the construction above is equivalently described by the functor $\operatorname{Rest}_{\varphi_\dr}$ given by \[-\boxtimes\big({}^{\cA(F_L\# F_R)}[\varphi_{\dr}]_{\cA(F_L)\otimes\cA(F_R)}\big),\] where this module is a 1-dimensional vector space (over $\F$) defined in \cite[Section 2.4.2]{LOTbimodules}. Finally, note that $\gr_{AA}$ depends on a choice of order of $F_L$ and $F_R$; we have chosen $F_L$ to come first and $F_R$ second.  We have that $\sfMod_{\cA(F_L) \otimes \cA(F_R)}$ is equivalent to the category of type AA structures, $\sfMod_{\cA(F_L), \cA(F_R)}$, so we will use the notations interchangeably. 

We now are interested in defining $\CFDD(W)$.  For this construction, we work with a parameterization of the boundary of $W_\dr$ as $\cZ = \cZ_R \# \cZ_L$.  This ordering is simply a choice of convention for compatibility with the way that we study the behavior of gradings under pairing (see Proposition~\ref{prop:CFAACFDDgrpairing} and Remark~\ref{rmk:pairingorderchoices}).  Note that $-\cZ = -\cZ_L \# -\cZ_R$.  Writing $F = F(\cZ)$, there is now a natural dg-algebra morphism
\[\varphi^{\textup{split}}\co \cA(-F)\to \cA(-F_L) \otimes \cA(-F_R) \]
where all algebra elements for which the partial permutation $\phi$ does not respect the splitting of $-\cZ$ are sent to $0$. This gives rise to a type DD operation
\[\delta^{1,1}\co \CFDD(W) \to (\cA(-F_L) \otimes \cA(-F_R))\otimes \CFDD(W_\dr)\] determined by the operation $\delta^1$ from the type D structure $\CFD(W_\dr)$, where as before, the generating sets for both modules are identical.  Again, this construction generalizes to yield the induction functor 
\[\Ind_{\varphi^{\textup{split}}} \co {}^{\cA(-F)}\sfMod\to {}^{\cA(-F_L),\cA(-F_R)}\sfMod.\]
This construction coincides with the functor $\operatorname{Induct}_{\varphi^{\textup{split}}}$ defined by 
	\[\big({}^{ \cA(-F_L)\otimes \cA(-F_R)}[\varphi^{\textup{split}}]_{\cA(-F)}\big)\boxtimes-\] 
as in \cite[Section 2.4.2]{LOTbimodules}. We let $\gr_{DD}$ denote the $\Ztwo$-grading on $\CFDD(W)$ induced from $\CFD(W_\dr)$ by the induction functor.  Again, note that $\gr_{DD}$ depends on a choice of order of $F_L$ and $F_R$.

\subsection{Duality}\label{subsec:dualityrevisited} The bimodules described above also give rise to functors, via pairing, that describe the duality between type D structures and type A structures. Consider the {\it identity bimodule} $\CFAA(\mathbb{I})\in\sfMod_{\cA(-F),\cA(F)}$ associated with the product $F\times I$. This is described in detail in \cite{LOTbimodules}; see also Figure \ref{fig:sample-build} in Section \ref{subsec:hodgeduality}.  

This object may be regarded as a functor \[\CFAA(\mathbb{I)}\boxtimes - \co{}^{\cA(-F)}\sfMod \to\sfMod_{\cA(F)}\] inducing a powerful duality: 
\begin{theorem}[Lipshitz-Ozsv\'ath-Thurston {\cite[Corollary 1.1]{LOTbimodules}}]\label{thm:AA-duality}
The functor $\CFAA(\mathbb{I})\boxtimes -$ induces an equivalence of categories \[{}^{\cA(-F,i)}\sfMod \simeq\sfMod_{\cA(F,-i)}\] for each strands grading $i$. 
\end{theorem}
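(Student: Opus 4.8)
The plan is to produce an explicit quasi-inverse to $\CFAA(\mathbb{I})\boxtimes-$ and to recognise both composites as identity functors by invoking the bimodule pairing theorem, after which the $\Ztwo$-graded refinement will follow from the behaviour of the gradings under pairing established above. The candidate quasi-inverse is the functor $\sfMod_{\cA(F)}\to{}^{\cA(-F)}\sfMod$ determined by the type DD identity bimodule $\CFDD(\mathbb{I})$ of $F\times I$, with its two ends parameterized so that the actions match up; box tensoring a right $\cA_\infty$-module over $\cA(F)$ with this DD bimodule produces a type D structure over $\cA(-F)$.

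First I would observe that gluing two copies of $F\times I$ along a common boundary copy of $F$ yields $F\times I$ again, and that the arced bordered Heegaard diagram obtained by stacking the two identity diagrams and then deleting a neighbourhood of the connecting arc presents the strongly bordered $F\times I$. The bimodule pairing theorem of \cite{LOTbimodules} then gives homotopy equivalences $\CFAA(\mathbb{I})\boxtimes\CFDD(\mathbb{I})\simeq\CFDA(F\times I)$ and, with the roles of the two ends reversed, $\CFDD(\mathbb{I})\boxtimes\CFAA(\mathbb{I})\simeq\CFDA(F\times I)$. Since $\CFDA(F\times I)$ is the identity type DA bimodule and box tensoring with an identity bimodule is naturally homotopy equivalent to the identity functor on the relevant category of modules bounded up to homotopy, the two composites $\bigl(\CFAA(\mathbb{I})\boxtimes-\bigr)\circ\bigl(\CFDD(\mathbb{I})\boxtimes-\bigr)$ and its reverse are naturally isomorphic to the identity functors. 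This establishes the ungraded equivalence ${}^{\cA(-F)}\sfMod\simeq\sfMod_{\cA(F)}$.

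Next I would account for the two gradings carried by these categories. For the strands grading: the identity bimodule pairs a generator occupying a $k$-element subset $\bfs\subset[2k]$ of the arcs on one side with the generator forced to occupy the complementary subset $[2k]\setminus\bfs$ on the other, and since the strands grading records $|\bfs|-k$, which changes sign under complementation (together with the orientation reversal relating $\cA(-F)$ to $\cA(F)$), the equivalence carries the summand ${}^{\cA(-F,i)}\sfMod$ into $\sfMod_{\cA(F,-i)}$; compatibility of $\boxtimes$ with these summand decompositions shows that it restricts to an equivalence there, with quasi-inverse sending $-i$ back to $i$. For the $\Ztwo$-grading: by Proposition \ref{prop:respectspairing} together with its AA and DD analogues (Remark \ref{rem:CFDApairing}), $\gr_{AA}$ of a box tensor product equals the sum of the gradings of the factors up to a fixed overall shift, so $\CFAA(\mathbb{I})\boxtimes-$ shifts the $\Ztwo$-grading by a constant and therefore descends to an equivalence of $\Ztwo$-graded categories.

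The main obstacle is the identification $\CFAA(\mathbb{I})\boxtimes\CFDD(\mathbb{I})\simeq\CFDA(F\times I)$ with the identity DA bimodule, which is the technical core of \cite{LOTbimodules}: it requires both the bimodule pairing theorem and the recognition of $\CFDA$ of a product as the identity, and in practice is carried out through an explicit model for the identity bimodules. A secondary subtlety is the bookkeeping: ensuring that all bimodules involved are bounded up to homotopy so that the box tensor products are defined and associative, pinning down the overall $\Ztwo$-shift, and checking that the $i\mapsto-i$ reindexing is consistent between the two composites so that the equivalence is genuinely strands-graded.
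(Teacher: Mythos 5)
The paper does not supply a proof of this statement: it is quoted directly from \cite[Corollary~1.1]{LOTbimodules}, as the theorem header indicates, so there is no in-paper argument to compare against. Your reconstruction follows the standard line of reasoning in that reference: take $\CFDD(\mathbb{I})$ as quasi-inverse, invoke the bimodule pairing theorem to identify both composites with $\CFDA(F\times I)$, recognize the latter as the identity DA bimodule, and track the strands grading through idempotent complementation (a generator with $\cA(F)$-idempotent $I(\bfs)$, $|\bfs|=k+i$, pairs with the complementary $\cA(-F)$-idempotent of size $k-i$, giving the $i\mapsto -i$ flip). You also correctly flag, and do not attempt to reprove, the real technical core — the homotopy equivalence $\CFAA(\mathbb{I})\boxtimes\CFDD(\mathbb{I})\simeq\CFDA(F\times I)\simeq$ identity — which is exactly the content of \cite{LOTbimodules}; so your proposal, like the paper, ultimately defers to that reference rather than establishing the result from scratch.

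One point worth making explicit: the categories ${}^{\cA(-F,i)}\sfMod$ and $\sfMod_{\cA(F,-i)}$ in the present paper are, per Section~\ref{sec:catofmod}, $\Ztwo$-graded, whereas LOT's Corollary~1.1 is stated with their noncommutative $G$-set gradings. Promoting the equivalence to the $\Ztwo$-graded setting is therefore a genuine (if small) gap between the cited result and the theorem as stated here. Your appeal to Proposition~\ref{prop:respectspairing} and its analogues to show that $\CFAA(\mathbb{I})\boxtimes-$ shifts $\gr$ by a constant is a valid way to close it. The paper itself handles this silently, via Theorem~\ref{thm:allthegradings} (proved in Appendix~\ref{sec:gradings}), which identifies $\gr$ with a reduction of the noncommutative grading that LOT's equivalence already respects. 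Either route works; yours is arguably more self-contained given the results already stated in Section~\ref{sec:background}.
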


We remark that the duality in Theorem~\ref{thm:AA-duality} can be thought of as a categorification of Hodge duality.  This is made precise in Theorem~\ref{thm:hodge} below and will be a key step in the proof of Theorem~\ref{thm:main}.  
 
\subsection{Hochschild homology}\label{sub:HH}
Given $W$ with two boundary components parameterized by $-\cZ$ and $\cZ$ respectively, we can form a chain complex $CH_*(\CFDA(W))$ with generators of the form $\bfx^\circ = \iota \cdot \bfx \cdot \iota$ for $\bfx \in \CFDA(W)$ and $\iota$ a minimal idempotent. The grading of $\bfx^\circ$ is $\gr_{DA}(\bfx)+i$, where $i$ denotes the strands grading of $\bfx$. The differential on this complex is defined in \cite[Section 2.3.5]{LOTbimodules}. We denote the homology of $CH_*(\CFDA(W))$ by $HH_*(\CFDA(W))$.  An analogous construction holds more generally for arbitrary bounded type DA bimodules over $\cA(\cZ)$, and this construction agrees with Hochschild homology \cite[Proposition 2.3.54]{LOTbimodules}. 

By \cite[Theorem 14]{LOTbimodules}, we have that
	\[ \HFK(W^\circ, K) \cong HH_*(\CFDA(W)) \]
where $(W^\circ,K)$ is the generalized open book obtained by identifying $F(-\cZ)$ with $F(\cZ)$ and $K$ is the binding; see \cite[Construction 5.20]{LOTbimodules} for the definition of a generalized open book. In the case that $W$ is a mapping cylinder, this is an honest open book. Moreover, the above isomorphism identifies the strands grading $i$ with the Alexander grading on knot Floer homology, i.e.,
		\[ \HFK(W^\circ, K,i) \cong HH_*(\CFDA(W,i)), \]
and identifies the relative Maslov grading modulo 2 with the $\Ztwo$-grading defined above.

\subsection{Fine print} In order to introduce the relevant objects in a streamlined and accessible manner, we have opted to defer the majority of the associated proofs, such as showing the grading functions such as $\gr_A$, $\gr_D$, and $\gr_{DA}$ are $\Ztwo$-gradings which are invariants, to Appendix \ref{sec:gradings}. We prove the desired results by identifying these with a generalization of Petkova's reduction \cite[Section 3]{Petkovadecat} of the non-commutative gradings of \cite[Chapter 10]{LOT}. Impatient and/or suspicious readers may consult Figure \ref{fig:grading-summary} for an overview of how this identification is ultimately established. 

\begin{figure}[ht!]
\begin{tikzpicture}
\node at (2,3) {$\cA$};\node at (6,3) {$\CFA$};\node at (10,3) {$\CFAA$};
\node at (6,1) {$\CFD$};\node at (10,1) {$\CFDD$};\node at (14,1) {$\CFDA$};

\draw [ultra thick] (1.25,2.5) rectangle (2.75,3.5);
\draw[ultra thick, ->,>=latex] (2.75,3) -- (5.25,3);
\draw [ultra thick] (5.25,2.5) rectangle (6.75,3.5);
\draw[ultra thick, ->,>=latex] (6.75,3) -- (9.25,3);\node at (8,3.375) {$\operatorname{Res}$};
\draw [ultra thick] (9.25,2.5) rectangle (10.75,3.5);

\draw [ultra thick] (5.25,0.5) rectangle (6.75,1.5);
\draw[ultra thick, ->,>=latex] (6.75,1) -- (9.25,1);\node at (8,1.375) {$\operatorname{Ind}$};
\draw [ultra thick] (9.25,0.5) rectangle (10.75,1.5);
\draw[ultra thick, ->,>=latex] (10.75,1) -- (13.25,1);\node at (12,1.375) {$\boxtimes\CFAA(\bbI)$};
\draw [ultra thick] (13.25,0.5) rectangle (14.75,1.5);

\draw[ultra thick, ->,>=latex] (6,2.5) -- (6,1.5);\node at (5.625,2) {$\boxtimes$};
\end{tikzpicture}
\caption{A graphical summary of the operations used in the identification of the two $\Z/2\Z$-gradings in bordered Floer homology; details in Appendix \ref{sec:gradings}.}\label{fig:grading-summary}
\end{figure}
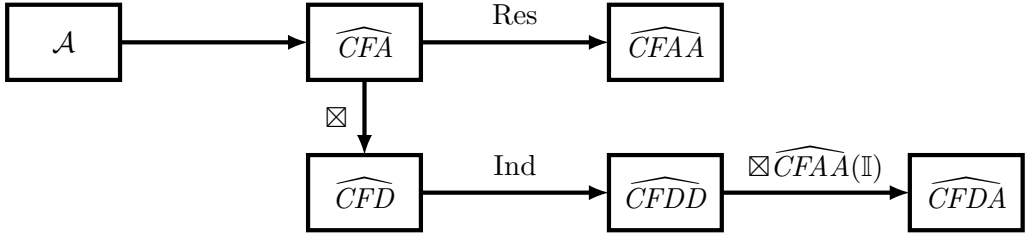

\begin{remark}
We will not need to make use of $\spinc$-structures on our three-manifolds throughout the body of the paper, only in Appendix~\ref{sec:gradings}.  Therefore, we will not make mention of these until then.
\end{remark}

\subsection{Examples}\label{sec:examples}
Before turning to the proofs of our main results, we consider some examples of the objects introduced above; these will establish Theorem \ref{thm:stronger} and Theorem \ref{thm:same}. In particular, we consider the strength of the bimodule $\CFDA$ as an invariant of a Seifert surface complement.  

To make this precise, let $K$ be a knot in an integer homology sphere $Y$ with Seifert surface $F^\circ$.  Let $W = (Y \smallsetminus \nu (F^\circ)) \cup D^2 \times I$ and choose parameterizations of the boundary of $\partial W$ such that gluing according to this parameterization gives $Y_0(K)$.  Note that by construction, both boundary components are parameterized by the same pointed matched circle.  As in Section \ref{sec:linearalgebra}, $F$ denotes the resulting capped off surface in $Y_0(K)$, and thus we can think of $W$ as a cobordism from $F$ to itself.    We define an arc $\bfz$ on $W$ given by $\{\text{pt}\} \times I \subset D^2 \times I$.  We write $\CFDA(K,F^\circ)$ to mean $\CFDA(W)$ with this additional data.   


We begin by showing that this invariant can distinguish certain knots with isomorphic knot Floer homology and Seifert forms. Let $K_1$ be the pretzel knot $P(3,5,3,-2)$ and let $K_2$ be $P(5,3,3,-2)$; these are illustrated in Figure \ref{fig:HFK}. The following proposition establishes Theorem~\ref{thm:stronger}.
\begin{proposition}\label{prop:noparameterization}
	The fibered knots $K_1$ and $K_2$ have isomorphic knot Floer homology and Seifert form, but 
	\[ \CFDA(K_1,F_1^\circ) \not\simeq\CFDA(K_2,F_2^\circ) \] 
for all choices of parametrization on $W_1$ and $W_2$, where $F_i^\circ$ is the unique minimal genus Seifert surface for $K_i$ and $W_i$ is as defined above.
\end{proposition}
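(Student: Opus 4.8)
The plan is to exploit fiberedness in an essential way. First I would note that, $K_1$ and $K_2$ being fibered (this is known for these pretzel knots, and also falls out of the $\HFK$ computation below), the minimal genus Seifert surface $F_i^\circ$ is unique --- it is the fiber --- so the bordered manifold $W_i=(S^3\smallsetminus\nu(F_i^\circ))\cup D^2\times I$ is well defined. Although $W_i$ is abstractly a product $F\times I$, the parameterizations of its two boundary components that are required in order to recover $Y_0(K_i)$ by gluing present $W_i$, as a strongly bordered cobordism, as the mapping cylinder $M_{\phi_i}$ of the monodromy $\phi_i$ of $K_i$; hence $\CFDA(K_i,F_i^\circ)\simeq\CFDA(M_{\phi_i})$. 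The remaining freedom --- changing the parameterization subject to still reconstructing $Y_0(K_i)$ --- couples the changes of marking on the two ends, and so amounts precisely to replacing $\phi_i$ by a conjugate $g\phi_i g^{-1}$. Thus, as the parameterization ranges over all admissible choices, $\CFDA(K_i,F_i^\circ)$ ranges over $\{\CFDA(M_{g\phi_ig^{-1}}):g\in\mathrm{MCG}(F)\}$.

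Next I would deduce that an isomorphism of these bimodules forces the monodromies to be conjugate. Since $\CFDA(M_\psi)\boxtimes\CFDA(M_{\psi'})\simeq\CFDA(M_{\psi\psi'})$ and $\CFDA(M_{\mathrm{id}})$ is the identity bimodule, an isomorphism $\CFDA(M_{g_1\phi_1g_1^{-1}})\simeq\CFDA(M_{g_2\phi_2g_2^{-1}})$ yields a mapping class $\psi$ with $\CFDA(M_\psi)\simeq\CFDA(M_{\mathrm{id}})$; because the categorical mapping class group action of Lipshitz--Ozsv\'ath--Thurston is faithful \cite{LOTfaithful} (its decategorification being Corollary~\ref{cor:MCGZ}), this forces $\psi=\mathrm{id}$, i.e.\ $\phi_1$ and $\phi_2$ are conjugate in $\mathrm{MCG}(F)$. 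Conjugate monodromies give equivalent open books, hence an (orientation-preserving) diffeomorphism of pairs $(S^3,K_1)\cong(S^3,K_2)$; in particular $K_1=K_2$.

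It then remains to verify the two concrete facts that make the statement nonvacuous: (a) $\HFK(K_1)\cong\HFK(K_2)$ and $F_1^\circ$, $F_2^\circ$ have congruent Seifert forms; and (b) $K_1\ne K_2$. For (a): $K_2=P(5,3,3,-2)$ is obtained from $K_1=P(3,5,3,-2)$ by interchanging two adjacent pretzel tangles, hence the two knots are Conway mutants, so their Alexander polynomials and signatures agree; one checks by direct computation (the knots being small) that their $\HFK$ groups coincide --- this computation also shows rank one in the top Alexander grading, reconfirming fiberedness and that $F_i^\circ$ realizes the genus --- and writing down the Seifert matrices from the two pretzel diagrams in corresponding bases exhibits them as congruent. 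For (b): one must check that $K_1$ and $K_2$ are genuinely distinct; since every mutation-invariant quantity agrees (Alexander polynomial, signature, $\HFK$, and --- by Ruberman's theorem --- even the hyperbolic volume), this is most cleanly confirmed by computer, for instance by distinguishing the fundamental groups of, or the hyperbolic structures on, the two knot complements. Given (a) and (b), the chain of implications above produces a contradiction, so $\CFDA(K_1,F_1^\circ)\not\simeq\CFDA(K_2,F_2^\circ)$ for every choice of parameterization.

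The real obstacle here is constructing the example, not carrying out the deduction: one needs a pair of fibered knots whose entire Floer-visible decategorification --- Alexander polynomial, Alexander module, Seifert form, and the full $\HFK$, hence everything extracted through Hochschild homology and Donaldson's TQFT as in Theorem~\ref{thm:main} --- agrees, yet which are distinct, so that the monodromy data packaged in $\CFDA$ is forced to differ. On the theoretical side, the only delicate point is the dictionary between ``all admissible parameterizations'' and ``all conjugates of the monodromy,'' together with the application of categorical faithfulness in the closed-surface (pointed matched circle) setting; with that dictionary in hand, fiberedness does the rest.
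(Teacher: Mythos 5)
Your proof takes essentially the same approach as the paper's: both argue that fiberedness makes each $W_i$ a mapping cylinder, and then apply faithfulness of the categorified mapping class group action \cite{LOTfaithful} to force the monodromies (hence the knots) to agree, contradicting $K_1\ne K_2$. The paper's writeup cites Kirk--Livingston for equality of Seifert forms (via positive mutation), Kawauchi for distinctness, Gabai for fiberedness, and Moore's computation via Droz's program for the $\HFK$ agreement rather than proposing direct computation or computer checks, and it is terser about the point you correctly make explicit --- that the admissible reparameterizations of $W_i$ (those for which gluing recovers $Y_0(K_i)$) correspond exactly to conjugation of the monodromy.
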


\begin{figure}[htb!]
\vspace{5pt}
\includegraphics[scale=0.5]{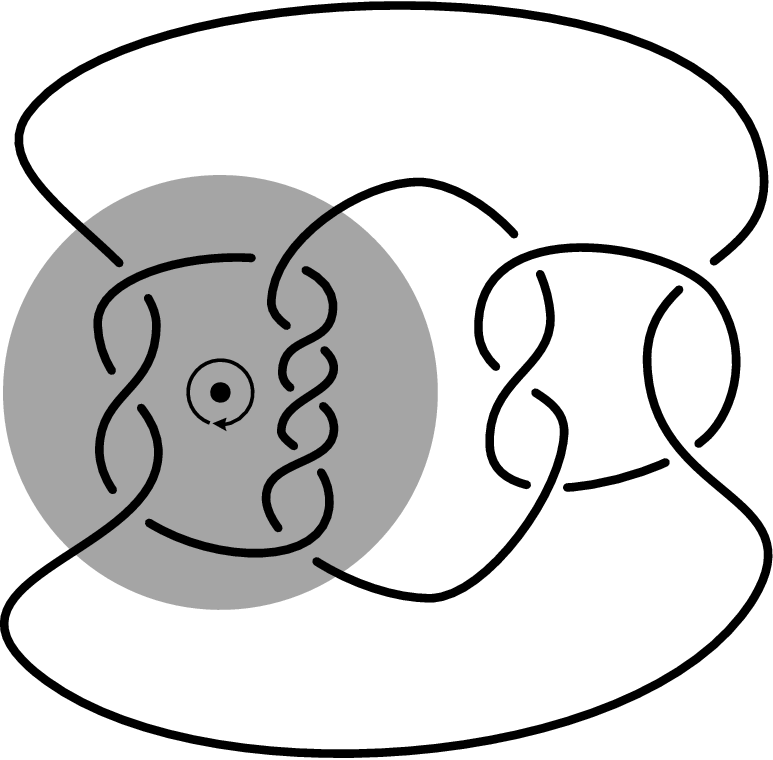}\quad
\labellist
\small \hair 2pt
\endlabellist
\centering
\begin{tikzpicture}[scale=0.6]

	\begin{scope}[thin, gray]
		\draw [<->] (-6, 0) -- (6, 0);
		\draw [<->] (0, -10) -- (0, 2);
	\end{scope}
	\draw (6, 0) node [right] {$A$};
	\draw (0, 2) node [above] {$M$};
	
	\draw[step=1, black!10!white, very thin] (-5.9, -9.9) grid (5.9, 1.9);

	\draw (-5, -9) node {$\F$};
	\draw (-4, -8) node {$\F^3$};
	\draw (-3, -7) node {$\F^4$};
	\draw (-2, -6) node {$\F^4$};
	\draw (-1, -5) node {$\F^4$};
	\draw (0, -4) node {$\F^4$};
	\draw (1, -3) node {$\F^4$};
	\draw (2, -2) node {$\F^4$};
	\draw (3, -1) node {$\F^4$};
	\draw (4, 0) node {$\F^3$};
	\draw (5, 1) node {$\F$};
	
	\draw (-1, -4) node {$\F$};
	\draw (0, -3) node {$\F$};
	\draw (1, -2) node {$\F$};
\end{tikzpicture}
\caption{A diagram for the $(3,5,3,-2)$-pretzel knot shown with a mutating disk illustrating that the $(5,3,3,-2)$-pretzel knot may be obtained by a positive mutation (left), and the homology $\widehat{HFK}\big(P(3, 5, 3, -2)\big) \cong \widehat{HFK}\big(P(5, 3, 3, -2)\big)$ (right).}
\label{fig:HFK}
\end{figure}
\begin{proof}
The knots $K_1$ and $K_2$ are distinct (see, for example \cite[Theorem 2.3.1]{Kawauchi}) and each $K_i$ is fibered (this follows from Gabai's classification of pretzel knots \cite{Gabai:Detecting}). Using the program of Droz \cite{Droz}, Allison Moore computed the knot Floer homology for each of these knots to be the bigraded vector space shown in Figure \ref{fig:HFK} \cite{Allison}. 

To see that the two knots have the same Seifert form, we notice that they are positive mutants, i.e., they are related by a mutation that does not require any changes in orientation. Then by \cite[Theorem 2.1]{KirkLivingston}, it follows that for an appropriate choice of basis, their Seifert matrices are the same.

Now, suppose for contradiction that there exist parameterizations of the boundaries of $W_1$ and $W_2$ such that $\CFDA(K_1,F_1)$ and $\CFDA(K_2,F_2)$ are homotopy equivalent. Clearly, if these modules are homotopy equivalent, each $W_i$ must be parameterized by the same pointed matched circle $\cZ$.  Now, because the modules are homotopy equivalent, by work of \cite[Theorem 1]{LOTfaithful}, since $W_1$ and $W_2$, as bordered three-manifolds, are each the mapping cylinder of a self-diffeomorphism $\overline{\phi}_i$ of $F(\cZ)$, we can conclude that $\overline{\phi}_1$ and $\overline{\phi}_2$ are isotopic rel $D^2$.  This implies that $K_1$ and $K_2$ are isotopic, which is a contradiction.  
\end{proof}

\begin{remark}
In general, the faithful linear-categorical action of the mapping class group defined in \cite{LOTfaithful} shows that $\CFDA(K,F^\circ)$, considered modulo homotopy and conjugation by the mapping class groupoid, is a complete invariant of the pair $(K,F^\circ)$ when $K$ is a fibered knot and $F^\circ$ is the fiber surface.  Recall that the fiber surface for a knot is unique, so this technique cannot be used to construct examples of non-isotopic Seifert surfaces for the same knot.   
\end{remark}

Given a pattern knot $P$ and a companion knot $C$, write $P(C)$ for the knot resulting from the satellite  of $C$ by the pattern $P$. Towards the proof of Theorem ~\ref{thm:same}, we will be interested in an infinite family of knots of the form  $K_i=P(C_i)$, for $i\in\Z$, where $\{C_i\}_{i\in\Z}$ is an infinite family of thin\footnote{See \cite{MOQA} for a definition of thinness.} knots with identical knot Floer homology. The existence of such a family $\{C_i\}_{i\in\Z}$ is provided by \cite[Proposition 11]{GW2013} or by  \cite[Section 6.4]{HW2014}. (Note that the latter produces an infinite family for which $\HFK(C_i)\cong\HFK(6_1)$.) Since these $C_i$ have thin knot Floer homology they must have homotopy equivalent $\mathit{CFK}^-$ \cite[Theorem 4]{Petkova}. The knots $C_i$ are distinct for all $i\in\Z$ by considering the Turaev torsion of their branched double covers \cite[Section 4]{GW2013} or by calculating their Khovanov homology \cite[Section 3]{HW2014}. The following proposition establishes Theorem~\ref{thm:same}.  

\begin{proposition}
Given any infinite family of distinct thin knots $\{C_i\}_{i\in\Z}$ with isomorphic knot Floer homology, let $P$ be a non-trivial nullhomologous pattern knot, and set $K_i=P(C_i)$ for $i\in\Z$.  Then the $K_i$ form  an infinite family of distinct knots  for which there is a choice of Seifert surface $F_i^\circ$ for $K_i$ and a choice of parametrization for $W_i=(S^3 \smallsetminus \nu (F_i^\circ)) \cup D^2 \times I$ such that \[ \CFDA(K_i,F_i^\circ) \simeq \CFDA(K_j,F_j^\circ) \] for all $i$ and $j$.
\end{proposition}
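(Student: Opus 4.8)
The plan is to realize every $W_i$ as a single bordered piece, depending only on the pattern $P$, glued onto the exterior of the companion $C_i$, and then to invoke the pairing theorem together with the fact that the bordered invariant of a knot complement is determined by the filtered chain homotopy type of $\mathit{CFK}^-$.

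First I would set up the topology. Realize $P$ in a standardly embedded, untwisted solid torus $V = S^1 \times D^2$, so that $K_i = P(C_i)$ corresponds to re-embedding $V$ as $\nu(C_i) \subset S^3$ using the Seifert framing of $C_i$. Since $P$ is a nullhomologous pattern it has winding number $0$ in $V$, hence it bounds a Seifert surface $F_P^\circ$ contained in the interior of $V$; I take $F_i^\circ$ to be the corresponding Seifert surface for $K_i$ inside $\nu(C_i)$, which has a fixed genus. Winding number $0$ also forces the Seifert longitude of $K_i$ to be the image of the Seifert longitude of $P$, so $0$-framed surgery on $K_i$ decomposes along $T^2 = \partial V$ as
\[ Y_0(K_i) = \bigl(S^3 \smallsetminus \nu(C_i)\bigr) \cup_{T^2} V_0, \]
where $V_0$ denotes the result of $0$-framed surgery on $P$ inside $V$. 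The capped-off surface $F = F_P^\circ \cup D^2$ lies in the interior of $V_0$ and is non-separating there (since it is non-separating in the closed manifold $Y_0(K_i)$, whose $H_2$ it generates), so cutting $V_0$ along $F$ produces a connected manifold $W_P$ with three boundary components $T^2$, $F$, and $F$, carrying the framed arc $\bfz = \{\mathrm{pt}\} \times I \subset D^2 \times I$. The key point is that $(W_P, \bfz)$, together with its boundary parameterizations — the two copies of $F$ parameterized compatibly by a pointed matched circle coming from $F_P^\circ$, and $T^2$ parameterized by the meridian and Seifert longitude of $C_i$ — depends only on $(V, P, F_P^\circ)$ and not on $i$, while unwinding the definitions gives
\[ W_i = \bigl(S^3 \smallsetminus \nu(C_i)\bigr) \cup_{T^2} W_P. \]

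Next I would pass to bordered invariants. By the pairing theorem for bordered manifolds with several boundary components \cite{LOTbimodules}, $\CFDA(K_i, F_i^\circ) = \CFDA(W_i)$ is the box tensor product, over $\cA(T^2)$, of the fixed bordered invariant of the three-boundary-component manifold $(W_P, \bfz)$ with the bordered module of $S^3 \smallsetminus \nu(C_i)$ in the fixed boundary parameterization. By the explicit description of the bordered invariant of a knot complement \cite[Chapter 11]{LOT}, this last invariant depends only on the filtered chain homotopy type of $\mathit{CFK}^-(C_i)$. Since the $C_i$ are thin knots with isomorphic knot Floer homology, they all have the same $\mathit{CFK}^-$ by \cite[Theorem 4]{Petkova}; hence the companion invariants are homotopy equivalent, and because $-\boxtimes-$ preserves homotopy equivalences we obtain $\CFDA(K_i, F_i^\circ) \simeq \CFDA(K_j, F_j^\circ)$ for all $i, j$. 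Finally, to see that the $K_i$ are distinct, I would argue that non-triviality of $P$ makes $\partial V$ an essential torus in $S^3 \smallsetminus \nu(K_i)$: it is incompressible since $P$ is essential in $V$ and $C_i$ is knotted, and it is not boundary-parallel since $P$ is not isotopic to the core of $V$ (its winding number is $0$, not $1$). Uniqueness of the companionship (JSJ) decomposition then recovers $C_i$ from $K_i = P(C_i)$, so distinct $C_i$ yield distinct $K_i$.

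The main obstacle is the topological bookkeeping in the first step: verifying that winding number zero makes the Seifert framings of $P$ and of $K_i$ correspond — so that the $D^2 \times I$ piece is literally the same in every $W_i$ — that $F$ is non-separating in $V_0$, and, most delicately, that all boundary parameterizations can be chosen uniformly in $i$ so that a single fixed piece $W_P$ appears in the decomposition and the pairing theorem applies verbatim. Once that is in place, the bordered-Floer content is a direct application of the pairing theorem together with the cited computation of $\CFD$ of a knot complement from $\mathit{CFK}^-$, and the distinctness of the $K_i$ is standard satellite theory.
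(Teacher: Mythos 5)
Your approach is essentially the paper's: both decompose $W_i$ along $\partial V$ into a fixed three-boundary-component piece (your $W_P$ is the paper's $X = (S^1 \times D^2 \smallsetminus \nu(F^\circ)) \cup D^2 \times I$) glued to $S^3 \smallsetminus \nu(C_i)$, then pair and use that $\CFD$ of the companion complement depends only on $\mathit{CFK}^-(C_i)$, which is constant over $i$ by \cite[Theorem 4]{Petkova}. The one small imprecision is the citation for the multi-boundary pairing step: \cite{LOTbimodules} only treats two boundary components, so the paper instead invokes a trimodule $\widehat{\mathit{CFDAA}}(X)$ following \cite{Hanselman,DLM} (or bordered sutured Floer homology \cite{Zarev}) together with an analogue of \cite[Theorem 12]{LOTbimodules}; your JSJ argument for distinctness of the $K_i$ is a correct supplement that the paper leaves implicit.
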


\begin{proof}
	Fix a non-trivial nullhomologous pattern $P \subset S^1 \times D^2$. Note that for such a pattern $P$, there exists a Seifert surface $F^\circ \subset S^1 \times D^2$ for $P$. Let $P(C_i)$ be the satellite with pattern $P$ and companion $C_i$. Then the image $P(F^\circ)$ gives a Seifert surface for $P(C_i)$. We denote this Seifert surface by $F_i^\circ$. Let $W_i=(S^3 \smallsetminus \nu (F_i^\circ)) \cup D^2 \times I$.
We will show that there is a choice of parametrization on the boundary such that $\CFDA(W_i)$ is independent of $i$.

Consider $X=(S^1 \times D^2 \smallsetminus \nu (F^\circ)) \cup D^2 \times I$, attached such that $X$ is the exterior of the capped-off Seifert surface in 0-surgery on $P$.  Observe that $X$ has three boundary components.    Roughly, the idea is that the bordered invariants for $S^3 \smallsetminus \nu (C_i)$ will be identical for all $i$, and thus by an appropriate pairing theorem, we obtain identical bordered invariants when we glue $X$ to $S^3 \smallsetminus \nu (C_i)$. We make this more precise below.

We begin with $X$ and a fixed parametrization on $F \times \{0\}$ and $F \times \{1\}$, where $F$ denotes the capped off Seifert surface for $P$, together with a path $\gamma$ in $X_i$ from the basepoint $z_L$ of the parameterization for $F \times \{0\}$ to the basepoint $z_R$ of the parameterization for $F \times \{1\}$ .  We also parametrize the boundary component of $X$ coming from $\d (S^1 \times D^2)$. We denote the basepoint of this parametrization by $z_M$, and we take a path $\eta$ from $z_M$ a point in $\gamma$. Associated with this data is a trimodule $\widehat{\mathit{CFDAA}}(X)$, where $F \times \{0\}$ is treated in a type D manner, and $F \times \{1\}$ and $\d (S^1 \times D^2)$ are treated in a type A manner. See \cite{Hanselman} and \cite{DLM} for discussions of trimodules. Alternatively, one could use bordered sutured Heegaard Floer homology \cite{Zarev}.

By \cite[Theorem A.11]{LOT}, we have that $\CFD(S^3 \smallsetminus \nu (C_i))$, where we parameterize the boundary using a meridian and a 0-framed longitude, is completely determined by $\mathit{CFK}^-(C_i)$. Note that if we glue $S^3 \smallsetminus \nu (C_i)$ to $X$ along $\d(S^1 \times D^2)$ according to the given parametrization, the resulting bordered manifold is $W_i$. We consider the tensor product $\widehat{\mathit{CFDAA}}(X) \boxtimes \CFD(S^3 \smallsetminus \nu (C_i))$, where the pairing corresponds to identifying $\d(S^1 \times D^2)$ and $\d (S^3 \smallsetminus \nu (C_i))$, which is homotopy equivalent to $\CFDA(W_i)$ by an analogue of \cite[Theorem 12]{LOTbimodules}.  Since the homotopy type of $\CFD(S^3 \smallsetminus \nu (C_i))$ is independent of $i$ with the given parameterizations, it follows that $\CFDA(W_i)$ is as well.
\end{proof} 

\section{Categorification}
\label{sec:decategorification}
With this background in place, from now on we treat all differential graded objects as $\Ztwo$-graded.  We briefly recall some algebraic preliminaries before turning to the proofs of Theorem \ref{thm:main}, Theorem \ref{thm:donaldson}, and Theorem~\ref{thm:seifert}.  
These theorems rely on an algebraic formalism, described presently,  which will allow us to extend Petkova's  work (see Theorem \ref{thm:kernel}) and categorify Hodge duality in this context (see Theorem \ref{thm:hodge}). 

\subsection{Algebraic preliminaries}\label{sub:collect}
The dg-algebras $\cA(F)$ over which the bordered invariants are defined satisfy particularly nice properties, as described in \cite[Section 2]{LOTbimodules}. The results proved below, while in some sense general, depend on these properties in an essential way. As such, we collect all of the requisite properties here for easy reference.   

A dg-algebra $\cA$ is {\it strictly unital} if there exists an identity element $\bfI\in\cA$ satisfying $\bfI\cdot a=a\cdot\bfI = a$ \cite[Definition 2.1.1]{LOTbimodules}. Observe that $\bfI$ is the sum of the minimal idempotents. 

Such a dg-algebra is naturally an algebra over the idempotent subring $\cI$. An {\it augmentation} of $\cA$ is a dg-algebra map $\epsilon\co\cA\to\cI$ satisfying $\epsilon(\bfI)=\bfI$ \cite[Definition 2.1.1]{LOTbimodules}. We will make use of the augmentation $\epsilon$  specified by the quotient to $\cI$, so $\cA/\ker(\epsilon)\cong\cI$.  Such a dg-algebra $\cA$ is called {\it augmented}. Strictly speaking, we should specify the pair $(\cA,\epsilon)$; since $\epsilon$ has been fixed it will be suppressed from the notation. The notation $\cA_+=\ker(\epsilon)$ will be used below; $\cA_+$ is the {\em augmentation ideal}.
An augmented dg-algebra $\cA$ is called {\em nilpotent} if it has a nilpotent augmentation ideal $\cA_+$, in the sense that $(\cA_+)^n=0$ for some integer $n$ \cite[Definition 2.1.8]{LOTbimodules}.  

All of the above properties are preserved under tensor product. Namely, if $\cA_1$ and $\cA_2$ are strictly unital, augmented, and nilpotent dg-algebras then so is $\cA_1\otimes\cA_2$, where the tensor is taken over $\F$. Note that  $\cA_1\otimes\cA_2$ is viewed as an algebra over $\cI_1\otimes\cI_2$. 

In summary, we restrict attention to dg-algebras that are strictly unital and augmented (compare \cite[Convention 2.1.5]{LOTbimodules}) in the strong sense recorded above for the remainder of this paper.

We follow the setup described by Khovanov \cite{Khovanov2010} to compute $K_0(\cA)$, the Grothendieck group for a dg-algebra $\cA$. Given a dg-algebra $\cA$ form the homotopy category of left dg-modules over $\cA$ and let  $\P(\cA)$ denote the full subcategory of compact, projective modules. Consider the free abelian group $G$ generated by symbols $[A]$ where $A$ is an object in $\P(\cA)$. The Grothendieck group $K_0(\cA)$ is the quotient of $G$ obtained by adding the relation $[A]=[B]+[C]$ whenever there is an exact triangle in $\P(\cA)$ of the form
\[
\begin{tikzpicture}
  \matrix (m) [matrix of math nodes,row sep=2em,column sep=2em,minimum width=2em]
  {
  &A& \\
    B & &C \\
 };
  \path[-latex]
  (m-1-2)  edge (m-2-1)
    (m-2-1) edge node [above] {$\{1\}$}(m-2-3)
    (m-2-3)  edge (m-1-2);
  \end{tikzpicture}
\] where $\{1\}$ indicates that the grading is shifted by one. Note that this identifies the object $A$ with the mapping cone on $B\to C$, up to homotopy. As an immediate consequence, it follows that $[A\{1\}]=-[A]$, that is $[A]$ and its inverse are related by the grading shift.  

Note that a functor $\sfF\co\P(\cA) \to \P(\cA')$ (or more generally between a pair of triangulated categories) induces a homomorphism $K_0(\sfF)$ between Grothendieck groups.  We recall from Section~\ref{sec:background} that we defined $\CFDD(W)$ in terms of $\CFD(W_\dr)$ by an induction functor coming from a dg-algebra morphism.  We point out that more generally, given a dg-algebra morphism $\varphi: \cA_1 \to \cA_2$, one obtains an induction functor $\Ind_\varphi$ between the associated categories of type D modules over the $\cA_i$.  
   
\begin{definition}Two dg-algebras $\cA_1$ and $\cA_2$ are {\em $K_0$-equivalent} if there is a dg-algebra morphism $\varphi$, called a {\em $K_0$-equivalence}, such that $\Ind_\varphi\co \mathcal{P}(\cA_1) \to \mathcal{P}(\cA_2)$ is a $K_0$-equivalence of categories, that is, $K_0(\Ind_\varphi)$ is an isomorphism.  By abuse of notation, we write $K_0(\varphi)$ for the map $K_0(\Ind_\varphi)$.     
\end{definition}
\noindent In particular, any dg-algebra isomorphism is a $K_0$-equivalence.  

Some more notation is required in order to state the next result.  Let $\iota$ be a minimal idempotent of $\cA$.  We define an {\em elementary projective module} $N$ to be one of the form $\cA \iota$, where the differential and grading is precisely the one coming from $\cA$.  In terms of type D structures, this means that considered as a module over the idempotent subalgebra, $N$ is a one-dimensional $\F$-vector space where $\iota$ is the only minimal idempotent which acts non-trivially; further, $N$ is supported in grading 0 and has $\delta^1 \equiv 0$.  We will write $\F \iota$ for this type D structure, even though the action of $\iota$ is on the {\em left}.  This is to make statements like $\Ind_\epsilon(\cA \iota) =\F \iota$ consistent with the placement of the idempotents.  We have an analogous convention for type A structures.    

\begin{lemma}\label{lem:idempotentdecomp}
Let $\cA$ be a differential $\Ztwo$-graded algebra equipped with an idempotent decomposition $\cA_1 \oplus \cA_2$.  Then, $K_0(\cA)$ is canonically isomorphic to $K_0(\cA_1) \oplus K_0(\cA_2)$.  Consequently, if $\cI$ is the idempotent subalgebra of $\cA$, such that $\cI = \oplus^n_{i=1} \F \bfx_i$, where $\bfx_i$ are the minimal idempotents, then $K_0(\cI)$ is the free abelian group with basis $[\F \bfx_i]$.    
\end{lemma}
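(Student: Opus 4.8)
The plan is to reduce the statement to the basic fact that $K_0$ takes direct sums of categories to direct sums of groups, and that for a semisimple algebra $K_0$ is free on the isomorphism classes of its simple (equivalently, elementary projective) modules. First I would observe that the idempotent decomposition $\cA = \cA_1 \oplus \cA_2$ (meaning $\cA = e_1 \cA e_1 \oplus e_2 \cA e_2$ for a pair of orthogonal central idempotents $e_1, e_2$ summing to $\bfI$, where each $e_j$ is a sum of minimal idempotents) induces an equivalence of categories of left dg-modules $\cA\text{-}\mathsf{mod} \simeq \cA_1\text{-}\mathsf{mod} \times \cA_2\text{-}\mathsf{mod}$, sending $M$ to $(e_1 M, e_2 M)$. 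This restricts to an equivalence of the subcategories $\P(\cA) \simeq \P(\cA_1) \times \P(\cA_2)$ of compact projectives, since $\cA\iota = \cA_j \iota$ for $\iota$ a minimal idempotent lying under $e_j$, and every compact projective is a summand of a finite sum of such elementary projectives. An equivalence of triangulated categories carries exact triangles to exact triangles and grading shifts to grading shifts, hence induces an isomorphism on Grothendieck groups; and $K_0$ of a product category is the direct sum of the $K_0$'s (an exact triangle in the product is a pair of exact triangles, so the defining relations split). This gives the canonical isomorphism $K_0(\cA) \cong K_0(\cA_1) \oplus K_0(\cA_2)$.

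For the second assertion, I would apply the first part inductively: if $\cI = \bigoplus_{i=1}^n \F\bfx_i$, then iterating the splitting gives $K_0(\cI) \cong \bigoplus_{i=1}^n K_0(\F\bfx_i)$. Each summand $\F\bfx_i$ is a field (a copy of $\F$ concentrated in grading $0$ with trivial differential), whose homotopy category of compact projective dg-modules is just the category of finite-dimensional graded $\F$-vector spaces with shift; its Grothendieck group is free of rank one, generated by $[\F\bfx_i]$, the class of the rank-one module in grading $0$ (using $[\F\bfx_i\{1\}] = -[\F\bfx_i]$ to absorb shifts). Reassembling, $K_0(\cI)$ is free abelian with basis $\{[\F\bfx_i]\}_{i=1}^n$. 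Here I should be slightly careful that the $[\F\bfx_i]$ on the right-hand side are genuinely the images of the elementary projectives under the canonical inclusion $K_0(\F\bfx_i) \hookrightarrow K_0(\cI)$, which is immediate from the way the equivalence was set up.

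The main obstacle — really the only subtlety — is bookkeeping around the $\Ztwo$-grading: one must make sure the grading shift $\{1\}$ interacts correctly with the decomposition (it does, since the $e_j$ are homogeneous of degree $0$ and the equivalence is grading-preserving), and that the relation $[A\{1\}] = -[A]$ is used consistently so that $K_0$ of a graded field is rank one rather than $\Z[t,t^{-1}]$. I would also note explicitly that the same argument works for type D structures (with $\Ind_\epsilon$-style conventions on the placement of idempotents, writing $\F\iota$ for $\cA\iota$), so that the lemma applies in the form used later in the paper. Beyond that, every step is a routine application of standard properties of Grothendieck groups of triangulated categories, so no genuinely hard input is needed.
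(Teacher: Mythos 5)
Your proof is correct and follows essentially the same route as the paper's: the paper's proof is just the terse version ("the first statement is immediate; $K_0(\cI)=\oplus_i K_0(\F\bfx_i)$ and each $K_0(\F\bfx_i)\cong\mathbb{Z}$"), while you spell out the category-theoretic bookkeeping — the equivalence $\P(\cA)\simeq\P(\cA_1)\times\P(\cA_2)$, its compatibility with triangles and shifts, and the role of $[A\{1\}]=-[A]$ in making $K_0$ of a graded field rank one. Nothing is missing, and no new ideas are introduced; this is simply a more careful exposition of the argument the authors declared immediate.
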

\begin{proof}
The first statement is immediate.  For the second case, we have $K_0(\cI) = \oplus^n_{i=1} K_0(\F \bfx_i)$.  As discussed, $K_0(\F \bfx_i) \cong \mathbb{Z}$, generated by $[\F \bfx_i]$.  The result follows.  
\end{proof}

We are now in a position to state (and outline the proof of) a key result.      

\begin{theorem}\label{thm:decat-general}
Suppose that $\cA$ is a dg-algebra satisfying the properties described above; in particular, $\cA$ is strictly unital and augmented. Then, the augmentation $\epsilon \co \cA \to \cI$ is a $K_0$-equivalence.  In particular, $K_0(\cA)$ is free abelian, generated by the symbols $[\cA\iota]$, where $\iota$ is a minimal idempotent.  
\end{theorem}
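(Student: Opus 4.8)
The plan is to produce a two-sided inverse to $K_0(\epsilon)$ using the inclusion of the idempotent subring. Since $\cI\subset\cA$ is a sub-dg-algebra carrying the zero differential, the inclusion $j\co\cI\hookrightarrow\cA$ is a dg-algebra morphism, and I would work with the associated induction functor $\Ind_j\co\P(\cI)\to\P(\cA)$, which sends the elementary projective $\F\iota$ over $\cI$ to $\cA\otimes_\cI\F\iota=\cA\iota$; dually $\Ind_\epsilon$ sends $\cA\iota$ to $\F\iota$, as noted in Section~\ref{sec:catofmod}. Because induction along dg-algebra morphisms is functorial (the change-of-scalars bimodules compose), $\Ind_\epsilon\circ\Ind_j=\Ind_{\epsilon\circ j}=\Ind_{\operatorname{id}_\cI}$ is the identity on $\P(\cI)$, hence on Grothendieck groups $K_0(\epsilon)\circ K_0(j)=\operatorname{id}_{K_0(\cI)}$. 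In particular $K_0(j)$ is injective and $K_0(\epsilon)$ is surjective.

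The remaining point, and the crux, is that the classes $[\cA\iota]$, as $\iota$ ranges over the minimal idempotents, generate $K_0(\cA)$. Granting this, the image of $K_0(j)$ contains a generating set, so $K_0(j)$ is also surjective, hence an isomorphism, and $K_0(\epsilon)=K_0(j)^{-1}$ follows. To prove the generation statement I would invoke the hypotheses on $\cA$: an object of $\P(\cA)$ may, after homotopy equivalence, be represented by the dg-module $N=\cA\otimes_\cI M$ associated to a bounded type D structure $(M,\delta^1)$ with $M$ a finitely generated $\cI$-module. Boundedness together with the nilpotence of $\cA_+$ yields a finite filtration $M=M_0\supseteq M_1\supseteq\cdots\supseteq M_\ell=0$ by $\cI$-submodules with $\delta^1(M_p)\subseteq\cA_+\otimes_\cI M_{p+1}$; then $\{\cA\otimes_\cI M_p\}$ is a filtration of $N$ by subcomplexes whose associated graded pieces $\cA\otimes_\cI(M_p/M_{p+1})$ carry only the differential $d_\cA\otimes\operatorname{id}$, and are therefore direct sums of (possibly grading-shifted) elementary projectives $\cA\iota$. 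Hence $N$ is a finite iterated mapping cone of elementary projectives, and the exact-triangle relations defining $K_0$ together with the identity $[\cA\iota\{1\}]=-[\cA\iota]$ show that $[N]$ lies in the subgroup generated by the $[\cA\iota]$.

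With both halves in place the conclusion assembles quickly: $K_0(\epsilon)$ is an isomorphism, and by Lemma~\ref{lem:idempotentdecomp} the group $K_0(\cI)$ is free abelian with basis $\{[\F\bfx_i]\}$ indexed by the minimal idempotents. Transporting this basis along $K_0(j)=K_0(\epsilon)^{-1}$ shows that $\{[\cA\bfx_i]\}$ is a $\Z$-basis of $K_0(\cA)$, so $K_0(\cA)$ is free abelian, generated by the classes of the elementary projectives, as claimed.

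I expect the main obstacle to be making the middle paragraph precise: identifying the objects of Khovanov's homotopy category $\P(\cA)$ with bounded type D structures up to homotopy, and extracting from boundedness and the nilpotence of $\cA_+$ the filtration of $M$ with $\delta^1(M_p)\subseteq\cA_+\otimes_\cI M_{p+1}$ on which the iterated-cone description rests. Once this structural input is available, the $K_0$ bookkeeping is routine.
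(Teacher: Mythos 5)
Your proposal is essentially the paper's proof, with two small variations that are worth noting. The paper's outline has the same two pillars you identify: (1) that the elementary projectives $\cA\iota$ generate the triangulated category $\P(\cA)$, and (2) that $K_0(\epsilon)([\cA\iota])=[\F\iota]$ together with Lemma~\ref{lem:idempotentdecomp}. For pillar (1), the paper argues incrementally by repeatedly finding a cycle $\bfx$ in $N$ and forming the triangle $\cA\iota(\bfx)\{\gr(\bfx)\}\to N\to N/\cA\bfx$, while you package the same structural input (boundedness plus nilpotence of $\cA_+$) into a single finite filtration of the type D structure whose associated graded is a sum of shifted elementary projectives; these are two presentations of the same iterated-cone decomposition. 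Where your write-up goes a bit further than the paper is in explicitly constructing the inverse: you introduce the inclusion $j\co\cI\hookrightarrow\cA$ (a dg morphism because, in the paper's strictly unital, augmented setup over $\cI$, the idempotents are closed) and observe $\Ind_\epsilon\circ\Ind_j\simeq\Ind_{\operatorname{id}_\cI}$, so that $K_0(\epsilon)\circ K_0(j)=\operatorname{id}$. This makes the injectivity of $K_0(\epsilon)$ immediate, whereas the paper's outline only states surjectivity and leaves implicit the (easy but nonvacuous) counting argument that a surjection from a group generated by $n$ elements onto $\Z^n$ sending generators to a basis must be an isomorphism. So your proof is a cleaner assembly of the same ideas rather than a genuinely different route, and the ``main obstacle'' you flag in your last paragraph is exactly the part the paper also relegates to an outline citing Petkova and \cite{LOTfaithful}.
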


The proof of this fact follows that of \cite[Theorem 21]{Petkovadecat} (see also \cite[Theorem 3]{LOTfaithful}), where the result is proved in a special case. Thus, we only outline the argument.  
We also remark that this result is well known to experts in various other contexts.  

\begin{proof}[Outline of the proof of Theorem \ref{thm:decat-general}]
The proof has two major steps.  The first is to show that the category $\P(\cA)$ is generated by elementary projective modules, as defined above.  This is achieved by induction using the triangle 
\[
\cA \iota(\bfx)\{\gr(\bfx)\} \to N \to N/\cA \bfx,
\]
where $\bfx$ is a cycle in $N$, the existence of which is guaranteed by the boundedness of the type D structure on $N$.  It follows that the symbols $[\cA \iota]$ generate $K_0(\cA)$ as an abelian group.    

The second step is to consider the induction functor for the augmentation map $\epsilon \co \cA \to \cI$.  Observe that 
\begin{equation}\label{eqn:K0epsilon}
K_0(\epsilon)([\cA \iota]) = [\Ind_\epsilon(\cA \iota)] = [\cI \iota] = [\F \iota].
\end{equation}
We remark that since the type D structure maps vanish on the elementary projective $\cA \iota$, they vanish on $\Ind_{\varphi_\dr}(\cA \iota)$ as well.  By Lemma~\ref{lem:idempotentdecomp}, the symbols $[\F \iota]$ give a basis for the free abelian group $K_0(\cI)$.  Since the symbols $[\cA \iota]$ generate $K_0(\cA)$, it follows that $K_0(\epsilon)$ is surjective, and thus $\epsilon$ is a $K_0$-equivalence.  The latter statement follows from \eqref{eqn:K0epsilon}.              
\end{proof}

\begin{remark}\label{rmk:checkelementary}
For strictly unital, augmented dg-algebras, Theorem~\ref{thm:decat-general} allows us to work with their Grothendieck groups more concretely.  In particular, given a compact projective module $N \in \P(\cA)$, 
in order to understand its image in the Grothendieck group, it suffices to understand its decomposition into elementary projectives.  Further, given a functor $\sfF$ from $\P(\cA_1)$ to $\P(\cA_2)$, in order to understand $K_0(\sfF)$, it suffices to compute $K_0(\sfF)(\cA_1 \iota)$ for the minimal idempotents $\iota$ in $\cA_1$.  
\end{remark}

Finally, we note that for a given bounded type D structure over a strictly unital, nilpotent, and augmented dg-algebra $\cA$, the associated left dg-module is projective according to \cite[Corollary 2.3.25]{LOTbimodules}. Since the type D structure $\CFD(W)$ associated with a bordered three-manifold $W$ is an invariant of $W$ up to homotopy, we will always work with bounded, compact, projective type D structures up to homotopy. This homotopy category of projectives will, replacing the notation from Section~\ref{sec:catofmod}, be denoted by ${}^\cA\sfMod$;  according to the discussion above $K_0({}^\cA\sfMod)$ is well-defined in this setting.  As we will often not distinguish between type D structures and their corresponding dg-modules, we have $K_0({}^\cA\sfMod) = K_0(\cA)$.  Note that other instances of ${}^\cA\sfMod$ in the bordered Floer homology literature also specify projective modules, however not all instances take equivalence up to homotopy as we do here. In particular, our   ${}^\cA\sfMod$ is denoted by $H({}^\cA\sfMod)$ in \cite{LOTbimodules}; we are abusing notation in order to lighten notation somewhat.  We similarly abuse notation and write $\sfMod_{\cA}$ for its homotopy category.  

\subsection{A reformulation in terms of type D structures}\label{subsec:kernel} 
Before going further, we need a quick discussion about gradings.  For each bordered three-manifold $W$ with parameterized boundary, the work of Section~\ref{sec:background} and Appendix~\ref{sec:gradings} equips the bordered Floer invariants associated to $W$ with a differential grading; as an absolute grading, this is not shown to be an invariant, as it depends on many choices.  In order to consider the bordered invariants as an object of the appropriate category (e.g., $^{\cA(\cZ)}\sfMod$), when working with the bordered Floer invariants of $W$, we assume this comes with an arbitrary fixed choice of absolute grading; in Lemma~\ref{lem:CFAAidgr} and its invocations, we will make an explicit choice of Heegaard diagram with orders and orientations of the relevant data, thus determining a particular lift.  Consequently, the decategorification of the bordered Floer invariants of $W$ will only be defined up to sign.  This is compatible with the fact that in Section~\ref{sec:linearalgebra} we have only defined Donaldson's TQFT up to sign.  Therefore, we will abuse notation and write statements like $\FDA(W) = K_0(\CFDA(W)) \in \Hom(\FDA(F_0),\FDA(F_1))$, when we actually mean that these objects agree up to this sign discrepancy.  A similar statement applies for the Pl\"ucker points constructed in Section~\ref{sec:linearalgebra}.  

With this discussion, we are now ready to begin our work towards Theorem~\ref{thm:main}.   A first essential result will be the following theorem, which generalizes a result of Petkova (see Theorem~\ref{thm:inakernel}).

 \begin{theorem}\label{thm:kernel}
Suppose that $W$ is a three-manifold with two parameterized boundary components $F_0$ and $F_1$, with genus $k_0$ and $k_1$ respectively, and an arc $\bfz$ connecting them.  Then, as an element in the Grothendieck group of ${}^{\A(-F_0) \otimes \A(-F_1)}\sfMod$, $[\CFDD(W)]$ is given by 
\[
|H_1(W,\partial W \cup \bfz)| \Lambda^{k_0+k_1} \Big(\ker(i_*\co H_1(\partial W) \to H_1(W) )\Big) \in \Lambda^*H_1(F_0) \otimes \Lambda^*H_1(F_1)
\]  
if $|H_1(W,\partial W \cup \bfz)| < \infty$ and $[\CFDD(W)] = 0$ otherwise.
\end{theorem}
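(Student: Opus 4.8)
The plan is to reduce the statement to Petkova's single-boundary-component computation (Theorem~\ref{thm:inakernel}) by passing to the drilled manifold, and then transport the resulting Grothendieck class through the induction functor used to define $\CFDD$. Recall from Section~\ref{sec:catofmod} that $W_\dr$ is obtained from $W$ by deleting a regular neighbourhood $\nu(\bfz)\cong D^2\times I$ of the connecting arc, so that $W_\dr$ has connected boundary $\cong F_0\# F_1$, and that $\CFDD(W)=\Ind_{\varphi^{\textup{split}}}(\CFD(W_\dr))$ for the splitting morphism $\varphi^{\textup{split}}\co\cA(-(F_0\# F_1))\to\cA(-F_0)\otimes\cA(-F_1)$. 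Since $\CFD(W_\dr)$ is, up to homotopy, a bounded compact projective type D structure (Section~\ref{sub:collect}), passing to Grothendieck groups gives
\[ [\CFDD(W)] \;=\; K_0(\varphi^{\textup{split}})\bigl([\CFD(W_\dr)]\bigr). \]
By Theorem~\ref{thm:inakernel}, inside $K_0(\cA(-(F_0\# F_1)))\cong\Lambda^*H_1(F_0\# F_1)$ the class $[\CFD(W_\dr)]$ equals $|H_1(W_\dr,\partial W_\dr)|\cdot\Lambda^{k_0+k_1}\bigl(\ker(i_*\co H_1(\partial W_\dr)\to H_1(W_\dr))\bigr)$ when that order is finite, and equals $0$ otherwise.

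I would then perform two homological identifications. Writing $A=\partial\nu(\bfz)\cap W_\dr\cong S^1\times I$ for the annular neck, its core is the connected-sum circle of $\partial W_\dr=F_0\# F_1$ and hence is null-homologous there; a Mayer--Vietoris computation for $W=W_\dr\cup_A\nu(\bfz)$ then shows that inclusion induces an isomorphism $H_1(W_\dr)\xrightarrow{\ \sim\ }H_1(W)$. Setting $P=\partial W_\dr\cup\nu(\bfz)=\partial W\cup\nu(\bfz)$, which is homotopy equivalent to $\partial W\cup\bfz$, one has $W/P=W_\dr/\partial W_\dr$ and therefore $H_*(W_\dr,\partial W_\dr)\cong H_*(W,\partial W\cup\bfz)$; in particular these groups are simultaneously finite, with equal order. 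Finally $H_1(\partial W_\dr)=H_1(F_0\# F_1)\cong H_1(F_0)\oplus H_1(F_1)=H_1(\partial W)$, compatibly with the maps to $H_1(W_\dr)\cong H_1(W)$, so $\ker(i_*\co H_1(\partial W_\dr)\to H_1(W_\dr))$ corresponds to $\ker(i_*\co H_1(\partial W)\to H_1(W))$ (both of rank $k_0+k_1$ by ``half lives, half dies''), and $\Lambda^{k_0+k_1}$ of one is carried to $\Lambda^{k_0+k_1}$ of the other.

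It remains to compute the homomorphism $K_0(\varphi^{\textup{split}})$, and by Remark~\ref{rmk:checkelementary} it is enough to evaluate it on the elementary projectives $\cA(-(F_0\# F_1))\,I(\bfs)$. Writing $\bfs=\bfs_0\sqcup\bfs_1$ under the splitting $[2(k_0+k_1)]=[2k_0]\sqcup[2k_0+1,\,2(k_0+k_1)]$ of matched pairs coming from the connected sum, the morphism $\varphi^{\textup{split}}$ sends the minimal idempotent $I(\bfs)$ to $I(\bfs_0)\otimes I(\bfs_1)$, so $\Ind_{\varphi^{\textup{split}}}$ sends $\cA(-(F_0\# F_1))\,I(\bfs)$ to the elementary projective for $I(\bfs_0)\otimes I(\bfs_1)$. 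Combining Petkova's identification \cite[Theorem~1]{Petkovadecat} with Lemma~\ref{lem:idempotentdecomp} (to identify $K_0$ of the tensor-product algebra), this says exactly that $K_0(\varphi^{\textup{split}})$ coincides, up to the sign ambiguity fixed in Section~\ref{subsec:kernel}, with the standard isomorphism $\Lambda^*(H_1(F_0)\oplus H_1(F_1))\xrightarrow{\ \sim\ }\Lambda^*H_1(F_0)\otimes\Lambda^*H_1(F_1)$. Feeding the previous two paragraphs through this map (and using $K_0(\varphi^{\textup{split}})(0)=0$ in the infinite case) yields the asserted formula for $[\CFDD(W)]$.

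The main obstacle is this last step: to match $K_0(\varphi^{\textup{split}})$ with the standard exterior-algebra isomorphism one must pin down Petkova's isomorphism $K_0(\cA(\cZ))\cong\Lambda^*H_1(F(\cZ))$ on the nose, track how it interacts with the (non-commutative) connected-sum ordering convention fixed in Section~\ref{sec:catofmod}, and check that every sign so incurred is already absorbed into the standing sign ambiguity of the bordered decategorification. By comparison, the two homological identifications are routine Mayer--Vietoris and excision arguments, and the reduction to $W_\dr$ is immediate from the definition of $\CFDD$.
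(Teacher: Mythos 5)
Your argument is correct and mirrors the paper's proof step for step: both reduce to $W_\dr$, apply Petkova's Theorem~\ref{thm:inakernel}, compute $K_0$ of the splitting induction functor on elementary projectives (which the paper packages as Proposition~\ref{prop:decatsquare}), and translate the $W_\dr$ data back to $W$ homologically. Your Mayer--Vietoris/excision treatment of that last translation is a bit more explicit than the paper's one-line remark about attaching a $2$-handle along a nullhomologous curve, but the substance is the same.
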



Here we are utilizing the canonical isomorphism between $\Lambda^*H_1(\partial W) \cong \Lambda^*H_1(F_0) \otimes \Lambda^*H_1(F_1)$.  In order to prove Theorem~\ref{thm:kernel}, we begin with a review of Petkova's results for one boundary component.  For a pointed matched circle $\cZ$, there is a natural identification between the idempotents of $\cA(\cZ)$ and a basis for $\Lambda^*H_1(F(\cZ))$ given as follows.  

Recall for an element $j \in [2k]$, we write $M^{-1}(j) = \{a^-_j, a^+_j\}$, where $a^-_j \lessdot a^+_j$.  We let $\gamma_j \in H_1(Z',\bfa)$ denote the oriented segment from $a^-_j$ to $a^+_j$, where $Z'=Z \smallsetminus z$.  For elements of $H_1(Z',\bfa)$ of this form, there is an obvious assignment of these to elements in $H_1(F(\cZ))$.   For $\bfs \subset [2k]$, recall from Section \ref{sec:algebrabackground} that $I(\bfs)$ is the idempotent $a(S, S, \operatorname{id}_S)$ where $M(S)=\bfs$, that is, the idempotent consisting of horizontal strands for each $j \in \bfs$. Let $\bfs=\{j_1, \dots, j_n\}$ where $j_1 < \dots < j_n$. We then assign $I(\bfs)$ to $\gamma_{j_1} \wedge \dots \wedge \gamma_{j_n} \in \Lambda^*H_1(F(\cZ))$ and by abuse of notation write this wedge product as $\bigwedge_{j \in \bfs} \gamma_j$ (that is, we use the prescribed linear order induced by $[2k]$ when taking wedge products).   

Observe that from the above construction, $H_1(F(\cZ))$ comes equipped with an orientation by the ordered basis $\gamma_1,\ldots,\gamma_{2k}$.  Applying the definition of $-\cZ$ from Section~\ref{sec:algebrabackground},  we see that $H_1(-F(\cZ))$ is oriented by the ordered basis $-\gamma_1,\ldots,-\gamma_{2k}$.  Consequently, given a surface parameterized by $\cZ$, $H_1(F(\cZ))$ and $H_1(-F(\cZ))$ are canonically identified, even as oriented vector spaces  (i.e., a homology orientation is determined by the choice of parameterization of a surface and not the orientation).  In order to stay consistent with both Donaldson's TQFT and Petkova's work from \cite{Petkova} on the decategorification of type D structures, we treat $\bigwedge_{j \in \bfs} \gamma_j$ as an element of $\Lambda^*H_1(-F)$ (or equivalently, we treat the homology elements assigned to idempotents of $\cA(-\cZ)$ as elements of $\Lambda^*H_1(F)$).  Of course this choice of sign is simply cosmetic, but this is chosen to create a clearer parallel with the work of Section~\ref{sec:linearalgebra} (see especially Section~\ref{subsec:functorvalued}).  The displeased reader should think of this as counteracting the fact that if $W$ has boundary parameterized by $F$, then $\CFD(W)$ is an object in $^{\cA(-F)}\sfMod$.  

Finally, for a finitely-generated left module $N$ over $\cA(\cZ)$, choose a minimal set of generators $\mfS(N)$ for $N$ as an $\F$-vector space such that each element has a unique minimal idempotent $I(\bfx)$ that acts non-trivially on that generator by the identity.  For an element $\bfx \in \mfS(N)$, let $\overline\bfs_\bfx \subset [2k]$ be such that $I(\overline\bfs_\bfx )$ is the unique minimal idempotent such that $\bfx = I(\overline\bfs_\bfx) \cdot \bfx$. (Note that when $N$ is $\CFD(W)$ for a given bordered Heegaard diagram of $W$, we have that $\overline\bfs_\bfx=[2k]\setminus \bfs_\bfx$ where $\bfs_\bfx$ corresponds to the $\alpha$-arcs occupied by $\bfx$, as in Section \ref{sec:typeAbackground}.)


\begin{theorem}[Petkova {\cite[Theorem 1]{Petkovadecat}}]\label{thm:inadecat}
Let $\cZ$ be a pointed matched circle with $F(\cZ) = F$, a surface of genus $k$.  There is a well-defined isomorphism $K_0(\cA(\cZ)) \cong \Lambda^*H_1(-F)$ given by extending the map  
\begin{equation}
\Psi^{\cA(\cZ)}([\cA I(\bfs)]) = \bigwedge_{j \in \bfs} \gamma_j
\end{equation}
by linearity. Written more generally: 
\begin{equation}\label{eqn:decatmap}
\Psi^{\cA(\cZ)}([N])= \sum_{\bfx \in \mfS(N)} (-1)^{\gr(\bfx)} \bigwedge_{j \in \overline\bfs_\bfx} \gamma_j. 
\end{equation}
\end{theorem}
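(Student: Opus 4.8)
The plan is to deduce this from Theorem~\ref{thm:decat-general} together with the bookkeeping set up in its proof. First I would record that the minimal idempotents of $\cA(\cZ)$ are exactly the elements $I(\bfs)$ as $\bfs$ ranges over all subsets of $[2k]$: the element $a(S,S,\operatorname{id}_S)$ depends only on $M(S)\subseteq[2k]$ (summing over $U\subseteq\textup{Fix}(\operatorname{id}_S)=S$ in the definition of $a(\cdot)$ replaces any chosen elements by their partners), and conversely every $\bfs\subseteq[2k]$ is realized by an $M$-admissible $(S,S,\operatorname{id}_S)$. By Theorem~\ref{thm:decat-general}, the augmentation $\epsilon\co\cA(\cZ)\to\cI$ is a $K_0$-equivalence, so $K_0(\cA(\cZ))$ is free abelian on the $2^{2k}$ symbols $[\cA I(\bfs)]$.

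Next I would match this with a basis of $\Lambda^*H_1(-F)$. Since $\gamma_1,\dots,\gamma_{2k}$ is an ordered basis of $H_1(F(\cZ))$, the rule $\bfs\mapsto\bigwedge_{j\in\bfs}\gamma_j$ — with the order inherited from $[2k]$ — carries the $2^{2k}$ subsets of $[2k]$ bijectively onto the monomial basis of $\Lambda^*H_1(F(\cZ))$, equivalently (up to the cosmetic sign convention discussed just before the statement) onto a basis of $\Lambda^*H_1(-F)$. As $\Psi^{\cA(\cZ)}$ is prescribed on a free $\Z$-basis of $K_0(\cA(\cZ))$ and its values constitute a free $\Z$-basis of $\Lambda^*H_1(-F)$, extending by linearity automatically yields a well-defined isomorphism; this is the content of the first displayed formula.

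For the general formula \eqref{eqn:decatmap}, I would invoke the inductive device from the proof of Theorem~\ref{thm:decat-general}. Given a bounded type D structure $N$ over $\cA(\cZ)$ with generating set $\mfS(N)$ and a cycle $\bfx\in\mfS(N)$, there is an exact triangle $\cA\,\iota(\bfx)\{\gr(\bfx)\}\to N\to N/\cA\bfx$ with $\iota(\bfx)=I(\overline\bfs_\bfx)$, giving $[N]=[\cA\,I(\overline\bfs_\bfx)\{\gr(\bfx)\}]+[N/\cA\bfx]$ in $K_0(\cA(\cZ))$; iterating over all of $\mfS(N)$ produces $[N]=\sum_{\bfx\in\mfS(N)}[\cA\,I(\overline\bfs_\bfx)\{\gr(\bfx)\}]$. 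Applying $\Psi^{\cA(\cZ)}$ termwise, and using $[A\{1\}]=-[A]$ so that the $\Ztwo$-grading shift contributes the sign $(-1)^{\gr(\bfx)}$, yields exactly $\sum_{\bfx\in\mfS(N)}(-1)^{\gr(\bfx)}\bigwedge_{j\in\overline\bfs_\bfx}\gamma_j$.

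The genuinely substantive input is Theorem~\ref{thm:decat-general} itself — that $\P(\cA(\cZ))$ is generated by elementary projectives and that the augmentation induces an isomorphism on Grothendieck groups — so the main obstacle sits upstream of this argument rather than inside it. The one point warranting care at this stage is that the inductive decomposition of $[N]$ into elementary projectives is independent, at the level of $K_0$, of the cycles chosen along the way; this is forced by the exact-triangle relations defining $K_0$, so it needs no separate argument. Everything else is a matching of bases.
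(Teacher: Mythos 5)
Your proposal is correct and takes essentially the same approach as the paper: both rest entirely on Theorem~\ref{thm:decat-general} and a matching of bases between $\{[\cA I(\bfs)]\}_{\bfs\subseteq[2k]}$ and the monomial basis of $\Lambda^*H_1(-F)$. The only cosmetic difference is that the paper packages the general formula \eqref{eqn:decatmap} by factoring $\Psi^{\cA(\cZ)} = \Psi^{\cI(\cZ)}\circ K_0(\epsilon)$ and reading off the signed sum at the idempotent-subalgebra level, whereas you unwind the same computation directly via the iterated exact triangles from the proof of Theorem~\ref{thm:decat-general}; these are the same argument.
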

We repeat Petkova's proof --- but instead with the notation of Section~\ref{sub:collect} --- as the constructions will be necessary for the proof of Theorem~\ref{thm:kernel}.
\begin{proof}[Proof of Theorem \ref{thm:inadecat}]
We begin by first studying the idempotent subalgebra of $\cA(\cZ)$.  A basis for $K_0(\cI(\cZ))$ is given by $[\F I(\bfs)]$, for $\bfs \subset [2k]$, according to Lemma~\ref{lem:idempotentdecomp}.  Extending by linearity, the map
\begin{equation}\label{eqn:psioneidempotent}
\Psi^{\cI(\cZ)} \co K_0(\cI(\cZ)) \to \Lambda^* H_1(-F(\cZ)), \; \; \; \Psi^{\cI(\cZ)}([\F I(\bfs)]) = \bigwedge_{j \in \bfs} \gamma_j
\end{equation}
induces an isomorphism from $K_0(\cI(\cZ))$ to $\Lambda^* H_1(-F(\cZ))$.  More explicitly, the map $\Psi^{\cI(\cZ)}$ is given by $[N] \mapsto \sum_{\bfx \in \mfS(N)} (-1)^{\gr(\bfx)} \bigwedge_{j \in \overline\bfs_\bfx} \gamma_j$.  By \eqref{eqn:K0epsilon} we have that that 
\begin{equation}\label{eqn:psioneboundary}
\Psi^{\cA(\cZ)} = \Psi^{\cI(\cZ)} \circ K_0(\epsilon)
\end{equation}     
and since $K_0(\epsilon)$ is an isomorphism by Theorem~\ref{thm:decat-general}, the result follows.     
\end{proof}

\begin{remark}\label{rmk:decatstrands}
Keeping track of $|\bfs|$ for our idempotents, we obtain a more refined statement, namely that \eqref{eqn:decatmap} induces an isomorphism $K_0(\cA(\cZ,i)) \cong \Lambda^{k+i} H_1(-F)$. Indeed,
 if $I(\bfs) \in \cA(\cZ,i)$ then $|\bfs| = k+i$.  We may write this isomorphism either as $\Psi^{\cA(\cZ,i)}$ or we may write it as $\Psi^{\cA(\cZ)}$ which we restrict to $K_0(\cA(\cZ,i))$.    
\end{remark}

As a consequence of Theorem~\ref{thm:inadecat}, if $Y$ is a bordered three-manifold with parameterized boundary $F = F(\cZ)$, then $\Psi^{\cA(-\cZ)}([\CFD(W)]) \in \Lambda^*H_1(F)$.  Using these identifications, Petkova studies this element explicitly.    
\begin{theorem}[Petkova {\cite[Theorem 4]{Petkovadecat}}]\label{thm:inakernel} 
Let $Y$ be a bordered three-manifold with one boundary component $F$.  If $|H_1(Y,\partial Y)| = \infty$, then $[\CFD(Y)] = 0$ in $K_0(\cA(-F))$.  Otherwise, 
\begin{equation}\label{eqn:inaspan}
\Span(\Psi^{\cA(-F)}([\CFD(Y)]))= |H_1(Y,\partial Y)| \Lambda^k(\ker(i_*\co H_1(\partial Y) \to H_1(Y))).    
\end{equation}
Equivalently,  given a basis $\omega_1,\ldots,\omega_{k}$ for this kernel, when $|H_1(Y,\partial Y)| < \infty$, we have 
\begin{equation}\label{eqn:inaspan2}
\Psi^{\cA(-F)}([\CFD(Y)]) = \pm |H_1(Y,\partial Y)| \omega_1 \wedge \ldots \wedge \omega_{k}.
\end{equation}
\end{theorem}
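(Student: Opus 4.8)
This is Petkova's theorem; the plan is to recover it from a bordered Heegaard diagram computation, generalizing Ozsv\'ath--Szab\'o's identity $\chi(\HFhat)=\pm|H_1|$ for closed manifolds, together with the decategorification map of Theorem~\ref{thm:inadecat}. Fix a provincially admissible bordered Heegaard diagram $\cH=(\Sigma,\bfalpha^c,\bfalpha^a,\bfbeta,z)$ for $Y$; up to homotopy $\CFD(Y)$ is bounded, hence compact projective and independent of $\cH$, so $[\CFD(Y)]\in K_0(\cA(-F))$ is well defined. By Theorem~\ref{thm:inadecat},
\[ \Psi^{\cA(-F)}([\CFD(Y)]) = \sum_{\bfx\in\mfS(\cH)} (-1)^{\gr_D(\bfx)} \bigwedge_{j\in\overline{\bfs}_\bfx}\gamma_j, \]
a finite signed sum. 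I would first reorganize it by the set of occupied $\alpha$-arcs: for each $k$-element $\bfs\subset[2k]$ the coefficient of $\bigwedge_{j\in[2k]\setminus\bfs}\gamma_j$ is $\sum_{\bfx\,:\,\bfs_\bfx=\bfs}(-1)^{\gr_D(\bfx)}$.

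Next, identify each such coefficient with a maximal minor of the signed intersection matrix $N$, $N_{i\ell}=\#(\beta_i\cap\alpha_\ell)$, with the $\alpha$-curves ordered (arcs before circles) as in the definition of $\gr_D$. Each generator $\bfx$ with $\bfs_\bfx=\bfs$ contributes a nonzero term to the expansion of the $g\times g$ minor $\det N_\bfs$ on the columns $\bfalpha^c\cup\{\alpha_j:j\in\bfs\}$; expanding that minor over permutations and then over intersection points yields terms of sign $\sgn(\sigma_\bfx)\prod_{x}(-1)^{o(x)}=(-1)^{\inv(\sigma_\bfx)+\sum_x o(x)}$. Comparing with $(-1)^{\gr_D(\bfx)}=(-1)^{\inv(\sigma_\bfx)+\sum_x o(x)}\cdot(-1)^{c(\bfx)}$, where $c(\bfx)=\sum_{i\in\Im(\sigma_\bfx)}\#\{j>i:j\notin\Im(\sigma_\bfx)\}$ is the extra term in $\sgn_D$, and using that the set of occupied $\alpha$-arcs is determined by $\bfs$ (a combinatorial property of pointed matched circles, so that $c(\bfx)$ depends only on $\bfs$), one obtains $\sum_{\bfx:\bfs_\bfx=\bfs}(-1)^{\gr_D(\bfx)}=\epsilon_\bfs\det N_\bfs$ for a sign $\epsilon_\bfs$ depending only on $\bfs$. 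Hence $\Psi^{\cA(-F)}([\CFD(Y)])=\sum_\bfs \epsilon_\bfs\det(N_\bfs)\bigwedge_{j\in[2k]\setminus\bfs}\gamma_j$.

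Finally, recognize the right-hand side as a Pl\"ucker point. The matrix $N$, together with the record of which columns are $\alpha$-arcs, encodes a presentation of $H_1(Y)$ and, via the long exact sequence of $(Y,\partial Y)$ (which gives $H_1(Y,\partial Y)\cong\mathrm{coker}\,i_*$), of the inclusion $i_*\co H_1(\partial Y)\to H_1(Y)$ written in the basis $\gamma_1,\dots,\gamma_{2k}$. The linear-algebra content --- a Cauchy--Binet/Cramer type statement --- is that the family of maximal minors $\{\epsilon_\bfs\det N_\bfs\}$, indexed by the $k$-subsets $\bfs\subset[2k]$, is, with the correct relative signs, exactly $|\mathrm{coker}\,i_*|=|H_1(Y,\partial Y)|$ times the Pl\"ucker coordinates of $\ker i_*$ (which has rank $k$ by half-lives-half-dies), and that these minors all vanish when $\mathrm{coker}\,i_*$ is infinite. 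In the finite case this gives $\Psi^{\cA(-F)}([\CFD(Y)])=\pm|H_1(Y,\partial Y)|\,\omega_1\wedge\dots\wedge\omega_k$ for an integral basis $\omega_1,\dots,\omega_k$ of $\ker i_*$; in the infinite case injectivity of $\Psi^{\cA(-F)}$ (Theorem~\ref{thm:inadecat}) forces $[\CFD(Y)]=0$.

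The crux is the interplay of the last two steps: matching $(-1)^{\gr_D(\bfx)}$ to the determinantal sign, and then verifying that the signs $\epsilon_\bfs$ are precisely those making $\sum_\bfs\epsilon_\bfs\det(N_\bfs)\bigwedge_{j\in[2k]\setminus\bfs}\gamma_j$ a decomposable vector, not merely one with the correct Pl\"ucker coordinates up to sign. A cross-check I would run in parallel is to pin the coordinates down geometrically: for each middle idempotent there is a bordered handlebody $H_\bfs$ with $\CFA(H_\bfs)$ an elementary type A structure, so that by Proposition~\ref{prop:respectspairing} and the pairing theorem each Pl\"ucker coordinate equals $\chi(\HFhat(Y\cup H_\bfs))=\pm|H_1(Y\cup H_\bfs)|$, and then the same classical linear algebra applied to the glued closed manifolds yields the claim.
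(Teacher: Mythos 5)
The paper does not prove this statement itself: it is stated as a citation of Petkova's Theorem~4 in \cite{Petkovadecat}, and the only ``proof'' in this paper is the bracketed attribution. So there is no internal argument to compare yours against; the right reference is Petkova's paper. That said, your reconstruction is plausible and is, as far as I can tell, the same Heegaard-diagrammatic argument Petkova uses: expand $\Psi^{\cA(-F)}([\CFD(Y)])$ generator by generator, sort generators by their idempotent, recognize each coefficient as a maximal minor of the $\beta$--$\alpha$ intersection matrix (with an $\bfs$-dependent sign), and then do linear algebra to identify those minors with Pl\"ucker coordinates of $\ker i_*$ scaled by $|H_1(Y,\partial Y)|$.

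Two remarks on the substance. The step where you claim $c(\bfx)$ depends only on $\bfs$ is correct, but your justification (``the set of occupied $\alpha$-arcs is determined by $\bfs$'') is close to circular: $\bfs_\bfx$ \emph{is} the set of occupied arcs by definition. The actual reason is that $\Im(\sigma_\bfx)=\bfs\cup\{2k+1,\ldots,g+k\}$ (arcs indexed by $\bfs$ plus all circles), so $c(\bfx)=\sum_{i\in\Im(\sigma_\bfx)}\#\{j\in[2k]\setminus\bfs:j>i\}$, which visibly depends only on $\bfs$ (and in fact only the terms $i\in\bfs$ contribute). With that fixed, your identity $\sum_{\bfx:\bfs_\bfx=\bfs}(-1)^{\gr_D(\bfx)}=(-1)^{c_\bfs}\det N_\bfs$ follows exactly as you say, by expanding each signed intersection number $N_{i,\ell}=\sum_{x\in\beta_i\cap\alpha_\ell}(-1)^{o(x)}$.

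The genuine content you have not carried out --- and rightly flag --- is the final linear-algebra/topology step: (i) identifying the row space of $N$ restricted to the ``arc'' columns, after killing the $\alpha^c$-columns, with $\ker i_*$; (ii) pinning down the overall scalar as $|H_1(Y,\partial Y)|$; and (iii) checking the relative signs $\epsilon_\bfs$ make the coefficient vector decomposable, not merely of decomposable absolute value. Your proposed cross-check via gluing handlebodies $H_\bfs$ and comparing with $\chi(\HFhat(Y\cup H_\bfs))=\pm|H_1(Y\cup H_\bfs)|$ is a sensible way to verify (ii) and the proportionality in (i); it does not by itself resolve (iii), since it only computes each Pl\"ucker coordinate up to an independent sign. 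To close (iii) one would need to track the $(-1)^{c_\bfs}$ against the Koszul-type sign in the Laplace expansion that realizes the $g\times g$ minors of $N$ as the Pl\"ucker coordinates of the row space. This is exactly the delicate sign bookkeeping Petkova's proof has to do, so you have located the crux correctly even if you have not resolved it.
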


\begin{remark}
In \cite[Section 7]{Petkovadecat}, Petkova orders the $\alpha$-arcs according to the orientation of $\d \cH$ (i.e., $(\partial\alpha_j)^- \lessdot (\partial \alpha_{j'})^-$ if and only if $j < j'$). In fact, the result is independent of this choice of order. Indeed, reordering the $\alpha$-arcs changes both the $\Ztwo$-grading and the order in which basis elements of $H_1(F)$ are wedged together when considering $\Psi^{\cA(-F)}([\CFD(Y)])$. It is straightforward to verify that the overall effect of these two changes leaves $\Span(\Psi^{\cA(-F)}([\CFD(Y)]))$ unchanged. See also Remark \ref{rem:idemconj}.
\end{remark}

As in Theorem \ref{thm:kernel}, let $W$ have $\partial W = F_0 \amalg F_1$.  By abuse of notation, we will often simply write $\Lambda^{k_0+k_1}(\ker(i_*\co H_1(\partial W) \to H_1(W)))$ to mean a choice of generator for this rank one subgroup of $\Lambda^*(H_1(\partial W))$, which we canonically identify with $\Lambda^* H_1(F_0) \otimes \Lambda^* H_1(F_1)$.  To prove Theorem~\ref{thm:kernel}, we would first like an extension of Theorem~\ref{thm:inadecat} for the algebra $\cA(-\cZ_0) \otimes \cA(-\cZ_1)$.  Once we understand this Grothendieck group, we will be able to use the induction functor and appeal to \eqref{eqn:inaspan}, since $\CFDD(W)$, for $W$ a bordered three-manifold with two boundary components $F(\cZ_0)$ and $F(\cZ_1)$, is defined in terms of $\CFD(W_\dr)$; see Section \ref{sec:catofmod}.  

We must first begin by computing $K_0(\cA(-\cZ_0) \otimes \cA(-\cZ_1))$ for  pointed matched circles $\cZ_0$ and $\cZ_1$.  Observe that the idempotent subalgebra of $\cA(-\cZ_0) \otimes \cA(-\cZ_1)$ is exactly $\cI(-\cZ_0) \otimes \cI(-\cZ_1)$.  We let $\epsilon^\otimes$ be the quotient map to the idempotent subalgebra.  As discussed in Section~\ref{sub:collect}, $\cA(-\cZ_0) \otimes \cA(-\cZ_1)$ is strictly unital and augmented.  Let $N$ be a type D module over $\cA(-\cZ_0) \otimes \cA(-\cZ_1)$.  For an element $\bfx \in \mfS(N)$, we write $\overline\bfs_{0,\bfx}$ and $\overline\bfs_{1,\bfx}$ to denote the subsets of $[2k_0]$ and $[2k_1]$ such that $I(\overline\bfs_{0,\bfx}) \otimes I(\overline\bfs_{1,\bfx})$ is the minimal idempotent which acts by the identity on $\bfx$.  Let $\mu_1,\ldots,\mu_{2k_0}$ (respectively $\nu_1,\ldots,\nu_{2k_1}$) be the homology classes in $H_1(F(\cZ_0))$ (respectively $H_1(F(\cZ_1))$) associated with elements of $[2k_0]$ (respectively $[2k_1]$) via $\cA(-\cZ_0)$ (respectively $\cA(-\cZ_1)$) described above.  

As in Theorem~\ref{thm:inadecat}, we begin with a discussion about $K_0$ for the idempotent subalgebras.  It again follows from Lemma~\ref{lem:idempotentdecomp} that the map 
\begin{equation}\label{eqn:psitwoidempotent}
\Psi^{\cI(-\cZ_0) \otimes \cI(-\cZ_1)}\co [\F (I(\bfs_0) \otimes I(\bfs_1))] \mapsto \bigwedge_{j \in \bfs_0} \mu_j \otimes \bigwedge_{j' \in \bfs_1}\nu_{j'}
\end{equation} 
induces an isomorphism from $K_0(\cI(-\cZ_0) \otimes \cI(-\cZ_1))$ to $\Lambda^*H_1(F(\cZ_0))\otimes \Lambda^*H_1(F(\cZ_1))$.  

\begin{proposition}\label{prop:DDdecat}
There is an isomorphism \[K_0(\cA(-\cZ_0) \otimes \cA(-\cZ_1)) \cong \Lambda^*H_1(F(\cZ_0)) \otimes \Lambda^* H_1(F(\cZ_1))\] given by 
\begin{equation}\label{eqn:tensordecatmap}
\Psi^{\cA(-\cZ_0) \otimes \cA(-\cZ_1)} \co [N] \mapsto \sum_{\bfx \in \mfS(N)} (-1)^{\gr(\bfx)} \bigwedge_{j \in \overline\bfs_{0,\bfx}} \mu_j \otimes \bigwedge_{j' \in \overline\bfs_{1,\bfx}} \nu_{j'}.  
\end{equation}
\end{proposition}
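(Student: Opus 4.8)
The plan is to mimic the proof of Theorem~\ref{thm:inadecat} line for line, with $\cA(\cZ)$ replaced by the tensor product $\cA(-\cZ_0)\otimes\cA(-\cZ_1)$ throughout. First I would note that this algebra satisfies the hypotheses of Theorem~\ref{thm:decat-general}: by the discussion in Section~\ref{sub:collect}, strict unitality, augmentedness, and nilpotence are all preserved under $\otimes_\F$, and the idempotent subring of $\cA(-\cZ_0)\otimes\cA(-\cZ_1)$ is exactly $\cI(-\cZ_0)\otimes\cI(-\cZ_1)$, with augmentation $\epsilon^\otimes$ the quotient onto it. Consequently Theorem~\ref{thm:decat-general} applies and $\epsilon^\otimes$ is a $K_0$-equivalence, so
\[ K_0(\epsilon^\otimes)\co K_0\big(\cA(-\cZ_0)\otimes\cA(-\cZ_1)\big)\to K_0\big(\cI(-\cZ_0)\otimes\cI(-\cZ_1)\big) \]
is an isomorphism.

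Next I would combine this with \eqref{eqn:psitwoidempotent}, which already provides an isomorphism $\Psi^{\cI(-\cZ_0)\otimes\cI(-\cZ_1)}$ from $K_0(\cI(-\cZ_0)\otimes\cI(-\cZ_1))$ onto $\Lambda^*H_1(F(\cZ_0))\otimes\Lambda^*H_1(F(\cZ_1))$. Setting $\Psi^{\cA(-\cZ_0)\otimes\cA(-\cZ_1)} := \Psi^{\cI(-\cZ_0)\otimes\cI(-\cZ_1)}\circ K_0(\epsilon^\otimes)$ then exhibits the claimed isomorphism as a composite of two isomorphisms. It remains only to verify that this composite is computed by the closed formula \eqref{eqn:tensordecatmap}. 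For this I would invoke the mechanism in the proof of Theorem~\ref{thm:decat-general} (see Remark~\ref{rmk:checkelementary}): given a bounded type D structure $N$ over $\cA(-\cZ_0)\otimes\cA(-\cZ_1)$ with generating set $\mfS(N)$, the generating triangle expresses $[N]$ in $K_0$ as a signed sum of the classes of the elementary projectives $\big(\cA(-\cZ_0)\otimes\cA(-\cZ_1)\big)\big(I(\overline\bfs_{0,\bfx})\otimes I(\overline\bfs_{1,\bfx})\big)\{\gr(\bfx)\}$ over $\bfx\in\mfS(N)$. Applying $\Ind_{\epsilon^\otimes}$ kills the type D structure map on each elementary projective, so $K_0(\epsilon^\otimes)([N]) = \sum_{\bfx\in\mfS(N)}(-1)^{\gr(\bfx)}[\F(I(\overline\bfs_{0,\bfx})\otimes I(\overline\bfs_{1,\bfx}))]$, using $[A\{1\}]=-[A]$; feeding this through $\Psi^{\cI(-\cZ_0)\otimes\cI(-\cZ_1)}$ from \eqref{eqn:psitwoidempotent} yields exactly \eqref{eqn:tensordecatmap}.

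I do not expect a genuine obstacle here: the statement is essentially a formal consequence of Theorem~\ref{thm:decat-general} once one observes that a tensor product of algebras with the good properties of Section~\ref{sub:collect} again has those properties. The only point requiring a little care is sign and ordering bookkeeping — one must confirm that the $\Ztwo$-grading $\gr$ appearing in the generating triangle for a type D structure over the tensor algebra is indeed the one whose parity produces the sign $(-1)^{\gr(\bfx)}$ in \eqref{eqn:tensordecatmap}, and that the separate linear orders on $[2k_0]$ and $[2k_1]$ used to form $\bigwedge_{j\in\overline\bfs_{0,\bfx}}\mu_j\otimes\bigwedge_{j'\in\overline\bfs_{1,\bfx}}\nu_{j'}$ are compatible with the canonical identification $\Lambda^*H_1(\partial W_\dr)\cong\Lambda^*H_1(F(\cZ_0))\otimes\Lambda^*H_1(F(\cZ_1))$ that will be used when deducing Theorem~\ref{thm:kernel}.
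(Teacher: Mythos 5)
Your proof is correct and takes essentially the same approach as the paper: the paper's argument is exactly the one-line observation that $\Psi^{\cA(-\cZ_0)\otimes\cA(-\cZ_1)} = \Psi^{\cI(-\cZ_0)\otimes\cI(-\cZ_1)}\circ K_0(\epsilon^\otimes)$ together with Theorem~\ref{thm:decat-general}. Your version simply spells out the verification (tensor products preserve the required algebra properties, identification of the idempotent subring, decomposition into elementary projectives via Remark~\ref{rmk:checkelementary}) that the paper leaves implicit by analogy with Theorem~\ref{thm:inadecat}.
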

\begin{proof}
The result follows from the fact that $\Psi^{\cA(-\cZ_0) \otimes \cA(-\cZ_1)} = \Psi^{\cI(-\cZ_0) \otimes \cI(-\cZ_1)} \circ K_0(\epsilon^\otimes)$ combined with Theorem~\ref{thm:decat-general}.
\end{proof}

\begin{remark}\label{rmk:decatstrandstwoboundary}
It follows that $K_0(\cA(-\cZ_0,i_0) \otimes \cA(-\cZ_1,i_1))\cong \Lambda^{k_0 + i_0}H_1(F(\cZ_0)) \otimes \Lambda^{k_1 + i_1}H_1(F(\cZ_1))$. 
\end{remark}

To prove Theorem~\ref{thm:kernel} we must determine where $[\CFDD(W)] \in K_0(\cA(-\cZ_0) \otimes \cA(-\cZ_1))$ is sent under the identifications of Proposition~\ref{prop:DDdecat}.  One approach could be to repeat the proof of Theorem~\ref{thm:inakernel}, which corresponds to reading the homological information directly off a bordered Heegaard diagram.  We instead take a purely categorical approach which allows us to simply apply Petkova's work.  We recall the definition of $\CFDD(W)$ as $\Ind_{\varphi_\dr}(\CFD(W_\dr))$.  Note that while we know what $[\CFD(W_\dr)]$ corresponds to under the identification $K_0(\cA(-F_0 \# -F_1)) \cong \Lambda^*H_1(F_0) \otimes \Lambda^*H_1( F_1)$, that $[\CFDD(W)] = K_0(\varphi_\dr)([\CFD(W_\dr)])$, and that $K_0(\cA(-F_0 \# -F_1)) \cong K_0(\cA(-F_0) \otimes \cA(-F_1))$, we do not know that this isomorphism is realized {\em a priori} by $K_0(\varphi_\dr)$, and thus we cannot {\em a priori} determine $[\CFD(W_\dr)]$ from $[\CFDD(W)]$ and apply Theorem~\ref{thm:inakernel}.  We are therefore interested in understanding the effect of $K_0(\varphi_\dr)$ under the identifications of Theorem~\ref{thm:inadecat} and Proposition~\ref{prop:DDdecat}.  For notation, given $\bfs \subset [2k_0 + 2k_1]$, this determines $\bfs_0 \subset [2k_0]$ and $\bfs_1 \subset [2k_1]$ by the split matching on $-\cZ_0 \# -\cZ_1$.    

\begin{proposition}\label{prop:decatsquare}
There is a commutative square of isomorphisms 
\[\begin{tikzpicture}
  \matrix (m) [matrix of math nodes,row sep=2em,column sep=4em,minimum width=2em]
  {
   K_0(\cA(-F_0 \# -F_1))& K_0(\cA(-F_0) \otimes \cA(-F_1)) \\
   \Lambda^* H_1(F_0 \# F_1) & \Lambda^* H_1(F_0) \otimes \Lambda^*H_1(F_1)\\};
  \path[-latex]
    (m-1-1) edge node[left] {$\Psi^{\cA(-F_0 \#-F_1)}$} (m-2-1)
            edge node[above] {$K_0(\varphi_\dr)$}(m-1-2)
    (m-2-1) edge node[above] {$\cong$} (m-2-2)
    (m-1-2) edge node[right] {$\Psi^{\cA(-F_0) \otimes \cA(-F_1)} $} (m-2-2);
\end{tikzpicture}\]
where the bottom horizontal arrow is the obvious map.  
\end{proposition}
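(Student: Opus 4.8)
The plan is to pin down the top horizontal map $K_0(\varphi_\dr)$ by evaluating it on a generating set and then observe that it matches the obvious map; this is exactly the input needed to run Petkova's Theorem~\ref{thm:inakernel} through the induction functor in the proof of Theorem~\ref{thm:kernel}. By Theorem~\ref{thm:decat-general} and Remark~\ref{rmk:checkelementary}, $K_0(\cA(-F_0 \# -F_1))$ is generated by the classes $[\cA(-F_0 \# -F_1)\,I(\bfs)]$ of elementary projectives, as $I(\bfs)$ ranges over the minimal idempotents, equivalently over subsets $\bfs \subset [2k_0+2k_1]$. Since all four arrows are homomorphisms of abelian groups, it suffices to chase one such generator around the square both ways. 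The left and right vertical maps are already isomorphisms, by Theorem~\ref{thm:inadecat} and Proposition~\ref{prop:DDdecat}, and the bottom arrow is the isomorphism induced by the canonical splitting $H_1(F_0 \# F_1) \cong H_1(F_0) \oplus H_1(F_1)$ together with $\Lambda^*(V \oplus V') \cong \Lambda^*V \otimes \Lambda^*V'$; once commutativity is checked, $K_0(\varphi_\dr)$ is automatically an isomorphism as well.

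First I would compute $\varphi_\dr$ on a minimal idempotent. Here $\varphi_\dr$ is the ``split'' dg-algebra morphism $\cA(-F_0 \# -F_1) \to \cA(-F_0) \otimes \cA(-F_1)$ of Section~\ref{sec:catofmod}, which kills every generator whose underlying partial permutation fails to respect the splitting of the pointed matched circle and otherwise acts by the evident splitting. Because the identity partial permutation underlying $I(\bfs)$ respects any splitting, $\varphi_\dr(I(\bfs)) = I(\bfs_0) \otimes I(\bfs_1)$, where $\bfs_0 \subset [2k_0]$ and $\bfs_1 \subset [2k_1]$ are the two pieces of $\bfs$ under the split matching of $-\cZ_0 \# -\cZ_1$ (the notation fixed just before the statement). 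As $\Ind_{\varphi_\dr}(\cA(-F_0 \# -F_1)\,\iota) = (\cA(-F_0) \otimes \cA(-F_1))\,\varphi_\dr(\iota)$ for a minimal idempotent $\iota$, and $I(\bfs_0) \otimes I(\bfs_1)$ is a single minimal idempotent of the target, the induced module is again elementary projective and sits in grading $0$, so no grading shift appears; hence $K_0(\varphi_\dr)([\cA(-F_0 \# -F_1)\,I(\bfs)]) = [(\cA(-F_0) \otimes \cA(-F_1))(I(\bfs_0) \otimes I(\bfs_1))]$.

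It then remains to chase the square on this generator. Along the top-then-right route, \eqref{eqn:tensordecatmap} evaluated on an elementary projective gives $\bigwedge_{j \in \bfs_0}\mu_j \otimes \bigwedge_{j' \in \bfs_1}\nu_{j'}$. Along the left-then-bottom route, $\Psi^{\cA(-F_0 \# -F_1)}$ sends $[\cA(-F_0 \# -F_1)\,I(\bfs)]$ to $\bigwedge_{j \in \bfs}\gamma_j$, and under $H_1(F_0 \# F_1) \cong H_1(F_0) \oplus H_1(F_1)$ each $\gamma_j$ maps to $\mu_j$ or to the appropriate $\nu_{j'}$. The one point that is not a purely formal unwinding of definitions — and the step I would treat most carefully — is the sign: because in $-\cZ_0 \# -\cZ_1$ every point of $-\cZ_0$ precedes every point of $-\cZ_1$ in the order $\lessdot$, all $\mu$-factors already occur before all $\nu$-factors in the ordered wedge $\bigwedge_{j \in \bfs}\gamma_j$, so passing to $\Lambda^*H_1(F_0) \otimes \Lambda^*H_1(F_1)$ introduces no sign and produces exactly $\bigwedge_{j \in \bfs_0}\mu_j \otimes \bigwedge_{j' \in \bfs_1}\nu_{j'}$. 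The two routes agree, so the square commutes, and the proof is complete.
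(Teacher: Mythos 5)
Your argument is essentially the same as the paper's: reduce via Theorem~\ref{thm:decat-general} and Remark~\ref{rmk:checkelementary} to checking on elementary projectives, observe that $\Ind_{\varphi_\dr}$ sends $\cA(-F_0\#-F_1)I(\bfs)$ to the elementary projective $(\cA(-F_0)\otimes\cA(-F_1))(I(\bfs_0)\otimes I(\bfs_1))$, and then compare with the $\Psi$-maps on both sides. The only difference is that you spell out the final diagram chase and the ordering/sign point (all $-\cZ_0$-indices precede all $-\cZ_1$-indices in $-\cZ_0\#-\cZ_1$, so no sign arises), which the paper leaves implicit as ``by the definition of the maps''; this is a welcome bit of extra detail, not a different method.
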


\begin{proof}
By Theorem~\ref{thm:decat-general} and Remark~\ref{rmk:checkelementary}, it suffices to determine where the map $K_0(\varphi_\dr)$ sends the symbol of an elementary projective $\cA(-F_0 \# -F_1)I(\bfs)$, where $\bfs \subset [2k_0+2k_1]$.  In other words, we are interested in the symbol 
\[
K_0(\varphi_\dr)([\cA(-F_0 \# -F_1)I(\bfs)]) = [\Ind_{\varphi_\dr} (\cA(-F_0 \# -F_1) I(\bfs))].
\]
Changing perspective, consider $\cA(-F_0 \# -F_1) I(\bfs)$ as its associated type D structure.  We write this module as $\F I(\bfs)$, where $\delta^1(\bfx) \equiv 0$ and $I(\bfs)$ is the unique minimal idempotent such that $I(\bfs) \cdot \bfx = \bfx$.  From the definition, we see $\Ind_{\varphi_\dr}(\F I(\bfs))$, as a type D structure over $\cI(-F_0) \otimes \cI(-F_1)$, is given by $\F (I(\bfs_0) \otimes I(\bfs_1))$, where the structure maps are trivial.  We can interpret this as identifying $K_0(\varphi_\dr)([\cA(-F_0 \# -F_1)I(\bfs)])$ as the symbol of the elementary projective $\cA(-F_0) \otimes \cA(-F_1)(I(\bfs_0) \otimes I(\bfs_1))$.  The diagram commutes by the definition of the maps in Theorem~\ref{thm:inadecat} and Proposition~\ref{prop:DDdecat} and, since these maps were shown to be isomorphisms, we conclude that $\varphi_\dr$ is a $K_0$-equivalence.     
\end{proof}

For ease of notation, we will omit the maps $\Psi$ from our notation and work implicitly with the identifications established in this section between Grothendieck groups and exterior algebras.

\begin{proof}[Proof of Theorem~\ref{thm:kernel}]
Let $W$ be a bordered three-manifold with boundary $F_0 \amalg F_1$.  Proposition~\ref{prop:DDdecat} identified $K_0(\cA(-F_0) \otimes \cA(-F_1))$ with $\Lambda^*H_1(F_0) \otimes \Lambda^*H_1(F_1)$ and, as discussed, we would like to compute $[\CFDD(W)]$ under this identification.   Recall that $\CFDD(W)$, as a type D structure over $\cA(-\cZ_0) \otimes \cA(-\cZ_1)$ is defined by $\Ind_{\varphi_\dr}(\CFD(W_\dr))$.  By the identifications given in Theorem~\ref{thm:inakernel} and Proposition~\ref{prop:decatsquare}, we can identify $[\CFDD(W)]$ with the element \[\pm |H_1(W_\dr,\partial W_\dr)|\Lambda^{k_0 + k_1}\ker(i_*\co H_1(\partial W_\dr)\rightarrow H_1( W_\dr)).\]  Notice that this is almost precisely what we want, except everything is in terms of $W_\dr$ instead of $W$. Recall that $W$ is obtained from $W_\dr$ by attaching a 2-handle along a nullhomologous curve (namely along the connected sum annulus).  Because the attachment is along a nullhomologous curve, we see that $|H_1(W, \partial W \cup \bfz)| = |H_1(W_\dr, \partial W_\dr)|$ and we have a canonical identification between $\Lambda^{k_0+k_1}\ker(i_*\co H_1(\partial W)\rightarrow H_1(W))$ and $\Lambda^{k_0+k_1}\ker(i_*\co H_1(\partial W_\dr)\rightarrow H_1( W_\dr))$.  This gives the desired result. 
\end{proof}


\subsection{Hodge duality}\label{subsec:hodgeduality}


In order to prove Theorem~\ref{thm:donaldson} we need to precisely formulate a passage between type DD bimodules and type DA bimodules in the present context. This will ultimately amount to a categorification of Hodge duality. 

Let $W$ be a cobordism from $F_0$ to $F_1$, that is, $\partial W= -F_0\amalg F_1$.  As shown in Theorem~\ref{thm:kernel} we are able to obtain the Pl\"ucker point $|\Gamma_W| \in \Lambda^*(H_1(-F_0) \oplus H_1(F_1))$ from $\CFDD(W)$.  As discussed in Section~\ref{sec:linearalgebra}, this allows us to obtain Donaldson's TQFT from bordered Floer homology.  
However, this is not sufficient for our purposes.  In order to obtain the commutative square in Theorem~\ref{thm:main} and thus obtain an independent proof that knot Floer homology categorifies the Alexander polynomial, we need to approach this problem more categorically.  

Suppose the boundary components $-F_0$ and $F_1$ of $W$ are parameterized by $-\cZ_0$ and $\cZ_1$, respectively.  Then, there is an associated functor
\[
\CFDA(W) \boxtimes_{\cA(-\cZ_0)} - \co{}^{\cA(-\cZ_0)}\sfMod \to {}^{\cA(-\cZ_1)}\sfMod
\]
where we treat $\CFDA(W)$ as a bimodule with a left (type D) action by $\cA(-\cZ_1)$ and a right (type A) action by $\cA(-\cZ_0)$.  The induced map on Grothendieck groups gives an element of \[\Hom\big(K_0(\cA(-\cZ_0)), K_0(\cA(-\cZ_1))\big)\] which we denote by $K_0(\CFDA(W))$.  By Theorem~\ref{thm:inadecat}, $K_0(\CFDA(W))$ may be regarded as an element of $\Hom(\Lambda^* H_1(F_0), \Lambda^*H_1(F_1))$.  Therefore, in order to prove Theorem~\ref{thm:donaldson}, we would like to show that under these identifications, $K_0(\CFDA(W))= \FDA(W)$.  In other words, the way that we will prove Theorem~\ref{thm:donaldson} in a manner that is suitable for Theorem~\ref{thm:main} is to show that the duality between type DD structures and type DA structures determined by Theorem~\ref{thm:AA-duality} categorifies the isomorphism between $\Lambda^*H_1(-F_0) \otimes \Lambda^*H_1(F_1)$ and $\Hom(\Lambda^*H_1(F_0), \Lambda^*H_1(F_1))$ (see Section \ref{Bimodules_and_Homs}, below, where this isomorphism is made explicit).  From this we will then be able to appeal to Theorem~\ref{thm:kernel} to obtain $\FDA(W)$, but still maintain enough structure to obtain contact with both the decategorified invariants and the Hochschild homology of $\CFDA(W)$ to prove Theorem~\ref{thm:main}.  To make this categorification easier to follow, we warm-up with the categorification of a version of Hodge duality.
  

\subsubsection{Type $A$ structures and Hodge duality}\label{subsubsec:typeAhodge}
Let $Q$ be a bounded type A structure over $\cA(-\cZ)$. This gives rise to a functor from ${}^{\cA(-\cZ)}\sfMod$ to $\mathsf{Kom}$, the homotopy category of $\Ztwo$-graded chain complexes, given by $N \mapsto Q \boxtimes_{\cA(-\cZ)} N$.  Therefore, $K_0(Q) \in \Hom(K_0(\cA(-\cZ)), K_0(\mathsf{Kom}))$.  Since $K_0(\mathsf{Kom}) \cong \mathbb{Z}$, with the isomorphism induced by the Euler characteristic, we see that $K_0(Q)$ yields an element of $K_0(\cA(-\cZ))^* \cong (\Lambda^*H_1(F(\cZ)))^*$.  

Given a bounded type D structure $N$ over $\cA(\cZ)$, $\CFAAid \boxtimes_{\cA(\cZ)} N$ is a bounded type A structure over $\cA(-\cZ)$.  The next proposition shows that the relationship between $[N] \in K_0(\cA(\cZ))$ and $K_0(\CFAAid \boxtimes_{\cA(\cZ)} N) \in K_0(\cA(-\cZ))^*$ is described essentially by Hodge duality. Recall, that $H_1(F(\cZ)) \cong H_1(-F(\cZ))$ came naturally equipped with two equivalent ordered bases: $\gamma_1,\ldots,\gamma_{2k}$ arising from $\cZ$, as described in Section~\ref{subsec:kernel}, and $-\gamma_1,\ldots,-\gamma_{2k}$ coming from $-\cZ$.  Note that $\dualize_{H_1(-F)} = \dualize_{H_1(F)}$, where $\dualize_{H_1(-F)}\co \Lambda^*H_1(-F) \to (\Lambda^*H_1(F))^*$ takes $v$ to the linear functional $x \mapsto \star(x \wedge v)$ as in Section~\ref{sec:linearalgebra}.  While this cosmetic insertion of signs may seem artificial, this is done to obtain $\FDA(-F) = \FDA(F)^*$ (see Section~\ref{subsec:functorvalued}), which is the duality axiom for topological quantum field theories (see \cite{Atiyah}).       


\begin{theorem}[Categorified Hodge duality]\label{thm:hodge}
Let $\cZ$ be a pointed matched circle.  Given $N \in {}^{\cA(\cZ)}\sfMod$ we have $K_0(\CFAAid \boxtimes_{\cA(\cZ)} N) = \dualize([N])$ as elements of $(\Lambda^*H_1(F))^*$.  
\end{theorem}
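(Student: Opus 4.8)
The plan is to reduce the statement to a computation on elementary projective modules and then to reconcile two a priori different sign recipes: the $\Ztwo$-grading $\gr_{AA}$ carried by the identity bimodule $\CFAAid$, and the shuffle (Koszul) sign built into the Hodge star $\star$. First I would note that both sides of the asserted equality are additive in $N$ along exact triangles in ${}^{\cA(\cZ)}\sfMod$: since $\boxtimes$ is exact and the Euler characteristic is additive on exact triangles in $\mathsf{Kom}$, the rule $N \mapsto K_0(\CFAAid \boxtimes_{\cA(\cZ)} N)$ factors through a group homomorphism $K_0(\cA(\cZ)) \to \Hom\big(K_0(\cA(-\cZ)),\Z\big)$, and $\dualize$ is linear. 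By Theorem~\ref{thm:decat-general} and Remark~\ref{rmk:checkelementary} it therefore suffices to verify the identity when $N = \F I(\bfs)$ is the elementary projective type D structure attached to an idempotent $I(\bfs)$, $\bfs \subset [2k]$; for such an $N$ one has $[N] = [\cA(\cZ) I(\bfs)]$, which corresponds to $\bigwedge_{j \in \bfs}\gamma_j \in \Lambda^* H_1(-F)$ under $\Psi^{\cA(\cZ)}$ by Theorem~\ref{thm:inadecat}.

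Next I would compute $Q_\bfs := \CFAAid \boxtimes_{\cA(\cZ)} \F I(\bfs)$ explicitly, using the standard description of the identity bimodule (see \cite{LOTbimodules} and Figure~\ref{fig:sample-build}): its generators are indexed by subsets $\bft \subset [2k]$, with the generator $\bfx_\bft$ carrying the $\cA(\cZ)$-idempotent $I(\bft)$ and the $\cA(-\cZ)$-idempotent $I([2k]\setminus\bft)$. Tensoring over $\cA(\cZ)$ with $\F I(\bfs)$ retains the single generator $\bfx_\bfs$, so $Q_\bfs$ is a type A structure over $\cA(-\cZ)$ with one generator, supported in the idempotent $I([2k]\setminus\bfs)$, with $m_1 = 0$, and of $\Ztwo$-degree $\gr_{AA}(\bfx_\bfs)$ up to the fixed overall grading shift. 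The value of $\gr_{AA}(\bfx_\bfs)$ is computed explicitly in Lemma~\ref{lem:CFAAidgr}.

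I would then evaluate the functional $K_0(Q_\bfs) \in \Hom(K_0(\cA(-\cZ)),\Z)$ on the generators $[\cA(-\cZ) I(\bft)]$, which correspond to $\bigwedge_{j\in\bft}\gamma_j$ under $\Psi^{\cA(-\cZ)}$. Because $\F I(\bft)$ carries vanishing type D operations, $Q_\bfs \boxtimes_{\cA(-\cZ)} \F I(\bft)$ vanishes unless $\bft = [2k]\setminus\bfs$ by idempotent matching, and otherwise is the one-dimensional complex on $\bfx_\bfs \otimes \bfy$ (with $\bfy$ the generator of $\F I(\bft)$) with zero differential; hence $K_0(Q_\bfs)$ sends $\bigwedge_{j\in\bft}\gamma_j$ to $(-1)^{\gr_{AA}(\bfx_\bfs)}$ when $\bft = [2k]\setminus\bfs$ and to $0$ otherwise. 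On the other hand $\dualize\big(\bigwedge_{j\in\bfs}\gamma_j\big)$ sends $\bigwedge_{j\in\bft}\gamma_j$ to $\star\big(\bigwedge_{j\in\bft}\gamma_j \wedge \bigwedge_{j\in\bfs}\gamma_j\big)$, which likewise vanishes unless $\bft = [2k]\setminus\bfs$ and otherwise equals $(-1)^{e(\bfs)}$, where $e(\bfs) = \#\{(i,j) : i \in [2k]\setminus\bfs,\ j \in \bfs,\ i>j\}$ counts the inversions between the increasing enumerations of $[2k]\setminus\bfs$ and $\bfs$. Thus the theorem comes down to the combinatorial identity $\gr_{AA}(\bfx_\bfs) \equiv e(\bfs) \pmod 2$ for all $\bfs \subset [2k]$, which I would check directly from Lemma~\ref{lem:CFAAidgr} together with the orientation conventions of Section~\ref{sec:linearalgebra}, in particular the identification $\dualize_{H_1(-F)} = \dualize_{H_1(F)}$.

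The main obstacle is precisely this last sign comparison. The grading $\gr_{AA}$ is defined only indirectly, as the grading induced on $\CFA$ of the drilled three-manifold by the restriction functor of Section~\ref{sec:catofmod}, so extracting a closed formula for $\gr_{AA}(\bfx_\bfs)$ (the role of Lemma~\ref{lem:CFAAidgr}) and recognizing it as a plain inversion count — while also confirming that the overall normalization shift has been chosen compatibly with the conventions for Donaldson's TQFT and for $\Psi^{\cA(-\cZ)}$ — is where the bookkeeping from Appendix~\ref{sec:gradings} is needed. Once that identity is in hand, everything else is formal: exactness of $\boxtimes$, additivity of $\chi$, and the generation statement of Theorem~\ref{thm:decat-general}.
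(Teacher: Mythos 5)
Your strategy is the same as the paper's: reduce to elementary projectives $\F I_{\cZ}(\bfs)$ via Theorem~\ref{thm:decat-general}, compute $\CFAAid\boxtimes\F I_\cZ(\bfs)$ as a one-dimensional object via Lemma~\ref{lem:CFAAidgr}, evaluate the resulting functional on the basis $[\F I_{-\cZ}(\bft)]$, and match the sign against the Koszul sign in $\star$. But the final reduction as stated contains a sign error that makes the claimed ``combinatorial identity'' false. You assert that under $\Psi^{\cA(-\cZ)}$ the symbol $[\F I_{-\cZ}(\bft)]$ corresponds to $\bigwedge_{j\in\bft}\gamma_j$ and that the theorem therefore comes down to $\gr_{AA}(\bfx_\bfs) \equiv e(\bfs) \pmod 2$. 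This drops the sign introduced by the convention that the ordered basis for $H_1(F)$ supplied by $-\cZ$ is $-\gamma_1,\dots,-\gamma_{2k}$: in fact $[\F I_{-\cZ}(\bft)] = (-1)^{|\bft|}\bigwedge_{j\in\bft}\gamma_j$ as an element of $\Lambda^*H_1(F)$. Lemma~\ref{lem:CFAAidgr} gives $\gr_{AA}(\bfx_\bfs) = \theta(\bfs) = |\bfs| + \sum_{j'\in\bfsbar}\#\{j\in\bfs : j<j'\} = |\bfs| + e(\bfs)$, so $\gr_{AA}(\bfx_\bfs) \not\equiv e(\bfs)$ in general, and a naive check of your proposed identity against Lemma~\ref{lem:CFAAidgr} would fail.

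The correct reduction is $\gr_{AA}(\bfx_\bfs) + |\bfsbar| \equiv e(\bfs) \pmod 2$ (equivalently, $\theta(\bfs) + |\bfsbar| = 2k + e(\bfs) \equiv e(\bfs)$), with the extra $|\bfsbar|$ coming precisely from the $(-1)^{|\bft|}$ factor at $\bft = \bfsbar$. This is exactly what the paper's proof keeps track of when it writes the functional evaluation as $(-1)^{|\bfsbar|}(-1)^{\theta(\bfs)}\delta_{\bfs',\bfsbar}$ rather than $(-1)^{\theta(\bfs)}\delta_{\bfs',\bfsbar}$. Once you insert that factor, everything you wrote goes through, and your remaining bookkeeping (extracting $\gr_{AA}$ from the restriction functor and normalizing via the conventions of Section~\ref{sec:linearalgebra}) is a fair description of what Lemma~\ref{lem:CFAAidgr} does.
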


In order to prove Theorem~\ref{thm:hodge}, we first understand $\CFAAid$, the bimodule associated to the mapping cylinder $M(\id_{F(\cZ)})$ of the identity map on $F(\cZ)$, described in \cite{LOTbimodules}.  Recall the definition of $-\cZ \# \cZ$ from Section \ref{sec:algebrabackground} and let $\cZ^\#$ denote $-\cZ \# \cZ$.  
Let $\bfsbar = [2k] \setminus \bfs$ and define
\begin{equation}\label{eqn:grAAtheta}
\theta(\bfs) = |\bfs| + \sum_{j' \in \bfsbar} \# \{ j \in \bfs \mid j < j'\}.
\end{equation}
\begin{lemma}\label{lem:CFAAidgr}
There exists an arced, bordered Heegaard diagram $\cH$ for $M(\id_{F(\cZ)})$ and an ordering and orientation of the $\beta$-circles such that as $\Ztwo$-graded right $\cI(-\cZ)$-, right $\cI(\cZ)$-modules,
\[
\CFAAid = \bigoplus_{\bfs \subset [2k]} (I_{-\cZ}(\bfsbar) \otimes I_{+\cZ}(\bfs)) \mathbb{F} \{\theta(\bfs)\}.
\]    
\end{lemma}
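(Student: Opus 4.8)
The plan is to explicitly produce a bordered Heegaard diagram for the mapping cylinder $M(\id_{F(\cZ)})$ and read off the module structure together with the $\Ztwo$-grading by hand. First I would recall the standard diagram for the identity cobordism: in \cite[Section 5.3 and Section 10]{LOTbimodules} one constructs an arced bordered Heegaard diagram $\cH$ for $M(\id_{F(\cZ)})$ built out of two copies of the pointed matched circle $\cZ$ (one cut open with the orientation of $-\cZ$, one with the orientation of $+\cZ$), joined along a thin strip, with $\beta$-circles in bijection with the matched pairs $[2k]$, each $\beta$-circle meeting exactly the two $\alpha$-arcs (one on each side) indexed by a given matched pair. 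The generating set $\mfS(\cH)$ is then in natural bijection with subsets $\bfs\subset[2k]$: the generator $\bfx_\bfs$ occupies, for each matched pair $j\in\bfs$, the $\alpha$-arc on the ``$+\cZ$ side'' indexed by $j$, and for each $j\in\bfsbar$ the $\alpha$-arc on the ``$-\cZ$ side''. A short check (already implicit in \cite{LOTbimodules}) shows $I_{-\cZ,A}(\bfx_\bfs)=I_{-\cZ}(\bfsbar)$ and $I_{+\cZ,A}(\bfx_\bfs)=I_{+\cZ}(\bfs)$, and that no two generators share an idempotent pair; this gives the claimed decomposition as a right $\cI(-\cZ)$-, right $\cI(\cZ)$-module once we verify each summand is one-dimensional over $\F$, which is immediate from the bijection $\bfs\leftrightarrow\bfx_\bfs$.

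The substance of the lemma is the grading computation, i.e.\ that with a suitable order and orientation of the $\beta$-circles, $\gr_{AA}(\bfx_\bfs)=\theta(\bfs)$ up to an overall shift (which we absorb by the freedom in Definition~\ref{defn:grA}/the shift $\{\cdot\}$). Recall $\gr_{AA}$ is, by definition in Section~\ref{sec:catofmod}, the grading on $\CFAA$ induced from $\CFA(\cH_\dr)$ via the restriction functor $\Res_{\varphi_\dr}$, and $\CFA(\cH_\dr)$ carries $\gr_A(\bfx)=\sgn_A(\sigma_\bfx)+\sum_{x\in\bfx}o(x)\pmod 2$ from Definition~\ref{defn:grA}, where $\cH_\dr$ is $\cH$ with a neighborhood of the arc $\bfz$ removed (so the boundary becomes the connected sum $\cZ^\#=-\cZ\#\cZ$). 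So I would: (i) fix the order on the $\alpha$-curves dictated by the type A convention ($\alpha$-circles before $\alpha$-arcs, and within the arcs the order on $-\cZ\#\cZ$), and choose the order and orientation on the $\beta$-circles; (ii) compute $\sigma_{\bfx_\bfs}$, the injection from $\beta$-circles to $\alpha$-curves, and its inversion count $\inv(\sigma_{\bfx_\bfs})\pmod 2$; (iii) compute $\sum_{x\in\bfx_\bfs}o(x)$, the sum of orientations of the $2k$ intersection points. The point is to arrange the $\beta$-ordering so that the ``diagonal'' choice $\bfs=\varnothing$ (or $\bfs=[2k]$) is the base generator with grading $0$, and so that switching a single matched pair $j$ from the $-\cZ$ side to the $+\cZ$ side changes $\inv(\sigma)+\sum o(x)$ by exactly $1+\#\{j'\in\bfsbar : j'\text{ lies on the appropriate side of }j\}$ — this is exactly the incremental form of $\theta$. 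Concretely, moving the occupied arc for pair $j$ across the connected-sum region introduces one new intersection-orientation flip (the $|\bfs|$ term in $\theta$, counting the total number of such moves) and reorders $\sigma$ past all the pairs in $\bfsbar$ that now sit ``between'' the two choices (the $\sum_{j'\in\bfsbar}\#\{j\in\bfs:j<j'\}$ term). I would verify this by a direct local computation on the two-handle-attachment/strip picture, comparing $\bfx_\bfs$ with $\bfx_{\bfs\triangle\{j\}}$.

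The main obstacle is precisely this bookkeeping: getting the sign conventions (orientation $o(x)$ via the right-hand rule of Remark~\ref{rmk:footnote4}, the order on $\bfalpha$ imposed by the type A convention, the orientation on $-\cZ$ which reverses each arc) to combine into the clean formula $\theta(\bfs)=|\bfs|+\sum_{j'\in\bfsbar}\#\{j\in\bfs: j<j'\}$ rather than some reindexed variant. I expect that once the diagram is drawn with the pairs laid out in order along the strip, the inversion count of $\sigma_{\bfx_\bfs}$ contributes the double-counting term $\sum_{j'\in\bfsbar}\#\{j\in\bfs: j<j'\}$ (each element of $\bfs$ past which a $\bfsbar$-strand must cross) and the orientation sum contributes $|\bfs|$ modulo $2$ (one flip per pair assigned to the $+\cZ$ side, since the arcs on the two sides carry opposite induced orientations); any residual constant is the permissible overall shift. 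As a consistency check I would confirm the resulting grading is compatible with the known behavior of $\CFAA(\bbI)$ under pairing, namely that $\CFAA(\bbI)\boxtimes N\simeq N$ as $\Ztwo$-graded objects, which pins down the shift and validates the formula. This lemma then feeds directly into the proof of Theorem~\ref{thm:hodge} by identifying the grading bookkeeping in $K_0(\CFAAid\boxtimes N)$ with the Hodge-star sign $\star(x\wedge\cdot)$.
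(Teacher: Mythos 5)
Your proposal is correct and follows essentially the same route as the paper: take the canonical arced bordered Heegaard diagram for $M(\id_{F(\cZ)})$, pass to the drilled diagram $\cH_\dr$ with pointed matched circle $-\cZ\#\cZ$, identify generators with subsets $\bfs\subset[2k]$ via the top/bottom choice on each $\beta$-circle, and compute $\gr_A$ on $\CFA(\cH_\dr)$ by splitting it into the intersection-sign sum (contributing $|\bfs|$, since $y^\top_j$ has sign $1$ and $y^\bot_j$ has sign $0$) plus the inversion count of $\sigma_{\bfy}$ (contributing $\sum_{j'\in\bfsbar}\#\{j\in\bfs\mid j<j'\}$ with the ordering $\alpha^\bot_1,\ldots,\alpha^\bot_{2k},\alpha^\top_1,\ldots,\alpha^\top_{2k}$). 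The only cosmetic difference is that you propose verifying the formula incrementally by comparing $\bfx_\bfs$ with $\bfx_{\bfs\triangle\{j\}}$ and then pinning the overall shift via a pairing consistency check, whereas the paper just counts inversions and orientation signs globally in one pass — both are fine, and your anticipated bookkeeping matches the paper's.
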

\begin{proof}
For the following construction, we rely on Figure~\ref{fig:sample-build} for the picture which we now describe.  We will use the ``canonical'' arced bordered Heegaard diagram $\cH$ for the mapping cylinder of the identity map on $F(\cZ)$ as described in \cite[Definition 5.35]{LOTbimodules}.  Recall that to compute $\CFAAid$ we need to work with the drilled manifold, which is a solid handlebody with boundary $F(-\cZ) \# F(\cZ)$.  From $\cH$, we construct the Heegaard diagram $\cH_{\dr}$ for the drilled manifold which has exactly $4k$ $\alpha$-arcs, no $\alpha$-circles, and $2k$ $\beta$-circles.  While the picture is clear up to isotopy, we will give a very explicit diagram.  We must also choose an ordering and orientations for the $\beta$-circles in $\cH$ (which consequently induces these choices on $\cH_\dr$).  

Recall that we orient our $\alpha$-arcs so that $(\partial \alpha)^- \lessdot (\partial \alpha)^+$.  We order $\bfalpha$ as $\alpha^{\bot}_1,\ldots,\alpha^{\bot}_{2k}, \alpha^{\top}_{1},\ldots,\alpha^{\top}_{2k}$, where $\alpha^{\bot,-}_j = M_{\cZ^\#}^{-1}(j)^-$ and $\alpha^{\top,-}_j = M_{\cZ^\#}^{-1}(j+2k)^-$.  By our labeling, $\partial \alpha^{\bot}_j \subset -Z'$ and $\partial \alpha^{\top}_j \subset Z'$.  
We begin with a planar diagram describing the Heegaard surface in $\cH_\dr$ (i.e., we record the feet of the handles).  Isotope the $\beta$-circles such that in the (punctured) planar diagram given by removing the feet of the handles, a $\beta$-circle which intersects $\alpha_j^{\bot}$ (respectively $\alpha_j^{\top}$) does so in the component containing $\alpha_j^{\bot,-}$ (respectively $\alpha_j^{\top,+}$); further, we arrange that a given $\beta$-circle intersects exactly two $\alpha$-arcs (in one point each).  We orient the $\beta$-circles clockwise and order them so that $\beta_j \cap \bfalpha$ consists of two points, one on $\alpha^{\bot}_j$ and the other on $\alpha^{t\op}_{j}$.  

\begin{figure}[ht]
\vspace{10pt}
\subfigure[]{
\labellist
\pinlabel $-\cZ$ at 10 80
\pinlabel $\cZ$ at 74 80
\scriptsize
\pinlabel $1$ at 7 61
\pinlabel $2$ at -1 46
\pinlabel $3$ at 7 35
\pinlabel $4$ at 7 23
\pinlabel $1$ at 79 61
\pinlabel $2$ at 88 46
\pinlabel $3$ at 79 35
\pinlabel $4$ at 79 23

\endlabellist
\includegraphics[scale=1.5]{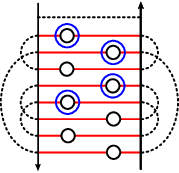}
}
\hspace{20pt}
\subfigure[]{
\labellist
\pinlabel {$-\cZ \# \cZ$} at 122 142
\scriptsize \pinlabel $1$ at 123 7
\pinlabel $2$ at 133 23
\pinlabel $3$ at 124 34
\pinlabel $4$ at 124 46
\pinlabel $5$ at 123 135
\pinlabel $6$ at 133 119
\pinlabel $7$ at 124 108
\pinlabel $8$ at 124 94
\pinlabel $\alpha^{\top}_1$ at 45 130
\pinlabel $\alpha^{\bot}_1$ at 45 10
\endlabellist
\includegraphics[scale=1.5]{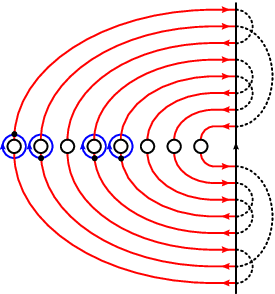}
}

\caption{Computing the grading on $\CFAAid$: Recall that we first pass to a bordered diagram with a single pointed matched circle (from the left-hand figure to the right-hand figure) via the drilling operation. The reader should check that in this example, the grading of the intersection point illustrated is 0.
}
\label{fig:sample-build}
\end{figure}

As an $\F$-vector space, $\CFAAid$ is generated by $2k$-tuples of intersection points, one on each $\beta$-circle.  For $1 \leq j \leq 2k$, on $\beta_j$, there is a pair of intersection points $\ybottom_j$ and $\ytop_j$, where $\ybottom_j = \beta_j \cap \alpha^{\bot}_j$ and $\ytop_j = \beta_j \cap \alpha^{\top}_{j}$.  Thus, a generator of $\CFAAid$ is given by a choice of top or bottom for each $j \in [2k]$.  We denote such a generator by $\bfy$.  Recall that $\CFAAid$ comes equipped with two commuting right $\cA_\infty$-module structures over $\cA(-\cZ)$ and $\cA(\cZ)$.  The minimal idempotent of $\cA(\cZ)$ (respectively $\cA(-\cZ)$) which acts non-trivially on $\bfy$ is precisely $I_{\cZ}(\bfs)$ (respectively $I_{-\cZ}(\bfsbar)$) where $\bfs$ is the set of $j \in [2k]$ such that $\ytop_j \in \bfy$.  We see that there is a correspondence between the generators of $\CFAAid$ and $\bfs \subset [2k]$; in terms of idempotents, we have $(I_{-\cZ}(\bfsbar) \otimes I_{\cZ}(\bfs)) \CFAAid  = \mathbb{F}$, up to grading shift, for each $\bfs \subset [2k]$.    

We claim that the $\Ztwo$-grading of a generator $\bfy$ is given by 
\begin{equation}\label{eqn:grAAid}
\gr_{AA}(\bfy) = \theta(\bfs) \pmod 2,    
\end{equation}
where $\theta(\bfs)$ is as defined in \eqref{eqn:grAAtheta}.  Since $\gr_{AA}$ is defined using the restriction functor and Definition~\ref{defn:grA}, we see that $\gr_{AA}(\bfy) = \sum_{j \in \bfs} o(y_j) + \sgn(\sigma_{\bfy}) \pmod 2$.  For each $j$, an intersection point of the form $\ybottom_j$ has intersection sign 0 while $\ytop_j$ has intersection sign 1.  This shows that the contribution of the intersection signs to $\gr_{AA}$ is precisely $|\bfs|$.  We now count inversions. By our ordering on the $\alpha$-arcs, the inversions are exactly the pairs $(j, j')$ such that $\ybottom_{j'}$ and $\ytop_{j}$ are in $\bfy$ and $j< j'$. Thus, the total number of inversions is $\sum_{j' \in \bfsbar} \# \{ j \in \bfs \mid j < j'\}$.   This now completes the proof, having established \eqref{eqn:grAAid}.
\end{proof}

With this, we are now ready to prove 
Theorem \ref{thm:hodge}.  For notation, recall that given $\bfs \subset [2k]$, we write $\bigwedge_{j \in \bfs} x_j$ to mean $x_{s_1} \wedge \ldots \wedge x_{s_\ell}$ where $\bfs = \{s_1,\ldots,s_\ell\}$ with $s_1 < \ldots < s_\ell$.

\begin{proof}[Proof of Theorem~\ref{thm:hodge}]
Per usual, by Theorem~\ref{thm:decat-general}, it suffices to establish the claim in the case $N$ is the elementary projective module $\cA(\cZ)I_{\cZ}(\bfs)$, or equivalently, the type D structure $\F I_{\cZ}(\bfs)$ equipped with trivial type D structure maps and supported in grading zero; we then extend by linearity.  We first compute $\CFAAid \boxtimes_{\cA(\cZ)} \F I_{\cZ}(\bfs)$.  By Lemma~\ref{lem:CFAAidgr} and the definition of the box tensor product, 
\begin{equation}\label{eqn:boxCFAAid}
\CFAAid \boxtimes_{\cA(\cZ)} \F I_{\cZ}(\bfs) = I_{-\cZ}(\bfsbar) \F \{\theta(\bfs)\},  
\end{equation}  
where $\theta(\bfs)$ is as defined in \eqref{eqn:grAAtheta}.  
Observe that \eqref{eqn:boxCFAAid} is only a statement about $\cI(-\cZ)$-modules.  However, as shown in Theorem~\ref{thm:decat-general} (see also Theorem~\ref{thm:inadecat}), the graded $\cI(-\cZ)$-module structure completely determines the decategorification, so we are content to work at this level.   To determine $K_0(I_{-\cZ}(\bfsbar) \F \{\theta(\bfs)\})$ as a functional from $K_0(\cA(-\cZ))$ to $K_0(\mathsf{Kom})$, we will see what it does on a basis for $K_0(\cA(-\cZ))$, namely $[\F I_{-\cZ}(\bfs')]$ for $\bfs' \subset [2k]$.  For each $\bfs' \subset [2k]$, we compute  
\begin{align}
K_0\Big(I_{-\cZ}(\bfsbar) \F \{\theta(\bfs)\}\Big)([\F I_{-\cZ}(\bfs')]) &= [I_{-\cZ}(\bfsbar) \F \{\theta(\bfs)\} \boxtimes_{\cA(-\cZ)} \F I_{-\cZ}(\bfs')]\notag\\ &= 
\begin{cases}   
[\F\{\theta(\bfs)\}] & \text{ if } \bfs' = \bfsbar \\
0 &\text{ if } \bfs' \neq \bfsbar.
\end{cases}\label{eqn:K0duality}
\end{align} 
Using the computations above, we will determine $K_0(I_{-\cZ}(\bfsbar) \F \{\theta(\bfs)\})$ as a map from $\Lambda^* H_1(F(\cZ))$ to $\mathbb{Z}$, using the identifications $K_0(\cA(-\cZ)) \cong \Lambda^* H_1(F(\cZ))$ from Theorem~\ref{thm:inadecat} and $K_0(\mathsf{Kom}) \cong \mathbb{Z}$ via Euler characteristic.  

Using the definition of $-\cZ$ from Section~\ref{sec:algebrabackground}, we have that $[\F I_{-\cZ}(\bfs')] = (-1)^{|\bfs'|} \bigwedge_{j \in \bfs'} \gamma_j \in \Lambda^*H_1(F)$.  (The factor of $(-1)^{|\bfs'|}$ appears since for $-\cZ$, we use the basis $-\gamma_1,\ldots,-\gamma_{2k}$.)  Further, we observe that $\chi(\F\{\theta(\bfs)\}) = (-1)^{\theta(\bfs)}$.  

We thus deduce from \eqref{eqn:K0duality} that when $\bfs' = \bfsbar$, under the appropriate identifications,
\[
K_0(I_{-\cZ}(\bfsbar) \F \{\theta(\bfs)\})\Big(  (-1)^{|\bfsbar|} \bigwedge_{j \in \bfsbar} \gamma_j \Big) = (-1)^{\theta(\bfs)},
\]   
and $K_0(I_{-\cZ}(\bfsbar) \F \{\theta(\bfs)\})$ vanishes on all other basis elements in $\Lambda^* H_1(F)$ arising from the remaining elementary projectives.  More concisely, we have
\[
K_0(I_{-\cZ}(\bfsbar) \F \{\theta(\bfs)\})\Big( \bigwedge_{j \in \bfs'} \gamma_j \Big) = (-1)^{|\bfsbar|} (-1)^{\theta(\bfs)} \delta_{\bfs', \bfsbar},
\]
where $\delta_{\bfs', \bfsbar}$ is the Kronecker delta function on the subsets of $[2k]$.  

However, we also have that $[\F I_{\cZ}(\bfs)] = \bigwedge_{j \in \bfs} \gamma_j \in \Lambda^* H_1(-F(\cZ))$ by \eqref{eqn:psioneidempotent}.  In summary, 
\[
K_0(\CFAAid \boxtimes_{\cA(\cZ)} -)\co \Lambda^* H_1(-F) \to \left( \Lambda^* H_1(F)  \right)^*
\] 
is the map which takes $\bigwedge_{j \in \bfs} \gamma_j$ to the functional 
\[
\bigwedge_{j' \in \bfs'} \gamma_{j'} \mapsto (-1)^{|\bfsbar|} (-1)^{\theta(\bfs)} \delta_{\bfs', \bfsbar}.
\]
Notice that $\theta(\bfs) + |\bfsbar| = \sum_{j' \in \bfsbar} \#\{j \in \bfs \mid j < j'\} \pmod{2}$ by \eqref{eqn:grAAtheta}.  

Write $\bfs = \{s_1,\ldots,s_\ell\}$ and $\bfsbar = \{s_{\ell+1},\ldots,s_{2k}\}$ in increasing order.  We see that the ordered set  $\{s_{\ell+1},\ldots,s_{2k}, s_1,\ldots,s_\ell\}$ requires exactly $\sum_{j' \in \bfsbar} \#\{j \in \bfs \mid j < j'\}$ transpositions to obtain the ordered set $[2k]$.  Thus, we see that $(-1)^{|\bfsbar|} (-1)^{\theta(\bfs)} \delta_{\bfs', \bfsbar}$ agrees with $\star(\bigwedge_{j' \in \bfs'} \gamma_{j'} \wedge \bigwedge_{j \in \bfs} \gamma_j)$.  Therefore, $K_0(\CFAAid \boxtimes_{\cZ} -)\co \Lambda^*H_1(-F) \to \left( \Lambda^*H_1(F)  \right)^*$ is precisely $\dualize_{H_1(-F)}$.
\end{proof}

\begin{remark}
An alternative to Theorem~\ref{thm:hodge} would be to consider a category of type A modules, to which we would assign exterior algebra elements.  If one follows a construction analogous to that for type D modules (where we work with simple modules instead of elementary projectives), we would see that the process of box tensoring with $\CFDDid$ is precisely Hodge duality.  We take the viewpoint given in Theorem~\ref{thm:hodge} as a similar picture will be used for the proof of Theorem~\ref{thm:donaldson}. 
\end{remark}

\begin{remark}
Keeping track of gradings of the exterior algebra, we have that $\dualize$ induces an isomorphism from $\Lambda^{k+i} H_1(-F)$ to $\left(\Lambda^{k-i} H_1(F) \right)^*$.  This grading corresponds to the strands grading on $\cA(\cZ)$; more precisely, this isomorphism is lifted by Theorem~\ref{thm:AA-duality}, which shows that $\CFAAid \boxtimes_{\cA(\cZ)} -$ induces an equivalence of categories from $^{\cA(\cZ,i)} \sfMod$ to $\sfMod_{\cA(-\cZ,-i)}$.  
\end{remark}

\subsubsection{Bimodules and Homs}\label{Bimodules_and_Homs}
We now generalize the discussion for 
Theorem \ref{thm:hodge} with type D and type A structures replaced by type DD and type DA structures.  We begin with the relevant linear algebra.  Let $V, V'$ be finitely-generated, oriented, vector spaces (or free abelian groups with ordered bases), and consider the isomorphism 
\[
\homize_{V,V'}\co \Lambda^*(V)^* \otimes \Lambda^*(V') \to \Hom(\Lambda^*(V), \Lambda^*(V')),
\]
which takes $v^* \otimes v'$ to the functional $x \mapsto v^*(x) v'$.  Therefore, we have an isomorphism
\begin{equation}\label{eqn:homdecat}
\dualhomize_{V,V'}\co \Lambda^*(V) \otimes \Lambda^*(V') \to \Hom(\Lambda^*(V), \Lambda^*(V')), \qquad v \otimes v' \mapsto \homize_{V,V'}(\dualize(v) \otimes v').  
\end{equation} 
In the context of Donaldson's TQFT, we have, as discussed in Section~\ref{sec:linearalgebra} without this notation, that $\dualhomize_{H_1(-F_0), H_1(F_1)}(|\Gamma_W|) = \FDA(W)$.  Again, we will omit the subscripts from $\homize$ and $\dualhomize$ when the groups in question are clear.    

In 
Theorem \ref{thm:hodge}, we showed that the duality between type D and type A structures given by box tensoring with $\CFAAid$ (which is an equivalence of categories by Theorem~\ref{thm:AA-duality}) decategorifies to the map $\dualize \co \Lambda^*H_1(-F) \to \left( \Lambda^*H_1(F)\right)^*$ defined above.  We seek to extend this construction to obtain a categorification of $\dualhomize_{V,V'}$.  Observe that given a type DD structure $N$ over $\cA(\cZ_0)$ and $\cA(-\cZ_1)$, we may obtain a left $\cA(-\cZ_1)$-, right $\cA(-\cZ_0)$-bimodule of type DA by considering $\CFAAidzero \boxtimes_{\cA(\cZ_0)} N$.  Consequently, we obtain an induced map $K_0(\CFAAidzero \boxtimes_{\cA(\cZ_0)} N) \in \Hom(\Lambda^*H_1(F_0), \Lambda^*H_1(F_1))$.

\begin{proposition}\label{prop:hodgehom}
Let $N \in \,  ^{\cA(\cZ_0) \otimes \cA(-\cZ_1)}\sfMod$.  Considering $[N]$ as an element of $\Lambda^*H_1(-F_0)\otimes \Lambda^*H_1(F_1)$ via Proposition~\ref{prop:DDdecat}, we have 
\begin{equation}\label{eqn:homdecatupsilon}
K_0(\CFAAidzero \boxtimes_{\cA(\cZ_0)} N)  = \dualhomize_{H_1(-F_0), H_1(F_1)} ([N])     
\end{equation}
as elements of $\Hom(\Lambda^*H_1(F_0), \Lambda^*H_1(F_1))$.
\end{proposition}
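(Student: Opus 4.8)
The plan is to reduce the statement, exactly as in the proof of Theorem~\ref{thm:hodge}, to the case of a single elementary projective generator and then extend by linearity. By Theorem~\ref{thm:decat-general} (combined with Proposition~\ref{prop:DDdecat} and Remark~\ref{rmk:checkelementary}), it suffices to verify \eqref{eqn:homdecatupsilon} when $N$ is the elementary projective type DD structure $(\cA(\cZ_0)\otimes\cA(-\cZ_1))\,(I_{\cZ_0}(\bfs_0)\otimes I_{-\cZ_1}(\bfs_1))$ for subsets $\bfs_0\subset[2k_0]$, $\bfs_1\subset[2k_1]$; equivalently, the type DD structure $\F\,(I_{\cZ_0}(\bfs_0)\otimes I_{-\cZ_1}(\bfs_1))$ with vanishing structure maps, supported in grading zero. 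Under Proposition~\ref{prop:DDdecat} this has class $[N]=(-1)^{|\bfs_1|}\bigwedge_{j\in\bfs_0}\gamma_j^{(0)}\otimes\bigwedge_{j'\in\bfs_1}\gamma_{j'}^{(1)}$ in $\Lambda^*H_1(-F_0)\otimes\Lambda^*H_1(F_1)$, where the sign accounts for the $-\cZ_1$ basis convention exactly as in the proof of Theorem~\ref{thm:hodge}.

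First I would compute $\CFAAidzero\boxtimes_{\cA(\cZ_0)}N$ as a type DA structure. Since the $\cA(-\cZ_1)$-action on $N$ is through the left (type D) factor and is untouched by the box tensor over $\cA(\cZ_0)$, the computation factors: the box tensor over $\cA(\cZ_0)$ sees only the $\cA(\cZ_0)$-part, and by Lemma~\ref{lem:CFAAidgr} and the definition of $\boxtimes$ (exactly as in \eqref{eqn:boxCFAAid}) one gets $\CFAAidzero\boxtimes_{\cA(\cZ_0)}N = I_{-\cZ_0}(\bar\bfs_0)\otimes I_{-\cZ_1}(\bfs_1)\,\F\{\theta(\bfs_0)\}$ as a right $\cI(-\cZ_0)$-, left $\cI(-\cZ_1)$-module, where $\theta$ is as in \eqref{eqn:grAAtheta} and $\bar\bfs_0=[2k_0]\setminus\bfs_0$. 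By Theorem~\ref{thm:decat-general} the graded idempotent-module structure determines the decategorification, so it is enough to work at this level.

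Next I would identify the induced map on Grothendieck groups. Viewing this type DA structure as a functor from $^{\cA(-\cZ_0)}\sfMod$ to $^{\cA(-\cZ_1)}\sfMod$, its value on a basis element $[\F I_{-\cZ_0}(\bfs')]$ of $K_0(\cA(-\cZ_0))$ is $[\F I_{-\cZ_1}(\bfs_1)\{\theta(\bfs_0)\}]$ if $\bfs'=\bar\bfs_0$ and $0$ otherwise, exactly paralleling \eqref{eqn:K0duality}. Translating into exterior algebra via Theorem~\ref{thm:inadecat} on both sides (and using $\chi(\F\{\theta\})=(-1)^\theta$, together with the $(-1)^{|\bfs'|}$ and $(-1)^{|\bfs_1|}$ sign conventions for the $-\cZ_i$ bases), the map $K_0(\CFAAidzero\boxtimes_{\cA(\cZ_0)}N)$ sends $\bigwedge_{j'\in\bfs'}\gamma_{j'}^{(0)}$ to $(-1)^{|\bar\bfs_0|+\theta(\bfs_0)}\delta_{\bfs',\bar\bfs_0}\cdot\bigwedge_{j'\in\bfs_1}\gamma_{j'}^{(1)}$ (the $(-1)^{|\bfs_1|}$ factors on the two sides cancel). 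The computation $\theta(\bfs_0)+|\bar\bfs_0|\equiv\sum_{j'\in\bar\bfs_0}\#\{j\in\bfs_0\mid j<j'\}\pmod2$ from the proof of Theorem~\ref{thm:hodge} shows $(-1)^{|\bar\bfs_0|+\theta(\bfs_0)}\delta_{\bfs',\bar\bfs_0}=\star\big(\bigwedge_{j'\in\bfs'}\gamma_{j'}^{(0)}\wedge\bigwedge_{j\in\bfs_0}\gamma_j^{(0)}\big)$, i.e.\ the coefficient is precisely $\dualize_{H_1(-F_0)}\big(\bigwedge_{j\in\bfs_0}\gamma_j^{(0)}\big)$ evaluated on $\bigwedge_{j'\in\bfs'}\gamma_{j'}^{(0)}$. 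Hence $K_0(\CFAAidzero\boxtimes_{\cA(\cZ_0)}N)=\homize\big(\dualize(\bigwedge_{j\in\bfs_0}\gamma_j^{(0)})\otimes\bigwedge_{j'\in\bfs_1}\gamma_{j'}^{(1)}\big)=\dualhomize_{H_1(-F_0),H_1(F_1)}([N])$ by \eqref{eqn:homdecat} on elementary projectives, and the general case follows by linearity.

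The only genuine obstacle is bookkeeping: one must check that the $\cA(-\cZ_1)$-type D action genuinely passes through the box tensor product unchanged (so that the computation of Lemma~\ref{lem:CFAAidgr} can be applied verbatim in the two-boundary setting) and that all the sign conventions — the $-\cZ_i$ basis signs, the grading shift $\theta$, and the Hodge star normalization — line up to give exactly $\dualhomize$ rather than $\dualhomize$ up to an overall sign. These are precisely the subtleties already resolved in the proof of Theorem~\ref{thm:hodge}, so here the work is to confirm that tensoring in the inert $\cA(-\cZ_1)$-factor introduces no new sign, which it does not because the relevant grading contributions and idempotent data on the $F_1$-side are simply carried along.
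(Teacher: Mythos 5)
Your proposal is correct and follows essentially the same route as the paper: reduce to elementary projectives via Theorem~\ref{thm:decat-general} and Proposition~\ref{prop:DDdecat}, compute the box tensor with $\CFAAidzero$ using Lemma~\ref{lem:CFAAidgr} (noting the $\cA(-\cZ_1)$-side idempotent and grading data are simply carried along), evaluate on the basis $[\F I_{-\cZ_0}(\bfs')]$, and match signs against $\dualhomize$ using the Hodge-star computation from the proof of Theorem~\ref{thm:hodge}. The only quibble is a minor bookkeeping slip in your displayed formula for the induced map, where the $(-1)^{|\bfs_1|}$ is dropped and instead referenced parenthetically; the paper keeps it explicit in \eqref{eqn:CFAAids1s2} so that both sides visibly agree, but the substance is the same.
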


\begin{remark}
Here is a moral argument for Proposition~\ref{prop:hodgehom}.  Given a type DD bimodule $N$, this gives us an element of $\Lambda^*H_1(-F_0) \otimes \Lambda^* H_1(F_1)$.  The box tensor product with $\CFAAidzero$ only affects the ``$\cZ_0$-side'', so by 
Theorem \ref{thm:hodge}, this corresponds to applying $\dualize$ to the ``$\cZ_0$-component'' of the tensor product.  Therefore, we obtain an element in $(\Lambda^*H_1(F_0))^* \otimes \Lambda^*H_1(F_1)$, which we can apply $\homize$ to obtain an element of $\Hom(\Lambda^*H_1(F_0), \Lambda^*H_1(F_1))$.  Consequently, we obtain $\dualhomize$.    
\end{remark}

\begin{proof}[Proof of Proposition \ref{prop:hodgehom}]
It suffices to show \eqref{eqn:homdecatupsilon} holds on a basis for $K_0(\cA(\cZ_0)\otimes \cA(-\cZ_1))$. Using Proposition~\ref{prop:DDdecat}, we have a basis given by $N_{\bfs_0,\bfs_1} = \cA(\cZ_0) \otimes \cA(-\cZ_1) (I_{\cZ_0}(\bfs_0) \otimes I_{-\cZ_1}(\bfs_1))$, for $\bfs_0 \subset [2k_0]$ and $\bfs_1 \subset [2k_1]$.  First, by Lemma~\ref{lem:CFAAidgr}, we have
\begin{equation}\label{eqn:CFAAidbimod}
\CFAAidzero \boxtimes_{\cA(\cZ_0)} N_{\bfs_0, \bfs_1} = I_{-\cZ_0}(\bfsbar_0) \F I_{-\cZ_1}(\bfs_1)\{\theta(\bfs_0)\}.   
\end{equation}
Again this is only a statement about graded modules over the appropriate idempotent subalgebras; however, as shown in Section~\ref{sub:collect}, this structure is all we need to keep track of to decategorify.  To compute $K_0(\CFAAidzero \boxtimes_{\cA(\cZ_0)} N_{\bfs_0,\bfs_1})$, we check this on a basis for $K_0(\cA(-\cZ_0))$.  In other words, we compute 
$K_0(\CFAAidzero \boxtimes_{\cA(\cZ_0)} N_{\bfs_0,\bfs_1})([\cA(-\cZ_0)I_{-\cZ_0}(\bfs'_0)])$ for each $\bfs'_0 \subset [2k_0]$. 
Using \eqref{eqn:CFAAidbimod}, we see 
\begin{align}
\nonumber &K_0(\CFAAidzero \boxtimes_{\cA(\cZ_0)} N_{\bfs_0,\bfs_1})([\cA(-\cZ_0)I_{-\cZ_0}(\bfs'_0)]) \\
\nonumber&= [(\CFAAidzero \boxtimes_{\cA(\cZ_0)} N_{\bfs_0,\bfs_1}) \boxtimes_{\cA(-\cZ_0)} \cA(-\cZ_0)I_{-\cZ_0}(\bfs'_0)] \\
\label{eqn:CFAAidbimodbox} &= [I_{-\cZ_0}(\bfsbar_0) \F I_{-\cZ_1}(\bfs_1) \{\theta(\bfs_0)\}\boxtimes_{\cA(-\cZ_0)}  \cA(-\cZ_0)I_{-\cZ_0}(\bfs'_0) ] \\
\nonumber &= 
\begin{cases} (-1)^{\theta(\bfs_0)}[\F I_{-\cZ_1}(\bfs_1)] & \text{ if } \bfs'_0 = \bfsbar_0 \\ 0  &\text{ if } \bfs'_0 \neq \bfsbar_0. \end{cases}
\end{align}

We now rephrase these computations in terms of our identifications with exterior algebras.  We let $x_1,\ldots,x_{2k_0}$ and $y_1,\ldots,y_{2k_1}$ be the ordered bases associated with $H_1(F_0)$ and $H_1(F_1)$ for $\cZ_0$ and $\cZ_1$ respectively, as described in Section~\ref{subsec:kernel}.  Recall that $-y_1,\ldots,-y_{2k_1}$ is the basis associated to $H_1(-F_1)$ associated to $-\cZ_1$.  From Proposition~\ref{prop:DDdecat}, we see 
\begin{equation}\label{eqn:Ns1s2}
[N_{\bfs_0,\bfs_1}] = (-1)^{|\bfs_1|} \Big(\bigwedge_{j \in \bfs_0} x_j \otimes \bigwedge_{j' \in \bfs_1} y_{j'}\Big) \in \Lambda^*H_1(-F_0)\otimes \Lambda^*H_1(F_1). 
\end{equation}  
By \eqref{eqn:CFAAidbimodbox}, we also have 
\begin{equation}\label{eqn:CFAAids1s2}
K_0(\CFAAidzero \boxtimes_{\cA(\cZ_0)} N_{\bfs_0,\bfs_1})\Big((-1)^{|\bfs'_0|} \bigwedge_{j \in \bfs'_0} x_j\Big) =  \begin{cases} (-1)^{\theta(\bfs_0)}(-1)^{|\bfs_1|} \bigwedge_{j' \in \bfs_1} y_{j'} & \text{ if } \bfs'_0 = \bfsbar_0 \\ 0  & \text{ if } \bfs'_0 \neq \bfsbar_0. \end{cases}
\end{equation}

As shown in the proof of Theorem~\ref{thm:hodge}, the map from $\Lambda^*H_1(-F_0)$ to $(\Lambda^*H_1(F_0))^*$ given by sending $\bigwedge_{j \in \bfs_0} x_j$ to the functional determined by 
\[
\bigwedge_{j \in \bfs'_0} x_j \mapsto (-1)^{|\bfsbar_0|} (-1)^{\theta(\bfs_0)} \delta_{\bfs'_0, \bfsbar_0}
\]
is precisely $\dualize_{H_1(-F_0)}$.  It therefore follows from \eqref{eqn:Ns1s2} and \eqref{eqn:CFAAids1s2} that 
\[
K_0(\CFAAidzero \boxtimes_{\cA(\cZ_0)} -)\co\Lambda^*H_1(-F_0) \otimes \Lambda^*H_1(F_1) \to \Hom(\Lambda^*H_1(F_0), \Lambda^*H_1(F_1))
\]
is given by sending $v_0 \otimes v_1$ to the functional $v'_0 \mapsto (\dualize_{H_1(-F_0)}(v_0))(v'_0) v_1$.  By definition, this map is precisely $\dualhomize$.  This completes the proof.
\end{proof}

\subsection{A functor-valued categorification of Donaldson's TQFT}\label{subsec:functorvalued}

We are now in a position to complete the proof of Theorem \ref{thm:donaldson} that $K_0(\CFDA(W)) = \FDA(W)$ for a cobordism $W$ from $F_0$ to $F_1$.  To begin, recall that Donaldson's TQFT, denoted $\FDA$,  assigns to a closed, orientable surface $F$ the abelian group $\Lambda^*H_1(F)$; and to a manifold $W$ with boundary $F$ an element of $\FDA(F)$ determined by the kernel of the map induced by the inclusion $i\co \partial W \to W$ on homology.  For a cobordism $W$ between closed surfaces $F_0$ and $F_1$ (so that $\partial W = -F_0\amalg F_1$) we obtain an element of $\Hom\big(\Lambda^*H_1(F_0),\Lambda^*H_1(F_1)\big)$. Throughout, we will continue to restrict to cobordisms for which $i_*:H_1(\partial W) \to H_1(W)$ is surjective.  We now relate the structure of this TQFT to the bordered Floer homology package.

Recall that, given a pointed matched circle $\cZ$ describing a surface $F$, there are two natural ordered bases for $H_1(F)$ determined by $\cZ$ and $-\cZ$ (compare Section \ref{subsec:hodgeduality}). To ensure consistency with the conventions of bordered Floer theory, we associated to $F$ the exterior algebra $\Lambda^*H_1(F)$, using the basis prescribed by $-\cZ$. As a result, we may describe the categorification of the two-dimensional part of Donaldson's TQFT, coming from Theorem~\ref{thm:inadecat}, diagrammatically as follows:
\[\begin{tikzpicture}
  \matrix (m) [matrix of math nodes,row sep=2em,column sep=3em,minimum width=2em]
  {
    &  {}^{\cA(-F)}\sfMod \\
 F & \Lambda^*H_1(F) \\};
  \path[-latex]
    (m-2-1) edge[snake=snake,
segment amplitude=.4mm, segment length=2mm, line after snake=2mm] (m-1-2)
    (m-2-1.east|-m-2-2) edge node [below] {$\FDA$} (m-2-2)
    (m-1-2) edge node [right] {$K_0$} (m-2-2);
\end{tikzpicture}\] 
Given a three-manifold $W$, with connected boundary $F$, we obtain $\FDA(W) \in \FDA(F)$.  Bordered Floer homology provides a categorical lift of this by assigning a bordered three-manifold $W$ with parameterized boundary $F$ the object $\CFD(W)$ in $^{\cA(-F)}\sfMod$.  Theorem~\ref{thm:kernel} shows that this is precisely the categorification of $\FDA(W) \in \FDA(F)$.     

Another axiom of topological quantum field theories is duality: If $\mathcal{X}$ is a $(2+1)$-dimensional TQFT, then $\mathcal{X}(-F) = \mathcal{X}(F)^*$ (see \cite{Atiyah}).  We can see the categorification of this duality in bordered Floer homology as the equivalence of categories ${}^{\cA(F)}\sfMod \simeq\sfMod_{\cA(-F)}$ from Theorem \ref{thm:AA-duality}.  To see this,  consider the pairing theorem as a means of constructing a functor, via box  tensor product, from the category ${}^{\cA(-F)}\sfMod$ to $\mathsf{Kom}$ (the homotopy category of $\Ztwo$-graded chain complexes over $\F$) as discussed in Section~\ref{subsubsec:typeAhodge}.  
The categorification of Hodge duality from Theorem \ref{thm:hodge} may be summarized in a commutative diagram
\[\begin{tikzpicture}
  \matrix (m) [matrix of math nodes,row sep=2em,column sep=3em,minimum width=2em]
  {
    {}^{\cA(F)}\sfMod & \sfMod_{\cA(-F)} \\
   \Lambda^*H_1(-F) & \left(\Lambda^*H_1(F)\right)^*\\};
  \path[-latex]
    (m-1-1) edge node [left] {$K_0$} (m-2-1)
      (m-1-1.east|-m-1-2) edge node [above] {$\simeq$}  (m-1-2)
    (m-2-1) edge node [above] {$\eta_{H_1(-F)}$} (m-2-2)
    (m-1-2) edge node [right] {$K_0$}  (m-2-2);
\end{tikzpicture}\] 
where $\eta_{H_1(-F)}$ is as defined in Section~\ref{sec:linearalgebra} using the Hodge star operator. This results in the following diagramatic illustration of the desired categorification of duality:
\[\begin{tikzpicture}
  \matrix (m) [matrix of math nodes,row sep=2em,column sep=3em,minimum width=2em]
  {
    &  {}^{\cA(F)}\sfMod & \sfMod_{\cA(-F)} \\
 -F & & (\Lambda^*H_1(F))^* \\};
  \path[-latex]
    (m-2-1) edge[snake=snake,
segment amplitude=.4mm, segment length=2mm, line after snake=2mm] (m-1-2)
     (m-1-2.east|-m-1-3) edge node [above] {$\simeq$}  (m-1-3)
    (m-2-1.east|-m-2-3) edge node [below] {$\FDA$} (m-2-3)
    (m-1-3) edge node [right] {$K_0$} (m-2-3);
\end{tikzpicture}\] 
With this setup in hand we may complete the proof that bordered Floer homology recovers Donaldson's TQFT. 

\begin{proof}[Proof of Theorem~\ref{thm:donaldson}]
Let $W$ be a cobordism from $F_0$ to $F_1$, with boundary parametrized by $-\cZ_0$ and $\cZ_1$ respectively, which satisfies $H_1(W,\partial W) = \Z$.  By Theorem~\ref{thm:kernel}, the homological conditions imply $[\CFDD(W)] = |\Gamma_W| \in \Lambda^*H_1(-F_0) \otimes \Lambda^*H_1(F_1)$, where $|\Gamma_W|$ is the Pl\"ucker point corresponding to $\ker(i_*\co H_1(\partial W) \to H_1(W))$.  
As shown in Section~\ref{sec:linearalgebra}, we have $\dualhomize_{H_1(-F_0), H_1(F_1)}(|\Gamma_W|) = \FDA(W)$.  Therefore, $\dualhomize_{H_1(-F_0), H_1(F_1)}([\CFDD(W)]) = \FDA(W)$.  Further, $\CFAAidzero \boxtimes_{\cA(\cZ_0)} \CFDD(W) \simeq \CFDA(W)$ by \cite[Theorem 12]{LOTbimodules}.   By Proposition~\ref{prop:hodgehom}, we have 
\begin{align*}
K_0(\CFDA(W)) &= K_0(\CFAAidzero \boxtimes_{\cA(\cZ_0)} \CFDD(W)) \\
&= \dualhomize_{H_1(-F_0),H_1(F_1)}([\CFDD(W)]) \\  
&= \FDA(W).\qedhere
\end{align*}
\end{proof}

We now conclude by placing this final step of the proof in the context of the discussion preceding it. Consider a cobordism $W$ with $\partial W = -F_0\amalg F_1$. Since $\CFAA(\mathbb{I}_{\cZ_0})\boxtimes-$ induces an equivalence of categories  \[{}^{\cA(F_0)\otimes\cA(-F_1)}\sfMod \simeq {}^{\cA(-F_1)}\sfMod_{\cA(-F_0)}\] we obtain, by appealing to  Proposition \ref{prop:hodgehom}, the following mnemonic diagram:
\[\begin{tikzpicture}
  \matrix (m) [matrix of math nodes,row sep=2em,column sep=3em,minimum width=2em]
  {
  & {}^{\cA(F_0)\otimes\cA(-F_1)}\sfMod &{}^{\cA(-F_1)}\sfMod_{\cA(-F_0)} \\
 W& & \Hom(\Lambda^*H_1(F_0), \Lambda^*H_1(F_1))\\};
  \path[-latex]
   (m-2-1) edge[snake=snake,
segment amplitude=.4mm, segment length=2mm, line after snake=2mm] (m-1-2)
     (m-1-2.east|-m-1-3) edge node [above] {$\simeq$}  (m-1-3)
    (m-2-1.east|-m-2-3) edge node [below] {$\FDA$} (m-2-3)
    (m-1-3) edge node [right] {$K_0$} (m-2-3);
\end{tikzpicture}\] 
As discussed, given $\CFAA(\mathbb{I}_{\cZ_0})\boxtimes_{\cA(\cZ_0)} \CFDD(W)\cong \CFDA(W)\in {}^{\cA(-F_1)}\sfMod_{\cA(-F_0)}$ we obtain a functor 
\[\CFDA(W)\boxtimes-\co {}^{\cA(-F_0)}\sfMod \to {}^{\cA(-F_1)}\sfMod\] taking type D structures over $\cA(-F_0)$ to type D structures over $\cA(-F_1)$. Theorem~\ref{thm:donaldson} shows this functor is precisely the desired categorification of Donaldson's TQFT, that is, $K_0(\CFDA(W)) = \FDA(W)$ as elements of $\Hom(\Lambda^*H_1(F_0), \Lambda^*H_1(F_1))$.

\subsection{Completing the proof of Theorem~\ref{thm:main}}
We now specialize Theorem~\ref{thm:donaldson} in order to prove Theorem~\ref{thm:main}.  We recall the setup.  

Let $F^\circ$ be a Seifert surface for a knot $K$ in a homology sphere $Y$.  Let $F = F^\circ \cup D^2$ be the capped-off Seifert surface in $Y_0(K)$.  Construct the manifold $W = Y_0(K) \smallsetminus \nu (F) = (Y \smallsetminus \nu (F^\circ)) \cup D^2 \times I$. Then, $\partial W = -F \amalg  F$.  Observe that since $H_1(Y) = 0$, we have that $H_1(W, \partial W) = \mathbb{Z}$.  We choose a parameterization of $\partial W$ such that gluing the two boundary components together results in $Y_0(K)$. Recall that the Alexander module is the first homology of the universal abelian cover of $Y_0(K)$ regarded as a $\Z[t,t^{-1}]$-module, and that $\Delta_{Y_0(K)}(t) = \Delta_K(t)$. Define $\CFDA(K,F^\circ) = \CFDA(W)$, noting that this depends on the choice of parameterization which, as in the previous sections, we will fix and suppress from the notation; however, we do keep in mind that we treat $\CFDA(K,F^\circ)$ as a bimodule over $\cA(-F)$. Following this notation, write $\FDA(K,F^\circ)=\FDA(W)$.  

Recall that Theorem~\ref{thm:main} asserts that there is a commutative diagram 
\[
\begin{tikzpicture}
  \matrix (m) [matrix of math nodes,row sep=3em,column sep=10em,minimum width=2em]
  {
     \CFDA(K,F^\circ) & \HFK(Y,K) \\
     \mathcal{F}_{DA}(K,F^\circ) & \Delta_K(t) \\};
  \path[-latex]
    (m-1-1) edge node [right] {$K_0$}(m-2-1)
       (m-1-1.east|-m-1-2)     edge node [above] {$HH_*$} (m-1-2)
    (m-2-1.east|-m-2-2) edge node [above] {$\grTr$}
            (m-2-2)
    (m-1-2) edge node [right] {$\chi_{\operatorname{gr}}$} (m-2-2);
\end{tikzpicture}
\]
We have shown in Theorem \ref{thm:donaldson} that $K_0(\CFDA(K,F^\circ)) = \FDA(K,F^\circ)$, which establishes the left-hand side of the diagram in the present setting. Donaldson proves that $\grTr(\FDA(K,F^\circ)) = \Delta_{Y_0(K)}(t) = \Delta_K(t)$ \cite{Donaldson1999} while $HH_*(\CFDA(K,F^\circ)) \cong \HFK(Y,K)$ is due to Lipshitz, Ozsv\'ath, and Thurston \cite{LOTbimodules}. The statement in the latter compares the non-commutative gradings and strands gradings on $\CFDA(K,F^\circ)$ to the $\mathbb{Z}$-valued Maslov grading and Alexander grading on $\HFK(K)$ respectively. The fact that this isomorphism also respects the relevant $\Ztwo$-gradings is established in Section \ref{app-sub-Hoch}. It is straightforward to show that the above diagram splits along strands/Alexander gradings.

Given that $\chi_{\operatorname{gr}}( \HFK(Y,K))= \Delta_K(t)$ \cite{Rasmussen2003}, we might be content to end the proof of Theorem \ref{thm:main}. However, in the interest of giving an independent proof of this decategorification (see, in particular, Corollary \ref{cor-alex}), it remains to establish commutativity independent of Rasmussen's work. In particular, it suffices to prove the following lemma verifying that Hochschild homology categorifies the (graded) trace in the present context.

\begin{lemma}
Let $N$ be a bounded bimodule of type DA over $\cA(-F)$.  Then $\grTr(K_0(N)) = \chi_{\operatorname{gr}}(CH_*(N))$.  
\end{lemma}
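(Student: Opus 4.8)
The plan is to evaluate both $\grTr(K_0(N))$ and $\chi_{\operatorname{gr}}(CH_*(N))$ as the same weighted count of generators of $N$, using that neither invariant sees the structure maps. Fix, as in the discussion preceding Theorem~\ref{thm:inadecat}, a minimal set of $\cI$-module generators $\mfS(N)$ of $N$; for $\bfx\in\mfS(N)$ write $I_L(\bfx),I_R(\bfx)\in\cI(\cA(-F))$ for the minimal idempotents acting by the identity through the type~D (left) and type~A (right) actions, let $i(\bfx)$ be the strands grading of $\bfx$ (so $N=\bigoplus_i N(i)$ via the right action), and write $\gr_{DA}(\bfx)\in\Ztwo$ for the homological grading. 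Via Theorem~\ref{thm:inadecat} we view $K_0(N)$ as an endomorphism of $K_0(\cA(-F))\cong\Lambda^*H_1(F)=\bigoplus_i K_0(\cA(-F,i))$, and we use the convention — forced by \eqref{eqn:alexanderpoly} — that for such an endomorphism $\varphi$, $\grTr(\varphi)=\sum_i(-1)^i t^i\Tr(\varphi_{ii})$, where $\varphi_{ii}$ is the component $K_0(\cA(-F,i))\to K_0(\cA(-F,i))$.

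The first step is to compute $K_0(N)$ on the basis $\{[\cA(-F)\iota]\}$ of $K_0(\cA(-F))$ provided by Theorem~\ref{thm:decat-general}. The elementary projective $\cA(-F)\iota\cong\F\iota$ has vanishing type~D operation, so the bounded type~D structure $N\boxtimes_{\cA(-F)}\F\iota$ has underlying graded $\cI$-module $\bigoplus_{\bfx\,:\,I_R(\bfx)=\iota}\F\{\gr_{DA}(\bfx)\}$, where the summand indexed by $\bfx$ is supported on the idempotent $I_L(\bfx)$ and sits (up to one global shift) in grading $\gr_{DA}(\bfx)$ — this uses the behaviour of the $\Ztwo$-grading under the DA/D pairing recorded in Remark~\ref{rem:CFDApairing}. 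Since the right-hand side of Petkova's formula \eqref{eqn:decatmap} depends only on the graded $\cI$-module structure, this gives
\[
K_0(N)\bigl([\cA(-F)\iota]\bigr)=\sum_{\bfx\,:\,I_R(\bfx)=\iota}(-1)^{\gr_{DA}(\bfx)}\,[\cA(-F)\,I_L(\bfx)].
\]
A generator $\bfx$ contributes to the coefficient of $[\cA(-F)\iota]$ precisely when $I_L(\bfx)=I_R(\bfx)=\iota$, and in that case $|I_L(\bfx)|=|I_R(\bfx)|$, so this contribution lies in the strands-grading-$i(\bfx)$ block. Hence
\[
\grTr(K_0(N))=\sum_i(-1)^i t^i\,\Tr\bigl(K_0(N)_{ii}\bigr)=\sum_i(-t)^i\!\!\sum_{\substack{\bfx\in\mfS(N)\,:\,I_L(\bfx)=I_R(\bfx)\\ i(\bfx)=i}}\!\!(-1)^{\gr_{DA}(\bfx)}.
\]

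On the Hochschild side, the construction of Section~\ref{sub:HH} produces in $CH_*(N)$ exactly one generator $\bfx^\circ$ for each $\bfx\in\mfS(N)$ with $I_L(\bfx)=I_R(\bfx)$ — the very same ``diagonal'' generators counted above — carrying Alexander weight $i(\bfx)$ and $\Ztwo$-grading $\gr_{DA}(\bfx)+i(\bfx)$. As the graded Euler characteristic of a finitely generated $\Ztwo$-graded complex equals the alternating generator count (the differential pairs up generators of opposite grading), this yields
\[
\chi_{\operatorname{gr}}(CH_*(N))=\sum_{\bfx\in\mfS(N)\,:\,I_L(\bfx)=I_R(\bfx)}(-1)^{\gr_{DA}(\bfx)+i(\bfx)}\,t^{i(\bfx)}=\sum_i(-t)^i\!\!\sum_{\substack{\bfx\in\mfS(N)\,:\,I_L(\bfx)=I_R(\bfx)\\ i(\bfx)=i}}\!\!(-1)^{\gr_{DA}(\bfx)},
\]
which agrees term by term with the expression just obtained for $\grTr(K_0(N))$.

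The step that needs genuine care — the main obstacle, as opposed to bookkeeping — is the reconciliation of signs and gradings: one must verify that the $\Ztwo$-grading entering Petkova's $K_0$-formula for $N\boxtimes\F\iota$ really is $\gr_{DA}(\bfx)$ and that its global shift is uniform in $\iota$ (otherwise the trace is corrupted), and that the $(-1)^i t^i$ weighting defining $\grTr$ matches the $+i$ shift built into the grading of $\bfx^\circ$ together with its Alexander weight. This last compatibility is exactly what the $t(g-k_L-k_R)$ term in Definition~\ref{defn:sgnDA} is designed to enforce, and is the content of Proposition~\ref{prop:Hochschildgr}, which I would cite. Finally, since $\grTr(K_0(N))$ and $\chi_{\operatorname{gr}}(CH_*(N))$ are in any event only defined up to a global sign coming from the choice of absolute grading on $N$, I would fix that choice once and for all so that the identity holds exactly.
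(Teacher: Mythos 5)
Your proposal is correct and takes essentially the same approach as the paper: both reduce to checking the identity at the generator level (the paper phrases this as "establish it for the trivial bimodule $I_{-\cZ}(\bfs_1)\F I_{-\cZ}(\bfs_0)$ and extend linearly," while you keep $N$ general and sum over a minimal generating set), observe that only the ``diagonal'' generators with $I_L=I_R$ contribute to both $\grTr(K_0(N))$ and $CH_*(N)$, and match the $\Ztwo$-gradings using the $+i$ shift from Section~\ref{sub:HH}. Your remarks about the global grading shift being uniform and about the role of the $t(g-k_L-k_R)$ term in Definition~\ref{defn:sgnDA} are valid and consistent with the paper's use of Proposition~\ref{prop:Hochschildgr}, even though the published proof cites only Section~\ref{sub:HH}.
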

\begin{proof}
Let $N$ be a bounded type DA bimodule over $\cA(-F)$, and recall that Proposition \ref{prop:hodgehom} identifies $K_0(N)$ with an element of $\Hom(\Lambda^*H_1(F), \Lambda^*H_1(F))$.  Generalizing the arguments used in Section~\ref{sub:collect}, it suffices to establish the result for the trivial type DA structure $N = I_{-\cZ}(\bfs_1) \F I_{-\cZ}(\bfs_0)$ and extend linearly.  (To see this, as in the proof of Theorem~\ref{thm:decat-general}, we reduce the problem to studying type DA bimodules over the idempotent subalgebras.)  

We begin by studying $\grTr(K_0(N))$.  As is now familiar, to determine $K_0(N)$, it suffices to consider, for each $\bfs \subset [2k]$,  $K_0(N)([\F I_{-\cZ}(\bfs)])$ where $\F I_{-\cZ}(\bfs)$ is the one-dimensional vector space over $\F$ with trivial type D structure and minimal idempotent $I_{-\cZ}(\bfs)$.  We compute 
\begin{align*}
K_0\big(I_{-\cZ}(\bfs_1)\F I_{-\cZ}(\bfs_0)\big)([\F I_{-\cZ}(\bfs)])
&= [I_{-\cZ}(\bfs_1)\F I_{-\cZ}(\bfs_0)\boxtimes \F I_{-\cZ}(\bfs)]\\
&=\begin{cases}[\F I_{-\cZ}(\bfs_0)] & \bfs_1=\bfs \\ 0 & \bfs_1\ne\bfs \end{cases}
\end{align*} which is represented by the matrix $(\delta_{\bfs_0,\bfs_1})$. In particular, 
\[\operatorname{Tr}\big(K_0(I_{-\cZ}(\bfs_1)\F I_{-\cZ}(\bfs_0))\big) =
\begin{cases} 1 & \bfs_0=\bfs_1 \\ 0 & \bfs_0\ne\bfs_1. \end{cases}
\]
On the other hand, \[CH_*\big(I_{-\cZ}(\bfs_1)\F I_{-\cZ}(\bfs_0)\big) \cong \begin{cases}\mathbb{F} & \bfs_0=\bfs_1 \\ 0 &\bfs_0\ne\bfs_1,\end{cases}\] where non-trivial generators are in grading $|\bfs_1| - k \pmod{2}$, according to Section \ref{sub:HH}. In particular, the resulting group has Euler characteristic $(-1)^{|\bfs_1| - k}$ or $0$ depending only on the idempotents $\bfs_0$ and $\bfs_1$. It now follows from the definition of $\grTr$ that  \[\grTr (K_0(N)) = \chi_{\operatorname{gr}}(CH_*(N))\] as claimed. 
\end{proof}

This completes the proof of Theorem \ref{thm:main}; Corollary \ref{cor-alex} follows, since $\grTr(\FDA(K,F^\circ)) = \Delta_K(t)$ by \cite[Proposition 12]{Donaldson1999}.


\subsection{Intersection pairings and decategorification}
Again, let $F^\circ$ be a Seifert surface for a  knot $K$ in an integer homology sphere $Y$.  By the work of Section~\ref{sec:linearalgebra}, we can determine the Seifert form for $F^\circ$ from the Pl\"ucker point for $W = (Y \setminus \nu (F^\circ)) \cup D^2 \times S^1$ and the intersection form on $F^\circ$.  We have already seen in Theorem~\ref{thm:kernel} how to obtain the Pl\"ucker point from the bordered invariants.   Therefore, we now focus on recovering the intersection form on the first homology of a surface from the bordered Floer homology package.  The following was described to us by Robert Lipshitz.  

We will recover the intersection form on $F^\circ$ using Theorem~\ref{thm:inadecat} and the definition of the $\Ztwo$-grading on $\cA(\cZ)$.  Consider a functor $\mathcal{F}$ from ${}^{\cA(\cZ,1-k)}\sfMod \times {}^{\cA(\cZ,1-k)}\sfMod$ to $\mathsf{Kom}$, the homotopy category of $\Ztwo$-graded chain complexes over $\F$.  Recall that $C_* \mapsto \chi(C_*)$ induces an isomorphism $K_0(\mathsf{Kom}) \cong \Z$.  Thus, by Theorem~\ref{thm:inadecat} and Remark~\ref{rmk:decatstrands}, the induced map on Grothendieck groups, $K_0(\mathcal{F})$, is a bilinear form $K_0(\mathcal{F})\co H_1(F(\cZ)) \times H_1(F(\cZ)) \to\Z$.  Here, we are returning to the identifications with $H_1(F(\cZ))$ as opposed to $H_1(-F(\cZ))$ as the orientation of the surface (as opposed to the homology orientation) is what is relevant for the intersection form.  Given a closed, connected, orientable surface $F$ and a pointed matched circle $\cZ$ such that $F(\cZ) = F$, a categorification of the intersection form is such a functor $\mathcal{F}$ where the induced bilinear form is exactly the intersection form on $H_1(F)$.  It turns out that there is a very natural categorification of the intersection form.  

\begin{proposition}\label{prop:intersectioncat}
Let $F$ be a closed, connected, orientable surface.  Let $\cZ$ be a pointed matched circle such that $F(\cZ) = F$.  Consider the functor
\begin{align*}
\Mor \co {}^{\cA(\cZ,1-k)}\sfMod \times {}^{\cA(\cZ,1-k)}\sfMod &\to \mathsf{Kom},
\end{align*}
where $\Mor$ indicates $\cA(\cZ,1-k)$-module homomorphisms which need not respect the differential graded structure.  Then $K_0(\Mor)$ is the intersection form on $H_1(F)$.  \end{proposition}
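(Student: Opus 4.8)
The plan is to reduce the statement, as in every other decategorification computation in this section, to the case of elementary projectives (equivalently, trivial type D structures over the idempotent subalgebra) and then to identify the resulting bilinear form with the intersection pairing via the combinatorics built into $\gr$ on $\cA(\cZ)$. First I would recall, following Theorem~\ref{thm:inadecat} together with Remark~\ref{rmk:decatstrands}, that $K_0(\cA(\cZ,1-k))\cong\Lambda^{1}H_1(-F)\cong H_1(-F)$, with canonical basis the classes $\gamma_1,\dots,\gamma_{2k}$ (equivalently $-\gamma_j$ for the $-\cZ$ labeling, but here, as the proposition emphasizes, we use the $\cZ$-basis $\gamma_1,\dots,\gamma_{2k}$ of $H_1(F)$ since it is the surface orientation, not the homology orientation, that matters). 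The corresponding elementary projectives are $N_j=\cA(\cZ,1-k)I(\{j\})$; by the usual reduction (Theorem~\ref{thm:decat-general} and Remark~\ref{rmk:checkelementary}), it suffices to compute $K_0(\Mor)([N_j],[N_{j'}])$ for each pair $j,j'\in[2k]$ and extend bilinearly.

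Next I would compute $\Mor(N_j,N_{j'})$ as a $\Ztwo$-graded complex. Since $\Mor$ consists of all $\cA(\cZ,1-k)$-module homomorphisms (not required to respect the differential), this is $\operatorname{Hom}_{\cA}(\cA I(\{j\}),\cA I(\{j'\}))$, which by the standard adjunction is $I(\{j\})\cdot\cA(\cZ,1-k)\cdot I(\{j'\})$, i.e.\ the $\F$-span of those generators $a(S,T,\phi)$ of strands grading $1-k$ with $a(S,T,\phi)=I(\{j\})\,a(S,T,\phi)\,I(\{j'\})$; concretely, these are the elements whose underlying partial permutation moves (only) the strand matched to $j'$ to the strand matched to $j$, together with the fixed-point/match data. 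The differential inherited from $\cA$ acts on this complex, and the key point is to take its Euler characteristic, which is $\sum(-1)^{\gr(a)}$ over a basis, and show that it collapses to exactly the intersection number $\omega_F(\gamma_j,\gamma_{j'})$. The cleanest way to organize this is to observe that strands moving from the $j'$-position to the $j$-position correspond geometrically to the ways the arcs $\gamma_{j'}$ and $\gamma_j$ (or the matched 1-handles of $F(\cZ)$) cross in the planar model of the pointed matched circle, and each such crossing contributes $\pm1$ according to its orientation — precisely the $o(i)$-and-$\inv(\overline\phi)$ data recorded in Definition~\ref{defn:alggr}. When $j$ and $j'$ are unmatched and their handles are unlinked, the complex is acyclic or has vanishing Euler characteristic (reflecting $\omega_F(\gamma_j,\gamma_{j'})=0$); when they are linked in the matched circle, exactly one basis element survives with sign given by the relative orientation, matching $\omega_F(\gamma_j,\gamma_{j'})=\pm1$; and one checks $K_0(\Mor)([N_j],[N_j])=0$ consistent with $\omega_F$ being alternating.

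The bilinearity of $K_0(\Mor)$ is automatic from functoriality and the triangulated structure (a mapping cone in either variable maps to the corresponding alternating sum of Euler characteristics), so once the values on the basis pairs $(\gamma_j,\gamma_{j'})$ are pinned down, the identification $K_0(\Mor)=\omega_F$ follows. I expect the main obstacle to be the bookkeeping in the middle step: correctly matching the sign conventions of $\gr$ on $\cA(\cZ)$ (the orientation weights $o(i)$ and the inversion count $\inv(\overline\phi)$, with the convention of Remark~\ref{rmk:footnote4} that $\alpha^-\lessdot\alpha^+$) against the right-hand-rule sign of an intersection point on $F$, and verifying that the off-diagonal linked case really contributes a single surviving class rather than a cancelling pair. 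A convenient consistency check — which I would include — is to run the computation on the genus-one pointed matched circle, where $\cA(\cZ,0)$ is small enough to write down $\operatorname{Hom}$ by hand and confirm $K_0(\Mor)$ is the standard symplectic form $\left(\begin{smallmatrix}0&1\\-1&0\end{smallmatrix}\right)$ on $H_1(T^2)$, and then to note the general case differs only by taking direct sums over the handles. This is also the step where one should be careful that the functor lands in $\mathsf{Kom}$ with the \emph{relative} $\Ztwo$-grading, so that $\chi$ is well-defined only up to an overall sign, exactly as the intersection form is defined only up to the orientation conventions already fixed in Section~\ref{sec:linearalgebra}.
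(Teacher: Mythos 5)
Your proposal is correct and takes essentially the same route as the paper: reduce to elementary projectives $\cA(\cZ,1-k)I(\{j\})$ via Theorem~\ref{thm:decat-general}, identify $\Mor(\cA I_j,\cA I_{j'})$ with $I_j\cA(\cZ)I_{j'}$, and read off $\chi$ from the $\Ztwo$-grading $\gr$ on single-strand algebra elements (where $\inv(\overline\phi)=0$, so only the orientation weights $o$ at the endpoints contribute). The only minor imprecision worth noting is in the linked case: the space $I_j\cA I_{j'}$ has three generators (one in grading $1$, two in grading $0$) rather than a single ``surviving'' class, but the net Euler characteristic is still $\pm1$ as you claim; and you have the direction of the Reeb chords (from $j$ to $j'$, not $j'$ to $j$) backwards, which would only swap the roles of $j$ and $j'$ and hence flip an overall sign consistent with the intersection form being alternating.
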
  
\begin{proof}
Recall from the identifications in Theorem~\ref{thm:inadecat} that we have a basis for $K_0({}^{\cA(\cZ,1-k)}\sfMod) \cong H_1(F(\cZ))$ given by the $[\cA I(\bfs)]$, where $I(\bfs)$ is an idempotent in $\cA(\cZ,1-k)$ (i.e., idempotents in $\cA(\cZ)$ which consist of exactly one pair of matched, horizontal strands, or equivalently, $\bfs$ is a singleton).  We label the idempotents in $\cA(\cZ,1-k)$ as $I_1,\ldots, I_{2k}$, where $I_i = I(\{i\})$.  Let $\gamma_1,\ldots,\gamma_{2k}$ denote the corresponding elements in $H_1(F(\cZ))$ that they induce. 

We use the notation of upward-veering strands, introduced in Section \ref{sec:alg}. We define $\rho_{j^\pm,j'^\pm}$ as the upward-veering strand in $Z'$ (when it exists) from $a^\pm_j$ to $a^\pm_{j'}$, and similarly for $\rho_{j^\pm, j'^\mp}$.  Furthermore, we abuse notation and identify $\rho_{j^\pm, j'^\pm}$ with the algebra element $I_j \cdot a(\rho_{j^\pm, j'^\pm}) \cdot I_{j'}$.

Since $\gamma_1,\ldots,\gamma_{2k}$ form a basis for $H_1(F(\cZ))$, it thus suffices to show that 
\[
\chi(\Mor(\cA(\cZ)I_j,\cA(\cZ)I_{j'})) = \gamma_j \cdot \gamma_{j'}, \quad \ 1 \leq j,j' \leq 2k.  
\]
Note that $\Mor(\cA(\cZ)I_j, \cA(\cZ)I_{j'})$ is naturally identified with $I_j\cA(\cZ)I_{j'}$ as $\Ztwo$-graded modules.  Thus, we are interested in determining the generators of $I_j\cA(\cZ)I_{j'}$ and their gradings, $\gr$.  Since the elements of $I_j\cA(\cZ)I_{j'}$ have only one moving strand, there are no inversions in the associated partial permutations.  Thus, we are only interested in the sum of the orientations of the initial and final points of the upward veering strand to compute gradings.  Therefore, it follows that 
\begin{align}
\label{eqn:reebgr0} \gr(I_j) = \gr(\rho_{j^+,j'^+}) = \gr(\rho_{j^-,j'^-}) &= 0 \pmod{2},  \\
\label{eqn:reebgr1} \gr(\rho_{j^+,j'^-}) = \gr(\rho_{j^-,j'^+}) &= 1 \pmod{2}.
\end{align}     

We begin with an example.  Consider the case $j = j'$.  In this case, we see that $I_j\cA(\cZ)I_{j}$ is a $\Ztwo$-vector space of dimension 2, generated by $I_j$ and $\rho_{j^-, j^+}$.   By \eqref{eqn:reebgr0} and \eqref{eqn:reebgr1}, we see 
\[
\chi(\Mor(\cA(\cZ)I_j,\cA(\cZ)I_j)) = \chi(I_j\cA(\cZ)I_j) = 0 = \gamma_j \cdot \gamma_j.
\]
More generally, we can apply \eqref{eqn:reebgr0} and \eqref{eqn:reebgr1} to compute the graded dimension of $I_j\cA(\cZ)I_{j'}$.  It is straightforward to verify that 
\[
\chi(\Mor(\cA(\cZ)I_j,\cA(\cZ)I_{j'})) = 
	\begin{cases} 
		0 & \text{ if } \rho_{j^-,j^+} \cap \rho_{j'^-,j'^+} = \emptyset \text{ or } \rho_{j^-,j^+} \subset \rho_{j'^-,j'^+} \text{ or } \rho_{j'^-,j'^+} \subset \rho_{j^-,j^+} \\
		1 & \text{ if } a^-_j \lessdot a^-_{j'} \lessdot a^+_j \lessdot a^+_{j'} \\
		-1 & \text{ if } a^-_{j'} \lessdot a^-_{j} \lessdot a^+_{j'} \lessdot a^+_{j}.
	\end{cases}
\]
By our orientation conventions, this value is precisely $\gamma_j \cdot \gamma_{j'}$.   
\end{proof}

\begin{remark}
In the definition of the grading $\gr$ on $\cA(\cZ)$, per Remark~\ref{rmk:footnote4}, we chose conventions for how to orient $\bfa$ in $\cZ$.  If we chose different conventions, this would result in a different absolute $\Ztwo$-grading on $\cA(\cZ)$.  We could have made a corresponding change of basis for $\Lambda^* H_1(F)$ with which to identify with $K_0({}^{\cA(\cZ)}\sfMod)$, and consequently Proposition~\ref{prop:intersectioncat} would hold for any of these other absolute gradings as well.   
\end{remark}

With the technical work of this section complete, we are now ready to obtain the Seifert form from bordered Floer homology.   
\begin{proof}[Proof of Theorem~\ref{thm:seifert}]
By Theorem~\ref{thm:main}, we can obtain $\FDA$ from $\CFDA(K,F^\circ)$.  As mentioned in Section~\ref{sec:linearalgebra},  Donaldson's TQFT $\FDA$ determines the Alexander module.  

To recover the Seifert form, we proceed as follows.  First, we may compute $\CFDD(W)$ as $\CFDA(K,F^\circ) \boxtimes \CFDD(\mathbb{I})$ by \cite[Theorem 12]{LOTbimodules}.  Donaldson's construction converts the Pl\"ucker point associated to $[\CFDD(W)]$ to a presentation matrix for the Alexander module, thought of as an endomorphism of $H_1(F) \otimes \mathbb{Z}[t,t^{-1}]$.  It's important to point out that in these constructions, we have considered $\CFDA(W)$ as a left $\cA(-F)$-, right $\cA(-F)$-bimodule, and not a bimodule over $\cA(F)$.  From $\cA(-F)$, we obtain the intersection form on $H_1$ for $-F$ by Proposition~\ref{prop:intersectioncat}.  Certainly, from this we can obtain the intersection form for $F$.  The proof is now completed by Proposition~\ref{prop:intersectiondetermines}.        
\end{proof}

\subsection{Example: The right-hand trefoil}

Let $W = (S^3 \smallsetminus \nu (F^\circ)) \cup D^2 \times I$ as before. We illustrate the process of obtaining the Seifert form from $\CFDD(W)$, where $K$ is the right-handed trefoil and $F^\circ$ is its unique minimal genus Seifert surface. We begin with an arced bordered Heegaard diagram for $W$ in Figure \ref{fig:trefoilbimodule}. Let $\cZ$ denote the pointed matched circle induced by this diagram which parameterizes $F = F^\circ \cup D^2$.  Note that the pointed matched circle parameterizing $-F$ is $-\cZ$ (where we recall from Section~\ref{sec:algebrabackground} that $-\cZ$ has particular matchings and orientations induced by $\cZ$).  We then pass to a bordered Heegaard diagram for the drilled manifold, which induces the single pointed matched circle $\cZ \# -\cZ$ as in Figure \ref{fig:trefoilCFDD}.  

\begin{figure}[ht]
\vspace{10pt}
\subfigure[]{
\labellist
\pinlabel ${-\cZ}$ at 25 115
\pinlabel ${\cZ}$ at 108 115
\scriptsize \pinlabel $1$ at -3 38
\pinlabel $2$ at -3 67
\pinlabel $1$ at 135 38
\pinlabel $2$ at 135 67
\endlabellist
\includegraphics[scale=1]{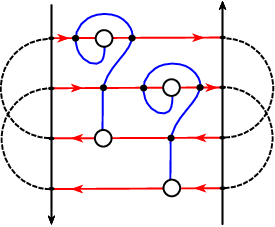}
\label{fig:trefoilbimodule}
}
\hspace{20pt}
\subfigure[]{
\labellist

\pinlabel ${\cZ \# {-\cZ}}$ at 21 215

\scriptsize \pinlabel $1$ at -3 30
\pinlabel $2$ at -3 55
\pinlabel $4$ at -3 146
\pinlabel $3$ at -3 171

\pinlabel $a$ at 85 82
\pinlabel $b$ at 49 105
\pinlabel $c$ at 54 69
\pinlabel $e$ at 105 111
\pinlabel $f$ at 85 134
\pinlabel $g$ at 75 99

\pinlabel $\beta_1$ at 36 87
\pinlabel $\beta_2$ at 63 114

\pinlabel $\alpha^a_1$ at 131 22
\pinlabel $\alpha^a_2$ at 102 36
\pinlabel $\alpha^a_3$ at 133 182
\pinlabel $\alpha^a_4$ at 103 168

\endlabellist
\includegraphics[scale=1]{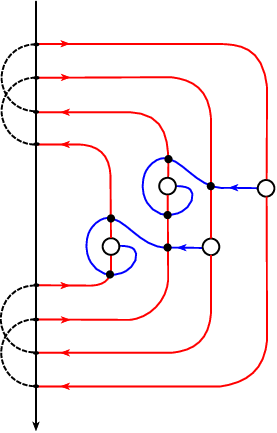}
\label{fig:trefoilCFDD}
}
\caption{Bordered diagrams associated with the right-hand trefoil relative to the unique genus one Seifert surface.  Recall that the matching on a pointed matched circle induced by a Heegaard diagram determines the labeling of the $\alpha$-arcs in the diagram.}
\label{fig:trefoil}
\end{figure}

Recall from Section~\ref{subsec:kernel} that given a pointed matched circle $\cZ$, a basis for $H_1(F(\cZ))$ is prescribed by the arcs on $Z'$ connecting pairs of matched points, oriented from the negatively-oriented point to the positively-oriented point it is matched with; further, if $\cZ$ is induced by a bordered Heegaard diagram, then it follows from the construction that this basis is ordered by the $\alpha$-arcs in the diagram.  From Figure~\ref{fig:trefoilbimodule}, we can thus construct ordered bases for each of $H_1(-F)$ and $H_1(F)$ as follows.  First, we have the ordered basis $x_1, x_2$ for $H_1(-F)$ induced by $\alpha^a_1$ and $\alpha^a_2$ respectively, where $x_i$ in $Z'$ is the arc connecting $(\partial \alpha_i^a)^-$ to $(\partial \alpha_i^a)^+$.   We also obtain $y_1$ and $y_2$, induced by $\alpha^a_3$ and $\alpha^a_4$ respectively, as the ordered basis for $H_1(F)$. It is clear that we have $y_i = -x_i$.  

We see from Figure~\ref{fig:trefoilCFDD} that the set 
$\{ae, \ af, \ bf, \ bg, \ ce, \ cf, \ cg\}$ generates $\CFDD(W)$ as a type DD structure.
We begin by considering $ae$, which lies on $\alpha^a_1$ and $\alpha^a_4$. Thus, the element of $\Lambda^*H_1(-F) \otimes  \Lambda^*H_1(F)$ corresponding to the idempotent of $ae$ is $x_2 \otimes y_1$. The grading of $ae$ is
\begin{align*} 
	\gr_{DD}(ae) &=\sgn_D (\sigma) + \sum_{p \in ae} o(p) \\
		&= \inv(\sigma) + \sum_{i \in \Im (\sigma)} \# \{ j \mid j>i, j \notin \Im(\sigma)\} + \sum_{p \in ae} o(p) \\
		&= 0 + 0 + 1  \\
		&= 1 \pmod 2,
\end{align*}
where $\sigma \co [2] \rightarrow [4]$ is the partial permutation corresponding to $ae$, that is, $1 \mapsto 1$ and $2 \mapsto 4$.  Therefore, under the identifications in Proposition~\ref{prop:DDdecat}, the generator will ultimately contribute $-x_2 \otimes y_1$ to $K_0(\CFDD(W))$.  We summarize the generators, their gradings, and the element of $\Lambda^*H_1(-F) \otimes \Lambda^*H_1(F)$ determined by the associated idempotent:

\begin{center}
\begin{tabular}{cccc}
\\ \hline
&Generator \qquad & Sign \qquad & Basis element \\ 
\hline
&$ae$ & 1 & $x_2 \otimes y_1$ \\
&$af$ & 1 & $x_2 \otimes  y_2$ \\
&$bf$ & 1 & $(x_1 \wedge x_2) \otimes 1$ \\
&$bg$ & 0 & $x_2 \otimes y_1$ \\
&$ce$ & 1 & $x_1 \otimes y_1$ \\
&$cf$ & 1 & $x_1 \otimes y_2$\\
&$cg$ & 1 & $1 \otimes (y_1 \wedge y_2)$\\
\hline \\
\end{tabular}
\end{center}

By Theorem~\ref{thm:kernel}, we obtain the following Pl\"ucker point $|\Gamma| = [\CFDD(W)]  \in \Lambda^*H_1(-F) \otimes \Lambda^* H_1(F)$:
\begin{align*}  
|\Gamma| &= -(x_1 \wedge x_2) \otimes 1  -x_1 \otimes (y_1+y_2) - x_2 \otimes y_2 -  1 \otimes (y_1 \wedge y_2).  
\end{align*}
Observe that treating $|\Gamma|$ as an element of $\Lambda^{2k}(H_1(-F) \oplus H_1(F))$, we can write $|\Gamma| = (x_1 + x_2, y_1) \wedge (x_1, - y_2)$.  

Following Section \ref{sec:linearalgebra}, in order to obtain a presentation matrix for the Alexander module, we would like to express the Pl\"ucker point as an element of $\Lambda^*(H_1(F) \oplus H_1(F))$ where we use the same basis in each copy of $H_1(F)$.  Therefore, we write $|\Gamma|$ as 
\begin{equation}\label{eq:trefoilplucker}
	 |\Gamma| = -(y_1 \wedge y_2) \otimes 1 + y_1 \otimes (y_1 + y_2) + y_2 \otimes y_2 - 1 \otimes (y_1 \wedge y_2)
\end{equation}
where we recall $y_i = -x_i$.  As an element of $\Lambda^*(H_1(F) \oplus H_1(F))$, we have $|\Gamma| =  (-y_1 -y_2, y_1) \wedge (-y_1, - y_2)$.  Before computing the Alexander module and Seifert form from $|\Gamma|$ by following the recipe in Section~\ref{sec:linearalgebra}, to provide relevance with Theorem~\ref{thm:main}, we convert $|\Gamma|$ to $\FDA(K,F^\circ)$ and compute the Alexander polynomial of the trefoil.  

We first obtain an element of $\Hom (\Lambda^* H_1(F), \Lambda^* H_1(F))$ from $|\Gamma|$.  Recall the isomorphism
\[ \eta_{H_1(F)}\co \Lambda^j H_1(F) \cong (\Lambda^{k-j} H_1(F) )^* \]
sending $x \in \Lambda^j H_1(F)$ to the map $v \mapsto \star (v \wedge x)$.  For $x$ of the form $1, y_1, y_2, y_1 \wedge y_2$, we will use $x^*$ to be the element of $(\Lambda^*H_1(F))^*$ which is precisely 1 on $x$ and vanishes on the other three basis vectors for $\Lambda^*H_1(F)$.  Since $y_1 \wedge y_2$ is our chosen volume form on $H_1(F)$, we have
\[ \eta_{H_1(F)}(1) = (y_1 \wedge y_2)^* \qquad \eta_{H_1(F)}(y_1) = -(y_2)^* \qquad \eta_{H_1(F)}(y_2) = (y_1)^* \qquad \eta_{H_1(F)}(y_1 \wedge y_2) = 1^*. \]
Since $\Upsilon_{H_1(F),H_1(F)}(v \otimes w)$ is the element of $\Hom(\Lambda^*H_1(F),\Lambda^*H_1(F))$ defined by $v' \mapsto \left(\eta_{H_1(F)}(v)\right)(v')w$, we see from \eqref{eq:trefoilplucker} that $\FDA(K,F^\circ) = \dualhomize_{H_1(F), H_1(F)}(|\Gamma|) \in \Hom(\Lambda^*H_1(F),\Lambda^*H_1(F))$ is given by
\begin{align*}
	1 &\mapsto -1 \\
	y_1 &\mapsto y_2 \\
	y_2 &\mapsto -y_1 - y_2 \\
	y_1 \wedge y_2 &\mapsto -y_1\wedge y_2. \\
\end{align*}
Using the ordered basis $1, y_1, y_2, y_1 \wedge y_2$ for $\Lambda^*H_1(F)$, this map in matrix form is
\[\begin{tikzpicture}
\draw (-1.4,0.5)--(1.4,0.5);
\draw (-1.4,-0.45)--(1.4,-0.45);
\draw (-0.75,-0.9)--(-0.75,0.9);
\draw (0.75,-0.9)--(0.75,0.9);
\node at (0,0) {$\begin{pmatrix} -1 & 0 & 0 & 0\\ 0  & 0 &-1& 0 \\ 0 & 1 & -1 & 0 \\ 0 &0 &0&-1 \end{pmatrix}$};
\end{tikzpicture}\]
and by taking the graded trace, we obtain 
\begin{equation*}
\Delta(t) =  \left(
-t^{-1}\Tr\begin{pmatrix}-1\end{pmatrix} 
+\Tr\begin{pmatrix}0&-1\\1&-1\end{pmatrix} 
-t\Tr\begin{pmatrix}-1\end{pmatrix} \right) \\
	=  t^{-1} -1+t,
\end{equation*}
which indeed is the Alexander polynomial of the trefoil.  

We next compute the Alexander module.  Since $|\Gamma| = (-y_1 - y_2,y_1) \wedge (-y_1, - y_2) \in \Lambda^*(H_1(F) \oplus H_1(F))$, we have that 
\[
\Gamma = \Span\left\{(-y_1 - y_2,y_1), (-y_1, -y_2)\right\}.  
\]
In terms of the ordered basis $\mathcal{B}$ for $H_1(F) \oplus H_1(F)$ given by $(y_1,0), (y_2,0), (0,y_1), (0,y_2)$, the $\mathcal{B}$-coordinates of $(-y_1 - y_2,y_1)$ are $(-1, -1, 1, 0)$ and the coordinates of $(-y_1, - y_2)$ are $(-1, 0, 0, -1)$.  These vectors are the rows in the matrix
\[
(A \ B)=
\begin{pmatrix}
    -1 & -1 & 1 & 0 \\        
    -1 & 0 & 0 & -1
\end{pmatrix},
\]
which the construction in Section~\ref{sec:linearalgebra} guarantees a presentation of the Alexander module of the right-handed trefoil by 
\[
A+tB=
\begin{pmatrix}
    -1+t & -1  \\        
    -1 & -t   
\end{pmatrix}.
\]
We also compute that the intersection form on $H_1(F)$ is
\[ \omega = \begin{pmatrix}
    0 & 1  \\        
    -1 & 0   
\end{pmatrix}
\]
by Proposition~\ref{prop:intersectioncat}. Finally, by Lemma \ref{lem:intersectiondetermines}, it follows that the Seifert form is
\begin{equation*}
-\omega (A+B)^{-1}A = 
-\begin{pmatrix}
    0 & 1  \\        
    -1 & 0   
\end{pmatrix}
\begin{pmatrix}
    0 & -1  \\        
    -1 & -1 
\end{pmatrix}^{-1}
\begin{pmatrix}
    -1 & -1  \\        
    -1 & 0 
\end{pmatrix} 
= 
\begin{pmatrix}
    -1 & -1  \\        
    0 & -1   
\end{pmatrix}.
\end{equation*}
Note that this is indeed the Seifert form for the right-handed trefoil.

\appendix\section{Properties and invariance of the $\Z/2\Z$-grading}\label{sec:gradings}

In Section \ref{sec:background}, we defined a relative $\Ztwo$-grading, $\gr$, on the bordered Floer invariants of three-manifolds.  The goal of this appendix is to prove the following theorem.  
\begin{theorem}\label{thm:allthegradings}
Let $\cZ$ be a pointed matched circle.  The function $\gr$ gives $\cA(\cZ)$ the structure of a differential $\Ztwo$-graded algebra.  Further, if $Y$ is a bordered three-manifold with boundary parameterized by $\cZ$, the function $\gr_A$ gives $\CFA(Y)$ the structure of a $\Ztwo$-graded $\cA_\infty$-module over $\cA(\cZ)$ which is invariant up to $\Ztwo$-graded $\cA_\infty$-homotopy equivalence.  Analogous statements hold for $\gr_D$, $\gr_{AA}$, $\gr_{DD}$, and $\gr_{DA}$.
\end{theorem}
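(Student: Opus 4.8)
The plan is to prove Theorem~\ref{thm:allthegradings} by identifying each of the combinatorial gradings $\gr$, $\gr_A$, $\gr_D$, $\gr_{AA}$, $\gr_{DD}$, $\gr_{DA}$ with the mod-$2$ reduction of the non-commutative grading of \cite[Chapter 10]{LOT}, following and generalizing Petkova's reduction \cite[Section 3]{Petkovadecat}. Once this identification is in place, every assertion of the theorem --- that each grading function makes the relevant (bi)module into a $\Z/2\Z$-graded $\cA_\infty$-(bi)module over the $\Z/2\Z$-graded algebra $(\cA(\cZ),\gr)$, and that it is invariant up to $\Z/2\Z$-graded homotopy equivalence --- is inherited from the corresponding statement for the LOT grading, since the LOT grading is already known to have these properties and the reduction is a group homomorphism compatible with box tensor products. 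So the real content is the bookkeeping that matches the combinatorial formulas with the reduced LOT grading, organized along the lines of Figure~\ref{fig:grading-summary}.

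First, for the algebra: Lemma~\ref{lem:Adiffgr} already shows $\gr$ makes $\cA(\cZ)$ a differential $\Z/2\Z$-graded algebra, and (as noted after Definition~\ref{defn:alggr}) a direct check identifies $\gr$ with the $\Z/2\Z$-grading Petkova obtains from the diagram $\textsf{AZ}(\cZ)$, hence with the mod-$2$ reduction of the LOT algebra grading. Second, for type D: $\gr_D$ coincides, up to an overall shift, with Petkova's grading $s(\bfx)$ on $\CFD(Y)$, and Petkova shows that $s$ is the reduction of the LOT grading; thus $\gr_D$ gives $\CFD(Y)$ the structure of a $\Z/2\Z$-graded module over $(\cA(-\cZ),\gr)$, invariant up to homotopy. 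Third, for type A: I would run the analogous computation for $\CFA(Y)$, comparing $\gr_A$ of Definition~\ref{defn:grA} with the reduction of the LOT grading on $\CFA(Y)$; the argument is dual to Petkova's, reflecting the passage between type A and type D partial permutations, and one uses Proposition~\ref{prop:respectspairing} together with the fact (recalled around \eqref{eq:closed-z2-grading}) that the induced $\Z/2\Z$-grading on $\CFhat(Y)$ of a closed manifold is the mod-$2$ Maslov grading in order to pin down the remaining shift freedom.

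For the bimodules, $\gr_{AA}$ and $\gr_{DD}$ are by definition the gradings pushed forward from $\CFA(W_\dr)$ and $\CFD(W_\dr)$ along the restriction functor $\Res_{\varphi_\dr}$ and the induction functor $\Ind_{\varphi^{\textup{split}}}$; since these functors are induced by dg-algebra morphisms, they take $\Z/2\Z$-graded (bi)modules to $\Z/2\Z$-graded (bi)modules and respect homotopy equivalence, so the desired properties for $\gr_{AA}$ and $\gr_{DD}$ follow from those already established for $\gr_A$ and $\gr_D$. For type DA I would use the equivalence $\CFDA(W)\simeq\CFAA(\mathbb{I})\boxtimes\CFDD(W)$ from \cite[Theorem~12]{LOTbimodules}: the grading on the right-hand side is determined by $\gr_{DD}$ and by the explicit grading on $\CFAA(\mathbb{I})$ computed in Lemma~\ref{lem:CFAAidgr}, and one verifies that it agrees with the combinatorial $\gr_{DA}$ of Definition~\ref{defn:grDA} by a sign computation built on Lemma~\ref{lem:sgnsum} and Lemma~\ref{lem:CFDAsignaddition}; the correction term $t(g-k_L-k_R)$ in $\sgn_{DA}$ of Definition~\ref{defn:sgnDA} is exactly what is needed for this matching to work out (and, later, for compatibility with Hochschild homology, cf. Proposition~\ref{prop:Hochschildgr}). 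Compatibility with pairing is then recorded by Propositions~\ref{prop:respectspairing} and~\ref{prop:CFDApairing}.

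The main obstacle is the first, purely combinatorial, step: setting up carefully the generalization of Petkova's reduction homomorphism from the LOT grading group(s) to $\Z/2\Z$ and checking, term by term, that composing the LOT grading with it reproduces the formulas in terms of $\inv$ and point orientations in Definitions~\ref{defn:alggr}, \ref{defn:grA}, and \ref{defn:grDA} (and their type D/DD/AA analogues). This requires harmonizing all of the sign and ordering conventions --- the orientation choices of Remark~\ref{rmk:footnote4}, the order in which $\alpha$-circles precede or follow $\alpha$-arcs for type A versus type D, and the splitting conventions for $-\cZ$ and for connected sums --- and confirming that reordering or reorienting curves changes $\gr$ only by an overall shift, so that the relative $\Z/2\Z$-grading is genuinely well defined. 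Once the conventions are reconciled, the propagation through $\Res$, $\Ind$, and $\boxtimes$ is routine.
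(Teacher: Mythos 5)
Your high-level plan --- identify $\gr$, $\gr_A$, $\gr_D$, $\gr_{AA}$, $\gr_{DD}$, $\gr_{DA}$ with a mod-$2$ reduction of the non-commutative LOT gradings and then inherit the structure and invariance statements --- is exactly the strategy the paper follows. But the proposal repeatedly off-loads the hard content onto phrases like ``a direct check,'' ``the analogous computation,'' and ``bookkeeping,'' and that is precisely where the proof actually lives (the appendix containing this material is described in the introduction as ``the bulk of the paper''). Three substantive gaps:

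First, for the algebra you propose to invoke Petkova's $\textsf{AZ}(\cZ)$ description. But the paper explicitly notes that ``the equivalence between $\gr$ and this ({\em a priori} different) $\Z/2\Z$-reduction was not required in [Petkova's] setting and consequently does not appear in Petkova's work'' --- so you cannot simply cite it. What replaces that citation is Proposition~\ref{prop:gradingreduction}: any differential grading on $\cA(\cZ)$ compatible with multiplication is determined by its values on the small subsets $\bbL$ (used to fix the refinement data) and $\bbM$ (the $a(\rho_j)$ generators). That reduction is what makes the identification $\gr=m$ tractable, and it has no counterpart in your sketch.

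Second, ``run the analogous computation, dual to Petkova's'' for $\CFA$ under-describes what is actually needed. The paper's argument (Propositions~\ref{prop:sameidem} and~\ref{prop:diffidem}) builds a closed Heegaard diagram $\cH'$ by gluing a handlebody to the bordered diagram, extends generators $\bfx$, $\bfy$ to closed generators $\bfx'$, $\bfy'$, extends domains $B$ to $B'$ by appending rectangles $R_{i,n}$ and thin rectangles $r_{i,n}$ (and, in the general-idempotent case, a ``refinement neighborhood'' tied to the grading refinement data), and then compares Euler measures, local multiplicities, and inversion counts. This is a new construction, not a formal dualization of Petkova's $\CFD$ computation; your proposal does not sketch it at all, yet this is where the two combinatorial formulas are actually reconciled with the Maslov index. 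The pairing trick with $\CFhat$ that you mention is then used, as in the paper, but only \emph{after} the $\CFA$ computation is in place (to handle $\CFD$ and to remove the base-idempotent hypothesis on $\CFA$).

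Third, for $\gr_{DA}$ you propose comparing the combinatorial grading directly against $\CFAA(\mathbb{I})\boxtimes\CFDD(W)$ via the homotopy equivalence of \cite[Theorem 12]{LOTbimodules}. Be careful: before you have proved invariance of the $\Z/2\Z$-grading, you cannot transport a $\Z/2\Z$-grading across a homotopy equivalence of (bi)modules coming from two different Heegaard diagrams for the same manifold --- this risks being circular. The paper avoids the issue by working at the diagram level: it pairs a fixed diagram $\cH_1$ for $\mathbb{I}$ (Lemma~\ref{lem:CFAAidgr}) with the given $\cH_2$ and uses that \emph{both} the reduction $f_\gamma^{\cZ_L,\cZ_R}$ (Lemma~\ref{lem:fsum}, Lemma~\ref{lem:fRtilde}) and the combinatorial $\gr$ (Proposition~\ref{prop:CFAACFDDgrpairing}) are additive under concatenation of domains, so agreement on two of the three pieces forces agreement on the third. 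Your sketch also omits the extension of Petkova's reduction homomorphism to the two-boundary group $G_{AA}(\d\cH)\to\Z/2\Z$ and its explicit evaluation (Proposition~\ref{prop:fZLZRgamma}), which is needed to state the type~DA identity precisely.

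In short: right framework, and the $\gr_{AA}$/$\gr_{DD}$ handling via restriction/induction is exactly right, but the central identifications that carry the weight of the theorem are not actually sketched, and the $\gr_{DA}$ step as written has a logical gap that needs the diagram-level pairing argument to close.
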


\noindent In order to prove the above theorem, we relate our gradings to those coming from the noncommutative gradings in bordered Floer homology, which are defined in \cite[Chapter 10]{LOT} and \cite[Section 6.5]{LOTbimodules}. More specifically, we identify our gradings with Petkova's $\Ztwo$-reduction of the noncommutative gradings \cite[Section 3]{Petkovadecat}. Note that Petkova defines both $\gr$ and a $\Ztwo$-reduction of the noncommutative group gradings on $\alg$.  However, the equivalence between $\gr$ and this ({\em a priori} different) $\Ztwo$-reduction was not required in that setting and consequently does not appear in Petkova's work. As this equivalence is necessary for our proof of Theorem \ref{thm:allthegradings}, we carry out the proof below.

The appendix is organized as follows. We begin with  an alternate description of the algebra in terms of Reeb chords (Section \ref{sec:alg}). We then identify our grading on $\cA(\cZ)$ with Petkova's reduction (Section \ref{sec:idalgebra}), followed by an identification of the two gradings on $\CFA$ and $\CFD$ (Section \ref{sec:grequivCFACFD}). We proceed to identify the gradings on bimodules (Section \ref{sec:grequivbimodules}). These identifications complete the proof of Theorem \ref{thm:allthegradings}, since the noncommutative gradings (and thus also Petkova's $\Ztwo$ reduction) are invariant up to the appropriate notions of equivalence.


\begin{remark} \label{rem:idemconj}
Given a bordered Heegaard diagram $\cH$, different choices of order and orientation on the $\alpha$-arcs change the pointed matched circle and our $\Ztwo$-grading $\gr$; cf. Remark \ref{rmk:footnote4}.  However, different such choices give different gradings that are conjugate in an idempotent-dependent way, as in \cite[Remark 3.46]{LOT}. Consider any two choices of order and orientation of the $\alpha$-arcs yielding bordered Heegaard diagrams $\cH_1$ and $\cH_2$ giving gradings $\gr_i$ on both $\cA(\cZ_i)$ and $\CFA(\cH_i)$. Then $\cZ_i = (Z, z, \bfa_i, M_i)$ where $M_2=\sigma \circ M_1$ for some permutation $\sigma \co [2k] \rightarrow [2k]$ and $M^{-1}_2(i)=o(i) \cdot M^{-1}_1 \circ \sigma^{-1}(i)$ for some function $o \co [2k] \rightarrow \{\pm1\}$. The permutation $\sigma$ corresponds to the reordering of the $\alpha$-arcs and the sign $o$ corresponds to the reorienting. There is a function $\xi \co \cI(\cZ_1) \rightarrow \Ztwo$ so that given a grading homogenous element $a=I(\bfs) \cdot a \cdot I(\bft) \in \cA(\cZ_1)$,
\[ \gr_2(a')=\xi(I(\bfs)) + \gr_1(a) - \xi(I(\bft)), \]
where $a' \in \cA(\cZ_2)$ is the element with the same strands diagram (see, for example, Figure \ref{fig:partperm}) as $a \in \cA(\cZ_1)$.
The map $\xi$ is given by $\xi(\bfs)=\inv(\sigma|_\bfs) + \prod_{i\in\bfs} o(i)$. Similarly, given $\bfx \in \mfS(\cH)$ thought of as an element of $\CFA(\cH)$ with a non-trivial right action by $I(\bfs_\bfx)=I_A(\bfx)$, we have
\[ \gr_2(\bfx) = \xi(I(\bfs_\bfx)) + \gr_1(\bfx).\]
Analogous statements hold for $\CFD$ and $\CFDA$.  Finally, we note that if $\gr_1$ is a differential $\Ztwo$-grading, then it follows that $\gr_2$ is as well.

This will be particularly relevant when comparing our grading to Petkova's reduction of the noncommutative gradings of \cite[Chapter 10]{LOT}, where we work with pointed matched circles where the matching is induced by the orientation of $Z$ and a specific choice of grading refinement data (see \cite[Section 3.3.2]{LOT}).  The above discussion will allow us to compare the two $\Ztwo$-gradings on a very specific family of pointed matched circles where we can compute the gradings on the associated algebras quite explicitly.  
\end{remark}

\subsection{The algebra}
\label{sec:alg}

There is an alternative description of the algebra from that in Section~\ref{sec:background} -- in terms of Reeb chords -- which will at times be useful. Let $\cZ = (Z, z, \bfa, M)$ be a pointed matched circle. Viewing $Z$ as a contact one-manifold and $\bfa$ as a Legendrian submanifold, a \emph{Reeb chord $\rho$} in $(Z \setminus z, \bfa)$ is an embedded arc in $Z \setminus z$ with endpoints in $\bfa$. A Reeb chord inherits an orientation from $Z$. We denote the initial point of $\rho$ by $\rho^-$ and the terminal point by $\rho^+$. A set of Reeb chords $\bfrho =\{ \rho_1, \ldots, \rho_\ell \}$ is \emph{consistent} if no two $\rho_i$ share initial or terminal points, i.e., $\bfrho^- = \{ \rho_1^-, \ldots, \rho_\ell^- \}$ and $\bfrho^+ = \{ \rho_1^+, \ldots, \rho_\ell^+ \}$ each contain $\ell$ distinct points.

Given a consistent set of Reeb chords $\bfrho$, we obtain an algebra element $a(\bfrho) \in \cA(\cZ)$, defined as 
\[ a(\bfrho) = \mathbf{I} \cdot \sum_{ \{ S  \;|\;  S \cap ( \bfrho^- \cup \bfrho^+) = \emptyset \} } (S \cup \bfrho^-, S \cup \bfrho^+, \phi_S) \cdot \mathbf{I}, \]
where $\phi_S(\rho_i^-)=\rho_i^+$ and $\phi_S |_S = \textup{id}_S$ and $\mathbf{I}$ is the unit for the algebra $ \cA(\cZ)$. The element $a(\bfrho)$ consists of all ways of enlarging the partial permutation given by $\bfrho$ to include any set of constant functions such that the result is $M$-admissible. Note that in general, $a(\bfrho)$ is not homogenous with respect to $\gr$. 

We write $[\bfrho]$ to denote the homology class in $H_1(Z, \bfa)$ given by $\sum_i \rho_i$.  

Recall that 
	\[ \cA(\cZ) = \bigoplus_{\t=-k}^k \cA(\cZ, \t), \]
where $k$ denotes the genus of $F(\cZ)$. Throughout this appendix, we will use $\t$ to denote the strands grading.

\subsection{Equivalence of the two gradings on the algebra}
\label{sec:idalgebra}

In \cite{Petkovadecat}, Petkova defines a $\Ztwo$-reduction of the noncommutative gradings for $\alg$, $\CFA$, and $\CFD$. Since the noncommutative gradings are differential, type A, and type D gradings on $\alg$, $\CFA$, and $\CFD$ respectively, it follows that Petkova's reduction necessarily inherits these properties. Therefore, it is our goal to identify $\gr$ with Petkova's $\Ztwo$-reduction, and then generalize to bimodules.  We begin the process with this section, where we work solely with $\alg$.  In particular, we review the noncommutative group gradings on $\alg$ and Petkova's $\Ztwo$-reduction.  Finally, we identify this with $\gr$.  

\subsubsection{Noncommutative group gradings on $\cA(\cZ)$ and Petkova's $\Z/2\Z$-reduction}\label{sec:algebranoncomm}
We quickly review the material in \cite[Section 3.3]{LOT} on noncommutative group gradings for $\cA(\cZ)$.  We begin with the definition of a grading by a noncommutative group.  Recall that we say that a {\em grading of $\cA$ by $(G,\lambda)$}, where $G$ is a group and $\lambda$ is central in $G$, is a decomposition $\cA = \oplus_{g \in G} A_g$ such that for $a \in A_g$ and $b \in A_{g'}$, we have $a \cdot b \in A_{gg'}$ and $\partial a \in A_{\lambda^{-1} g}$.  Furthermore, we call $a \in \cA$ {\em homogeneous} if it is an element of $A_g$ for some $g \in G$ and we say the grading of $a$ is $g$.  Instead of giving the decomposition of $\cA$ for a grading, we will usually just specify a function from a generating set for $\cA$ to $G$. Unless otherwise specified, we use the same notation as \cite[Section 3.3]{LOT}.  Finally, we note that if $\cA$ is graded by $(G,\lambda)$, and there exists a homomorphism from $G$ to $\Ztwo$ which sends $\lambda$ to 1, then this mod 2 reduction induces a $\Ztwo$-grading on $\cA$.

We fix a pointed matched circle $\cZ = (Z,z,\bfa,M)$ and let $Z' = Z \setminus z$. A \emph{parity change} in $\eta \in H_1(Z',\bfa)$ is a point $p \in \bfa$ such that the multiplicity of $\eta$ to the left of $p$ has different parity than the multiplicity to the right of $p$; note that there is always an even number of such points. Consider the group
\[
G'(4k) = \{(j,\eta) \mid j \in \frac{1}{2}\Z, \ \eta \in H_1(Z',\bfa), \ j = \epsilon(\eta) \pmod 1\}, 
\]
where $\epsilon(\eta) = \frac{1}{4} \#(\text{parity changes in } \eta) \pmod 1$. For $p \in \bfa$ and $\eta \in H_1(Z', \bfa)$, let $m(\eta, p)$ denote the average multiplicity with which $\eta$ covers the regions on either side of $p$, and extend bilinearly to $m: H_1(Z', \bfa) \times H_0(\bfa) \rightarrow \frac{1}{2} \Z$.
 Multiplication in $G'(4k)$ is 
\[
(j_1,\eta_1) \cdot (j_2, \eta_2) = (j_1 + j_2 + L(\eta_1,\eta_2), \eta_1 + \eta_2),
\]  
where $L(\eta_1, \eta_2) = m( \eta_2, \d \eta_1)$.
Note that $\lambda = (1,0)$ is a central element.  Given $a=(S, T, \phi)$, define $[a] \in H_1(Z', \bfa)$ to be the sum of the intervals corresponding to the strands, i.e.,
\[ [a] = \sum_{s \in S} [s, \phi(s)],\]
where $[s,\phi(s)]$ denotes the Reeb chord from $s$ to $\phi(s)$ and we have identified $[4k]$ with $\bfa$ as in Section \ref{sec:algebrabackground} .        

The {\em unrefined grading}, $\grunref$, on $\cA(\cZ)$ by $(G'(4k),\lambda)$ is 
\begin{equation}
\label{eqn:unrefgralg}
\grunref(a) = (\iota(a),[a]),
\end{equation}
where $\iota(a) = \inv(\phi) - m([a], S)$ for $a=(S, T, \phi)$. It follows from \cite[Proposition 3.40]{LOT} that $a(S, T, \phi)$ is homogeneous with respect to $\grunref$.

We are interested in a {\em refined grading}, which takes values in the subgroup $G(\cZ)$ of $G'(4k)$ defined by 
\[
G(\cZ) = \{(j,\eta) \in G'(4k) \mid M_*(\partial \eta) = 0\}.
\]  
(Note that we use $\grunref$ and $\grref$ to denote the unrefined and refined gradings, respectively,  rather than $\gr'$ and $\gr$ as in \cite[Section 3.3]{LOT} since we would like to reserve $\gr$ to refer to the $\Ztwo$-grading defined in Section \ref{sec:algebrabackground}.)
We will grade $\cA(\cZ,\t)$ separately for each $\t$.  However, in order to define a grading of $\cA(\cZ,\t)$ by $(G(\cZ),\lambda)$ some choices are required.  Fix an idempotent $I(\bfs_0) \in \cA(\cZ,\t)$, for $\bfs_0 \subset [2k]$ with $|\bfs_0| = k + \t$.  Now, for each $I(\bfs) \neq I(\bfs_0)$ with $|\bfs| = k+\t$, choose $\psi(\bfs) \in G'(4k)$ such that $M_*(\partial([\psi(\bfs)])) = \bfs - \bfs_0$, as elements of $H_0(M(\bfa);\Z)$, where $[\psi(\bfs)]$ denotes the second component of $\psi(\bfs)$.  Define $\psi(\bfs_0)$ to be the identity element in $G'(4k)$. These choices $\bfs_0, \psi$ are called the {\em grading refinement data} and $I(\bfs_0)$ is called the \emph{base idempotent}.  With this, we define the grading $\grref^{\bfs_0,\psi}$ on $\cA(\cZ,\t)$ by 
\[
\grref^{\bfs_0,\psi}(I(\bfs)a(\bfrho)I(\bft)) = \psi(\bfs) \grunref(a(\bfrho))\psi(\bft)^{-1},
\]
or equivalently,
	\[ \grref^{\bfs_0,\psi}(a(S,T,\phi))=\psi(\bfs)g'(a(S,T,\phi))\psi(\bft)^{-1}, \]
where $\bfs=M(S)$ and $\bft=M(T)$.

The function $\grref^{\bfs_0,\psi}$ takes values in $G(\cZ)$ and consequently determines a grading of $\cA(\cZ,\t)$ by $(G(\cZ),\lambda)$.  A choice of grading refinement data for each $\t$ thus determines a grading on $\cA(\cZ)$. 

Our choice of grading refinement data will rely on a certain subset $\bbL$ of $\cA(\cZ)$ (where $\bbL$ can refer to ``least'' or ``lowest''). Define
\[ L(j) = \min \{M^{-1}(j)\} \]
where $\min$ is taken with respect to $\lessdot$, the order induced by the orientation on $Z \setminus z$. In other words, the elements of $L(j)$ are the ``bottoms'' of each pair of matched points in $\mathbf{a}$.  Let $S_0$ denote the first $\t+k$ points (with respect to $\lessdot$) in $L([4k])$. Consider the set $\mathcal{L}$ of upward-veering, $M$-admissible partial permutations $(S_0, T, \phi)$ such that
\begin{itemize}
	\item $T \subset L([4k])$
	\item $\inv(\phi)=0$.
\end{itemize}
Given $S$ and $T$, the condition that $\inv(\phi)=0$ uniquely specifies $\phi$. 
Recall that, for $a(S, T, \phi) \in \cA(\cZ, \t)$, we have $|S|=\t+k$.  Now define 
\[ \bbL = \{ a(S_0, T, \phi) \ | \ (S_0, T, \phi) \in \mathcal{L} \}. \]
See Figure \ref{fig:algebraL} for examples. 

\begin{figure}[htb!]
\centering
\includegraphics[scale=1]{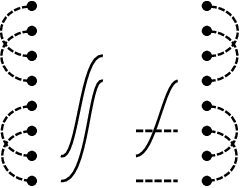}
\caption[]{Examples of elements in $\bbL$. }
\label{fig:algebraL}
\end{figure}

We define $\psi(\bft) = \lambda^{\gr (a(S_0, T, \phi))}\grunref(a(S_0, T, \phi))$ where $a(S_0, T, \phi)$ is the unique element in $\bbL$ with $\bft=M(T)$ and $\gr$ is the $\Ztwo$-grading defined in Section \ref{sec:algebrabackground}. Our base idempotent is $I(\bfs_0)$ where $\bfs_0 = M(S_0)$. It follows from the definitions that this prescribes grading refinement data.




\begin{remark}\label{rmk:reverse}
Fix a pointed matched circle $\cZ$ and a choice of grading refinement data $(\bfs_0,\psi)$ for $\cA(\cZ)$ as above.  It is shown in \cite[Chapter 10.5]{LOT} (see also \eqref{eqn:reverserefinement}) how to produce from $(\bfs_0,\psi)$ grading refinement data for $-\cZ$, called the {\em reverse of the grading refinement data}.  Note that this is {\em not} the same as the refinement data given above for $-\cZ$.  
\end{remark}

For the rest of the paper, given a pointed matched circle $\cZ$, we will work with the fixed choice of grading refinement data defined above for each $-k \leq \t \leq k$; we therefore omit this from the notation by simply writing $\grref$ and refer to $\grref$ as {\em the} refined grading on $\alg$. 

We will be particularly interested in the refined gradings of the elements in the following subset $\bbM$ (where $\bbM$ refers to ``matched''). Consider the set $\mathcal{M}$ of upward-veering, $M$-admissible partial permutations $(S, T, \phi)$ with $\phi \neq \textup{id}_S$ such that there exists a $j \in S$ satisfying
\begin{itemize}
	\item $|S| = \t+k$
	\item $\phi |_{S \backslash \{j\}} = \textup{id}_{S \backslash \{j\}}$
	\item $M(j) = M(\phi(j))$.
\end{itemize}
Then define $\bbM$ to be
\[ \bbM = \{ a(S, T, \phi) \ | \ (S, T, \phi) \in \mathcal{M} \}. \]
See Figure \ref{fig:algebraM} for examples. Note that the elements in $\bbM$ are exactly the elements $\iota \cdot a(\rho_j)$ for each minimal idempotent $\iota \in \cI_\t$ and $j \in [2k]$ where $\rho_j$ is the Reeb chord connecting the two elements of $M^{-1}(j)$. 

\begin{figure}[htb!]
\centering
\includegraphics[scale=1]{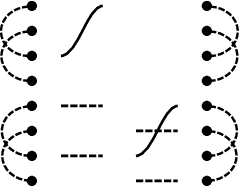}
\caption[]{Examples of elements in $\bbM$. }
\label{fig:algebraM}
\end{figure}

It follows from  \cite[Lemma 14]{Petkovadecat} that $\bfI_\t \cdot a(\rho_j)$ is $\grref$-homogeneous (although $a(\rho_j)$ is not in general $\grref$-homogenous), where $\bfI_\t$ is the unit in $\cA(\cZ, \t)$. We define $\mu_{\t,j} = \grref(\bfI_\t \cdot a(\rho_j))$ for $1 \leq j \leq 2k$.\footnote{Note that in \cite[Section 3]{Petkovadecat}, Petkova only needs the central strands grading, i.e., $ \grref(\bfI_0 \cdot a(\rho_j))$, since she does not deal with bimodules, although \cite[Lemma 14 and Proposition 15]{Petkovadecat} holds for any strands grading. }
For each $\t$, the group $G(\cZ)$ is generated by $\{ \lambda, \mu_{\t,1}, \dots, \mu_{\t,2k}\}$ subject to the relations
	\[ \mu_{\t,j} \mu_{\t,{j'}} = \mu_{\t,{j'}} \mu_{\t,j} \lambda^{2 (\rho_j \cap \rho_{j'})} \quad \textup{ and } \lambda \textup{ central}, \]  
where $\rho_j$ is viewed as an element of $H_1(F(\cZ))$ and $\rho_j \cap \rho_{j'}$ indicates the signed intersection number. 

Petkova's $\Ztwo$-reduction, $m$, of the noncommutative gradings on $\cA(\cZ,\t)$ can now be defined as follows.  Consider the assignment
\begin{equation}
\label{eqn:defnf}
	\lambda \mapsto  1,  \quad \mu_{\t,j} \mapsto 1 \text{ for } 1 \leq j \leq 2k.
\end{equation}
This data in fact determines a homomorphism $f_\t^{\cZ}:G(\cZ) \to \Ztwo$.  We choose to keep $\cZ$ in the notation for the homomorphism from $G(\cZ)$ to $\Ztwo$ since, when working with bimodules, we will consider more than one pointed matched circle at a time. Note that $f_\t^{\cZ}$ depends on a choice of grading refinement data for $\cA(\cZ)$, which we suppress from the notation.

For $a \in \cA(\cZ, \t)$, define 
\begin{equation}\label{eqn:Inaalg}
m(a) = f_\t^{\cZ} \circ \grref(a).
\end{equation}  
Since $\grref$ grades $\alg$ by $(G(\cZ),\lambda)$, it is clear that $m$ makes $\alg$ into a differential $\Ztwo$-graded algebra.  
\vspace{20pt}


\subsubsection{Identifying $m$ and $\gr$} 
Recall that $\gr$ depends on a choice of matching $M$ and choice of orientations of the $4k$ points. (In terms of a bordered Heegaard diagram, this will come from a choice of order and orientation on the $\alpha$-arcs.) This data is built into our definition of a pointed matched circle. Recall also that Petkova's $\Z/2\Z$-grading $m$ depends on a choice of grading refinement data. Our goal for the rest of the current section is to prove:

\begin{theorem}\label{thm:alggradingsagree}
For the choice of refinement data above, the $\Z/2\Z$-gradings on $\cA(\cZ)$ agree, i.e., we have $\gr = m$.  
\end{theorem}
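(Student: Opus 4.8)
The plan is to compare $\gr$ and $m$ directly on a generating set for $\cA(\cZ)$, and since both are $\Ztwo$-gradings making $\cA(\cZ)$ a differential graded algebra, it suffices to check agreement on a generating set and observe that both respect the multiplicative structure. Concretely, both $\gr$ and $m$ are determined (up to the overall idempotent-dependent shift discussed in Remark~\ref{rem:idemconj}) by their values on idempotents $I(\bfs)$ and on the elements $\bfI_\t \cdot a(\rho_j)$ for $1 \le j \le 2k$, since $G(\cZ)$ is generated by $\lambda$ and the $\mu_{\t,j} = \grref(\bfI_\t \cdot a(\rho_j))$. So the heart of the proof is a two-part computation: (i) show $\gr$ and $m$ agree on the elements of $\bbM$, i.e., that $m(\bfI_\t \cdot a(\rho_j)) = \gr(\bfI_\t \cdot a(\rho_j))$; and (ii) show they agree on idempotents, i.e., $m(I(\bfs)) = \gr(I(\bfs))$ for all $\bfs$ with $|\bfs| = \t + k$.

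For part (i), I would unwind the definition $m(a) = f_\t^{\cZ} \circ \grref(a)$. By construction of the refinement data via $\bbL$, we have $\psi(\bft) = \lambda^{\gr(a(S_0,T,\phi))}\grunref(a(S_0,T,\phi))$, where $a(S_0,T,\phi)$ is the unique element of $\bbL$ with $M(T) = \bft$. The key point is that $f_\t^{\cZ}$ kills $\lambda$ and all $\mu_{\t,j}$, so $m$ of a homogeneous element $I(\bfs)a(\bfrho)I(\bft)$ reduces to computing the image of $\psi(\bfs)\,\grunref(a(\bfrho))\,\psi(\bft)^{-1}$ under $f_\t^{\cZ}$; the $\psi(\bfs)$ and $\psi(\bft)^{-1}$ contributions involve only $\grunref$ of elements of $\bbL$ and powers of $\lambda$, which I will track carefully. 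For $\bfI_\t \cdot a(\rho_j)$ this is exactly what Petkova computes in \cite[Lemma 14]{Petkovadecat}; I would extract from that computation that $f_\t^{\cZ}(\mu_{\t,j}) = 1$ matches $\gr(\bfI_\t \cdot a(\rho_j)) = o(j^-) + o(j^+) + \inv(\overline{\phi})$. Since $\rho_j$ connects the two matched points $M^{-1}(j) = \{a_j^-, a_j^+\}$ with $a_j^- \lessdot a_j^+$ and (per Remark~\ref{rmk:footnote4}) these have opposite orientations, $o(a_j^-) + o(a_j^+) = 1$, and $\overline{\phi}$ is the identity so $\inv(\overline{\phi}) = 0$; hence $\gr = 1$, matching $f_\t^{\cZ}(\mu_{\t,j}) = 1$.

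For part (ii), I would use the characterization of $\psi$ via $\bbL$ once more: $m(I(\bfs)) = f_\t^{\cZ}(\psi(\bfs)\,\grunref(\bfI_\t)\,\psi(\bfs)^{-1})$. Since $\grunref$ of an idempotent $a(S,S,\id_S)$ is $(\inv(\id) - m([a],S), 0) = (0,0)$ up to the multiplicity term, and conjugation doesn't change the image under the homomorphism $f_\t^{\cZ}$ in $\Ztwo$ (abelian target), this reduces to computing $f_\t^{\cZ}$ applied to a specific power of $\lambda$ coming from the $\psi(\bfs)$ normalization — which by our choice of refinement data is $\lambda^{\gr(\ell_\bfs)}$ for $\ell_\bfs \in \bbL$ the element with final idempotent $\bfs$. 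Then I must check that $\gr(I(\bfs))$ equals $\gr(\ell_\bfs)$ modulo $2$, which follows because $\ell_\bfs$ and $I(\bfs)$ both have the inversion-free property and one can compare the orientation sums $\sum_{i \in S} o(i) + \sum_{i \in T} o(i)$ directly from the explicit description of $\bbL$. The main obstacle I anticipate is part (ii): carefully bookkeeping the $\lambda$-powers introduced by the conjugating elements $\psi(\bfs)$ and verifying that the $\gr$-corrections built into the definition of $\psi$ via $\bbL$ are precisely what is needed to make the two $\Ztwo$-gradings coincide on idempotents — this is where the somewhat delicate choice of refinement data (designed exactly for this purpose) has to be shown to do its job, and it requires keeping the order $\lessdot$, the orientations, and the $L(j) = \min M^{-1}(j)$ conventions all consistent. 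Once both parts are done, agreement on the full algebra follows since $\gr$ and $m$ are both differential graded-algebra gradings agreeing on a generating set, and the differential compatibility is automatic.
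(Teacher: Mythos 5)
There is a genuine gap in your reduction step. You propose to check agreement on the idempotents $I(\bfs)$ together with the elements $\bfI_\t \cdot a(\rho_j)$ (i.e.\ $\bbM$), arguing that this is a generating set because $G(\cZ)$ is generated by $\lambda$ and the $\mu_{\t,j}$. But group generation of $G(\cZ)$ does not translate into algebra generation of $\cA(\cZ)$, and in fact idempotents plus $\bbM$ do \emph{not} generate $\cA(\cZ)$ as a differential algebra. Concretely: every idempotent has $\gr = 0$ and $m = 0$ automatically (for $\gr$ the orientation terms cancel in pairs and $\inv(\overline{\id}) = 0$; for $m$ we have $\grunref(I(\bfs))$ equal to the identity of $G'(4k)$, so $\grref(I(\bfs)) = \psi(\bfs)\psi(\bfs)^{-1} = 1$), so your part (ii) gives no information at all. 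And the elements of $\bbM$ have both left and right idempotents of the form $I(\bfs)$ with $\bfs$ unchanged (since $\rho_j$ joins the two points of $M^{-1}(j)$), so no product or differential of $\bbM$-elements and idempotents ever produces an algebra element whose left and right idempotents differ. In particular, an element such as the Reeb chord from $a_j^-$ to $a_{j'}^-$ for $j \neq j'$ (one of the basic "transport" elements with non-isomorphic left and right idempotents) cannot be reached, so your proposed set fails to pin down a grading on it.

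What is missing is precisely the set $\bbL$: inversion-free elements starting at the base idempotent $I(\bfs_0)$ and ending at arbitrary idempotents, obtained by moving strands along $L([4k])$. These are exactly the elements used to \emph{define} the refinement data $\psi$, and the paper's choice of $\psi(\bft) = \lambda^{\gr(a(S_0,T,\phi))}\grunref(a(S_0,T,\phi))$ is engineered so that the comparison $\gr = m$ on $\bbL$ becomes an immediate unwinding of definitions. The nontrivial half of the argument is the reduction (Proposition~\ref{prop:gradingreduction} and Lemmas~\ref{lem:smoothings}--\ref{lem:rightendpoints}): one resolves inversions using the differential, then uses $\bbM$-elements to move endpoints into $L([4k])$, and finally uses $\bbL$-elements to move the source to $S_0$ — each step relying on non-vanishing of specific products. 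That reduction is the content you need to supply, and it cannot be shortcut by the group-generation observation, because $\gr$ is not a priori a function of $\grref(a)$ (that is essentially what the theorem asserts). Your computation in part (i) for $\bbM$ is correct, and agreement on $\bbL$ is by design, so once you replace "idempotents" by "$\bbL$" and actually prove the reduction lemma the argument goes through.
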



The key step in the proof of Theorem~\ref{thm:alggradingsagree} is showing that $\gr$ is determined by its values on a small subset of $\cA(\cZ)$, namely $\bbL$ and $\bbM$. We fix a single $\t$ throughout, where $-k \leq \t \leq k$. We will then explicitly compute the different $\Z/2\Z$-gradings on this subset of $\cA(\cZ, \t)$ and see that they agree.

\begin{proposition}\label{prop:gradingreduction}
Any $\Z/2\Z$-grading on $\cA(\cZ)$ which respects the differential and the algebra structure is determined by its values on $\bbL$ and $\bbM$.  
\end{proposition}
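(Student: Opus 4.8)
The plan is to fix a strands grading $\t$ and argue entirely inside the summand $\cA(\cZ,\t)$; this is harmless because $\cA(\cZ)=\bigoplus_\t\cA(\cZ,\t)$ as a differential algebra, a $\Ztwo$-grading respecting the structure restricts to one on each summand, and $\bbL$ and $\bbM$ decompose along the strands grading. Recall that the admissible elements $a(S,T,\phi)$ are the homogeneous generators, and that $\gr(xy)=\gr(x)+\gr(y)$ whenever $x,y$ are generators whose product is a nonzero generator, while $\gr(y)=\gr(x)+1$ whenever $y$ is a summand of $dx$. Two preliminary remarks drive the argument. First, every minimal idempotent $I(\bfs)$ satisfies $I(\bfs)\cdot I(\bfs)=I(\bfs)$, which forces $\gr(I(\bfs))=0$; so the idempotents carry no free information, and in particular the base idempotent $I(\bfs_0)\in\bbL$ is assigned $0$. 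Second, a product $x\cdot y$ of two generators is \emph{grading-transparent}, meaning $\gr(xy)=\gr(x)+\gr(y)$, as soon as one of $x,y$ has no inversions, by the observation in Section~\ref{sec:algebrabackground} that $\inv(\psi\circ\phi)=\inv(\psi)+\inv(\phi)$ when $\inv(\phi)$ or $\inv(\psi)$ vanishes; the defining feature of $\bbL$ is precisely that its elements have no inversions.

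I would then carry out two reductions. The first normalizes the left idempotent: given a generator $a=a(S,T,\phi)$ with $M(S)=\bfs$, let $\ell_\bfs\in\bbL$ be the unique element with right idempotent $I(\bfs)$; it has source $S_0$ and no inversions, so $\ell_\bfs\cdot a$ is a single nonzero generator with source $S_0$ and the same inversion count as $a$, and $\gr(\ell_\bfs\cdot a)=\gr(\ell_\bfs)+\gr(a)$. Hence $\gr(a)=\gr(\ell_\bfs)+\gr(\ell_\bfs\cdot a)$, and since $\ell_\bfs\in\bbL$ it suffices to determine $\gr$ on generators whose source is the base source $S_0$. The second reduction treats such a generator $b=a(S_0,T,\psi)$. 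Because $S_0$ consists of the lowest representatives of its matched pairs, every Reeb chord of $\psi$ begins at a lowest representative, and one can peel these chords off one at a time: at each stage one factors out, as a grading-transparent product, either a matched chord $I(\bft)\cdot a(\rho_j)\in\bbM$ (which toggles the target at a matched pair between its two representatives) or a piece that is identified with an element of $\bbL$, using the differential where necessary to replace a generator by one with the same source and target but a more convenient inversion count, each such replacement shifting $\gr$ by $1$. Assembling these identities writes $\gr(b)$, and hence $\gr(a)$, as a sum modulo $2$ of values of $\gr$ on $\bbL$ and $\bbM$ and of explicit inversion counts, which is the assertion.

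The step I expect to be the main obstacle is the combinatorial bookkeeping in the second reduction: one must check that each product used along the way really is a single nonzero generator with additive inversion count (so that it is grading-transparent), and, where this fails, that the right compensating applications of $d$ are available and contribute exactly the asserted correction terms. This is essentially the computation Petkova performs in the proof of \cite[Lemma 14]{Petkovadecat} for the central strands grading $\t=0$, phrased there in terms of the unrefined and refined noncommutative gradings; nothing in it uses the value of $\t$, so it extends verbatim to every strands grading. It is also convenient to invoke Remark~\ref{rem:idemconj}: since any two choices of order and orientation on $\bfa$ change the $\Ztwo$-grading on $\cA(\cZ)$ by the same idempotent-dependent conjugation, it is enough to verify the statement for the particular family of pointed matched circles on which the gradings of the elements of $\bbL$ and $\bbM$ can be computed explicitly, which is exactly how the reduction is used in the comparison with Petkova's reduction carried out in the proof of Theorem~\ref{thm:alggradingsagree}.
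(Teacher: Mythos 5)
Your overall plan — reduce to the values of $\gr$ on $\bbL$ and $\bbM$ by factoring a general generator through idempotent-normalizing products and differential steps — is the same strategy the paper uses, and the second reduction as sketched is close to the combination of Lemmas~\ref{lem:smoothings} and~\ref{lem:rightendpoints}. But the \emph{first reduction} as you state it is not correct, and the order of operations turns out to matter.

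You claim that for $a=a(S,T,\phi)$ with $M(S)=\bfs$, the product $\ell_\bfs\cdot a$ with the unique $\ell_\bfs\in\bbL$ having right idempotent $I(\bfs)$ is ``a single nonzero generator with source $S_0$ and the same inversion count as $a$,'' relying on the fact that $\ell_\bfs$ has no inversions. The trouble is that ``no inversions'' is a property of the defining partial permutation of $\ell_\bfs=a(S_0,T',\phi')$, not of every term in its $a(\cdot)$-expansion: when $S\not\subset L([4k])$ the canonical sets do not line up ($T'\subset L([4k])$ while $S\not\subset L([4k])$), so the only nonzero contributions come from the smeared terms $((S_0\setminus U)\cup U_M,\,(T'\setminus U)\cup U_M,\,\phi'_U)$, and these $\phi'_U$ can have inversions even though $\phi'$ does not. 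Once that happens, the nonvanishing criterion $\inv(\psi\circ\phi'_U)=\inv(\psi)+\inv(\phi'_U)$ can fail whenever $a$ itself has inversions, and the product is zero. A small example with $k=2$, matching $M(1)=M(3)$, $M(2)=M(6)$, $M(4)=M(7)$, $M(5)=M(8)$, $\ell_{\{1,3\}}=a(\{1,2\},\{1,4\},\phi')$ and $a=a(\{3,4\},\{7,5\},\psi)$ with $\psi(3)=7,\psi(4)=5$ gives $\ell_{\{1,3\}}\cdot a=0$. Even in cases where the product is nonzero, it need not have source $S_0$ literally (only the same left idempotent) and its inversion count generally differs from that of $a$, so the phrasing is misleading in two further ways.

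The fix is exactly the ordering in the paper's proof: first apply the differential (Lemma~\ref{lem:smoothings}) to reduce to $\inv(\phi)=0$; then multiply on the left by a single $\bbM$-element $a(S\cup\{j'\}\setminus\{j\},S,\tau_{j,j'})$ to move one point of $S$ into $L([4k])$ at a time (this product is manifestly nonzero because the canonical target/source sets agree and $\inv(\phi)=0$ guarantees additivity of inversions), re-applying the differential as needed after each step; only after $S\subset L([4k])$ is it safe to multiply by the $\bbL$-element, because only then do the canonical sources and targets match. Your argument would go through if you moved the inversion-killing and the $\bbM$-multiplications \emph{before} the $\bbL$-multiplication; as written, the first reduction jumps straight to $\bbL$ and the nonvanishing you need is not available. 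The appeal to Remark~\ref{rem:idemconj} and to Petkova's Lemma~14 of \cite{Petkovadecat} for the bookkeeping is fine, but it does not rescue the specific claim about $\ell_\bfs\cdot a$.
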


In order to prove Proposition~\ref{prop:gradingreduction}, for what follows, we will assume that we have an arbitrary $\Z/2\Z$-grading on $\cA(\cZ,\t)$.

\begin{lemma}\label{lem:smoothings}
Let $a(S, T, \phi) \in \cA(\cZ,\t)$.  There exists an element $a(S, T, \phi') \in \cA(\cZ,\t)$ with $\inv(\phi')=0$ whose $\Z/2\Z$-grading determines that of $a(S, T, \phi)$.  
\end{lemma}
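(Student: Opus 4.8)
The plan is to induct on $\inv(\phi)$. If $\inv(\phi)=0$, take $\phi'=\phi$. For the inductive step, assume $\inv(\phi)\geq 1$; I will produce an upward-veering, $M$-admissible partial permutation $\phi''$ with the \emph{same} domain $S$ and image $T$ as $\phi$, with $\inv(\phi'')=\inv(\phi)-1$, such that $\gr(a(S,T,\phi))=\gr(a(S,T,\phi''))+1\pmod{2}$ for the given $\Ztwo$-grading (recall that here the standard generators $a(S,T,\phi)$ are homogeneous). Feeding $a(S,T,\phi'')$ back into the inductive hypothesis then produces the desired $\phi'$; iterating the step shows $\gr(a(S,T,\phi))=\gr(a(S,T,\phi'))+\inv(\phi)\pmod{2}$, and since the element reached has $\inv=0$, the terminal $\phi'$ is the unique order-preserving bijection $S\to T$. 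This $\phi'$ is a legitimate generator of $\cA(\cZ,\t)$: writing $S=\{s_1<\dots<s_m\}$ and $T=\{t_1<\dots<t_m\}$, the inequality $t_i\geq s_i$ holds by comparing the number of elements of $T$ that are $\geq s_i$ with the number of elements of $S$ that are $\geq s_i$ and using that $\phi$ is upward-veering, so $\phi'$ is upward-veering; and it is $M$-admissible since it has the same support as $\phi$.

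For the step, pick an inversion $(i,j)$ of $\phi$ (so $i<j$, $\,i,j\in S$, $\,\phi(i)>\phi(j)$) with $j-i$ minimal. Then no element of $S$ lies strictly between $i$ and $j$, and a direct computation gives $\inv(\phi\circ\tau_{i,j})=\inv(\phi)-1$. Put $\phi''=\phi\circ\tau_{i,j}$. As $\tau_{i,j}$ permutes $S$, the map $\phi''$ again has domain $S$ and image $T$; it is upward-veering because the only strands altered are $i\mapsto\phi(j)$ and $j\mapsto\phi(i)$, and $\phi(j)\geq j>i$ together with $\phi(i)>\phi(j)\geq j$ yield $\phi(j)\geq i$ and $\phi(i)\geq j$; and it is $M$-admissible since its support is that of $\phi$. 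Because $a(S,T,\phi)$ is $M$-admissible we have $M(i)\neq M(j)$, so $i$ and $j$ are moving strands (never fixed points) of $\phi$ and of every $\phi_U$ occurring in $a(S,T,\phi)=\sum_{U\subseteq\textup{Fix}(\phi)}(\,\cdot\,)$. Resolving the crossing of these two strands commutes with passing to $\phi_U$, so in the differential on $\cA(\cZ)$ (the restriction of the differential on $\cA(4k)$ recalled in Section~\ref{sec:algebrabackground}; cf. \cite[Section 3]{LOT}) the resolution of $(i,j)$ collects to the single basis element $a(S,T,\phi'')$, and this term is not cancelled by any other summand of $d(a(S,T,\phi))$. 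Since the grading respects the differential and $a(S,T,\phi)$ is homogeneous, every summand of $d(a(S,T,\phi))$ — in particular $a(S,T,\phi'')$ — lies one degree higher, giving $\gr(a(S,T,\phi))=\gr(a(S,T,\phi''))+1\pmod{2}$. For the specific grading $\gr$ of Definition~\ref{defn:alggr} this is exactly the computation already carried out in the proof of Lemma~\ref{lem:Adiffgr}, where resolving a crossing with $M(i)\neq M(j)$ shifts $\inv(\overline\phi)$ by $1$.

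The step I expect to require the most care is the one just sketched: controlling the differential on the \emph{subalgebra} $\cA(\cZ)$ closely enough to be sure the chosen resolution $a(S,T,\phi'')$ really appears (and is not annihilated) in $d(a(S,T,\phi))$, in the presence of the extra terms produced when a moving strand of some $\phi_U$ runs over one of the matched doubling points $U_M$. What makes this manageable is precisely the choice of a crossing between two strands that are moving for $\phi$ itself, for which resolution is compatible with the $a(\cdot)$-construction. Everything else — the extremal choice of inversion, the stability of the upward-veering and $M$-admissibility conditions under resolution, the identification of the terminal $\phi'$, and the homogeneity of the standard generators (which holds for every grading considered here) — is elementary and is handled in the order above.
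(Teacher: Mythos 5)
Your proof follows the same strategy as the paper's — induct on $\inv(\phi)$, resolving one crossing at a time to a term in the differential, with an extremal choice of inversion making the inversion count drop by exactly one — and the auxiliary verifications (upward-veering, $M$-admissibility, identification of the terminal $\phi'$) are correct and, if anything, more carefully spelled out than in the paper. However, the claim you flag as the delicate step contains an error. You assert that $M$-admissibility ($M(i)\neq M(j)$) forces both $i$ and $j$ to be moving strands of $\phi$, and you lean on this to say the resolution of $(i,j)$ is compatible with passing to every $\phi_U$. This is true for $i$: an inversion with $\phi$ upward-veering gives $\phi(i)>\phi(j)\geq j>i$, so $i\notin\textup{Fix}(\phi)$. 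But $j$ can certainly be a fixed point: $\phi\co 1\mapsto 3,\ 2\mapsto 2$ has the inversion $(1,2)$ with $j=2$ fixed, and the partial permutation is easily arranged to be $M$-admissible and upward-veering. The condition $M(i)\neq M(j)$ has nothing to do with whether $j$ is moving. For $U\ni j$, there is then no crossing at $(i,j)$ in $\phi_U$, so the claim that the resolutions ``collect'' across the $U$-sum does not hold as stated.

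The conclusion you need is nevertheless correct, and the patch is simple and avoids any discussion of which strands move. Having shown $\inv(\phi\circ\tau_{i,j})=\inv(\phi)-1$, you know $(S,T,\phi\circ\tau_{i,j})$ appears with coefficient $1$ in $d((S,T,\phi))$, the $U=\emptyset$ summand of $d(a(S,T,\phi))$. For $U\neq\emptyset$ the summand $\big((S\setminus U)\cup U_M,\ (T\setminus U)\cup U_M,\ \phi_U\big)$ has source different from $S$, and its differential never produces a partial permutation with source $S$. Hence $(S,T,\phi\circ\tau_{i,j})$ has coefficient exactly $1$ in $d(a(S,T,\phi))$. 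Since $d$ preserves the subalgebra $\cA(\cZ)$, $d(a(S,T,\phi))$ is a $\Ztwo$-linear combination of basis elements $a(S',T',\psi)$, and distinct such $a$-elements expand to disjoint sets of partial permutations (two $a$-elements are either equal or share no $U$-term). The unique $a$-element having $(S,T,\phi\circ\tau_{i,j})$ as a $U$-term is $a(S,T,\phi\circ\tau_{i,j})$ itself, so it must occur in $d(a(S,T,\phi))$ with coefficient $1$. With that, the rest of your argument — homogeneity of the basis elements, the grading shift by $1$ under $d$, and the induction — goes through.
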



\begin{proof}
If $\inv(\phi)=0$, we are done. Otherwise, let $(i, j)$ be the smallest inversion of $\phi$ with respect to the lexicographical ordering given by $\lessdot$ (recall that the condition $i<j$ is built into the definition of an inversion). Then consider the partial permutation $(S, T, \phi \circ \tau_{i,j})$, where $\tau_{i,j}$ is the transposition interchanging $i$ and $j$. Note that since we chose $(i, j)$ to be the smallest inversion, we have that $\inv(\phi \circ \tau_{i,j}) = \inv(\phi)-1$. Indeed, the inversions of $\phi \circ \tau_{i,j}$ are exactly the inversions $(i', j')$ of $\phi$ such that $(i', j',) \neq (i, j)$.
Therefore, $(S, T, \phi \circ \tau_{i,j})$ is a non-zero term in $d(a(S, T, \phi))$. Hence, the grading of $a(S, T, \phi \circ \tau_{i,j})$ determines the gradings of $a(S, T, \phi)$; namely, their gradings differ by one.
Iterating this process, and noting that at each step we decrease the number of inversions by one, completes the proof of the lemma.
\end{proof}

\begin{lemma}\label{lem:leftendpoints}
Let $a(S, T, \phi) \in \cA(\cZ,\t)$.  The $\Ztwo$-grading of $a(S, T, \phi)$ is determined by the gradings on $\bbL$ and $\bbM$, and the grading of the element $a(S_0, T, \phi') \in \cA(\cZ,\t)$, where  
\begin{itemize}
	\item $S_0$ consists of the first $\t+k$ points in $L([4k])$
	\item $\inv(\phi')=0$.
\end{itemize}
\end{lemma}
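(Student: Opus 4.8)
The plan is to peel an arbitrary $a(S,T,\phi)$ down to the distinguished element $a(S_0,T,\phi')$ by a finite sequence of moves, at each of which the chosen $\Ztwo$-grading changes by a quantity we can name: by $\pm 1$ whenever we pass to a term of the differential (this is where we use that the grading is a \emph{differential} grading), and by $\gr$ of an explicit element of $\bbL$ or $\bbM$ whenever we multiply (this is where we use that it is an \emph{algebra} grading, together with the fact that the relevant products are nonzero). Since the total change is a sum of grading values on $\bbL\cup\bbM$ and of $\pm 1$'s, the grading of $a(S,T,\phi)$ is determined by $\gr(a(S_0,T,\phi'))$ and the grading values on $\bbL$ and $\bbM$, as claimed.

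First I would apply Lemma~\ref{lem:smoothings} to reduce to the case $\inv(\phi)=0$ (same $S,T$), so that $\phi$ is the order-preserving bijection $S\to T$. Next comes the stage of pushing the left endpoints to leftmost positions. Suppose the current source set contains a point $q=\rho_j^+$ which is the non-leftmost point of its matched pair $j$, so $L(j)\lessdot q$ and $M(q)=j$. Set $S'=(S\setminus\{q\})\cup\{L(j)\}$ and let $m=a(S',S,\phi_m)$, where $\phi_m$ sends $L(j)\mapsto q$ and fixes $S\setminus\{q\}=S'\setminus\{L(j)\}$; one checks directly that $(S',S,\phi_m)$ is $M$-admissible, upward-veering, of size $\t+k$, and has exactly one in-pair non-identity strand, so that $m\in\bbM$. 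Because $\inv(\phi)=0$, the product $m\cdot a(S,T,\phi)$ is nonzero (the inversion counts add, since one factor has no inversions) and equals $a(S',T,\phi\circ\phi_m)$, so $\gr(a(S,T,\phi))=\gr(a(S',T,\phi\circ\phi_m))-\gr(m)$; re-applying Lemma~\ref{lem:smoothings} returns to an element with source set $S'$ and no inversions, at the cost of a further known $\pm 1$. Each such move sends one non-leftmost point of the source set to leftmost and never re-introduces one (subsequent moves act on other pairs, and the differential moves no points), so after finitely many steps we reach $a(\widetilde S,T,\widetilde\phi)$ with $\widetilde S\subseteq L([4k])$ and $\inv(\widetilde\phi)=0$.

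Finally, since $\widetilde S$ and $S_0$ both consist of leftmost points of pairs and $S_0$ is by definition the set of the $\t+k$ smallest elements of $L([4k])$, an elementary comparison of order statistics shows that the order-preserving bijection $\psi\colon S_0\to\widetilde S$ satisfies $\psi(i)\ge i$; as $\inv(\psi)=0$ and its right endpoints lie in $L([4k])$, the element $\ell=a(S_0,\widetilde S,\psi)$ lies in $\bbL$. The same comparison shows the order-preserving bijection $\phi'\colon S_0\to T$ is upward-veering, so $a(S_0,T,\phi')\in\cA(\cZ,\t)$ is a genuine element with $\inv(\phi')=0$. Since $\inv(\widetilde\phi)=\inv(\psi)=0$, the product $\ell\cdot a(\widetilde S,T,\widetilde\phi)$ is nonzero and equals $a(S_0,T,\phi')$, giving $\gr(a(\widetilde S,T,\widetilde\phi))=\gr(a(S_0,T,\phi'))-\gr(\ell)$. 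Chaining the three stages expresses $\gr(a(S,T,\phi))$ in terms of $\gr(a(S_0,T,\phi'))$, finitely many grading values on $\bbL$ and $\bbM$, and finitely many $\pm 1$'s, which is the assertion; this is then fed into the proof of Proposition~\ref{prop:gradingreduction}.

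The main obstacle is not conceptual but the bookkeeping at each multiplication: one must verify that the auxiliary factor really lies in $\bbL$ or $\bbM$ (admissibility, the upward-veering condition, the single in-pair strand, and the matched idempotents needed for composability) and that the product is nonzero, so that additivity of the $\Ztwo$-grading applies. In particular one has to keep track of the fact that multiplying by an element of $\bbM$ can raise the number of inversions, which is exactly why Lemma~\ref{lem:smoothings} has to be interleaved with the endpoint-moving moves rather than applied once at the end.
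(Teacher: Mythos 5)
Your proposal is correct and follows essentially the same path as the paper's proof: reduce to $\inv(\phi)=0$ via Lemma~\ref{lem:smoothings}, iteratively left-multiply by elements of $\bbM$ (interleaving Lemma~\ref{lem:smoothings} to kill newly created inversions) until the source set lies in $L([4k])$, and finish by left-multiplying by a single element of $\bbL$ to move the source set to $S_0$. The only difference is cosmetic: you spell out the membership checks for $\bbM$ and $\bbL$ more explicitly than the paper does.
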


\begin{proof}
We first apply Lemma \ref{lem:smoothings} to reduce the problem to the case that $\phi$ has no inversions. Next, if $S \not \subset L([4k])$, then choose $j \in S$ such that $j \notin L([4k])$, and consider $(S \cup \{j'\} \backslash \{j \}, S, \tau_{j, j'})$, where $j$ and $j'$ are matched by $M$ (i.e., $M(j)=M(j')$ and $j \neq j'$) and $\tau$ is the transposition interchanging $j$ and $j'$. Note that $a(S \cup \{j'\} \backslash \{j \}, S, \tau_{j, j'}) \in \bbM$. Then the grading of the product
\[ a(S \cup \{j'\} \backslash \{j \}, S, \tau_{j, j'}) \cdot a(S, T, \phi) \]
together with the grading on $\bbM$ determines the grading of $a(S, T, \phi)$. Note that since $\inv(\phi)=0$, it follows that $\inv(\tau_{j, j'} \circ \phi) = \inv (\tau_{j, j'}) + \inv(\phi)$ and hence the product above is non-zero.

By iterating the procedure in the preceding paragraph and applying Lemma \ref{lem:smoothings}, we may conclude that the grading of $a(S, T, \phi)$ is determined by the gradings on $\bbM$ and an element $a(U, T, \psi)$ such that $\inv(\psi)=0$ and $U \subset L([4k])$.
 
Since $U \subset L([4k])$, there is an element $a(S_0, U, \omega) \in \bbL$, where $S_0$ consists of the first $\t+k$ points in $L([4k])$. Then $a(S_0, U, \omega) \cdot a(U, T, \psi)$ is non-zero and equal to $a(S_0, T, \psi \circ \omega)$, where $a(S_0, T, \psi \circ \omega)$ is an algebra element satisfying the desired properties.
\end{proof}

\begin{lemma}\label{lem:rightendpoints}
If $a(S_0, T, \phi) \in \cA(\cZ,\t)$ satisfies
\begin{itemize}
	\item $S_0$ consists of the first $\t+k$ points in $L([4k])$
	\item $\inv(\phi)=0$,
\end{itemize}
as in the conclusion of Lemma~\ref{lem:leftendpoints},  then the $\Z/2\Z$-grading of $a(S_0, T, \phi)$ is determined by the $\Z/2\Z$-grading on $\bbL$ and $\bbM$.  
\end{lemma}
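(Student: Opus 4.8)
The plan is to complete the reduction chain from Lemmas~\ref{lem:smoothings} and \ref{lem:leftendpoints}: having moved the left endpoints to the base set $S_0$ and killed all inversions, we now move the right endpoints $T$ down to be as small as possible, one at a time, again multiplying by elements of $\bbM$ and using $\bbL$. Concretely, suppose $T \not\subset L([4k])$. Pick $j \in T$ with $j \notin L([4k])$, and let $j'$ be its $M$-partner, so $j' = L(M(j)) \lessdot j$. Consider the element $a(T \cup \{j'\} \setminus \{j\}, T, \tau_{j,j'})$, which lies in $\bbM$ (it is the identity away from a single matched pair, with $\phi$ swapping the two points of $M^{-1}(M(j))$ up to the admissibility enlargement). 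Then form the product
\[
a(S_0, T, \phi) \cdot a\bigl(T, T\cup\{j'\}\setminus\{j\}, \tau_{j,j'}\bigr),
\]
except one must be slightly careful: we want to \emph{append} an $\bbM$-type element on the right so that the target idempotent of the product has $j$ replaced by $j'$. Checking the multiplication rule, one sees the product is nonzero (since $\inv(\phi)=0$ forces the inversion count to add) and equals $a(S_0, T\cup\{j'\}\setminus\{j\}, \phi')$ for an appropriate $\phi'$, whose number of inversions may have jumped; re-apply Lemma~\ref{lem:smoothings} to restore $\inv = 0$. Since $j' \lessdot j$, iterating this strictly decreases the target set in the lexicographic order on $\binom{[4k]}{\t+k}$, so the process terminates with $T \subset L([4k])$.

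Once $S_0 \cup \{\text{left endpoints}\} = S_0$, $T \subset L([4k])$, and $\inv(\phi)=0$, the element $a(S_0, T, \phi)$ is precisely (up to admissibility enlargement) an element of $\bbL$ by the definition of $\mathcal{L}$ and $\bbL$ in Section~\ref{sec:idalgebra}. So its $\Ztwo$-grading is literally one of the prescribed values on $\bbL$, and unwinding all the products (each of which changed the grading by a controlled amount determined by gradings of $\bbM$-elements, and each smoothing changed it by $1$) recovers the grading of the original $a(S_0,T,\phi)$. This establishes the lemma.

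The main obstacle I anticipate is bookkeeping the idempotent-dependence of the products correctly: the elements of $\bbM$ are of the form $\iota \cdot a(\rho_j)$, which are not $\gr$-homogeneous a priori (as noted in Section~\ref{sec:idalgebra}), so one has to be careful that the \emph{specific} idempotent-truncated piece appearing in each product is homogeneous and that its grading is among the data we are allowed to use. This is handled by the observation already recorded in the excerpt that $a(S\cup\{j'\}\setminus\{j\}, S, \tau_{j,j'}) \in \bbM$ as a single algebra generator (not a sum), so homogeneity is automatic. The other delicate point is verifying that the relevant products are genuinely nonzero, which in each case reduces to checking $\inv(\psi\circ\phi) = \inv(\psi)+\inv(\phi)$; this holds because at the moment we multiply, one of the two factors has no inversions (we always smooth first via Lemma~\ref{lem:smoothings}), and the remark right after the multiplication rule in Section~\ref{sec:algebrabackground} guarantees additivity of $\inv$ in that situation. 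With these two points addressed, the argument is a direct iteration entirely parallel to the proof of Lemma~\ref{lem:leftendpoints}, just acting on the right-hand endpoints instead of the left-hand ones.
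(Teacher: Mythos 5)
There is a genuine gap here, and it is precisely at the spot you flagged with "one must be slightly careful" but then did not resolve. You want to multiply $a(S_0,T,\phi)$ on the right by an element with source idempotent $I(M(T))$ and target idempotent $I(M(T\cup\{j'\}\setminus\{j\}))$, which forces the strands picture of the second factor to carry $j$ down to $j'$. But $j'=L(M(j)) \lessdot j$ by hypothesis, so the partial permutation $(T,\,T\cup\{j'\}\setminus\{j\},\,\tau_{j,j'})$ sends $j \mapsto j' < j$: it is \emph{downward-veering} and therefore is not a generator of $\cA(4k)$, let alone an element of $\bbM$. (The element you correctly identified as lying in $\bbM$, namely $a(T\cup\{j'\}\setminus\{j\},\,T,\,\tau_{j,j'})$, has the domain and target swapped; its moving strand goes from $j'$ up to $j$. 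Right-multiplying $a(S_0,T,\phi)$ by it is zero unless $T = T\cup\{j'\}\setminus\{j\}$.) So the product you wrote down is not a product of algebra elements, and the reduction you are attempting — pushing the right endpoints of $a(S_0,T,\phi)$ \emph{down} into $L([4k])$ by right-multiplication — cannot be carried out in $\cA(\cZ)$.

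The paper's proof goes in the opposite direction, and this is exactly what makes the argument go through. Rather than reducing $a(S_0,T,\phi)$ to an $\bbL$-element, one \emph{builds} $a(S_0,T,\phi)$ starting from the $\bbL$-element $a(S_0, T', \phi')$ with $T' = L(M(T))$ (the unique $\bbL$-element with the same idempotents as $a(S_0,T,\phi)$), by right-multiplying by $\bbM$-elements of the form $a(T', T'\cup\{j\}\setminus\{j'\}, \tau_{j',j})$, which raise $j' \in L([4k])$ to $j \notin L([4k])$. These \emph{are} upward-veering, the products are nonzero because $\inv(\phi')=0$, and one then smooths via Lemma~\ref{lem:smoothings} to clear any inversions created. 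Iterating over all $j \in T \setminus L([4k])$ produces $a(S_0,T,\phi)$ as a composition of $\bbL$- and $\bbM$-elements together with differentials, so its grading is determined. You should reverse the direction of your construction accordingly; once you do, the rest of your bookkeeping (homogeneity of the idempotent-truncated $\bbM$-elements, nonvanishing of products via inversion additivity) is essentially the same as the paper's and is fine.
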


\begin{proof}
We will show that $a(S_0, T, \phi)$ can be constructed from the sets $\bbM$ and $\bbL$ together with applications of Lemma \ref{lem:smoothings}.

 If $T \subset L([4k])$, then $a(S_0, T, \phi) \in \bbL$ and we are done. If $T \not \subset L([4k])$, consider the unique element $a(S_0, T', \phi') \in\bbL$ with the same left and right minimal idempotents as $a(S_0, T, \phi)$. That is, $T'=L(M(T))$ and $\phi':S_0 \rightarrow T'$ is the unique partial permutation such that $\inv(\phi')=0$.
 
Choose $j \in T$ such that $j \notin L([4k])$. Consider
\[ a(S_0, T', \phi') \cdot a(T', T' \cup \{j\} \backslash \{ j' \} , \tau_{j', j}), \]
where $j$ and $j'$ are matched by $M$, and $\tau_{j', j}$ denotes the transposition interchanging $j'$ and $j$. The second factor above is clearly in $\bbM$. Note that $j' \in L([4k])$ since $j$ was not, and that the product above is non-zero since $\inv(\phi')=0$. Now apply Lemma \ref{lem:smoothings} to the above product.

Iterating this procedure for all $j \in T$ with $j \notin L([4k])$ will produce $a(S_0, T, \phi)$.  Therefore, the grading of $a(S_0, T, \psi)$ is determined by the gradings on $\bbL$ and $\bbM$, since $a(S_0, T, \psi)$ was constructed using elements in $\bbL$ and $\bbM$, together with applications of Lemma \ref{lem:smoothings}. 
\end{proof} 

\begin{proof}[Proof of Proposition~\ref{prop:gradingreduction}]
Apply Lemmas \ref{lem:leftendpoints} and \ref{lem:rightendpoints} in sequence.    
\end{proof}

We are now ready to prove Theorem \ref{thm:alggradingsagree}, showing that for our choice of refinement data, the $\Z/2\Z$-gradings $m$ and $\gr$ agree.

\begin{proof}[Proof of Theorem~\ref{thm:alggradingsagree}]
By Proposition~\ref{prop:gradingreduction}, it suffices to show that $\gr$ and $m$ agree on $\bbL$ and $\bbM$.  

Recall  that our choice of grading refinement data can be written as follows:
\begin{itemize}
	\item $\bfs_0=M(S_0)$
	\item $\psi(\bft)= \lambda^{\gr (a(S_0, T, \phi))} \grunref(a(S_0, T, \phi))$ for $a(S_0, T, \phi)$ the unique element in $\bbL$ with $\bft = M(T)$.
\end{itemize}

We first look at $\bbL$. Consider $m(a(S_0, T, \phi))$ for $a(S_0, T, \phi) \in \bbL$, using the above refinement data. Recall that
\[
m(a(S_0, T, \phi)) = f_\t^{\cZ}\big(\psi(\bfs_0) \grunref(a(S_0, T, \phi))\psi(\bft)^{-1}\big),
\]
where $\bft=M(T)$. Then, for $a(S_0, T, \phi) \in \bbL$, we have that 
\begin{align*}
  \psi(\bfs_0) \grunref(a(S_0, T, \phi))\psi(\bft)^{-1} = \lambda^{-\gr (a(S_0, T, \phi))},
\end{align*}
implying that
\[ m(a(S_0, T, \phi)) = \gr(a(S_0, T, \phi)) \pmod 2.\]

Next, we look at $\bbM$. Given $a(S, T, \phi) \in \bbM$, note that for all $j \in S$, $\phi(j)$ either equals $j$ or $j'$, where $j$ and $j'$ are matched by $M$. It follows that $\inv(\overline{\phi})=0$ since $\overline{\phi} = \id_{M(S)}$ and that 
\[ \sum_{i \in S} o(i) + \sum_{i \in T} o(i) = 1 \pmod 2\]
since for all $i \in [2k]$, $M^{-1}(i)$ is an oriented $S^0$ and elements of $\bbM$ disagree with the identity at exactly one point in the domain of the partial permutation. Hence $\gr(a(S, T, \phi)) = 1 \pmod 2$ for all $a(S, T, \phi) \in \bbM$. The fact that $m(a(S, T, \phi)) = 1 \pmod 2$ for all $a(S, T, \phi) \in \bbM$ follows from \eqref{eqn:defnf}. 
\end{proof}

\begin{remark}
Note that the proof of Theorem~\ref{thm:alggradingsagree} holds for any choice of pointed matched circle $\cZ$, that is, we do not need to assume that $M^{-1}(i)^- \lessdot M^{-1}(i)^+$, where $M^{-1}(i)^\pm$ denotes the positively/negatively oriented point in $M^{-1}(i)$; cf. Remark \ref{rmk:footnote4}. However, by Remark \ref{rem:idemconj}, such generality was not strictly necessary.
\end{remark}

\subsection{Equivalence of the two $\Z/2\Z$-gradings on $\CFA$ and $\CFD$}
\setcounter{secnumdepth}{5}
\label{sec:grequivCFACFD}

We briefly recall the non-commutative gradings on $\CFA$ and $\CFD$ from \cite[Chapter 10]{LOT}. Let $\cH$ be a bordered Heegaard diagram for a three-manifold $Y$ with parameterized boundary $F$ of genus $k$.  Let $\d \cH = \cZ$.  By \cite[Lemma 4.21]{LOT}, there is a homology class connecting $\bfx, \bfy \in \mfS(\cH)$ if and only if the associated  $\spinc$-structures coincide, i.e., $\mfs(\bfx)=\mfs(\bfy)$.  The collection of such homology classes is denoted by $\pi_2(\bfx,\bfy)$ and, as with the Heegaard Floer homology of a closed manifold,  the resulting theory decomposes according to $\spinc$-structures.  

We begin by fixing a $\spinc$-structure $\mfs$ on $Y$.  For $\bfx, \bfy \in \mfS(\cH, \mfs)$, we define a map
	\[ g': \pi_2(\bfx, \bfy) \rightarrow G'(\cZ) \]
as
	\[ g'(B) = (-e(B)-n_\bfx(B)-n_\bfy(B), \d^\d B), \]
where
\begin{itemize}
	\item $e(B)$ denotes the Euler measure of $B$
	\item $n_\bfx(B) = \sum_{x \in \bfx} n_x(B)$ where $n_x(B)$ denotes the average of the local multiplicities of $B$ in the four regions surrounding $x$
	\item $\d^\d B$ denotes the portion of the boundary of $B$ contained in $\d \Sigma$.
\end{itemize}
(Recall that the Euler measure of a region in $\Sigma \setminus (\bfalpha \cup \bfbeta)$ is the Euler characteristic minus $1/4$ the number of corners, since the corners of $\Sigma \setminus (\bfalpha \cup \bfbeta)$ are all acute, and is additive under unions.) 

Given refinement data $\bfs_0, \psi$ for $\cA(\cZ)$, we define
	\[ g: \pi_2(\bfx, \bfy) \rightarrow G(\cZ) \]
as
	\[ g(B) = \psi(I_A(\bfx)) g'(B) \psi(I_A(\bfy))^{-1}. \]
Given a fixed reference point $\bfx_0 \in \mfS(\cH, \mfs)$, we grade $\CFA(\cH,\mfs)$ (in the sense of \cite[Definition 2.42]{LOT}) by the right $G(\cZ)$-set 
	\[ S_A(\cH, \bfx_0) = P_{\bfx_0} \backslash G(\cZ) \]
where $P_{\bfx_0} = g(\pi_2(\bfx_0, \bfx_0)) \subset G(\cZ)$. This grading is given by
	\[ g_A(\bfx) = P_{\bfx_0} \cdot g(B) \]
for $B \in \pi_2(\bfx_0, \bfx)$.

We grade $\CFD(\cH)$ by the left $G(-\cZ)$-set
	\[ S_D(\cH, \bfx_0) = G(-\cZ) / R(P_{\bfx_0}),  \]
where $R: G(\cZ) \rightarrow G(-\cZ)$ is the map induced by the (orientation reversing) identity map $r: Z \rightarrow -Z$ \cite[Chapter 10.4]{LOT}. Then
	\[ g_D(\bfx) = R(g(B))\cdot R(P_{\bfx_0}). \]

\begin{remark}
	Note that we use $g_A$ and $g_D$ rather than $\gr$ to denote the $G(\cZ)$-set gradings on $\CFA(\cH)$ and $\CFD(\cH)$ from \cite[Chapter 10]{LOT}, since we have chosen to use $\gr$ to denote our $\Ztwo$-grading.
\end{remark}

In \cite[Definition 13]{Petkova}, Petkova defines a relative $\Ztwo$-grading on $\CFA(\cH, \mfs)$ as
	\[ m_A(\bfy)-m_A(\bfx)=f_0^\cZ \circ g(B) \pmod 2\]
and a relative $\Ztwo$-grading on $\CFD(\cH, \mfs)$ as
	\[ m_D(\bfy)-m_D(\bfx)=f_0^{-\cZ} \circ R \circ g(B) \pmod 2. \]
The goal of this subsection is to prove the following theorem.

\begin{theorem}
\label{thm:gradingsagree}
Let $\cH$ be a bordered Heegaard diagram with $\d \cH = \cZ$. For generators $\bfx, \bfy \in \mfS(\cH, \mfs)$ and $B \in \pi_2(\bfx, \bfy)$, we have
\begin{enumerate}
\item\label{gradingsagreethm:1}$\gr_A(\bfy) - \gr_A(\bfx) = f_0^{\cZ} \circ g(B) \pmod 2$
\item \label{gradingsagreethm:2} $\gr_D(\bfy) - \gr_D(\bfx) = f_0^{- \cZ} \circ R \circ g(B) \pmod 2$;
\end{enumerate}
that is, the two relative $\Z/2\Z$-gradings on bordered Floer homology agree.
\end{theorem}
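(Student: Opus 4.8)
\textbf{Proof proposal for Theorem~\ref{thm:gradingsagree}.}

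The plan is to reduce both statements to Theorem~\ref{thm:alggradingsagree} (the algebra case), which has already been established, together with the compatibility of our grading functions $\gr_A$, $\gr_D$ with the module operations. First I would observe that the right-hand sides $f_0^{\cZ}\circ g$ and $f_0^{-\cZ}\circ R\circ g$ are, by construction, relative $\Ztwo$-gradings: since $g(B_1 * B_2) = g(B_1)g(B_2)$ for composable domains and $f_0^\cZ$, $f_0^{-\cZ}\circ R$ are homomorphisms to $\Ztwo$, the stated quantities depend only on the endpoints $\bfx$, $\bfy$ and not on the choice of $B \in \pi_2(\bfx,\bfy)$. So it suffices to check that the difference of our $\gr_A$-values (respectively $\gr_D$-values) agrees with the difference of Petkova's reduced values for \emph{some} convenient family of pairs $(\bfx,\bfy)$ that connects all generators within a $\spinc$-structure. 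Concretely, Petkova has already shown \cite[Definition 13]{Petkova} that $m_A$, $m_D$ are relative $\Ztwo$-gradings on $\CFA(\cH,\mfs)$, $\CFD(\cH,\mfs)$ respectively, so the task is exactly to prove $\gr_A = m_A$ and $\gr_D = m_D$ up to an overall shift (equivalently, that the two relative gradings coincide).

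The key step is to exploit the fact that $m_A$ is a \emph{type A} grading: it interacts with the $\cA_\infty$-operations via the $(G(\cZ),\lambda)$-grading on $\cA(\cZ)$, and likewise $\gr_A$ is compatible with the $m_i$ by the grading statements assembled in Section~\ref{sec:background} (the compatibility that Theorem~\ref{thm:allthegradings} promises). Given two generators $\bfx,\bfy$ with $\mfs(\bfx)=\mfs(\bfy)$, one can find a nonzero operation $m_{j+1}(\bfx\otimes a_1\otimes\cdots\otimes a_j) = \bfy$ after possibly replacing $\cH$ by a homotopy equivalent diagram, or more robustly one can use the pairing/gluing structure: glue $\cH$ to a standard bordered diagram so that the resulting generator lies in a closed Heegaard diagram, and appeal to Proposition~\ref{prop:respectspairing} which says $\gr(\bfx\otimes_\cI\bfy) = \gr_A(\bfx)+\gr_D(\bfy)$. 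Since the closed-manifold $\Ztwo$-grading \eqref{eq:closed-z2-grading} is known to agree with the reduction of the Maslov grading, and since Petkova's $m_A$, $m_D$ by construction reduce the noncommutative gradings which in turn recover the Maslov grading mod $2$ after pairing, the two relative gradings must agree on the pieces that survive pairing; a small argument handles the orientation/ordering conventions to conclude they agree before pairing as well. For the type D case one repeats the argument with $-\cZ$, using that $g_D$ is obtained from $g$ by applying $R$ and that $\gr_D$ was defined using the opposite ordering convention (arcs before circles), matching Petkova's conventions up to the idempotent-dependent conjugation recorded in Remark~\ref{rem:idemconj}.

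I expect the main obstacle to be the bookkeeping of conventions: the noncommutative grading of \cite{LOT} depends on grading refinement data and on an ordering of the $\alpha$-arcs coming from the orientation of $\partial\Sigma$, whereas our $\gr_A$, $\gr_D$ were defined with explicit (and, for type D, \emph{opposite}) orderings and with the normalization $\alpha^-\lessdot\alpha^+$ of Remark~\ref{rmk:footnote4}. Reconciling these requires invoking Remark~\ref{rem:idemconj} to absorb the discrepancy into an idempotent-dependent conjugation $\xi$, and then checking that $\xi$ contributes identically to both sides — i.e., that changing order/orientation shifts $\gr_A$ and $f_0^\cZ\circ g$ by the same idempotent-dependent amount. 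The cleanest route is probably: (i) prove \ref{gradingsagreethm:1} for the specific family of pointed matched circles and orderings where Theorem~\ref{thm:alggradingsagree} was verified, using that $g'$ on $\pi_2(\bfx,\bfy)$ is pinned down by Euler measures and point multiplicities exactly as the algebra grading $\grunref$ is; (ii) deduce the general case from Remark~\ref{rem:idemconj}. Then \ref{gradingsagreethm:2} follows by the same computation with $\cZ$ replaced by $-\cZ$ and $g$ replaced by $R\circ g$, noting $\dualize_{H_1(F)} = \dualize_{H_1(-F)}$ so no extra sign enters. The holomorphic-geometry input (that $g'(B)$ has the stated form) is quoted from \cite[Chapter 10]{LOT}; the new content is purely the mod-$2$ comparison, which after the reductions above is a finite check on $\bbL$ and $\bbM$-type elements.
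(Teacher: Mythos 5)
Your opening observation (that $f_0^{\cZ}\circ g$ and $f_0^{-\cZ}\circ R\circ g$ give well-defined relative $\Ztwo$-gradings independent of the domain $B$) is correct and is used in the paper. The pairing step you sketch second is also genuinely in the spirit of how the paper handles part~\eqref{gradingsagreethm:2}. But there is a substantial gap at the heart of your plan: you want to ``reduce both statements to Theorem~\ref{thm:alggradingsagree},'' and no such reduction exists. Theorem~\ref{thm:alggradingsagree} is a statement about elements of the dg-algebra $\cA(\cZ)$, and Proposition~\ref{prop:gradingreduction} (the finite check on $\bbL$ and $\bbM$) works because any element of the algebra can be built from those sets via products and differentials. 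Nothing analogous holds for pairs of generators $\bfx, \bfy \in \mfS(\cH,\mfs)$ and domains $B \in \pi_2(\bfx,\bfy)$: the quantity $f_0^{\cZ}\circ g(B)$ involves the Euler measure $e(B)$ and the point multiplicities $n_\bfx(B)$, $n_\bfy(B)$, which are topological data of the Heegaard diagram not controlled by the algebra grading. Your closing claim that ``after the reductions above'' the proof is ``a finite check on $\bbL$ and $\bbM$-type elements'' does not apply to modules.

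Your first suggested mechanism --- that for any two generators $\bfx,\bfy$ in the same $\spinc$-structure one can find a nonzero $m_{j+1}$ connecting them, ``after possibly replacing $\cH$ by a homotopy equivalent diagram'' --- is both unsubstantiated and circular. There is no reason such an operation must exist (the $m_{j+1}$ count holomorphic curves and can vanish identically between given generators), and invoking invariance of $\gr_A$ under homotopy equivalence is exactly what Theorem~\ref{thm:allthegradings} is being proved \emph{for}; you cannot assume it here. The paper instead does the genuinely hard work: Constructions~\ref{con:closeddiagram}--\ref{con:refine} build a closed Heegaard diagram $\cH'$ from $\cH$ by capping off and adding parallel $\alpha$-circles so that the bordered generators $\bfx,\bfy$ and the domain $B$ extend to closed-diagram data $\bfx',\bfy',B'$, and then Lemmas~\ref{lem:grAMsame}, \ref{lem:samenxy}, \ref{lem:nxydiff}, \ref{lem:mhc} compare $\gr_A(\bfy)-\gr_A(\bfx)$ to the closed Maslov grading $M(\bfy')-M(\bfx')$ by explicit inversion-counting and multiplicity bookkeeping, culminating in Propositions~\ref{prop:sameidem} and~\ref{prop:diffidem}. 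That Heegaard-diagram computation is the new content of the theorem; it is not a finite check and cannot be sidestepped by appealing to the algebra case.

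Your pairing strategy does track part of the paper's argument, but you have the logic partly backwards. To prove part~\eqref{gradingsagreethm:2} by gluing $\cH_2$ to a filling diagram $\cH_1$ and using Proposition~\ref{prop:respectspairing} together with Petkova's compatibility with pairing, one needs to \emph{already know} part~\eqref{gradingsagreethm:1} on the side $\cH_1$. The paper arranges this by choosing $\cH_1$ so that every $\beta$-circle meets every $\alpha$-arc, guaranteeing a generator in the base idempotent, and then quoting Propositions~\ref{prop:sameidem}/\ref{prop:diffidem}. The general case of part~\eqref{gradingsagreethm:1} (when the base idempotent is never realized) is then recovered by pairing in the other direction using the now-established part~\eqref{gradingsagreethm:2}. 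So your proposal identifies the right auxiliary tools (pairing, Remark~\ref{rem:idemconj} to absorb ordering/orientation conventions), but the core Heegaard-diagram argument --- the construction of $\cH'$, the refinement-neighborhood trick for changing idempotents, and the multiplicity computations --- is missing entirely and cannot be replaced by the algebraic reduction you propose.
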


As discussed earlier, Theorem~\ref{thm:gradingsagree} is our method of proof that $\gr_A$ (respectively $\gr_D$) induces $\cA_\infty$-gradings on $\CFA(\cH)$ (respectively gradings on $\CFD(\cH)$).  

\subsubsection{$\CFA$}\label{subsec:CFAagree}
We will now prove Theorem \ref{thm:gradingsagree}\eqref{gradingsagreethm:1} as follows. We begin by constructing a Heegaard diagram for a closed manifold $\cH'$, which is associated to $\cH$. Generators $\bfx, \bfy \in \mfS(\cH, \mfs)$ will extend to generators $\bfx', \bfy' \in \mfS(\cH')$, and a domain $B \in \pi_2(\bfx, \bfy)$ in $\cH$ will extend to a domain $B' \in \pi_2(\bfx', \bfy')$ in $\cH'$. We then use the fact that we have a $\Ztwo$-grading on $\CFhat(\cH')$ to compare the gradings of $\bfx'$ and $\bfy'$, and hence also of $\bfx$ and $\bfy$.

In order to construct the closed Heegaard diagram $\cH'$, we follow the approach of \cite[Proposition 3.5]{HuangRamos}, in particular Step 2 and the end of Step 5 of their proof. 

Given a pointed matched circle $\cZ$, recall that the orientation of $Z$ and basepoint $z$ induces an ordering $\lessdot$ on $\bfa$.  For $j \in [2k]$, suppose that
\begin{enumerate}
	\item $M^{-1}(j) = \{a^-_j,a^+_j\}$, where $a^-_j \lessdot a^+_j$ and $a^-_j$ (respectively $a^+_j$) is oriented negatively (respectively positively).
	\item $M$ is chosen such that $a^-_j \lessdot a^-_{i}$ if and only if $j < i$. 
\end{enumerate}
A matching $M$ satisfying the two conditions above is said to be \emph{subordinate to $\lessdot$}.
Throughout the current section (\ref{subsec:CFAagree}), we will assume that the matching $M$ associated to $\cZ$ is subordinate to $\lessdot$.  Recall that, as described in Section~\ref{sec:borderedHD}, the matching determines the ordering on the $\alpha$-arcs in $\cH$.  This eases the exposition and does not cause any loss of generality by Remark \ref{rem:idemconj}.  
\vspace{.07in}

\paragraph{{\em A warm-up for Theorem~\ref{thm:gradingsagree}\eqref{gradingsagreethm:1}}}\label{subsubsec:sameidem}
We first prove Theorem~\ref{thm:gradingsagree}\eqref{gradingsagreethm:1} in the case where $I_A(\bfx) = I_A(\bfy) = I(\bfs_0)$ for a warm-up, where $\bfs_0 = \{1,\ldots,k\}$.  

\begin{proposition}
\label{prop:sameidem}
Let $\bfx, \bfy \in \mfS(\cH, \mfs)$, $B \in \pi_2(\bfx, \bfy)$, and $\d \cH = \cZ$. If $I_A(\bfx)=I_A(\bfy)=I(\bfs_0)$, then
	\[ \gr_A(\bfy) - \gr_A(\bfx) = f_0^\cZ \circ g(B) \pmod 2. \]
\end{proposition}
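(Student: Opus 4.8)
The plan is to reduce the statement to a Maslov-index computation on a \emph{closed} Heegaard diagram built from $\cH$, following the capping-off construction of \cite[Proposition 3.5]{HuangRamos}. First I would attach to $\cH$, along $\partial\Sigma$, a standard collar piece that closes the $\alpha$-arcs $\bfalpha^a$ into $\alpha$-circles and restores balance, obtaining a closed Heegaard diagram $\cH'$ (made admissible if necessary, which does not affect relative gradings). Because $I_A(\bfx)=I_A(\bfy)=I(\bfs_0)$ with $\bfs_0=\{1,\dots,k\}$ and $M$ is subordinate to $\lessdot$, the intersection points needed to extend $\bfx$ and $\bfy$ to generators $\bfx',\bfy'\in\mfS(\cH')$ form the \emph{same} tuple of ``filler'' points, determined entirely by $\bfs_0$; one then extends $B\in\pi_2(\bfx,\bfy)$ to a domain $B'\in\pi_2(\bfx',\bfy')$ supported on $B$ together with an explicit region of the collar dictated by the boundary multiplicities $\partial^\partial B$. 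In particular $\bfx'$ and $\bfy'$ lie in the same $\spinc$-structure on the filled-in manifold.

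Next I would compare $\gr_A$ with the closed $\Ztwo$-grading of \eqref{eq:closed-z2-grading}. After the relabeling of curves induced by closing up the arcs, $\sigma_{\bfx'}$ is $\sigma_\bfx$ together with the filler permutation, which depends only on $\bfs_0$; hence $\inv(\sigma_{\bfx'})-\inv(\sigma_\bfx)$ and $\sum_{x\in\bfx'}o(x)-\sum_{x\in\bfx}o(x)$ depend only on $\bfs_0$. Therefore $\gr(\bfx')-\gr_A(\bfx)$ is a constant $c(\bfs_0)$, and likewise for $\bfy$, so that $\gr(\bfy')-\gr(\bfx')=\gr_A(\bfy)-\gr_A(\bfx)\pmod 2$. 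This is exactly the place where the hypothesis $I_A(\bfx)=I_A(\bfy)$ is used.

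Third, since by \cite[Section 5]{OSz2004-properties} (recalled in Section~\ref{subsec:pair}) the grading \eqref{eq:closed-z2-grading} on $\CFhat(\cH')$ is the relative Maslov grading mod $2$, which is computed by the Maslov index $\mu(B')=e(B')+n_{\bfx'}(B')+n_{\bfy'}(B')$, I obtain
\[ \gr(\bfy')-\gr(\bfx')\;=\;e(B')+n_{\bfx'}(B')+n_{\bfy'}(B')\pmod 2; \]
in particular the right-hand side, mod $2$, is independent of the choice of extension $B'$ of $B$.

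Finally, it remains to identify $e(B')+n_{\bfx'}(B')+n_{\bfy'}(B')$ with $f_0^\cZ\big(g'(B)\big)$ mod $2$. Here $g(B)=g'(B)=\big(-e(B)-n_\bfx(B)-n_\bfy(B),\ \partial^\partial B\big)$, because $I(\bfs_0)$ is the base idempotent, so $\psi(\bfs_0)$ is the identity. Writing $B'=B\cup(\text{collar region})$ and using additivity of $e$ and of the point measures, one splits the left-hand side into the contribution of $B$ and a collar correction: the $B$-contribution reproduces the first component $-e(B)-n_\bfx(B)-n_\bfy(B)$ of $g'(B)$ (the overall sign being immaterial mod $2$), while the collar correction, being additive in the class $\partial^\partial B$, is governed by a local model and must be matched with the value on $\partial^\partial B$ of the homomorphism $G(\cZ)\to\Ztwo$ sending $\lambda=(1,0)\mapsto1$ and each Reeb-chord class $(0,[\rho_j])\mapsto1$ --- that is, $f_0^\cZ$, by the relations \eqref{eqn:defnf}. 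I expect this last step to be the main obstacle: it requires careful Euler-measure bookkeeping, including the $\tfrac14$ contributions of the corners created in the collar, together with point-measure counts in the explicit Huang--Ramos piece, and then matching the homomorphism produced against $f_0^\cZ$ via its generators $\mu_{0,j}$. Everything else is formal once the closed model and its grading are in place.
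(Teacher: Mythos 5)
There is a genuine gap in the second step. You assert that, because $I_A(\bfx)=I_A(\bfy)=I(\bfs_0)$, the ``filler'' intersection points used to extend $\bfx$ and $\bfy$ to closed generators $\bfx',\bfy'\in\mfS(\cH')$ are the \emph{same} tuple, and hence that $\gr(\bfx')-\gr_A(\bfx)=\gr(\bfy')-\gr_A(\bfy)$, so that $\gr(\bfy')-\gr(\bfx')=\gr_A(\bfy)-\gr_A(\bfx)$ with no correction. This cannot hold when $\partial^\partial B\neq 0$: if the filler tuples for $\bfx'$ and $\bfy'$ were identical, any $B'\in\pi_2(\bfx',\bfy')$ extending $B$ would have to have vanishing boundary on every filler $\alpha$- and $\beta$-circle, and then the nontrivial boundary multiplicities of $B$ along $\partial\Sigma$ could never be capped off in the collar. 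In the paper's construction the filler points for $\bfx'$ and $\bfy'$ are deliberately taken at \emph{opposite} corners of the rectangles $R_{i,n}$ whenever $h_i\neq 0$ (Construction~\ref{con:sameidempotent}, items (3)--(6)), and this produces a nonzero correction: Lemma~\ref{lem:grAMsame} shows
\[
\gr_A(\bfy)-\gr_A(\bfx)=M(\bfy')-M(\bfx')+\sum_{i\notin\bfs_0}h_i+\sum_{\substack{i\in\bfs_0\\ h_i\neq 0}}(|h_i|-1)\pmod 2.
\]
Your argument as written would only yield the proposition when $\partial^\partial B=0$.

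There is also a smaller but related issue: a single ``standard collar piece'' as in Construction~\ref{con:closeddiagram} does not by itself suffice to extend $B$ to a domain once some $|h_i|>1$; one must further subdivide by adding parallel $\alpha$-circles and $\beta$-circles (Construction~\ref{con:doubling} and items (5)--(6) of Construction~\ref{con:sameidempotent}) precisely so that the rectangles $R_{i,n}$ can be stacked with multiplicity $\pm 1$. Your ``explicit region of the collar dictated by $\partial^\partial B$'' therefore needs to be this refined grid, and the resulting correction to $n_{\bfx'}(B')+n_{\bfy'}(B')$ picks up the cross-terms $\sum_{i<j}h_ih_jL([\rho_i],[\rho_j])$ of Lemma~\ref{lem:samenxy}, which must be carried through and matched against the formula for $f_0^\cZ$ from \cite[Proposition~15]{Petkovadecat}. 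That final bookkeeping is exactly where the two kinds of corrections (filler-point grading shifts on one side, Euler-measure and point-measure shifts on the other) must be shown to cancel, so the step you flag as ``the main obstacle'' is indeed the crux, but it cannot be completed without first repairing the same-fillers claim.
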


In order to prove Proposition~\ref{prop:sameidem}, we will need a number of constructions.  We begin with two constructions that will build the closed Heegaard diagram $\cH'$ discussed above.  See Figure~\ref{fig:constructions_1_and_2} for a graphical depiction.
\begin{construction}
\label{con:closeddiagram}
Let $\Sigma'$ be the closed surface obtained from $\Sigma$ by gluing a compact surface of genus $k$ with boundary $-Z$ to $\Sigma$ along the boundary. We define a closed Heegaard diagram $\cH' = (\Sigma', \bfalpha', \bfbeta', z)$ as follows. Complete each $\alpha$-arc $\alpha^a_i$ on $\Sigma$ to an $\alpha$-circle $\alpha'_i$ in $\Sigma'$ such that the collection $\bfalpha'=\{ \alpha^c_1, \dots, \alpha^c_{g-k}, \alpha'_1, \dots, \alpha'_{2k} \}$ is pairwise-disjoint and linearly independent in $H_1(\Sigma')$. Consider $k$ translates of $Z \setminus \nu ( z )$ in a collar neighborhood of $\partial \Sigma$ in $\Sigma' \setminus \Sigma$, and order them, increasingly, according to their distance to $\partial \Sigma$.  Add $k$ $\beta$-circles, $\beta'_1,\ldots,\beta'_k$, to $\Sigma' \setminus \nu(\Sigma)$ such that $\beta'_i$ contains the $i^\textup{th}$ translate of $Z \setminus \nu(z)$ and the collection $\bfbeta' = \{ \beta_1, \dots, \beta_g, \beta'_1, \dots, \beta'_k\}$ is linearly independent in $H_1(\Sigma')$. Lastly, isotope the $\beta'_1, \ldots, \beta'_k$ such that $\beta'_i$ intersects $\alpha'_j$ in $\Sigma' \setminus \nu(\Sigma)$ for all $i, j$.  
\end{construction}

In our setting, any $\alpha$-circles which intersect $Z$ will always intersect in exactly two points.  For notation, we denote these intersections by $\alpha^\pm$, where $\alpha^- \lessdot \alpha^+$ and we equip these points with positive orientations.  Since $I_A(\bfx)=I_A(\bfy)$, given $B \in \pi_2(\bfx, \bfy)$, we can express $[ \dd B]$ as $(h_1, \ldots, h_{2k}) \in H_1(F(\cZ))$, represented by the sum of the intervals $a_i$ in $Z\setminus \nu(z)$ from $\alpha'^-_i$ to $\alpha'^+_i$ with multiplicity $h_i$.   

\begin{construction}\label{con:doubling}
For each $i \in \bfs_0$ with $h_i \neq 0$, remove a neighborhood of a point $p_i \in \alpha'_i \cap (\Sigma' \setminus \nu(\Sigma))$ which avoids the $\beta$-circles. Add an $\alpha$-arc parallel to $\alpha'_i$ in $\Sigma' \setminus \nu(p_i)$ and identify the boundaries of $\Sigma' \setminus \nu(p_i)$ and $\Sigma_1$, where $\Sigma_1$ is the decorated punctured torus in Figure \ref{fig:sigmai}, so as to obtain two new $\alpha$-circles (instead of one).   We denote these new $\alpha$-circles by $\alpha'_{i, 1}$ and $\alpha'_{i, 2}$, ordered such that $\alpha'^-_{i, 1}$ appears immediately before $\alpha'^-_{i, 2}$ along $Z$.  Perform an isotopy such that the new $\beta$-circles coming from the copies of $\Sigma_1$ each contains a translate of $Z \setminus \nu(z)$ in a collar neighborhood of $\d \Sigma$ in $\Sigma' \setminus \Sigma$.  Isotope the resulting $\beta$-circles to intersect each $\alpha'_j$ for all $j$.  We abuse notation and also refer to the resulting surface as $\Sigma'$ and the collection of $\beta$-circles as $\bfbeta' = \{ \beta_1, \dots, \beta_g, \beta'_1, \dots, \beta'_\ell\}$ where $\ell = k + \# \{i \mid i \in \bfs_0, h_i \neq 0\}$ and $\beta'_i$ contains the $i^\textup{th}$ translate of $Z \setminus \nu(z)$, which are again ordered by distance to $\partial \Sigma$. We give the $\alpha'_{i, 1}$ and $\alpha'_{i, 2}$ the same orientation as $\alpha'_i$ in $\Sigma' \setminus \nu(p_i)$, and we may orient the $\beta'_i$ however we would like.  Observe that when restricting to $\Sigma$, we have an identification of $\alpha'_i$ and $\alpha'_{i,1}$; we will implicitly use this identification, especially for intersection points on these curves.
\end{construction}

\begin{figure}[htb!]
\labellist
\pinlabel $\bfy$ at 29.5 75.5 \pinlabel $\bfx$ at 29.5 29
	\pinlabel $\cZ$ at 31 5 \pinlabel $\cZ$ at 154 5
	\pinlabel $\beta_1$ at 21.5 45 \pinlabel $\beta_1$ at 142 45
	\pinlabel $\beta_1'$ at 110 35 \pinlabel $\beta_1'$ at 233 35
	\pinlabel $\alpha_2'$ at 98 52 \pinlabel $\alpha_2'$ at 223 52 
	\pinlabel $\alpha_1'$ at 86 67 
	\pinlabel $p_1$ at 73 27
	\pinlabel $\alpha_{1,2}'$ at 209 64 \pinlabel $\alpha_{1,1}'$ at 209 78.5
	\pinlabel $\beta_2'$ at 182 7
	\scriptsize
	\pinlabel $1$ at 163.5 20 \pinlabel $2$ at 169.5 20
\endlabellist
\includegraphics[scale=1.5]{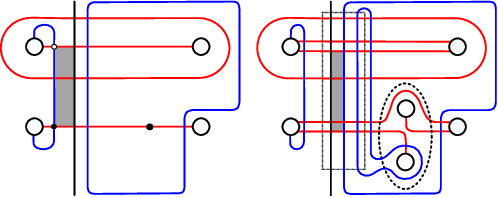}
\caption{A sample implementation of Construction \ref{con:closeddiagram} (left) followed by Construction \ref{con:doubling} (right). Left, the Heegaard diagram $\cH'$ built out of the original diagram $\cH$, which consists of everything to the left of $\cZ$.  Right, in this example, $h_1 \neq 0$, and the corresponding rectangle $R_1$ has been shaded in the Heegaard diagram resulting from Construction \ref{con:doubling}; the grid formed by the new $\alpha$- and $\beta$-curves has been highlighted with portions of the relevant $\beta$-curves labelled according to their corresponding translates of the pointed matched circle.}
\label{fig:constructions_1_and_2}
\end{figure}

\begin{remark}\label{rem:alphaorientations}
Throughout this section, we will repeat the construction of creating parallel $\alpha$-curves and attaching the decorated surface $\Sigma_n$ to our Heegaard diagram. Whenever we do so, we identify the boundaries in such as way as to create $n+1$ new $\alpha$-circles (the maximum possible) and will orient the new $\alpha$-circles so that their orientation agrees with $\alpha'_i$ in $\Sigma$.  Further, when restricting to $\Sigma$, as in the previous construction, we continue to make an identification of $\alpha'_{i,1}$ with $\alpha'_i$, including the intersection points on these curves.
\end{remark}

The result of these constructions contains a rectangular grid inside of a collar neighborhood of $\partial \Sigma$ in $\Sigma'$, with segments of $\beta$-circles as specified translates of $Z \setminus \nu(z)$, which are perpendicular to segments of $\alpha$-circles.  See Figure~\ref{fig:constructions_1_and_2}.  Note that there may be other segments of $\beta$-circles which are contained in this grid (which may also run parallel to $Z \setminus \nu(z)$), which we would like to ignore to keep this grid as clear as possible.  From now on, whenever we refer to a portion of the $\beta$-circle in a neighborhood of $Z$, by abuse of notation, we will only be considering the portion which is on the specified translate of $Z \setminus \nu(z)$ that it has been isotoped to contain.  Note that inside of $\nu(\Sigma)$ the change to the $\alpha$-curves from Constructions~\ref{con:closeddiagram} and \ref{con:doubling} simply consists of the addition of a parallel copy of $\alpha_i$ whenever $i \in \bfs_0$ and $h_i \neq 0$.  In $\nu(\Sigma)$, there has been no change to the set of $\beta$-circles.

\begin{figure}[htb!]
\centering
\labellist
	\pinlabel $\alpha$ at 6 95
	\pinlabel $\alpha$ at 24 80
	\pinlabel $\alpha$ at 45 49
	\pinlabel $\beta$ at 60 57
	\pinlabel $\beta$ at 70 144
\endlabellist
\includegraphics[scale=1]{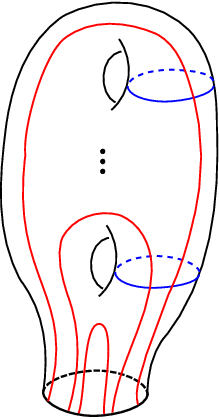}
\vspace{-10pt}
\caption[]{The surface $\Sigma_n$, a genus $n$ surface with boundary.}
\label{fig:sigmai}
\end{figure}

If $i \notin \bfs_0$, let $R_i$ denote the rectangle in a collar neighborhood of $Z \setminus \nu(z)$ with boundary consisting of 
\begin{itemize}
	\item $a_i$, the segment in $Z\setminus \nu(z)$ between $\alpha'^-_i$ and $\alpha'^+_i$
	\item the segment in $\alpha'_i \cap \nu(Z)$ from $\alpha'^-_i$ to $\beta'_{j_i}$
	\item the segment in $\alpha'_i \cap \nu(Z)$ from $\alpha'^+_i$ to $\beta'_{j_i}$
	\item the translate of $a_i$ to $\beta'_{j_i}$, 
\end{itemize}
where $j_i = \# \{ i' \leq i \mid i' \notin \bfs_0 \textup{ or } h_{i'} \neq 0\}$. The index $j_i$ is so defined because for each $n \in \bfs_0$ with $h_{n} \neq 0$ and for each $n \notin \bfs_0$, we have added a new $\beta$-circle to $\bfbeta'$; see Figure \ref{fig:constructions_1_and_2}.
We denote the segment in the last bullet point by $a'_i$.

If $i \in \bfs_0$ and $h_i \neq 0$, let $R_i$ denote the rectangle in a collar neighborhood of $Z \setminus \nu(z)$ with boundary consisting of 
\begin{itemize}
	\item the segment in $Z\setminus \nu(z)$ between $\alpha'^-_{i, 1}$ and $\alpha'^+_{i, 2}$
	\item the segment in $\alpha'_{i, 1} \cap \nu(Z)$ from $\alpha'^-_{i, 1}$ to $\beta'_{j_i}$
	\item the segment in $\alpha'_{i, 2} \cap \nu(Z)$ from $\alpha'^+_{i, 2}$ to $\beta'_{j_i}$ 
	\item the 
	translate in $\beta'_{j_i}$ such that these four segments bound a rectangle. 
\end{itemize}
We denote the segment in the last bullet point by $a'_i$. See Figure \ref{fig:Ri} and compare to Figure \ref{fig:constructions_1_and_2}.  

In fact, Figure \ref{fig:Ri} suggests how we could {\em attempt} to extend $B$ to a domain between generators for $\cH'$; namely one could append $R_i$ with multiplicity $h_i$ to (a slight perturbation of) $B$ and add to $\bfx$ and $\bfy$ the intersection points between $\beta'_{j_i}$ and $\alpha'_i$ (or $\alpha'_{i,1}$ and $\alpha'_{i,2}$).  Observe the necessity in the distinction made between the cases of $i \in \bfs_0$ and $i \notin \bfs_0$ for this attempt.  The issue is that when $|h_i| > 1$, this process does not produce a domain and thus we have to make further modifications.  The idea is to construct a domain $B'$, which will  (roughly) consist of $B \subset \Sigma$ together with the rectangles $R_i$ with multiplicity $h_i$; the higher multiplicities will be dealt with by further refining the grid and using multiple translates of the $R_i$.  More precisely, we begin by modifying $\bfx, \bfy,$ and $B$ as follows.

\begin{figure}[htb!]
\centering
\labellist
	\pinlabel $\d \Sigma$ at 53 -5
	\pinlabel $\d \Sigma$ at 210 -5
	\pinlabel $\alpha'_{i, 1}$ at 270 24
	\pinlabel $\alpha'_{i, 2}$ at 270 34
\endlabellist
\includegraphics[scale=1.4]{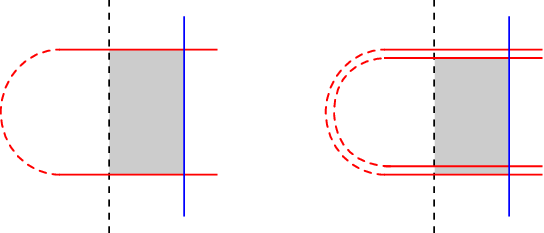}
\vspace{10pt}
\caption[]{A rectangle $R_i$ when $i \notin \bfs_0$ (left) and when $i \in \bfs_0$ (right). }
\label{fig:Ri}
\end{figure}

\begin{construction}\label{con:doublepoints}
\begin{itemize}
	\item[] 
	\item For each $i \in \bfs_0$ with $h_i > 0$, we move the intersection point $y_i \in \bfy$ between $\alpha'_i$ and $\beta'_j$ to the nearby intersection point $y'_i$ between $\alpha'_{i, 2}$ and $\beta'_j$. We then subtract the thin rectangle in $\Sigma$ bounded by $\alpha'_{i, 1}$ and $\alpha'_{i,2}$ with corners at $y_i, y'_i, \alpha'^+_{i, 2}$, and $\alpha'^+_{i, 1}$ from $B$.
	\item For each $i \in \bfs_0$ with $h_i < 0$, we move the intersection point in $x_i \in \bfx$ between $\alpha'_i$ and $\beta'_j$ to the nearby intersection point $x'_i$ between $\alpha'_{i, 2}$ and $\beta'_j$. We then add the thin rectangle in $\Sigma$ bounded by $\alpha'_{i, 1}$ and $\alpha'_{i,2}$ with corners at $x_i, x'_i, \alpha'^+_{i, 2}$, and $\alpha'^+_{i, 1}$ to $B$.
\end{itemize}	
We refer to these modified tuples of intersections as $\bfx_*$ and $\bfy_*$, and the modified domain as $B_*$.  These are {\em not} generators of any sensible Floer complex.  
\end{construction}

We modify the closed Heegaard diagram $\cH'$ and add intersection points to $\bfx_*$ (respectively $\bfy_*$) to obtain generators $\bfx'$ (respectively $\bfy'$) in $\CFhat(\cH')$ as follows. 
\begin{construction}\label{con:sameidempotent}
\begin{enumerate}
	\item[]
	\item\label{same:1} For $i \notin \bfs_0$ with $h_i=0$, we add a single fixed intersection point between $\alpha'_i$ and $\beta'_{j_i}$ in $\Sigma' \setminus \nu(\Sigma)$ to both $\bfx_*$ and $\bfy_*$. 
	\item\label{same:2}  For $i \in \bf{s}_0$ and $h_i = 0$, no change is needed.
	\item\label{same:3} For $i \notin\bfs_0$ with $|h_i| = 1$, we add $a'^{\, \sgn \; h_i}_i$ to $\bfx_*$  (respectively $a'^{ -\sgn \; h_i}_i$ to $\bfy_*$), where $a'^{\pm}_i$ denote the two endpoints of $a'_i$, oriented so that $a'^{\pm}_i$ is a translate of $\alpha'^\pm_i$ to $\beta'_{j_i}$ along $\alpha'_i$ in $\nu(Z)$.  For consistency with notation to appear shortly, we rename $R_i$ as $R_{i,1}$.  
	\item\label{same:4} For $i \in\bfs_0$ with $|h_i| = 1$, we add $a'^{\, \sgn \; h_i}_i$ to $\bfx_*$  (respectively $a'^{ -\sgn \; h_i}_i$ to $\bfy_*$), where $a'^{\pm}_i$ denote the two endpoints of $a'_i$, oriented so that $a'^-_i$ (respectively $a'^+_i$) is a translate of $\alpha'^-_{i,1}$ (respectively $\alpha'^+_{i,2}$) to $\beta'_{j_i}$  along $\alpha'_{i,1}$ (respectively $\alpha'_{i,2}$) in $\nu(Z)$.  We rename $R_i$ as $R_{i,1}$.  
	\item\label{same:5}  For $i \notin \bfs_0$ and $|h_i| > 1$, we remove a neighborhood of a point $p _i \in \alpha'_i \cap \Sigma' \setminus \nu(\Sigma)$, where $p_i$ is disjoint from the $\beta$-circles, add $(|h_i| -1)$ $\alpha$-arcs parallel to $\alpha'_i$, and identify the boundaries of $\Sigma' \setminus \nu(p_i)$ and $\Sigma_{|h_i|-1}$, where $\Sigma_{|h_i|-1}$ is the surface in Figure \ref{fig:sigmai}. Perform an isotopy such that the new $\beta$-circles each contain a translate of $Z \setminus \nu(z)$ in a collar neighborhood of $\d \Sigma$, and these translates appear between $\beta'_{j_i}$ and $\beta'_{j_i+1}$.  We relabel $\beta'_{j_i}$ as $\beta'_{j_i, 1}$ and label the new $\beta$-circles (in the obvious order) $\beta'_{j_i,2}, \dots, \beta'_{j_i, |h_i|}$. We label the resulting $\alpha$-circles as $\alpha'_{i, 1}, \dots, \alpha'_{i, |h_i|}$, ordered so that along $Z$, the endpoints are ordered as $\alpha'^-_{i, 1}, \dots, \alpha'^-_{i, |h_i|}$. Let $a'_{i, n}$ denote the translate to $\beta'_{j_i, n}$ of the segment $a_{i,n}$ from $\alpha'^-_{i, n}$ to $\alpha'^+_{i, n}$. We add $a'^{\sgn \; h_i}_{i, n}$ to $\bfx_*$ (respectively $a'^{-\sgn \; h_i}_{i, n}$ to $\bfy_*$), where $a'^\pm_{i,n}$ denote the two endpoints of $a'_{i, n}$, for $1 \leq n \leq |h_i|$. We define $R_{i,n}$ to be the rectangle in a neighborhood of $Z \setminus \nu(z)$ bounded by $a'_{i,n}$ and $a_{i,n}$.  See Figure \ref{fig:hi}.
	\item\label{same:6}  For $i \in \bfs_0$ and $|h_i| > 1$, we proceed essentially as above. That is, we remove a neighborhood of a point $p _i \in \alpha'_{i,2} \cap \Sigma' \setminus \nu(\Sigma)$, add $(|h_i| -1)$ $\alpha$-arcs parallel to $\alpha'_{i,2}$, and identify the boundaries of $\Sigma' \setminus \nu(p_i)$ and $\Sigma_{|h_i|-1}$, where $\Sigma_{|h_i|-1}$ is the surface in Figure \ref{fig:sigmai}. Perform an isotopy so that the new $\beta$-circles each contain a translate of $Z \setminus \nu(z)$ in a collar neighborhood of $\d \Sigma$, and these translates appear between $\beta'_{j_i}$ and $\beta'_{j_i+1}$. We relabel $\beta'_{j_i}$ as $\beta'_{j_i, 1}$ and label the new $\beta$-circles $\beta'_{j_i, 2}, \dots, \beta'_{j_i, |h_i|}$ based on the obvious order. We label the resulting $\alpha$-circles $\alpha'_{i, 2}$ through $\alpha'_{i, |h_i|+1}$, ordered so that along $Z$, the endpoints are ordered as $\alpha'^-_{i, 1}, \dots, \alpha'^-_{i, |h_i|+1}$. Let $a'_{i,1}$ be the translate to $\beta'_{i,1}$ of the segment $a_{i,1}$ from $\alpha'^-_{i,1}$ to $\alpha'^+_{i,2}$.  For $2 \leq n \leq |h_i|$, let $a'_{i, n}$ denote the translate to $\beta'_{j_i, n}$ of the segment $a_{i,n}$ from $\alpha'^-_{i, n+1}$ to $\alpha'^+_{i, n+1}$.  We add $a'^{\sgn \; h_i}_{i, n}$ to $\bfx_*$ and $a'^{-\sgn \; h_i}_{i,n}$ to $\bfy_*$ for $1 \leq n \leq |h_i|$.  We define $R_{i,n}$ to be the rectangle in $\nu(Z)$ bounded by $a'_{i,n}$ and $a_{i,n}$ for $1 \leq n \leq |h_i|$.   
\end{enumerate}
This yields a new closed Heegaard diagram, which we continued to call $\cH'$ by abuse.  Further, $\bfx'$ and $\bfy'$ determine generators of $\CFhat(\cH')$.  Let $r_{i,n}$ denote the thin rectangle in $\Sigma$ between $\alpha'_{i,n}$ and $\alpha'_{i,n+1}$, that is, $r_{i,n}$ has boundary $\alpha'_{i,n} \cap \Sigma$, $\alpha'_{i,n+1} \cap \Sigma$, and two segments in $\d \Sigma$, one between $\alpha'^-_{i, n}$ to $\alpha'^-_{i, n+1}$ and the other between $\alpha'^+_{i, n}$ to $\alpha'^+_{i, n}$. There is a natural way to add a linear combination of the thin rectangles $r_{i, n}$ in $\Sigma$ between each $\alpha'_{i,n}$ and $\alpha'_{i,n+1}$ to $B_* + \sum_i \sum_{1 \leq n \leq |h_i|} (\sgn \; h_i) R_{i,n}$ such that the result is an element of $\pi_2(\bfx',\bfy')$.   More explicitly, we subtract
\begin{equation}
\label{eqn:thinrectangles}
	 \sum_{i \notin \bfs_0}\Bigg(\sum_{1 \leq n \leq |h_i|-1} (\sgn \; h_i) \, (|h_i| -n) r_{i, n} \Bigg) + \sum_{i \in \bfs_0} \Bigg( (\sgn \; h_i) \, (|h_i| -1) r_{i, 1} + \sum_{1 \leq n \leq |h_i|-1} (\sgn \; h_i) \, (|h_i| -n) r_{i, n+1}\Bigg)
\end{equation}
from $B_* + \sum_i \sum_{1 \leq n \leq |h_i|}(\sgn \; h_i) R_{i,n}$ and denote the resulting element of $\pi_2(\bfx',\bfy')$ by $B'$. 
See Figures \ref{fig:hi} and \ref{fig:examples} for examples of these modifications.
\end{construction}

\begin{figure}[htb!]
\centering
\labellist
\pinlabel $\d \Sigma$ at 113 140
\pinlabel $\alpha'_i$ at 175 173
\pinlabel $\beta'_{j_i}$ at 152 150

\pinlabel $\d \Sigma$ at 268 140
\pinlabel $\alpha'_{i,3}$ at 330 183
\pinlabel $\alpha'_{i,1}$ at 330 171
\pinlabel $\beta'_{j_i,1}$ at 303 152
\pinlabel $\beta'_{j_i,3}$ at 321 152

\pinlabel $\d \Sigma$ at 58 -7
\pinlabel $\alpha'_i$ at 118 22
\pinlabel $\beta'_{j_i}$ at 96 2

\pinlabel $\d \Sigma$ at 210 -7

\pinlabel $\d \Sigma$ at 332 -7
\pinlabel $\alpha'_{i,4}$ at 395 33
\pinlabel $\alpha'_{i,1}$ at 395 17
\pinlabel $\beta'_{j_i,1}$ at 368 0
\pinlabel $\beta'_{j_i,3}$ at 385 0
\endlabellist
\includegraphics[scale=1.2]{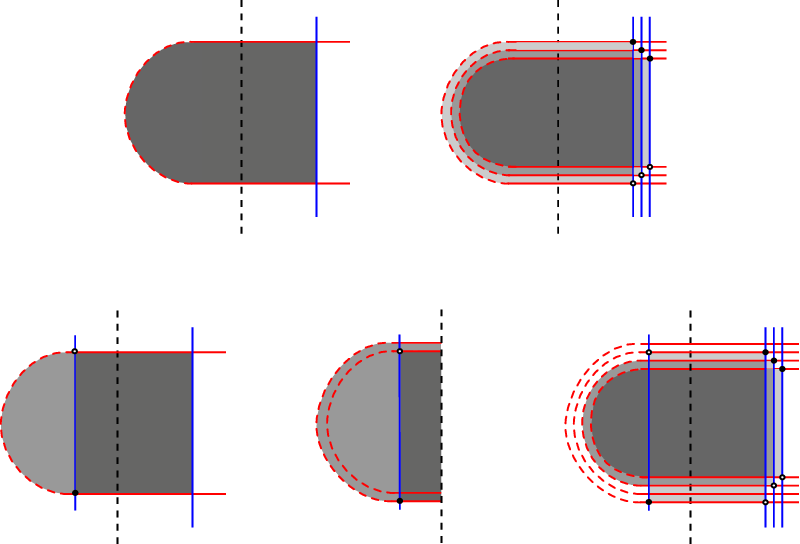}
\vspace{10pt}
\caption[]{Top row, the modification to the domain $B$ when $i \notin \bfs_0$ for $h_i=3$, to create the domain $B'$ between two generators of the Heegaard Floer chain complex of the closed Heegaard diagram.
Bottom row, the modification to the domain $B$ when $i \in \bfs_0$ for $h_i=3$, where the bottom center figure is $B_*$.}
\label{fig:hi}
\end{figure}

\begin{figure}[htb!]
\centering
\subfigure{
\labellist
\pinlabel $B$ at 50 88
\pinlabel $\d \Sigma$ at 60 -7
\pinlabel $\alpha^a_2$ at 50 50
\pinlabel $\alpha^a_1$ at 50 26

\pinlabel $B'$ at 198 88
\pinlabel $\alpha'_1$ at 254 32
\pinlabel $\alpha'_{2,1}$ at 256 48
\pinlabel $\alpha'_{2,2}$ at 256 58
\pinlabel $\d \Sigma$ at 195 -7
\pinlabel $\beta'_1$ at 235 -8
\pinlabel $\beta'_2$ at 244 -8
\endlabellist
\includegraphics[scale=1.4]{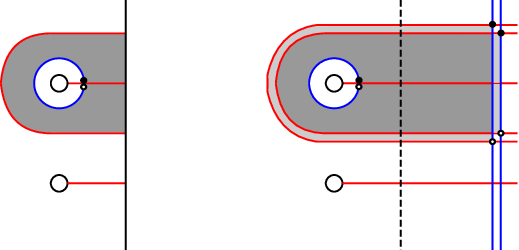}
}
\vspace{20pt}

\subfigure{
\labellist
\pinlabel $B$ at 52 70
\pinlabel $\d \Sigma$ at 64 -7
\pinlabel $\alpha^a_1$ at 52 28
\pinlabel $\alpha^a_2$ at 52 51

\pinlabel $B_*$ at 138 69
\pinlabel $\d \Sigma$ at 145 -7

\pinlabel $B'$ at 237 69
\pinlabel $\alpha'_{1,1}$ at 282 25
\pinlabel $\alpha'_{1,2}$ at 282 34
\pinlabel $\alpha'_{2}$ at 280 57
\pinlabel $\d \Sigma$ at 230 -7
\pinlabel $\beta'_1$ at 253 -8
\endlabellist
\includegraphics[scale=1.4]{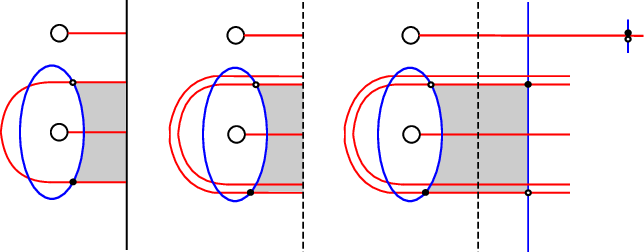}
\label{subfig:BB*B'}
}
\vspace{20pt}
\caption[]{Examples of $B \in \pi_2(\bfx, \bfy)$ with $I(\bfx)=I(\bfy)=I(\bfs_0)$ and the modified domains $B' \in \pi_2(\bfx', \bfy')$. Top, $h_2=2$. Bottom, $h_1=1$.}
\label{fig:examples}
\end{figure}

We now compare the gradings of $\bfx, \bfy \in \mfS(\cH)$ with those of $\bfx', \bfy' \in \mfS(\cH')$, where $\cH'$, $\bfx'$, and $\bfy'$ are as described above in Construction~\ref{con:sameidempotent}. Let $g'(B)=(-e(B) -n_\bfx(B) -n_\bfy(B) ; h_1, \dots, h_{k})$ as described above.  Since $I_A(\bfx) = I_A(\bfy) = I(\bfs_0)$, we have $g(B) = (-e(B) -n_\bfx(B) -n_\bfy(B) ; h_1, \dots, h_{k})$ as well.  

\begin{lemma}\label{lem:grAMsame}  For $\bfx, \bfy, \bfx', \bfy'$ as above,  
\begin{equation}\label{eqn:grAM} \gr_A(\bfy) - \gr_A(\bfx) = M(\bfy') - M(\bfx') + \sum_{i \notin \bfs_0} h_i + \sum_{\substack{ i \in \bfs_0 \\ h_i \neq 0}} (|h_i| -1) \pmod 2
\end{equation} 
where $M$ denotes the Maslov grading modulo $2$. 
\end{lemma}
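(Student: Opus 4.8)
The plan is to reduce both sides of \eqref{eqn:grAM} to purely combinatorial quantities and then match them term by term through Constructions~\ref{con:closeddiagram}, \ref{con:doubling}, \ref{con:doublepoints}, and \ref{con:sameidempotent}, bearing in mind that we are in the warm-up situation $I_A(\bfx)=I_A(\bfy)=I(\bfs_0)$. By Definition~\ref{defn:grA} we have $\gr_A(\bfz)=\sgn_A(\sigma_{\bfz})+\sum_{p\in\bfz}o(p)=\inv(\sigma_{\bfz})+\sum_{p\in\bfz}o(p)\pmod 2$ for $\bfz\in\{\bfx,\bfy\}$, while for the closed diagram $\cH'$ the relative Maslov grading modulo $2$ is computed by $M(\bfz')\equiv\inv(\sigma_{\bfz'})+\sum_{p\in\bfz'}o(p)\pmod 2$ (this is the $\Ztwo$-grading of \eqref{eq:closed-z2-grading}, which agrees with the relative Maslov grading mod $2$ by \cite[Section 5]{OSz2004-properties}). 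So it suffices to fix orderings and orientations on $\cH'$ extending those on $\cH$, to compute $\inv(\sigma_{\bfz'})-\inv(\sigma_{\bfz})$ and $\sum_{p\in\bfz'}o(p)-\sum_{p\in\bfz}o(p)$ for $\bfz\in\{\bfx,\bfy\}$, and then subtract the $\bfx$-expression from the $\bfy$-expression.

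First I would pin down the conventions. Order the $\alpha$-curves of $\cH'$ by placing $\bfalpha^c$ first and then the completed arcs $\alpha'_i$ (together with their parallel copies $\alpha'_{i,n}$ produced by Constructions~\ref{con:doubling} and \ref{con:sameidempotent}) in the order prescribed by the arc index $i$ and the index $n$; order the $\beta$-curves by placing the original $\bfbeta$ first, then the new circles $\beta'_j$ and their subdivisions $\beta'_{j,n}$ in the prescribed order (distance to $\partial\Sigma$), which mirrors the arc order; orient each new $\alpha$-circle compatibly with the corresponding arc as in Remark~\ref{rem:alphaorientations}. With these choices $\sigma_{\bfz'}$ agrees with $\sigma_{\bfz}$ on the original $\beta$-curves, except where Construction~\ref{con:doublepoints} relocates an intersection point (which happens only for $i\in\bfs_0$ with $h_i\neq 0$), and the new rows of $\sigma_{\bfz'}$ send the new $\beta$-circles to the corresponding (completed or parallel) $\alpha$-circles in an essentially order-preserving way.

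Then I would run the bookkeeping case by case in $i\in[2k]$. For $i\notin\bfs_0$ with $h_i=0$, the same fixed intersection point is added to both $\bfx'$ and $\bfy'$, so it contributes identically and cancels in the difference. For $i\notin\bfs_0$ with $|h_i|>0$, the new $\beta$-circles map to the new parallel $\alpha$-circles in the same way for $\bfx'$ and $\bfy'$, so every contribution to $\inv(\sigma_{\bfz'})-\inv(\sigma_{\bfz})$ depends only on the shared idempotent $I(\bfs_0)$ and cancels in the $\bfy$-minus-$\bfx$ difference; the orientation sums, however, differ because $\bfx'$ uses the endpoints ${a'}^{\sgn h_i}_{i,n}$ and $\bfy'$ uses ${a'}^{-\sgn h_i}_{i,n}$, and since ${a'}^{+}_{i,n}$ and ${a'}^{-}_{i,n}$ are adjacent corners of the rectangle $R_{i,n}$ (Figure~\ref{fig:Ri}) they have opposite intersection signs, so each of the $|h_i|$ values of $n$ contributes $1\pmod 2$, for a total of $|h_i|\equiv h_i\pmod 2$; summing over $i\notin\bfs_0$ gives the term $\sum_{i\notin\bfs_0}h_i$. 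For $i\in\bfs_0$ with $h_i\neq 0$ the analysis is the same in spirit, but now the splitting of $\alpha'_i$ into $\alpha'_{i,1},\alpha'_{i,2},\dots$ together with the relocation in Construction~\ref{con:doublepoints} means that one of the $|h_i|$ new $\beta$-circles inherits its arrow from the arrow already present on the occupied arc in $\bfx$ (resp.\ $\bfy$); tracking the resulting change in $\inv$ and $o$ carefully gives the contribution $|h_i|-1\pmod 2$ per such $i$. For $i\in\bfs_0$ with $h_i=0$ nothing is modified. Assembling these, the $\bfy$-minus-$\bfx$ difference of $\big(\inv(\sigma_{\bfz'})-\inv(\sigma_{\bfz})\big)+\big(\sum_{\bfz'}o-\sum_{\bfz}o\big)$ equals $\sum_{i\notin\bfs_0}h_i+\sum_{i\in\bfs_0,\,h_i\neq 0}(|h_i|-1)$, which rearranges to the claimed identity.

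The main obstacle I expect is the $i\in\bfs_0$ case: one must track simultaneously the relocation of $y_i$ (or $x_i$) off $\alpha'_{i,1}$ onto $\alpha'_{i,2}$, the thin-rectangle corrections $r_{i,n}$ (which do not enter the grading formula for $\bfz'$ directly but do affect which circle each relevant point sits on), the fact that the points ${a'}^{\pm}_{i,n}$ lie on the ``shifted'' circles $\alpha'_{i,n+1}$ rather than $\alpha'_{i,n}$, and the new $\beta$-circles, and to verify that the net effect on $\inv$ and on $\sum o$ is precisely $|h_i|-1$ --- the ``$-1$'' being exactly the one occupied-arc arrow already recorded by $\sigma_{\bfz}$. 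Everything else (the orientation computation from the rectangles, the order-preserving structure of the new rows, and the cancellation of the idempotent-dependent inversion counts) should be routine once the orderings and orientations are fixed.
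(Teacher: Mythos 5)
Your proposal takes essentially the same route as the paper: expand both $\gr_A$ and the closed-diagram $\Ztwo$-grading into inversion counts and orientation sums, fix orderings and orientations on $\cH'$ extending those on $\cH$, and tally the contributions from Constructions~\ref{con:closeddiagram}--\ref{con:sameidempotent} case by case in $i$. Your case breakdown (contribution $h_i$ for $i\notin\bfs_0$, contribution $|h_i|-1$ for $i\in\bfs_0$, nothing for $h_i=0$) and the identification of the ``$-1$'' with the already-occupied arc arrow all match the paper's argument.

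One device worth adopting from the paper: rather than separately tracking $\inv(\sigma_{\bfz'})-\inv(\sigma_{\bfz})$ for each $\bfz$ (which requires fixing orderings and then arguing that ordering-dependent pieces cancel), the paper works directly with $\inv(\sigma_{\bfy}\sigma_{\bfx}^{-1})$ and $\inv(\sigma_{\bfy'}\sigma_{\bfx'}^{-1})$. Since $I_A(\bfx)=I_A(\bfy)=I(\bfs_0)$, the composite $\sigma_{\bfy}\sigma_{\bfx}^{-1}$ is an honest permutation of a $g$-element set, and its inversion count is manifestly independent of the order of the $\alpha$- and $\beta$-curves, so the ``shared contributions cancel'' step becomes automatic. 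As for the $i\in\bfs_0$ computation you flag as the main obstacle, your outline is correct and the bookkeeping is short once set up this way: for $|h_i|=1$ the relocation of $x_i$ (or $y_i$) in Construction~\ref{con:doublepoints} together with the added endpoint on $\beta'_{j_i}$ produce exactly one extra inversion in $\sigma_{\bfy'}\sigma_{\bfx'}^{-1}$, cancelling the single change in orientation signs, so the net contribution is $0=|h_i|-1$; for $|h_i|>1$, the additional $|h_i|-1$ points $a'^{\pm}_{i,n}$ for $n\geq 2$ lie on the same $\alpha$- and $\beta$-circles for $\bfx'$ and $\bfy'$ with opposite intersection signs, adding $|h_i|-1$ to the orientation sum and nothing to the inversion count, giving the claimed $|h_i|-1 \pmod 2$. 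Carrying this out closes the sketch.
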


\begin{proof}
We use $\sigma^{-1}_\bfx$ to denote the inverse of $\sigma_\bfx$ restricted to its image.  Observe that $\sigma_\bfy\sigma^{-1}_\bfx$ is a permutation of a $g$-element subset of $[g+k]$, since $I_A(\bfx) = I_A(\bfy)$.  Further, note that $\sgn_A(\sigma_\bfy) + \sgn_A(\sigma_\bfx) =\inv(\sigma_\bfy\sigma^{-1}_\bfx) \pmod 2$, and that $\inv (\sigma_{\bfy'} \sigma_{\bfx'}^{-1})$ and $\inv(\sigma_{\bfy} \sigma_{\bfx}^{-1})$ are independent of the choice of order of the $\alpha$- and $\beta$-circles.  We make the following observations.  
\begin{enumerate}
	\item By Construction~\ref{con:sameidempotent}\eqref{same:1}, if $h_i=0$ and $i \notin \bfs_0$, then the same fixed intersection point is added to both $\bfx'$ and $\bfy'$; hence there is no contribution to the change in their relative gradings.
	\item By Construction~\ref{con:sameidempotent}\eqref{same:2}, if $h_i=0$ and $i \in \bfs_0$, then there is no change to the intersection points on $\alpha'_i$ between $\bfx'$ and $\bfx$ (respectively $\bfy'$ and $\bfy$).
	\item By Construction~\ref{con:sameidempotent}\eqref{same:3} and \eqref{same:5}, for $i \notin \bfs_0$ and $h_i \neq 0$, compared to $\bfx$ and $\bfy$, the generators $\bfx'$ and $\bfy'$ each contain $|h_i|$ additional intersection points $x^i_1, \dots, x^i_{|h_i|}$ and $y^i_1, \dots, y^i_{|h_i|}$ such that $x^i_j$ and $y^i_j$ both lie on the same $\alpha$- and $\beta$-circles, but with different intersection signs.  See tops of Figures~\ref{fig:hi} and \ref{fig:examples}.  Thus, there is no contribution, mod 2, to the inversion number.  We conclude the new intersection points in $\bfx'$ and $\bfy'$ coming from such an $i$ induce a change in the relative gradings by $h_i$.
	\item If $i \in \bfs_0$ and $h_i=1$, the intersection point in $\bfx$ (respectively $\bfy$) on $\alpha^a_i$ has been translated to $\alpha'_{i, 1}$ in $\bfx_*$ and thus also in $\bfx'$ (respectively $\alpha'_{i, 2}$ for $\bfy'$).  By Construction~\ref{con:sameidempotent}\eqref{same:4}, we have also that the generator $\bfx'$ (respectively $\bfy'$) contains the additional intersection point $a'^+_{i,2}$ on $\alpha'_{i,2} \cap \beta'_{j_i}$ (respectively $a'^-_{i,1}$ on $\alpha'_{i,1} \cap \beta'_{j_i}$); note that $a'^+_{i,2}$ and $a'^-_{i,1}$ have opposite intersection signs. The combination of these two changes from $\bfx$ and $\bfy$ to $\bfx'$ and $\bfy'$ introduces an extra inversion to $\sigma_{\bfy'} \sigma_{\bfx'}^{-1}$ relative to $\sigma_{\bfy} \sigma_{\bfx}^{-1}$. This cancels with the change from the intersection signs, and thus in this case, there is no contribution to the difference in gradings. The case of $i \in \bfs_0$ and $h_i = -1$ is similar.    
	\item The case $i \in \bfs_0$ and $h_i > 1$ is a combination of the two preceding items.  Namely, we apply the argument in the preceding item, using Construction~\ref{con:sameidempotent}\eqref{same:6}, with the additional caveat that $\bfx'$ (respectively $\bfy'$) contains the $h_i - 1$ intersection points $a'^+_{i,n}$ (respectively $a'^-_{i,n}$) for $2 \leq n \leq h_i$.  We observe that $a'^\pm_{i,n}$ both lie on $\alpha'_{i,n+1} \cap \beta'_{j_i,n}$ for such $n$, but with different intersection signs.  These additional $h_i - 1$ intersection points therefore contribute a total of $h_i - 1$ to the change in the intersection signs between $\bfx', \bfy'$ and $\bfx, \bfy$. Also, these additional $h_i - 1$ intersection points do not contribute to the change in the number of inversions of $\sigma_{\bfy'} \sigma_{\bfx'}^{-1}$ compared to $\sigma_{\bfy} \sigma_{\bfx}^{-1}$.  Therefore, by the arguments given in the previous item, the change to the inversion number is one, while the change to the intersection signs is $h_i$.  See bottom of Figure~\ref{fig:hi}. It now follows that the total contribution to the change in the relative gradings for such an $i$ is given by $h_i - 1$.  The case of $i \in \bfs_0$ and $h_i < -1$ is similar. 	
\end{enumerate}
The lemma now follows from the above observations and the fact that $M(\bfx') - M(\bfy') = \gr(\bfx') - \gr(\bfy')$, where $\gr$ is the $\Ztwo$-grading for $\CFhat$ of closed three-manifolds defined in \eqref{eq:closed-z2-grading}.  
\end{proof}

We next note that
\begin{equation}\label{eqn:eulerBB'}
 e(B') = e(B). 
 \end{equation} 
Indeed, $B'$ differs from $B$ by a collection of rectangles, which have Euler measure zero. These rectangles are the thin rectangles in Construction~\ref{con:doublepoints}, the $R_{i,n}$ in $\nu(Z)$ defined in Construction~\ref{con:sameidempotent}, and the $r_{i,n}$ in $\Sigma$ that lie between the various $\alpha'_{i,n}$ and $\alpha'_{i,n+1}$ as in \eqref{eqn:thinrectangles}.

We next compare the multiplicities of the generators along the domains $B$ and $B'$; that is, we determine $n_{\bfx'}(B') + n_{\bfy'}(B') - \Big( n_\bfx(B) + n_\bfy(B) \Big)$.  Recall that $\rho_i$ denotes the Reeb chord in $Z$ from $\alpha'^-_i$ to $\alpha'^+_i$.

\begin{lemma}\label{lem:samenxy}
For $\bfx, \bfy, \bfx', \bfy', B, B'$ as above, 
\begin{equation}\label{eqn:multiplicityBB'}
n_{\bfx'}(B') + n_{\bfy'}(B') - \Big( n_\bfx(B) + n_\bfy(B) \Big) = \frac{1}{2} \sum_i h_i  + \sum_{i < j} h_i h_j L([\rho_i], [\rho_j]) + \sum_{\substack{ i \in \bfs_0 \\ h_i \neq 0}} (|h_i| -1) \pmod 2. 
\end{equation}
\end{lemma}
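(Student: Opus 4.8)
The plan is to establish \eqref{eqn:multiplicityBB'} by a direct, region‑by‑region accounting of local multiplicities, using the explicit description of $B'$ as $B_* + \sum_i(\sgn h_i)\sum_{n=1}^{|h_i|}R_{i,n}$ minus the thin rectangles of \eqref{eqn:thinrectangles}, together with the description of $\bfx',\bfy'$ from Construction~\ref{con:sameidempotent}. I would sort the points of $\bfx'$ (and likewise $\bfy'$) into three groups: (i) points already appearing in $\bfx$ (resp. $\bfy$), or the translates $x'_i,y'_i$ of Construction~\ref{con:doublepoints}; (ii) the ``fixed'' points added for $i\notin\bfs_0$ with $h_i=0$; and (iii) the corner points $a'^{\pm}_{i,n}$ lying on the $\beta'$‑translates, added when $h_i\neq0$.

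First I would dispose of the easy contributions. A point $p\in\bfx$ lying outside a neighbourhood of $\partial\Sigma$ and of the $p_i$'s sees $B'$ and $B$ identically, so its multiplicity is unchanged and it contributes $0$ to the difference; the same holds for the fixed points of group (ii), since $h_i=0$ means no copy of $R_i$ is used and the relevant $\beta'$‑circle is disjoint from $B'$. This reduces the computation to the points of group (iii) and to the finitely many points near $\partial\Sigma$ whose local multiplicity is altered by the rectangles $R_{i,n}$, the thin rectangles $r_{i,n}$ of \eqref{eqn:thinrectangles}, and the thin rectangles of Construction~\ref{con:doublepoints}.

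Next I would compute the three error terms individually. (a) The term $\tfrac12\sum_i h_i$: each rectangle $R_{i,n}$ occurs in $B'$ with local multiplicity $\sgn h_i$, and exactly two of its four corners are generator points, namely the endpoints $a'^{\pm}_{i,n}$ of $a'_{i,n}$, one lying in $\bfx'$ and the other in $\bfy'$; each absorbs a quarter‑multiplicity $\tfrac14\sgn h_i$, so the two corners together contribute $\tfrac12\sgn h_i$ per translate, giving $\tfrac12 h_i$ after summing over the $|h_i|$ translates and hence $\tfrac12\sum_i h_i$ in all. (b) The cross term $\sum_{i<j}h_ih_jL([\rho_i],[\rho_j])$: because each $\beta'_{j_i}$‑circle runs all the way around the annular collar of $\partial\Sigma$, the $2$‑chain $R_{i,n}$ covers with multiplicity $\sgn h_i$ every generator point $a'^{\pm}_{j,m}$ that lies on an $\alpha'_j$‑translate which is both angularly inside $\rho_i$ and on a $\beta'$‑circle nearer to $\partial\Sigma$ than $\beta'_{j_i,n}$; since $j_i$ is increasing in $i$ and the nearness order of the $\beta'$‑circles is governed by the indices $j_i$, this covering count is precisely the average‑multiplicity pairing $m([\rho_j],\partial[\rho_i])=L([\rho_i],[\rho_j])$, weighted by $h_ih_j$ after summing over $n,m$ and symmetrizing, with the sign fixed by the orientation conventions of Remark~\ref{rem:alphaorientations}. (c) The term $\sum_{i\in\bfs_0,\,h_i\neq0}(|h_i|-1)$ is the only place the split $i\in\bfs_0$ versus $i\notin\bfs_0$ matters: it comes from the Construction~\ref{con:doublepoints} thin rectangle bounded by $\alpha'_{i,1}$ and $\alpha'_{i,2}$ (added or subtracted per $\sgn h_i$), the ``long'' rectangle $R_{i,1}$, and the extra summand $(\sgn h_i)(|h_i|-1)r_{i,1}$ in \eqref{eqn:thinrectangles}, whose corner contributions at $y'_i$ (or $x'_i$) and at the $a'^{+}_{i,n}$ combine to $|h_i|-1$ modulo $2$.

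The main obstacle I anticipate is item (b): matching the combinatorial ``nesting/covering'' data of the $\beta'$‑translates with the bilinear form $L$ — getting the orientation signs right and keeping straight which translate is nearer $\partial\Sigma$ — and verifying that the quarter‑ and half‑integer corner contributions from (a), together with the Euler‑measure‑zero rectangles, all reduce consistently modulo $2$ (here one may use $h_i^2\equiv h_i$ to absorb diagonal terms). Once \eqref{eqn:multiplicityBB'} is in hand it combines with Lemma~\ref{lem:grAMsame}, the identity \eqref{eqn:eulerBB'}, and Lipshitz's index formula $\mu(B')=e(B')+n_{\bfx'}(B')+n_{\bfy'}(B')$, together with the expression of $g(B)=g'(B)$ in terms of $\lambda$ and the generators $\mu_{0,j}$ of $G(\cZ)$, to yield Proposition~\ref{prop:sameidem}.
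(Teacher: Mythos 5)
Your proposal takes essentially the same approach as the paper: a region-by-region accounting of how the rectangles $R_{i,n}$, the thin rectangles of \eqref{eqn:thinrectangles}, and the Construction~\ref{con:doublepoints} modification change local multiplicities, with the $\tfrac12\sum_i h_i$ coming from the corner points of the $R_{i,n}$, the cross term $\sum_{i<j}h_ih_jL([\rho_i],[\rho_j])$ from overlapping rectangles $R_{i_1,n_1}$ and $R_{i_2,n_2}$ (the paper's Figure~\ref{fig:Lij}), and the $\sum_{i\in\bfs_0, h_i\neq 0}(|h_i|-1)$ from the thin rectangles added for $i\in\bfs_0$. One small caveat: in your item (c) you attribute part of that last term to the corners $a'^{+}_{i,n}$, but those corner contributions are already fully consumed by item (a); the paper instead locates the $(|h_i|-1)$ contribution at the original intersection points $x_i,y_i$ inside $\Sigma$, where the thin rectangles $r_{i,n}$ of \eqref{eqn:thinrectangles} change the local multiplicity — be careful not to double-count when you write out the details.
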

\begin{proof}
We can compute the right hand side of \eqref{eqn:multiplicityBB'} using Construction \ref{con:sameidempotent} as follows.
\begin{enumerate}
	\item The first term on the right hand side of \eqref{eqn:multiplicityBB'} consists of the contribution of the multiplicity of $B'$ at the intersection points in $\bfx'$ and $\bfy'$ on the NE and SE corners of $R_{i,n}$ for $1 \leq n \leq |h_i|$. 
	\item The second term on the right hand side comes from the overlapping rectangles $R_{i_1,n_1}$ and $R_{i_2,n_2}$ in $\Sigma' \setminus \Sigma$, as in Figure \ref{fig:Lij}.  
\item The third term on the right hand side comes from the fact that for $i \in \bfs_0$ and $h_i \neq 0$, the sum of the multiplicities at the intersection points $x_i$ and $y_i$ on $\alpha_i$ in $\Sigma$ is changed by $| h_i | - 1$ because of the addition of $|h_i| - 1$ new $\alpha$-circles and the subsequent modification to $B$, coming from the addition of the thin rectangles $r_{i,n}$ as in \eqref{eqn:thinrectangles}. See the bottom row of Figure \ref{fig:hi}.  
\end{enumerate}
The first and second items account for the contributions from the addition of the appropriate rectangles $R_{i,n}$, and the third item accounts for the the addition of the appropriate thin rectangles $r_{i,n}$.  Thus, these are all of the contributions, and the lemma is complete. 
\end{proof}

\begin{figure}[htb!]
\centering
\labellist
\pinlabel $\d \Sigma$ at 56 -5
\endlabellist
\includegraphics[scale=1.4]{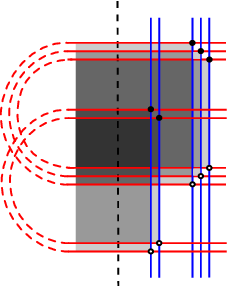}
\vspace{10pt}
\caption[]{Two $\alpha$-circles in $\cH'$ which lead to the linking term in \eqref{eqn:multiplicityBB'}.}
\label{fig:Lij}
\end{figure}

We are now ready to complete the proof of Proposition~\ref{prop:sameidem}.  
\begin{proof}[Proof of Proposition~\ref{prop:sameidem}]
Recall that 
\[ g(B) = ( -e(B) - n_\bfx(B) - n_\bfy(B); h_1, \ldots, h_{2k}). \]
By \cite[Proposition 15]{Petkovadecat}, 
\begin{equation}\label{eqn:inaB}
f_0^\cZ \circ g(B) =  -e(B) -n_\bfx(B) -n_\bfy(B) -\frac{1}{2} \sum_{i \in \bfs_0} h_i +\frac{1}{2} \sum_{i \notin \bfs_0} h_i + \sum_{i < j} h_i h_j L([\rho_i], [\rho_j]) \pmod 2.
\end{equation}

Observe that while in general $L(\gamma_1,\gamma_2)$ need not be integral for $\gamma_i \in H_1(Z',\bfa)$, it is straightforward to verify that $L([\rho_i], [\rho_j])$ is integral. Also, recall that given $B' \in \pi_2(\bfx', \bfy')$ for $\bfx', \bfy' \in \mfS(\cH')$, by \cite[Corollary 4.10]{Lipshitz} we have
\begin{equation}\label{eqn:Mclosed}
M(\bfy') - M(\bfx') = -e(B') -n_{\bfx'}(B') -n_{\bfy'}(B').
\end{equation}
Now, combining \eqref{eqn:grAM}, \eqref{eqn:eulerBB'}, \eqref{eqn:multiplicityBB'}, \eqref{eqn:inaB}, and \eqref{eqn:Mclosed}, we have

\begin{align*}
	\gr_A(\bfy) - \gr_A(\bfx) &= M(\bfy') - M(\bfx') + \sum_{i \notin \bfs_0} h_i + \sum_{\substack{ i \in \bfs_0 \\ h_i \neq 0}} (h_i -1) \\
		&= -e(B') -n_{\bfx'}(B') -n_{\bfy'}(B') + \sum_{i \notin \bfs_0} h_i + \sum_{\substack{ i \in \bfs_0 \\ h_i \neq 0}} (h_i -1) \\
		&= -e(B) -n_\bfx(B) -n_\bfy(B) -\frac{1}{2} \sum_i h_i  - \sum_{i < j} h_i h_j L([\rho_i], [\rho_j]) + \sum_{\substack{ i \in \bfs_0 \\ h_i \neq 0}} (h_i -1) \\
			& \quad + \sum_{i \notin \bfs_0} h_i + \sum_{\substack{ i \in \bfs_0 \\ h_i \neq 0}} (h_i -1) \\
		&= -e(B) -n_\bfx(B) -n_\bfy(B) -\frac{1}{2} \sum_{i \in \bfs_0} h_i +\frac{1}{2} \sum_{i \notin \bfs_0} h_i - \sum_{i < j} h_i h_j L([\rho_i], [\rho_j]) \\
		&= f_0^\cZ \circ g(B) \pmod 2,
\end{align*}
as desired.
\end{proof}

\paragraph{{\em Towards the general case of Theorem~\ref{thm:gradingsagree}\eqref{gradingsagreethm:1}}}
Suppose there exists $\bfx \in \mfS(\cH, \mfs)$ such that $I_A(\bfx) = I(\bfs_0)$. We return to the proof when this hypothesis is not satisfied in Section \ref{subsec:generalCFAagree}. To prove Theorem~\ref{thm:gradingsagree}\eqref{gradingsagreethm:1} under this hypothesis, it is sufficient to consider $B \in \pi_2(\bfx, \bfy)$ for $\bfy \in \mfS(\cH, \mfs)$ where $I_A(\bfy) = I(\bft)$ for some $\bft \subset [2k]$. This is sufficient since for arbitrary $\bfz \in \mfS(\cH, \mfs)$ we will have 
\begin{align*}
	\gr_A(\bfz) - \gr_A(\bfy) &= \gr_A(\bfz) - \gr_A(\bfx) + \gr_A(\bfx) - \gr_A(\bfy) \\
		&= f_0^\cZ \circ g(B_1) - f_0^\cZ \circ g(B_2) \\
		&= f_0^\cZ \circ g(-B_2 * B_1) \pmod 2
\end{align*}
where $B_1 \in \pi_2(\bfx, \bfz)$ and $B_2 \in \pi_2(\bfx, \bfy)$, and so $-B_2 * B_1 \in \pi_2(\bfy, \bfz)$ where $*$ denotes concatenation of homology classes corresponding to addition of domains.  Here, we are using the fact that $g(-B_2 * B_1) = g(-B_2)g(B_1)$, which is \cite[Lemma 10.32]{LOT}.  By this discussion, the following proposition suffices to complete the proof of Theorem~\ref{thm:gradingsagree}\eqref{gradingsagreethm:1} in the presence of $\bfx$ with $I_A(\bfx) = I(\bfs_0)$.

\begin{proposition}
\label{prop:diffidem}
Let $\bfx, \bfy \in \mfS(\cH, \mfs)$ and $B \in \pi_2(\bfx, \bfy)$. If $I_A(\bfx) = I(\bfs_0)$ and $I_A(\bfy) = I(\bft)$, then
	\[ \gr_A(\bfy) - \gr_A(\bfx) = f_0^\cZ \circ g(B). \]
\end{proposition}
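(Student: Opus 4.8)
\textbf{Proof proposal for Proposition~\ref{prop:diffidem}.}

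The plan is to reduce the case of distinct idempotents $I_A(\bfx)=I(\bfs_0)$ and $I_A(\bfy)=I(\bft)$ to the already-established case of equal idempotents (Proposition~\ref{prop:sameidem}) by interpolating through an auxiliary generator. Concretely, I would first construct, inside a stabilization of $\cH$ or by an explicit local move near $\partial\Sigma$, a generator $\bfy_0$ with $I_A(\bfy_0)=I(\bfs_0)$ together with a domain connecting $\bfy_0$ to $\bfy$ whose $\partial^\partial$-boundary realizes the ``idempotent-changing'' Reeb chords $\rho_j$ for $j\in\bft\smallsetminus\bfs_0$ (paired with the $\rho_{j'}$ for $j'\in\bfs_0\smallsetminus\bft$). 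The natural tool here is precisely the elements of $\bbM$ from Section~\ref{sec:idalgebra}: multiplying (on the appropriate side) by $a(\rho_j)$ shifts the occupied $\alpha$-arcs from $\bfs_0$ to $\bft$ one matched pair at a time, and on the Heegaard-diagram side this corresponds to sliding an intersection point across $Z$ in a neighborhood collar, exactly as in Constructions~\ref{con:doublepoints} and \ref{con:sameidempotent}. Since $\gr$ on $\cA(\cZ)$ was shown to agree with Petkova's reduction $m$ on $\bbM$ (Theorem~\ref{thm:alggradingsagree}), each such move changes $\gr_A$ by the value $f_0^\cZ$ assigns to the corresponding algebra element, which by \eqref{eqn:defnf} is the same as the contribution of that Reeb chord to $f_0^\cZ\circ g$.

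The key steps, in order, are: (1) reduce (via the concatenation identity $g(-B_2*B_1)=g(-B_2)g(B_1)$, \cite[Lemma~10.32]{LOT}, together with the analogous additivity $\gr_A(\bfz)-\gr_A(\bfx)=(\gr_A(\bfz)-\gr_A(\bfy))+(\gr_A(\bfy)-\gr_A(\bfx))$) to the situation where $\bft$ differs from $\bfs_0$ in a single matched pair — i.e., $|\bft\triangle\bfs_0|=2$; (2) in that base case, pick the Reeb chord $\rho$ realizing the swap and show that the domain $B$ can be written, after the same stabilization-and-rectangle bookkeeping used for Proposition~\ref{prop:sameidem}, as a domain between generators $\bfx',\bfy'$ of a closed diagram $\cH'$ plus a single extra rectangle in the collar of $Z$ encoding $\rho$; (3) compute directly that the Euler-measure and point-multiplicity corrections from that extra rectangle contribute exactly $f_0^\cZ\bigl(\bfI_0\cdot a(\rho)\bigr)=\grref$-value $\pmod 2$, invoking \cite[Proposition~15]{Petkovadecat} for the explicit formula for $f_0^\cZ\circ g'$ on such a class; (4) assemble the pieces, matching the $\sgn_A$-change on the bordered side (new inversions created by the slid point) against the $L([\rho_i],[\rho_j])$ cross-terms and the $\tfrac12\sum h_i$ term on the algebra side, exactly as in the proof of Proposition~\ref{prop:sameidem}. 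The Maslov-grading identity \eqref{eqn:Mclosed} and the Euler-measure invariance \eqref{eqn:eulerBB'} carry over essentially verbatim.

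The main obstacle I anticipate is the bookkeeping in step (2)–(3): when $\bfx$ and $\bfy$ have different idempotents, the rectangle in the collar of $Z$ associated to $\rho$ is no longer ``closed up'' on both sides by the same $\alpha$-circle, so the modification of $B$ to a domain $B'$ in a closed diagram requires carefully choosing which intersection points to add to $\bfx'$ versus $\bfy'$ and with which orientations, and then verifying that the total contribution to $n_{\bfx'}(B')+n_{\bfy'}(B')$ and to $\inv(\sigma_{\bfy'}\sigma_{\bfx'}^{-1})$ matches the prediction. This is the genuinely new content relative to Proposition~\ref{prop:sameidem}, where the two idempotents agreed and the rectangles closed up symmetrically; here one must track an asymmetric contribution, and the sign conventions (which endpoint of $\rho$ is positively oriented, the ordering of $\alpha$-arcs subordinate to $\lessdot$) have to be pinned down precisely so that the half-integer $\tfrac12\sum h_i$ term in \eqref{eqn:inaB} comes out with the correct parity. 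Once the single-swap case is done, the general case follows formally from step~(1).
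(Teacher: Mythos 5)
Your geometric intuition is correct, and steps (2)--(4) of your outline essentially reproduce the paper's argument in the special case of a single swapped matched pair. The gap is in step (1). To reduce the general case $|\bft \triangle \bfs_0| = 2c$ to the case $c = 1$ by concatenation, you need honest intermediate generators $\bfz_1, \ldots, \bfz_{c-1} \in \mfS(\cH, \mfs)$ realizing the intermediate idempotents $I(\bft_1), \ldots, I(\bft_{c-1})$, together with domains connecting consecutive ones, so that the identity $g(-B_2 * B_1) = g(-B_2)g(B_1)$ applies. Such intermediate generators need not exist in the given bordered Heegaard diagram -- $\mfS(\cH, \mfs)$ may simply not contain any tuple occupying the intermediate collection of $\alpha$-arcs. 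Your fallback of stabilizing $\cH$ or performing a ``local move'' is not a fix within the scope of this proposition: changing $\cH$ changes both $\gr_A$ and the generating set, so you would then need an independent argument that the relative $\Ztwo$-grading is preserved under stabilization -- which is precisely the sort of invariance statement the appendix is working to establish and therefore cannot be assumed here.

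The paper sidesteps the need for intermediate generators by handling all $c$ swaps simultaneously. Rather than interpolating in the bordered diagram, it works entirely in the closed-up diagram $\cH'$: it designates a collar ``refinement neighborhood'' $RN(Z)$ between $Z$ and a translate $\tilde Z$ inside $\Sigma' \smallsetminus \Sigma$, places $c$ rectangles $R_{s_i, t_i}$ there (one for each pair in $\bft \triangle \bfs_0$), and adds auxiliary intersection points $b_i^\pm$ to $\bfx$ and $\bfy$ to form tuples $\tilde\bfx, \tilde\bfy$. These are not generators of any Floer complex -- they are just bookkeeping devices -- but $\dd\tilde B$ then lies honestly in $H_1(F(\cZ))$, so the same-idempotent machinery (Constructions~\ref{con:doubling}--\ref{con:sameidempotent}) applies verbatim with $\Sigma$ replaced by $\Sigma \cup RN(Z)$ and $\bfs_0$ replaced by $\bfs_0 \cup \bft$. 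This is where the refinement data $\psi(\bft) = \grunref(I(\bfs_0) \cdot a(\bfrho^{\bfs_0,\bft}))$ enters the computation, producing the terms $\frac{c^2}{2}$, $m([\bfrho^{\bfs_0',\bft'}], S_0 \cap T)$, and $L([\bfrho^{\bfs_0,\bft}],[\dd B])$ that ultimately match $f_0^\cZ \circ g(B)$. Your single-swap base case would reproduce the $c=1$ version of this bookkeeping, but the general case needs the all-at-once construction, not an induction you cannot anchor.
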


The strategy will follow the same approach as that of Proposition~\ref{prop:sameidem}.  Namely, we would like to construct a closed Heegaard diagram $\cH'$ with generators $\bfx', \bfy'$ whose relative gradings we can compare to those of $\bfx, \bfy$.  In this general case, we require an additional modification to the Heegaard diagram, which we will give more motivation for shortly. Recall that for $B \in \pi_2(\bfx, \bfy)$ with $I_A(\bfx)=I(\bfs_0)$, we have
\[ g(B) = g'(B) \psi(\bft)^{-1}. \]
The additional modification to the Heegaard diagram we will make is related to the grading refinement data $\psi(\bft)$.

Let
\begin{align*}
	\bfs_0 ' &= \bfs_0 \setminus (\bfs_0 \cap \bft) \\
	\bft ' &= \bft \setminus (\bfs_0 \cap \bft) \\
	c &= k - | \bfs_0 \cap \bft | = |\bfs'_0| = | \bft'|. 
\end{align*}
Given a set $X = \{ x_1, \dots, x_n\} \subset \Z$, let $J(X)$ be the ordered tuple $(x_1, \ldots, x_n)$ such that $x_1 < \dots < x_n$.  Let $J(\bfs'_0)=(s_1, \dots, s_c)$ and $J(\bft') = (t_1, \dots, t_c)$. Recall that $\bfs_0 = \{1,\ldots,k\}$.  Note that $t_i>k$ for each $i$. Define $\rho^{i,i'}$ to be the Reeb chord from $\alpha^-_{i}$ to $\alpha^-_{i'}$ whenever $i \neq i'$, and $\bfrho^{\bfs'_0, \bft'}$ to be $\{ \rho^{s_i, t_i} \mid 1 \leq i \leq c \}$. We will also consider $\bfrho^{\bfs_0, \bft}$, defined analogously.  Note that $\bfrho^{\bfs'_0,\bft'}$ and $\bfrho^{\bfs_0,\bft}$ induce the same element $[\bfrho^{\bfs'_0,\bft'}]=[\bfrho^{\bfs_0,\bft}] \in H_1(Z',\bfa)$.  The grading refinement data specified in Section \ref{sec:algebranoncomm} is equivalent to
\[ \psi(\bft) = \grunref(I(\bfs_0) \cdot a(\bfrho^{\bfs_0, \bft})) \]
when $M$ is subordinate to $\lessdot$, which we have assumed throughout the current section (\ref{subsec:CFAagree}).
Our modifications to the Heegaard diagram will be more closely related to $\bfrho^{\bfs'_0, \bft'}$ rather than $\bfrho^{\bfs_0, \bft}$.


As before, we follow Construction \ref{con:closeddiagram} to form a closed surface $\Sigma'$ and an associated closed Heegaard diagram $\cH' = (\Sigma,\bfalpha', \bfbeta', z)$ for $\bfalpha' = \{\alpha_1,\ldots\alpha_{g-k}, \alpha'_1,\ldots\alpha'_{2k}\}$, $\bfbeta' = \{\beta_1,\ldots\beta_g,\beta'_1,\ldots\beta'_k\}$.  Let $B \in \pi_2(\bfx, \bfy)$ and $g(B) = (-e(B) -n_\bfx(B) -n_\bfy(B); h_1, \dots, h_{2k})$.  Recall that we are using the Reeb chords $\rho_i$ from $\alpha'^-_i$ to $\alpha'^+_i$ as our basis for $H_1(F(\cZ))$, and $(h_1,\ldots,h_{2k})$ represents the coordinates in this basis.  Our goal, as in Section~\ref{subsubsec:sameidem}, is to modify $\cH'$ to create the relevant generators $\bfx', \bfy'$ and domain $B'$ to allow for the comparison of the relative gradings of $\bfx'$ and $\bfy'$ to those of $\bfx$ and $\bfy$. For notation, let $\tilde{Z}$ denote a translate of $Z$ which lies between the portions of $\beta'_c$ and $\beta'_{c+1}$ in $\nu (Z)$.  We call the region between $Z$ and $\tilde{Z}$ the {\em refinement neighborhood}, denoted $RN(Z)$.  

Here is an outline of the argument we will use.  The main issue that prevents us from applying the arguments in Section~\ref{subsubsec:sameidem}, is that $\partial^\partial B$ corresponds to a class in $H_1(Z',\bfa)$, but does not correspond to an element of $H_1(F(\cZ))$ if $\bft \neq \bfs_0$.  Therefore, there is not a natural way to extend $B$ into a closed domain by adding rectangles $R_{i,n}$, $1 \leq i \leq 2k$ as in Construction~\ref{con:sameidempotent}.  In order to construct the domain $B'$ in the closed Heegaard diagram we will first extend $B$ across the refinement neighborhood and make further modification to obtain $\tilde{B}$ such that $\dd \tilde{B}$ naturally corresponds to the element $\vec{h} \in H_1(F(\cZ))$.\footnote{We consider $\tilde{B}$ as a domain in $\Sigma \cup RN(Z)$. Then the definition of $\dd$ will carry over, where we use $\tilde{Z}$ in place of $Z$.}  From this, we can extend $\tilde{B}$ to obtain $B'$ by repeating the constructions in Section~\ref{subsubsec:sameidem}.  We now describe the construction of $\tilde{B}$.  

\begin{construction}\label{con:refine}
	For each $\rho^{s_i, t_i}$, where $1 \leq i \leq c$, let $b_i$ denote the translate of $\rho^{s_i, t_i}$ to $\beta'_i$. Add $b_i^-$ to $\bfy$ and $b_i^+$ to $\bfx$.  Call the resulting tuples of intersection points $\tilde{\bfx}$ and $\tilde{\bfy}$.  Define rectangles $R_{s_i,t_i}$ bounded by $b_i$ and its translate to $\tilde{Z}$.  We are now ready to define $\tilde{B}$.  First, extend $B$ productwise across the refinement neighborhood.  We now obtain $\tilde B$ by appending $-R_{s_i,t_i}$ for each $1 \leq i \leq c$.  See Figure~\ref{fig:examplesdiffidem}.  
\end{construction}

Since $\bfrho^{\bfs_0',\bft'}$ and $\bfrho^{\bfs_0,\bft}$ both correspond to the same element of $H_1(F(\mathcal{Z}))$, it follows from the construction that $\dd \tilde{B}$ is in fact an element of $H_1(F(\mathcal{Z}))$.  To obtain $\bfx', \bfy', B'$ as in Section~\ref{subsubsec:sameidem}, we now repeat Construction~\ref{con:doubling}, where we replace $\bfx, \bfy$ with $\tilde{\bfx}, \tilde{\bfy}$, $B$ with $\tilde{B}$, $\Sigma$ with $\Sigma \cup RN(Z)$, and $\bfs_0$ with $\bfs_0 \cup \bft$.  We relabel $\beta'_1,\ldots,\beta'_\ell$ as $\beta^r_1,\ldots,\beta^r_c,\beta'_1,\ldots,\beta'_{\ell-c}$, maintaining the order listed, where $\ell = k + \# \{i \mid i \in \bfs_0 \cup \bft, h_i \neq 0\}$. Here, the $r$ stands for refinement data. 
While $\tilde{\bfx}, \tilde{\bfy}$ are not generators of a Heegaard Floer chain complex and $\tilde{B}$ is not a domain, this is not necessary to carry out the topological construction.  Next, we repeat Constructions~\ref{con:doublepoints} and \ref{con:sameidempotent} mutatis mutandis.

\begin{figure}[htb!]
\centering
\labellist
	\pinlabel $\d \Sigma$ at 62 -5
	\pinlabel $\d \Sigma$ at 142 -5
	\pinlabel $\tilde{Z}$ at 177 -5
	\pinlabel $\d \Sigma$ at 270 -5
	\pinlabel $\tilde{Z}$ at 299 -5
\endlabellist
\includegraphics[scale=1.2]{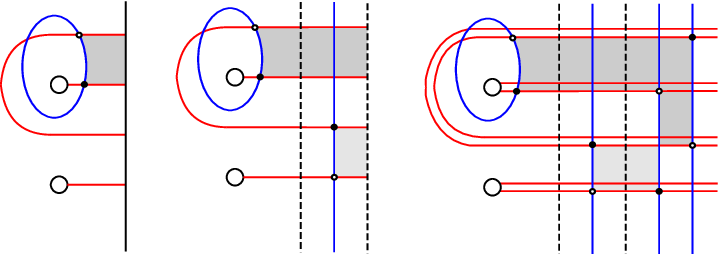}
\vspace{10pt}
\caption[]{Left, a domain $B$ in $\Sigma$. Center, $\tilde{B}$. Right, the associated domain $B'$ in $\Sigma'$. Note that the small rectangle has multiplicity $-1$, while the rest of $B'$ has multiplicity $1$, corresponding to $h_1 = -1$, $h_2 = +1$. The refinement neighborhood is contained between the two dashed lines.}
\label{fig:examplesdiffidem}
\end{figure}

We will compare the gradings of $\bfx, \bfy \in \mfS(\cH)$ with those of $\bfx', \bfy' \in \mfS(\cH')$, where $\bfx', \bfy', $ and $\cH'$ are as described above. Recall that $g(B) = ( -e(B) - n_\bfx(B) - n_\bfy(B); h_1, \dots, h_{2k})$ and $(h_1, \dots, h_{2k}) = \d^\d\tilde{B} \in H_1(F(\cZ))$.

Let $S_0 = \{ \alpha^-_i \mid i \in \bfs_0 \}$ and $T = \{ \alpha^-_i \mid i \in \bft \}$. 

\begin{lemma}
Let $\bfx, \bfy, \bfx', \bfy'$ be as above.  Then, 
\begin{equation}\label{eqn:diffgrA}
 \gr_A(\bfy) - \gr_A(\bfx) = M(\bfy') - M(\bfx') + c + m( [\bfrho^{\bfs'_0, \bft'}], S_0 \cap T) + \sum_{i \notin \bfs_0 \cup \bft } h_i + \sum_{\substack{ i \in \bfs_0 \cup \bft \\ h_i \neq 0}} (|h_i| -1) \pmod 2, 
 \end{equation}
where $M(\bfx')$ denotes the Maslov grading of $\bfx'$ modulo $2$. 
\end{lemma}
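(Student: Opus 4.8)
The goal is to track exactly how the $\Ztwo$-grading $\gr_A$ changes when passing from the bordered diagram $\cH$ to the closed diagram $\cH'$ constructed above (with the refinement neighborhood), exactly paralleling the proof of Lemma~\ref{lem:grAMsame} but now accounting for the extra strands $b_1,\ldots,b_c$ added in Construction~\ref{con:refine}. The key point is that the difference $\gr_A(\bfy) - \gr_A(\bfx)$ splits, modulo $2$, into (i) the Maslov grading difference $M(\bfy') - M(\bfx')$ computed in the closed diagram, plus (ii) a sum of local contributions from the $\alpha$-circles over each $i \in [2k]$, plus (iii) new contributions from the refinement strands $b_i$. First I would set up the permutation bookkeeping as in Lemma~\ref{lem:grAMsame}: write $\sigma_{\bfx'}, \sigma_{\bfy'}$ for the permutations attached to $\bfx', \bfy'$ and note that $\sgn_A(\sigma_{\bfy'}) + \sgn_A(\sigma_{\bfx'}) = \inv(\sigma_{\bfy'}\sigma_{\bfx'}^{-1})$, which is independent of orderings and orientations, and that the analogous quantity $\sgn_A(\sigma_\bfy) + \sgn_A(\sigma_\bfx)$ in $\cH$ can be compared to it by cancelling matched pairs of intersection points with opposite signs.

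\textbf{Step 1: the non-refinement contributions.} For each $i \in [2k]$ with $i \notin \bfs_0 \cup \bft$ and $h_i \neq 0$, the analysis is identical to items (3) of the proof of Lemma~\ref{lem:grAMsame}: the new intersection points come in sign-opposite pairs lying on the same $\alpha$- and $\beta$-circles, contributing $h_i$ to the grading difference and nothing to the inversion count mod $2$. For $i \in \bfs_0 \cup \bft$ with $h_i \neq 0$, the presence of a translated intersection point on a parallel $\alpha$-circle introduces exactly one extra inversion which cancels one unit of the intersection-sign change, yielding a net contribution of $|h_i| - 1$, exactly as in items (4)--(5) there. This accounts for the terms $\sum_{i \notin \bfs_0 \cup \bft} h_i$ and $\sum_{i \in \bfs_0 \cup \bft,\, h_i \neq 0}(|h_i|-1)$ in \eqref{eqn:diffgrA}.

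\textbf{Step 2: the refinement strands.} The genuinely new part is the contribution of the $c$ strands $b_i$ added across $\beta^r_1,\ldots,\beta^r_c$. Adding $b_i^-$ to $\bfy$ and $b_i^+$ to $\bfx$ introduces: first, a fixed contribution of $c$ from the intersection signs (the endpoints $b_i^\pm$ contribute a net change that reduces mod $2$ to $c$, since there are $c$ such pairs and each Reeb chord $\rho^{s_i,t_i}$ runs from a negatively-to a positively-oriented point -- more precisely one tracks the orientations of $b_i^-$ versus $b_i^+$, which is where the factor $c$ comes from together with any sign convention on the $\beta^r_j$); and second, a contribution from the inversions created between the $b_i$-strands and the strands of $\bfx,\bfy$ lying to their ``left'' in the grid, which is precisely the linking-type quantity $m([\bfrho^{\bfs'_0,\bft'}], S_0 \cap T)$. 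Here the set $S_0 \cap T = \{\alpha_i^- \mid i \in \bfs_0 \cap \bft\}$ indexes the $\alpha$-circles that are occupied by both a point of the ``original'' configuration and are threaded by the refinement chords, and $m$ is the average-multiplicity pairing from Section~\ref{sec:algebranoncomm}. I would verify this by drawing the rectangles $R_{s_i,t_i}$ and $-R_{s_i,t_i}$ of Construction~\ref{con:refine} and counting northwest-southeast pairs, exactly as in Lemma~\ref{lem:sgnsum}, noting that the refinement rectangles carry multiplicity $-1$ so the linking sign matches $m([\bfrho^{\bfs'_0,\bft'}], S_0\cap T)$.

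\textbf{Step 3: assembling.} Combining Steps 1 and 2 with the identity $M(\bfx') - M(\bfy') = \gr(\bfx') - \gr(\bfy')$ for the closed $\Ztwo$-grading of \eqref{eq:closed-z2-grading} gives \eqref{eqn:diffgrA}. \textbf{The main obstacle} I anticipate is bookkeeping the orientations and the precise combinatorics of the refinement strands: unlike the warm-up case $\bft = \bfs_0$, the strands $b_i$ connect $\alpha_{s_i}$ to $\alpha_{t_i}$ with $t_i > k$, so they genuinely reorder the occupied $\alpha$-circles, and one must be careful that the inversions they create are counted by $m([\bfrho^{\bfs'_0,\bft'}], S_0 \cap T)$ and not by some other combination (e.g.\ involving $S_0$ or $T$ separately, or the full linking form $L$). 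The cleanest way to control this is to observe that the $b_i$ are mutually non-crossing (they form a consistent set of Reeb chords in the sense of Section~\ref{sec:alg}), so the only inversions they contribute, beyond those already among themselves (which vanish since $\inv$ of the partial permutation defining $\bfrho^{\bfs'_0,\bft'}$ is zero by choice), are crossings with the pre-existing strands on $\alpha$-circles indexed by $S_0 \cap T$; this is exactly the average multiplicity $m([\bfrho^{\bfs'_0,\bft'}], S_0\cap T)$. Once this is pinned down, the remaining verification is routine rectangle-counting as in the proof of Proposition~\ref{prop:sameidem}.
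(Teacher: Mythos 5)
The broad architecture of your argument --- split off the refinement-neighborhood contribution, compare it to the inversion count of the closed permutation, then recycle Lemma~\ref{lem:grAMsame} (with $\bfs_0$ replaced by $\bfs_0 \cup \bft$) for the $h_i$ terms --- is the same as the paper's. However, your explanation of where the additive $c$ comes from is incorrect, and the error is not merely cosmetic.

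You attribute the $c$ term to the intersection signs of the new points $b_i^\pm$, citing that ``each Reeb chord $\rho^{s_i,t_i}$ runs from a negatively- to a positively-oriented point.'' This is false: $\rho^{i,i'}$ is defined in this paper to be the Reeb chord from $\alpha^-_i$ to $\alpha^-_{i'}$, so \emph{both} endpoints are negatively oriented. Concretely, $b_i^-$ is a translate of $\alpha'^-_{s_i}$ to $\beta'_i$ and $b_i^+$ is a translate of $\alpha'^-_{t_i}$ to $\beta'_i$; since they sit on the same $\beta$-circle and on parallel $\alpha$-circles all oriented consistently near $Z'$, we have $o(b_i^-) = o(b_i^+)$, and the orientation contributions of the refinement strands cancel identically. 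Your mechanism would give $0$, not $c$. The $c$ in fact arises from a count of inversions: the points $b_1^-,\ldots,b_c^-$ sit on $\alpha'_{s_1},\ldots,\alpha'_{s_c}$ (positions in $[g-k+1,g]$, since each $s_i \leq k$) while $b_1^+,\ldots,b_c^+$ sit on $\alpha'_{t_1},\ldots,\alpha'_{t_c}$ (positions in $[g+1,g+k]$, since each $t_i > k$), so every pair of one point from $\tilde\bfx$ and one from $\tilde\bfy$ within the refinement block is an inversion of $\sigma_{\tilde\bfy}\sigma^{-1}_{\tilde\bfx}$; there are exactly $c^2$ such pairs, and $c^2 \equiv c \pmod 2$. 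This $c^2$ count is precisely what your ``non-crossing'' heuristic misses: although the $b_i$ are nested as Reeb chords in $Z'$, the \emph{new} occupied $\alpha$-circles they contribute to $\tilde\bfx$ and $\tilde\bfy$ are interleaved in a way that creates $c^2$ inversions.

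Your identification of the $m([\bfrho^{\bfs'_0,\bft'}], S_0\cap T)$ term as the count of inversions between the refinement strands and the pre-existing strands on $\alpha$-circles indexed by $S_0 \cap T$ is in the right spirit, but you should make it precise: these are the pairs $(i,j)$ with $\sigma_{\tilde\bfy}\sigma^{-1}_{\tilde\bfx}(i), \sigma_{\tilde\bfy}\sigma^{-1}_{\tilde\bfx}(j) \in [g-k+1,g]$ forming an inversion, which correspond bijectively to pairs $(u,v)$ with $u\in \bfs_0\cap\bft$, $1\leq v \leq c$, and $s_v < u < t_v$ (the upper bound $t_v$ being automatic since $u \leq k < t_v$), and this count is exactly $m([\bfrho^{\bfs'_0,\bft'}], S_0\cap T)$. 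Once you correct the source of the $c$ term and nail down this identification, your assembly in Step 3 goes through.
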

\begin{proof}
First, we analyze the contribution to the change in relative gradings from $\bfx$ and $\bfy$ to $\tilde \bfx$ and $\tilde \bfy$ coming from the refinement neighborhood.\footnote{While $\tilde \bfx, \tilde \bfy$ are not generators of a Heegaard Floer chain complex, there is an obvious extension of the definition of $\gr_A$.}  Again, we have $\sgn_A(\sigma_\bfx) + \sgn_A(\sigma_\bfy) = \inv(\sigma_\bfy\sigma_\bfx^{-1}) \pmod 2$, where we use $\sigma_\bfx^{-1}$ to denote the inverse of $\sigma_\bfx$ restricted to its image. Note that $\sigma_\bfy\sigma_\bfx^{-1}$ is an injection from $[g]$ to a $g$-element subset of $[g+k]$, since $I_A(\bfx) = \bfs_0$, and $\sigma_{\tilde \bfy} \sigma_{\tilde \bfx}^{-1}$ is a permutation of a $(g+c)$-element subset of $[g+k]$. The set $[g+k]$ is in bijection with the $\alpha$-circles, ordered as $\alpha^c_1, \dots, \alpha^c_{g-k}, \alpha'_1, \dots, \alpha'_{2k}$.

We consider the change in the number of inversions of $\sigma_{\tilde \bfy}\sigma^{-1}_{\tilde \bfx}$ relative to $\sigma_{\bfy}\sigma^{-1}_{\bfx}$; this change comes from the refinement neighborhood. We claim that
	\begin{equation}\label{eqn:invST} 
\inv(\sigma_{\tilde \bfy}\sigma^{-1}_{\tilde \bfx}) - \inv(\sigma_{\bfy}\sigma^{-1}_{\bfx}) = c + m( [\bfrho^{\bfs'_0, \bft'}], S_0 \cap T) \pmod 2. 
	\end{equation}
Since we have added intersection points to $\bfx$ (respectively $\bfy$) to obtain $\tilde{\bfx}$ (respectively $\tilde{\bfy})$, we will count the number of newly introduced inversions. Note that by our ordering, the first $g - k$ intersection points of $\bfx, \bfy, \tilde{\bfx}, \tilde{\bfy}$ (ordered according to the $\beta$-circles on which they sit) are on the original $\alpha$-circles in $\Sigma$. Therefore, the inversions that we have introduced consists of inversions of the form $(i, j)$ where $g - k + 1 \leq i<j \leq g + k$ and either
\begin{enumerate}
	\item \label{it:diffgrA1} $\sigma_{\tilde \bfy}\sigma^{-1}_{\tilde \bfx}(i) \in [g+1, g+k]$ and $\sigma_{\tilde \bfy}\sigma^{-1}_{\tilde \bfx}(j) \in [g-k+1, g]$ 
	\item \label{it:diffgrA2} $\sigma_{\tilde \bfy}\sigma^{-1}_{\tilde \bfx}(i) \in [g-k+1, g]$,  $\sigma_{\tilde \bfy}\sigma^{-1}_{\tilde \bfx}(j) \in [g-k+1, g]$ and $\sigma_{\tilde \bfy}\sigma^{-1}_{\tilde \bfx}(j) < \sigma_{\tilde \bfy}\sigma^{-1}_{\tilde \bfx}(i)$.
\end{enumerate}
See Figure \ref{fig:diffgrA}. Note that by our definition of $\rho^{s_n, t_n}$ (recall that each $s_n$ corresponds to one of the first $k$ $\alpha$-arcs), there are no new inversions of the form $\sigma_{\tilde \bfy}\sigma^{-1}_{\tilde \bfx}(i) \in [g+1, g+k]$ and $\sigma_{\tilde \bfy}\sigma^{-1}_{\tilde \bfx}(j) \in [g+1, g+k]$.  The number of pairs $(i, j)$ satisfying \eqref{it:diffgrA1} is $c^2$. The set of pairs $(i, j)$ satisfying \eqref{it:diffgrA2} is equivalent to

\begin{equation}
\label{eqn:pairsij}
	\{ (u, v) \mid 1 \leq v \leq c, u \in \bfs_0 \cap \bft, s_v < u\},
\end{equation}
where $u= \sigma_{\tilde \bfy}\sigma^{-1}_{\tilde \bfx}(i)$ and $s_v=\sigma_{\tilde \bfy}\sigma^{-1}_{\tilde \bfx}(j)$. Indeed, if $(i,j)$ is a new inversion satisfying \eqref{it:diffgrA2}, then $\sigma_{\tilde \bfy}\sigma^{-1}_{\tilde \bfx}(j)$ must equal $s_n$ for some $n$ and $\sigma_{\tilde \bfy}\sigma^{-1}_{\tilde \bfx}(i)$ must be in both $\bfs_0$ and $\bft$.

Notice that $u \in \bfs_0 \cap \bft$ and $s_v<u$ if and only if $u \in \bfs_0 \cap \bft$ and $s_v < u < t_v$. The number of elements in $\{ (u, v) \mid 1 \leq v \leq c, u \in \bfs_0 \cap \bft, s_v < u < t_v\}$ is $m( [\bfrho^{\bfs'_0, \bft'}], S_0 \cap T)$. This completes the proof of the claim that \eqref{eqn:invST} holds.

\begin{figure}[htb!]
\centering
\subfigure[]{
\begin{tikzpicture}[scale=0.50]
	\draw[step=1, black!30!white, very thin] (0, 0) grid (10, 10);
	\draw[-] (2, 0) -- (2, 10);
	\draw[-, dashed] (6, 0) -- (6, 10);
	\draw[-] (0, 2) -- (10, 2);
	\draw[-, dashed] (0, 6) -- (10, 6);
	\filldraw (0.5, 2.5) circle (2pt);
	\filldraw (1.5, 4.5) circle (2pt); 
	\filldraw (4.5, 1.5) circle (2pt); 
	\filldraw (6.5, 5.5) circle (2pt);
	\filldraw (7.5, 0.5) circle (2pt); 
	\filldraw (9.5, 3.5) circle (2pt); 
	
	\draw [decorate,decoration={brace,amplitude=4pt}] (-0.2, 0) -- (-0.2, 1.95) node [midway, left, xshift=-2pt] {$g-k$};
	\draw [decorate,decoration={brace,amplitude=4pt}] (-0.2, 2.05) -- (-0.2, 5.95) node [midway, left, xshift=-2pt] {$k$};
	\draw [decorate,decoration={brace,amplitude=4pt}] (-0.2, 6.05) -- (-0.2, 9.95) node [midway, left, xshift=-2pt] {$k$};
	\draw [decorate,decoration={brace,amplitude=4pt,mirror}] (0, -0.2) -- (1.95, -0.2) node [midway, below, yshift=-2pt] {$g-k$};
	\draw [decorate,decoration={brace,amplitude=4pt,mirror}] (2.05, -0.2) -- (5.95, -0.2) node [midway, below, yshift=-2pt] {$k$};
	\draw [decorate,decoration={brace,amplitude=4pt,mirror}] (6.05, -0.2) -- (9.95, -0.2) node [midway, below, yshift=-2pt] {$k$};
\end{tikzpicture}
}
\hspace{20pt}
\subfigure[]{
\begin{tikzpicture}[scale=0.50]
	\draw[step=1, black!30!white, very thin] (0, 0) grid (10, 10);
	\draw[-] (2, 0) -- (2, 10);
	\draw[-, dashed] (6, 0) -- (6, 10);
	\draw[-] (0, 2) -- (10, 2);
	\draw[-, dashed] (0, 6) -- (10, 6);
	\filldraw (0.5, 2.5) circle (2pt);
	\filldraw (1.5, 4.5) circle (2pt); 
	\filldraw (2.5, 6.5) circle (2pt); 
	\filldraw (3.5, 7.5) circle (2pt);
	\filldraw (4.5, 1.5) circle (2pt); 
	\filldraw (5.5, 9.5) circle (2pt); 
	\filldraw (6.5, 5.5) circle (2pt);
	\filldraw (7.5, 0.5) circle (2pt); 
	\filldraw (9.5, 3.5) circle (2pt); 
	
	\draw [decorate,decoration={brace,amplitude=4pt}] (-0.2, 0) -- (-0.2, 1.95) node [midway, left, xshift=-2pt] {$g-k$};
	\draw [decorate,decoration={brace,amplitude=4pt}] (-0.2, 2.05) -- (-0.2, 5.95) node [midway, left, xshift=-2pt] {$k$};
	\draw [decorate,decoration={brace,amplitude=4pt}] (-0.2, 6.05) -- (-0.2, 9.95) node [midway, left, xshift=-2pt] {$k$};
	\draw [decorate,decoration={brace,amplitude=4pt,mirror}] (0, -0.2) -- (1.95, -0.2) node [midway, below, yshift=-2pt] {$g-k$};
	\draw [decorate,decoration={brace,amplitude=4pt,mirror}] (2.05, -0.2) -- (5.95, -0.2) node [midway, below, yshift=-2pt] {$k$};
	\draw [decorate,decoration={brace,amplitude=4pt,mirror}] (6.05, -0.2) -- (9.95, -0.2) node [midway, below, yshift=-2pt] {$k$};
\end{tikzpicture}
}
\caption[]{Left, an example of $\sigma_\bfy \sigma_\bfx^{-1}$; the domain corresponds to the vertical axis. Right, the corresponding example of $\sigma_{\tilde \bfy} \sigma_{\tilde \bfx}^{-1}$. Here, $\bfs_0=\{1,2,3,4\}$, $\bft=\{3,5,6,8\}$, and $c=3$.}
\label{fig:diffgrA}
\end{figure}
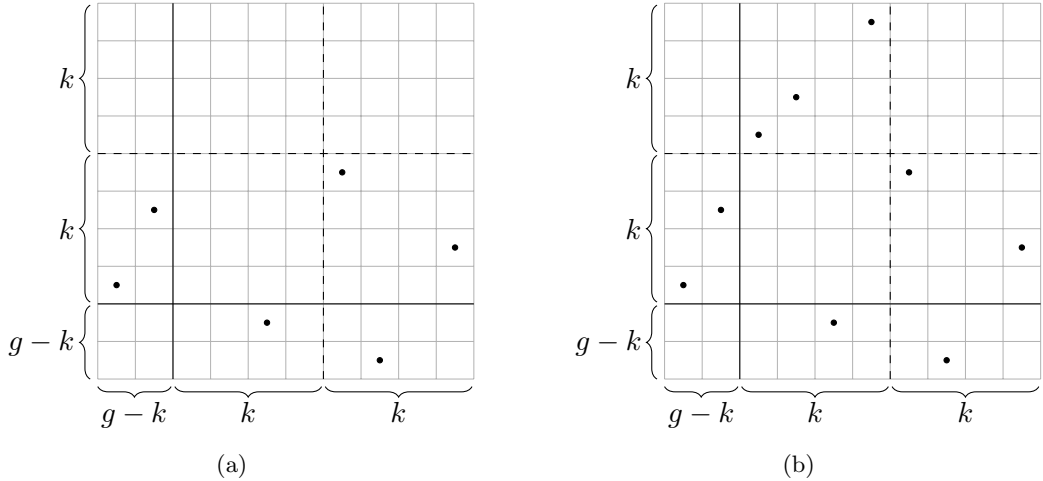

As we obtain $\bfx', \bfy'$ from $\tilde \bfx, \tilde \bfy$ by the same procedure as in Section~\ref{subsubsec:sameidem} where we obtained $\bfx', \bfy'$ from $\bfx, \bfy$, we can now repeat the arguments of Lemma~\ref{lem:grAMsame} (with $\bfs_0$ replaced by $\bfs_0 \cup \bft$) to obtain the remaining terms, completing the proof.  
\end{proof}

As before, we have that
\begin{equation}\label{eqn:diffeuler}
e(B') = e(B),
\end{equation}
since $B'$ differs from $B$ by a collection of rectangles, which have Euler measure zero.

Let $S'_0 = \{ \alpha'^-_i \mid i \in \bfs'_0 \}$ and $T' = \{ \alpha'^-_i \mid i \in \bft' \}$.  We again compare the multiplicities of the generators along the domains $B$ and $B'$:
\begin{lemma}\label{lem:nxydiff}
Let $\bfx, \bfy, \bfx', \bfy', B, B'$ be as above.  Then, \begin{align}
\label{eqn:diffnxy}	n_{\bfx'}(B') + n_{\bfy'}(B') - \big( n_\bfx(B) + n_\bfy(B) \big) &= \frac{1}{2} \sum_i h_i  + \sum_{\substack{ i \in \bfs_0 \cup \bft \\ h_i \neq 0}} (|h_i| -1) + \sum_{i < j} h_i h_j L([\rho_i], [\rho_j]) \\
	 \nonumber	&\quad - \frac{c^2}{2} + m([\dd B], S'_0) + m([\dd B], T') \pmod 2.
\end{align}
\end{lemma}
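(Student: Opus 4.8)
The strategy is to track precisely how the closed domain $B'$ on $\cH'$ differs from the bordered domain $B$ on $\cH$, measuring the change in total multiplicity $n_{\bfx'}(B')+n_{\bfy'}(B')$ relative to $n_\bfx(B)+n_\bfy(B)$ contribution by contribution, exactly as in the proof of Lemma~\ref{lem:samenxy}, but now accounting for the extra features introduced by the refinement neighborhood $RN(Z)$. First I would recall that $B'$ is obtained from $B$ in three stages: (i) we extend $B$ productwise across $RN(Z)$ and append the rectangles $-R_{s_i,t_i}$, $1\le i\le c$, to form $\tilde B$ (Construction~\ref{con:refine}); (ii) we apply Construction~\ref{con:doubling} with $\Sigma$ replaced by $\Sigma\cup RN(Z)$ and $\bfs_0$ replaced by $\bfs_0\cup\bft$; (iii) we apply Constructions~\ref{con:doublepoints} and \ref{con:sameidempotent} mutatis mutandis, which for each $i$ with $h_i\ne 0$ appends $|h_i|$ rectangles $R_{i,n}$ in $\nu(Z)$, subtracts the thin rectangles $r_{i,n}$ in $\Sigma$, and (when $i\in\bfs_0\cup\bft$) translates a generator point off $\alpha'_i$. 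The key point is that each of these pieces is a rectangle, and the net change in $n_{\bfx'}+n_{\bfy'}$ is the sum of the local contributions at the corners of these rectangles.

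Next I would enumerate the contributions. The terms $\tfrac12\sum_i h_i$, $\sum_{i<j}h_ih_jL([\rho_i],[\rho_j])$, and $\sum_{i\in\bfs_0\cup\bft,\,h_i\ne0}(|h_i|-1)$ arise exactly as in the proof of Lemma~\ref{lem:samenxy}: the first from the multiplicities of $B'$ at the NE and SE corners of the rectangles $R_{i,n}$, $1\le n\le|h_i|$; the second from the overlapping rectangles $R_{i_1,n_1}$, $R_{i_2,n_2}$ in $\Sigma'\setminus\Sigma$ (Figure~\ref{fig:Lij}); and the third from the $|h_i|-1$ new $\alpha$-circles and the resulting thin-rectangle adjustment (bottom of Figure~\ref{fig:hi}). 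The genuinely new contributions come from $RN(Z)$. The rectangles $R_{s_i,t_i}$ introduced in Construction~\ref{con:refine} have corners lying on $b_i$ (hence on $\alpha'_{s_i}$ near $\alpha'^-_{s_i}$ and $\alpha'_{t_i}$ near $\alpha'^-_{t_i}$) and at $b_i^\pm$; since these $c$ rectangles all lie in $RN(Z)$ and are attached to $B$ productwise, their mutual overlaps with each other and with (the extension of) $B$ into $RN(Z)$ contribute the term $-\tfrac{c^2}{2}$, paralleling the way pairwise overlaps of the $R_{i,n}$ gave $\sum_{i<j}h_ih_jL$. The remaining two terms $m([\dd B],S'_0)$ and $m([\dd B],T')$ record the average multiplicity of $B$ itself along the points $\alpha'^-_{s_i}$ ($i$ ranging over $\bfs'_0$) and $\alpha'^-_{t_i}$ ($i$ ranging over $\bft'$): these are precisely the corners of the $R_{s_i,t_i}$ that lie in the closure of $\Sigma$, where the already-present multiplicity of $B$ must be counted into $n_{\bfx'}(B')+n_{\bfy'}(B')$ because $b_i^+\in\tilde\bfx$ and $b_i^-\in\tilde\bfy$ sit on those $\alpha$-arcs. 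I would verify each of these claims by the same bookkeeping as Lemma~\ref{lem:samenxy}, referring to Figure~\ref{fig:examplesdiffidem} for a worked instance, checking signs against the orientation conventions and the fact that $R_{s_i,t_i}$ is appended with multiplicity $-1$.

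The main obstacle I anticipate is the careful sign and corner-counting for the refinement rectangles $R_{s_i,t_i}$: one must be sure that the $-1$ multiplicity, the direction of the Reeb chords $\rho^{s_i,t_i}$ (which go from $\alpha^-_{s_i}$ to $\alpha^-_{t_i}$ rather than being the ``short'' chords $\rho_i$), and the placement of $\tilde\bfx,\tilde\bfy$ interact so as to produce exactly $-\tfrac{c^2}{2}+m([\dd B],S'_0)+m([\dd B],T')$ and nothing more — in particular that there is no cross term between the $R_{s_i,t_i}$ and the $R_{i,n}$, which holds because the former live in $RN(Z)$ and the latter in $\nu(Z)\setminus RN(Z)$ or in $\Sigma'\setminus(\Sigma\cup RN(Z))$. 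Once the contributions are tallied, summing them gives \eqref{eqn:diffnxy} directly; no further computation is needed, and the lemma follows.
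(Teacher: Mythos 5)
Your decomposition of the argument is essentially the paper's: repeat the bookkeeping of Lemma~\ref{lem:samenxy} for the three recurring terms, and isolate the new contributions to the change in $n_{\bfx'}+n_{\bfy'}$ arising from the refinement neighborhood, i.e.\ the multiplicities of $\tilde{B}$ at the added points $b_i^\pm$. But your attribution of the three new terms is tangled in a way that would not survive a careful write-up. You claim the $-\tfrac{c^2}{2}$ comes from ``mutual overlaps \emph{with each other and with (the extension of) $B$} into $RN(Z)$.'' In fact the overlap of the productwise extension of $B$ with the $b_i^\pm$ is exactly what produces $m([\dd B],S'_0)+m([\dd B],T')$; it contributes nothing to $-\tfrac{c^2}{2}$. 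The paper's computation is that each $-R_{s_i,t_i}$ contributes $-\tfrac14$ to each of $n_{b_i^+}$ and $n_{b_i^-}$ (its own corners), and additionally $-1$ to $n_{b_j^-}$ for $j>i$; summing gives $-\tfrac{c}{2}-\binom{c}{2}=-\tfrac{c^2}{2}$. So the $-\tfrac{c^2}{2}$ is purely from the rectangles $R_{s_i,t_i}$, including self-corner terms, not just pairwise overlaps and certainly not any overlap with $B$. You should also correct the geometric phrasing ``corners of the $R_{s_i,t_i}$ that lie in the closure of $\Sigma$'': the $R_{s_i,t_i}$ live entirely in $RN(Z)\subset\Sigma'\setminus\Sigma$ and have corners at $b_i^\pm$ and at the translates of those points to $\tilde{Z}$; none of these lie in $\overline{\Sigma}$. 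The point is that $b_i^+$ (resp.\ $b_i^-$) sits on $\alpha'_{t_i}$ (resp.\ $\alpha'_{s_i}$) and is a \emph{translate} across $RN(Z)$ of $\alpha'^-_{t_i}$ (resp.\ $\alpha'^-_{s_i}$); the productwise extension of $B$ therefore has multiplicity at $b_i^\pm$ governed by $[\dd B]$ near $\alpha'^-_{t_i}$, $\alpha'^-_{s_i}$, giving the two $m(\cdot,\cdot)$ terms. With these corrections, the rest of your tally matches the paper and the lemma follows.
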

\begin{proof}
We are first interested in the final three terms of \eqref{eqn:diffnxy}.  These will all come from the intersection points in the refinement neighborhood, the $b^\pm_i$.  More precisely, we will show that 
\[
\sum_{1 \leq i \leq c} (n_{b^+_i}(\tilde{B}) + n_{b^-_i}(\tilde{B})) = - \frac{c^2}{2} + m([\dd B], S'_0) + m([\dd B], T').
\]  
The rest of the proof will then follow as in Lemma~\ref{lem:samenxy}.  	

Observe that the only contributions of $\tilde{B}$ to the multiplicities of $b^\pm_i$ come from the $R_{s_j,t_j}$ and the productwise extension of $\dd B$ across the refinement neighborhood.  First, each $-R_{s_i,t_i}$ clearly contributes $-\frac{1}{4}$ to each of $n_{b^+_i}$ and $n_{b^-_i}$.  The only other contribution of $-R_{s_i,t_i}$ is $-1$ to $n_{b^-_j}$ for $j > i$.  See Figure \ref{fig:Rsiti}.  Hence, 
\begin{equation*}
\sum_{1 \leq i \leq c} \sum_{1 \leq j \leq c} (n_{b^+_i}(-R_{s_j,t_j}) + n_{b^-_i}(-R_{s_j,t_j}))
= \Bigg(\sum_{1 \leq i \leq c} -\frac{1}{2}\Bigg) + \sum_{1 \leq i \leq c} \Bigg(\sum_{i < j \leq c} -1 \Bigg) 
 = -\frac{c^2}{2}.    
\end{equation*}  
It thus remains to compute the contribution of the productwise extension of $\dd B$ to the $n_{b^\pm_i}$.  The intersection points $b^+_i$ (respectively $b^-_i$) are translates of the points in $T'$ (respectively $S'_0$) along $\alpha'_{t_i}$ (respectively $\alpha'_{s_i}$). Hence it is straightforward to check that the contribution to $\sum_i n_{b^+_i}(\tilde{B})$ (respectively $\sum_i n_{b^-_i}(\tilde{B})$) from the productwise extension of $B\cap \d \Sigma$ is $m([\dd B], T')$ (respectively $m([\dd B], S'_0)$).  As discussed, the proof now follows as in Lemma~\ref{lem:samenxy}.
\end{proof}

\begin{figure}[htb!]
\centering
\labellist
	\pinlabel $\tilde{Z}$ at 74 -5
	\pinlabel $b^-_i$ at 4 9
	\pinlabel $b^+_i$ at 5 70
	\pinlabel $-R_{s_i,t_i}$ at 35 24
\endlabellist
\includegraphics[scale=1.5]{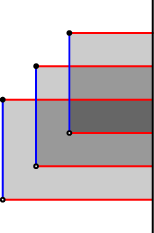}
\vspace{10pt}
\caption[]{The sum $\sum_i -R_{s_i, t_i}$ when $c=3$. 
}
\label{fig:Rsiti}
\end{figure}

For what follows, let $\vec{h} = (h_1,\ldots,h_{2k}) \in H_1(F(\cZ))$.  
\begin{lemma}\label{lem:mhc}
With notation as above, 
\begin{equation}\label{eqn:mhc}
	2m([\dd B], T') = \Big( \sum_{i \in \bft '} h_i  \Big) + c \pmod 2.
\end{equation} 
\end{lemma}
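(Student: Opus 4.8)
The claim \eqref{eqn:mhc} is a purely combinatorial identity relating the average-multiplicity pairing $m([\dd B], T')$ to the coordinates $h_i$ of $\vec h = \dd^\dd \tilde B \in H_1(F(\cZ))$. The strategy is to unwind the definition of $m$ on the relevant generators and bookkeep parities. Recall that $T' = \{\alpha'^-_i \mid i \in \bft'\}$ and that for $p \in \bfa$, $m(\eta, p)$ is the average of the two local multiplicities of $\eta$ on either side of $p$; extend bilinearly over $H_0(\bfa)$. First I would observe that since $\tilde B$ was constructed precisely so that $\dd \tilde B$ corresponds to an honest element of $H_1(F(\cZ))$ (not merely $H_1(Z',\bfa)$), we may compute $m([\dd B], T')$ using the Reeb-chord decomposition $[\dd B] = \sum_i h_i [\rho_i]$, where $\rho_i$ is the chord from $\alpha'^-_i$ to $\alpha'^+_i$. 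Bilinearity then reduces the computation to $m([\rho_i], \alpha'^-_j)$ for $i, j \in [2k]$.

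The key step is the local computation of $m([\rho_i], \alpha'^-_j)$. For $j = i$: the endpoint $\alpha'^-_i$ is an endpoint of $\rho_i$ itself, so on one side the multiplicity of $[\rho_i]$ is $0$ (outside the chord) and on the other it is $1$ (inside), giving $m([\rho_i], \alpha'^-_i) = 1/2$. For $j \neq i$: the point $\alpha'^-_j$ either lies in the interior of the chord $\rho_i$ (contributing $1$) or outside it (contributing $0$), depending only on whether $\alpha'^-_j$ lies between $\alpha'^-_i$ and $\alpha'^+_i$ in the order $\lessdot$ on $Z \setminus z$. Summing, $m([\rho_i], \alpha'^-_j) = \#\{\,\cdot\,\} + \tfrac12 \delta_{ij}$ where the integer count records the "nesting" relation. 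Therefore
\[
2m([\dd B], T') = \sum_{j \in \bft'} \sum_i h_i\bigl(2m([\rho_i],\alpha'^-_j)\bigr) = \sum_{j \in \bft'} h_j + 2\!\!\sum_{\substack{i,\, j \in \bft'\\ i \neq j}}\!\! h_i \cdot \#\{\text{nesting}\} \equiv \sum_{i \in \bft'} h_i + c \pmod 2,
\]
where the last congruence needs the observation that the doubled integer term vanishes mod $2$ \emph{except} for a correction of size $|\bft'| = c$ that arises from the half-integer diagonal terms $\tfrac12\delta_{ij}$ summed over the $c$ elements of $\bft'$. Concretely, $\sum_{j \in \bft'}\sum_i h_i \cdot 2m([\rho_i],\alpha'^-_j) = \sum_{j \in \bft'} h_j + 2(\text{integer})$, and pulling the $2m$ through forces one to track that each $j \in \bft'$ contributes an extra $+1$ (mod $2$) beyond $h_j$ coming from comparing "average multiplicity at a point that is itself a chord endpoint" with the integer part — yielding the additive $+c$.

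The main obstacle is getting the parity bookkeeping exactly right at the endpoints that are shared between $T'$ and the chords $\rho_i$ — i.e., distinguishing the half-integer contributions from the integer "nesting" contributions, and confirming that the integer part is genuinely even so that it drops out mod $2$. This is precisely the kind of argument already carried out for the analogous linking term $L([\rho_i],[\rho_j])$ in Lemma~\ref{lem:samenxy} and in the cited \cite[Proposition 15]{Petkovadecat}, so I would model the computation on those, and I expect the identity to follow after a short case analysis. An alternative, slicker route is to extract \eqref{eqn:mhc} directly from Lemma~\ref{lem:nxydiff} by a symmetry/consistency argument (the $S'_0$ and $T'$ terms should behave the same way up to the known parity of $\sum h_i$ over the relevant index set), but I would present the direct local computation since it is self-contained and matches the style of the surrounding lemmas.
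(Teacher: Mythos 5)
Your opening reduction is where the argument breaks. You write that because $\dd\tilde B$ lies in $H_1(F(\cZ))$, you ``may compute $m([\dd B], T')$ using the Reeb-chord decomposition $[\dd B]=\sum_i h_i[\rho_i]$.'' But the lemma is about $\dd B$, not $\dd\tilde B$, and these are not the same class: since $I_A(\bfx)=I(\bfs_0)\neq I(\bft)=I_A(\bfy)$, the boundary $[\dd B]\in H_1(Z',\bfa)$ has nonzero image under $M_*\circ\partial$ and does \emph{not} equal $\vec h$. The correct relation (which the paper's proof records as its first observation) is $[\dd B]=\vec h + [\bfrho^{\bfs_0',\bft'}]$, where $\bfrho^{\bfs_0',\bft'}$ is the collection of Reeb chords $\rho^{s_i,t_i}$ encoding the change of idempotent. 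Construction~\ref{con:refine} was built precisely to add the rectangles $-R_{s_i,t_i}$ and cancel this defect, so that the conclusion about $\tilde B$ cannot be transported back to $B$ for free.

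This mis-identification is what makes the rest of your computation self-contradictory. If it really were the case that $[\dd B]=\sum_i h_i[\rho_i]$, then the local calculation $m([\rho_i],\alpha'^-_j)\equiv\frac12\delta_{ij}\pmod 1$ would give $2m([\dd B],T')\equiv\sum_{j\in\bft'}h_j\pmod 2$ with \emph{no} $+c$ term, and there is no legitimate ``extra $+1$ per $j\in\bft'$'' to be conjured from the half-integer diagonal entries --- those have already been fully consumed in producing $\sum_{j\in\bft'}h_j$. The $+c$ genuinely comes from the omitted chord class: a direct computation (the paper's observation (3), equation~\eqref{eqn:mrho}) gives $m([\bfrho^{\bfs_0',\bft'}],T')=\tfrac{c^2}{2}$, since for $j\in\bft'$ and $1\leq i\leq c$ one has $m(\rho^{s_i,t_i},\alpha'^-_j)$ equal to $0$, $\tfrac12$, or $1$ according to whether $j>t_i$, $j=t_i$, or $j<t_i$, and $s_i<j$ always. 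Hence $2m([\bfrho^{\bfs_0',\bft'}],T')\equiv c\pmod 2$, and adding this to $2m(\vec h,T')\equiv\sum_{i\in\bft'}h_i$ yields the claimed identity. Your suggested ``slicker'' alternative of extracting the identity from Lemma~\ref{lem:nxydiff} by symmetry is not developed and would in any case require an independent handle on exactly the same $\bfrho^{\bfs_0',\bft'}$ contributions; as written it does not fill the gap.
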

\begin{proof}
Recall that $\d^\d \tilde{B} = \vec{h}=(h_1, \dots, h_{2k}) \in H_1(F(\cZ))$. We observe the following.
\begin{enumerate}
	\item $[\dd B] = \vec{h} + [\bfrho^{\bfs'_0, \bft'}]$.
	\item Recall that $\rho_i$ is the Reeb chord from $\alpha'^-_i$ to $\alpha'^+_i$.  Note that $m(\rho_i, \alpha'^-_j) = \frac{1}{2} \delta_{i,j} \pmod 1$, where $\delta_{i,j}$ is the Kronecker delta function.  Therefore, $m(\vec{h}, T') = \sum_{i \in \bft'} \frac{1}{2}h_i \pmod 1$.  
	\item Let $j \in \bft'$. Then $j>s_i$ for all $1 \leq i \leq c$ and 
\[
m(\rho^{s_i,t_i},\alpha'^-_j) = \begin{cases} 0 & \text{ if } j > t_i \\ 
	\frac{1}{2} & \text{ if } j = t_i\\ 
	1 & \text{ if } j < t_i.\end{cases}  
\]
Here we recall that in the definition of $\alpha'^-_j$, this point is given positive orientation.  Consequently,  
\begin{equation}\label{eqn:mrho}
m([\bfrho^{\bfs'_0,\bft'}],T') = \frac{c^2}{2},
\end{equation}
since $T' = \{\alpha'^-_i \mid i \in \bft'\}$ and $|T'| = c$. We note for future use that we similarly have 
\begin{equation}\label{eqn:mrho2}
m([\bfrho^{\bfs'_0,\bft'}],S_0') = \frac{c^2}{2}.
\end{equation}
From \eqref{eqn:mrho}, it follows that $m([\bfrho^{\bfs'_0, \bft'}],T') = \frac{1}{2}c \pmod 1$.  
\end{enumerate}
Combining these observations, we obtain 
\begin{align*}
	2m([\dd B], T') &= 2m(\vec{h} + [\bfrho^{\bfs'_0, \bft'}], T') \\
			&= 2m(\vec{h}, T') + 2m([\bfrho^{\bfs'_0, \bft'}], T') \\
			&= \Big( \sum_{i \in \bft '} h_i  \Big) + c \pmod 2.\qedhere
\end{align*}
\end{proof}

We also observe that
\begin{align}\label{eqn:mbs} 
\nonumber	L([\bfrho^{\bfs_0,\bft}], [\dd B] ) &= m([\dd B], \partial [\bfrho^{\bfs_0,\bft}]) \\
&= m ([\dd B], T) -m ([\dd B], S_0)  \\
\nonumber	&= m ([\dd B], T') -m ([\dd B], S'_0).
\end{align}

\begin{proof}[Proof of Proposition~\ref{prop:diffidem}]
Combining \eqref{eqn:Mclosed}, \eqref{eqn:diffgrA}, \eqref{eqn:diffeuler}, \eqref{eqn:diffnxy}, \eqref{eqn:mhc}, and \eqref{eqn:mbs}, we see that
\begin{align*}
	\gr_A(\bfy) - \gr_A(\bfx) &= M(\bfy') - M(\bfx') + c + m( [\bfrho^{\bfs'_0, \bft'}], S_0 \cap T) + \sum_{i \notin \bfs_0 \cup \bft } h_i + \sum_{\substack{ i \in \bfs_0 \cup \bft \\ h_i \neq 0}} (|h_i| -1) \\
		&= -e(B') -n_{\bfx'}(B') -n_{\bfy'}(B') + c + m( [\bfrho^{\bfs'_0, \bft'}], S_0 \cap T) + \sum_{i \notin \bfs_0 \cup \bft } h_i + \sum_{\substack{ i \in \bfs_0 \cup \bft \\ h_i \neq 0}} (|h_i| -1) \\
		&= -e(B) -n_\bfx(B) -n_\bfy(B) - \frac{1}{2} \sum_i h_i  + \sum_{\substack{ i \in \bfs_0 \cup \bft \\ h_i \neq 0}} (|h_i| -1) - \sum_{i < j} h_i h_j L([\rho_i], [\rho_j])+ \frac{c^2}{2} \\
	 	&\quad  - m([\dd B], S'_0) - m([\dd B], T') + c + m( [\bfrho^{\bfs'_0, \bft'}], S_0 \cap T) + \sum_{i \notin \bfs_0 \cup \bft } h_i + \sum_{\substack{ i \in \bfs_0 \cup \bft \\ h_i \neq 0}} (|h_i| -1) \\
		&= -e(B) -n_\bfx(B) -n_\bfy(B) - \frac{1}{2} \sum_i h_i  - \sum_{i < j} h_i h_j L([\rho_i], [\rho_j]) + \frac{c^2}{2} \\
	 	&\quad  + L([\bfrho^{\bfs_0,\bft}], [\dd B] ) - 2m([\dd B], T') + c + m( [\bfrho^{\bfs'_0, \bft'}], S_0 \cap T)  + \sum_{i \notin \bfs_0 \cup \bft } h_i \\
		&= -e(B) -n_\bfx(B) -n_\bfy(B) - \frac{1}{2} \sum_i h_i  - \sum_{i < j} h_i h_j L([\rho_i], [\rho_j]) + \frac{c^2}{2} + L([\bfrho^{\bfs_0,\bft}], [\dd B] )  \\
		&\quad + m( [\bfrho^{\bfs'_0, \bft'}], S_0 \cap T) + \sum_{i \notin \bfs_0 } h_i \pmod 2.
\end{align*}

We now verify that the end result of the above set of equalities is $f^\cZ_0 \circ g(B)$. Recall that our grading refinement data is $\psi(\bft) = \grunref(I(\bfs_0) \cdot a(\bfrho^{\bfs_0, \bft}))$. Since $I_A(\bfx) = \bfs_0$, we have that
\begin{align*}
	g(B) &= \grunref(B) \cdot (\grunref(I(\bfs_0) \cdot a(\bfrho^{\bfs_0, \bft})))^{-1} \\
	&= ( -e(B) -n_\bfx(B) -n_\bfy(B), [ \dd B]) \cdot ( -m([\bfrho^{\bfs_0, \bft}], S_0 ) , [\bfrho^{\bfs_0, \bft}])^{-1} \\
	&= ( -e(B) -n_\bfx(B) -n_\bfy(B) + m([\bfrho^{\bfs_0, \bft}], S_0 )+ L( [\dd B], -[\bfrho^{\bfs_0, \bft}]) , [ \dd B] - [\bfrho^{\bfs_0, \bft}] ),
\end{align*}
where the second term in the second equality follows from \eqref{eqn:unrefgralg} and the observation that $\iota(I(\bfs_0) \cdot a(\bfrho^{\bfs_0,\bft})) = -m([\bfrho^{\bfs_0,\bft}],S_0)$.

Since $[ \dd B] - [\bfrho^{\bfs_0, \bft}] = (h_1, \ldots, h_{2k})$, it follows from \cite[Proposition 15]{Petkovadecat} that
\begin{align*}
	f_0^\cZ \circ g(B) &= -e(B) -n_\bfx(B) -n_\bfy(B) + m ([\bfrho^{\bfs_0, \bft}] , S_0 ) + L( [\dd B], -[\bfrho^{\bfs_0, \bft}]) - \frac{1}{2} \sum_{i \in \bfs_0} h_i +\frac{1}{2} \sum_{i \notin \bfs_0} h_i \\
	& \quad + \sum_{i < j} h_i h_j L([\rho_i], [\rho_j]) \pmod 2.
\end{align*}

By \eqref{eqn:mrho2}, we have 
\begin{align*}
m ([\bfrho^{\bfs_0, \bft}] , S_0 ) &= m ([\bfrho^{\bfs'_0, \bft'}] , S_0 \setminus (S_0 \cap T) ) + m( [\bfrho^{\bfs'_0, \bft'}], S_0 \cap T)		\\
& = \frac{c^2}{2} + m( [\bfrho^{\bfs'_0, \bft'}], S_0 \cap T).
\end{align*}

Recall that $L([\rho_i], [\rho_j])$ is integral and that $L$ is skew-symmetric.
These results combine to yield
	\[ \gr_A(\bfy) - \gr_A(\bfx) = f_0^\cZ \circ g(B) \pmod 2, \]
as desired.
\end{proof}

  
\subsubsection{$\CFD$}\label{subsec:CFDagree}
Having established that the two $\Ztwo$-gradings agree on $\CFA$ in the presence of a generator in the base idempotent, we proceed to show that they agree on arbitrary $\CFD$. We will prove this using the fact that both $\Ztwo$-gradings respect the pairing theorem, i.e.,  Proposition~\ref{prop:respectspairing} and \cite[Proposition 22]{Petkovadecat}. 

\begin{figure}[htb!]
\labellist
	\pinlabel $\cZ$ at 62 5
	\pinlabel $B_1$ at 48 45 \pinlabel $B_2$ at 67 45
		\scriptsize
	\pinlabel $\bfx_1$ at 36 30 
	\pinlabel $\bfy_1$ at 36 60 
	\pinlabel $\bfx_2$ at 103 60 
	\pinlabel $\bfy_2$ at 103 48.5 
	\endlabellist
\includegraphics[scale=1.5]{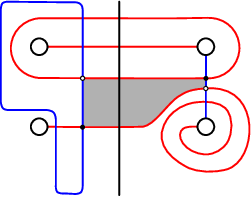}
\caption{An example of the closed Heegaard diagram $\cH$ constructed in the proof of Theorem~\ref{thm:gradingsagree}\eqref{gradingsagreethm:2}. Notice that, as the $\beta$-circle on the left has been isotoped to meet every $\alpha$-curve,  there is a generator in the base idempotent (in this case, $\bfx_1$) so that $B_1\in\pi_2(\bfx_1,\bfy_1)$ (in particular, this is non-empty). }
\label{fig:CFDCFA-example}
\end{figure}

\begin{proof}[Proof of Theorem~\ref{thm:gradingsagree}\eqref{gradingsagreethm:2}]
For notational convenience, we relabel $\cH$ as $\cH_2$ and $\bfx, \bfy$ as $\bfx_2, \bfy_2$ and $\mfs$ as $\mfs_2$.  We begin by choosing a bordered Heegaard diagram $\cH_1$ for a bordered three-manifold $Y_1$ with $\d \cH_1= \cZ =-\d \cH_2$ and generators $\bfx_1, \bfy_1 \in \mfS(\cH_1)$ such that $\bfx_0 = \bfx_1 \otimes \bfx_2$ and $\bfy_0 = \bfy_1 \otimes \bfy_2$ are generators in $\mfS(\cH_1 \cup \cH_2, \mfs_0)$ for some $\mfs_0$. One can construct such a bordered Heegaard diagram as follows (compare Figure \ref{fig:CFDCFA-example}).  Let $Y_2$ denote the bordered 3-manifold corresponding to $\cH_2$.  Let $Y_1$ denote a bordered handlebody of genus $k$ with boundary $F(\cZ)$ such that the map from $H_1(F(\cZ))$ to $H_1(Y_1 \cup Y_2)$ is identically 0.  Now let $\cH_1$ be a bordered Heegaard diagram for $Y_1$.  Further, after isotopy, we can guarantee that every $\beta$-circle intersects every $\alpha$-arc. Now take $\bfx_1$ (respectively $\bfy_1$) to be any generator with $I_A(\bfx_1) = I_D(\bfx_2)$ (respectively $I_A(\bfy_1) = I_D(\bfy_2)$).  By construction, we have that $\bfx_1 \otimes \bfx_2$ and $\bfy_1 \otimes \bfy_2$ are generators for $\widehat{CF}(\cH_1 \cup \cH_2)$. Further, these homological conditions guarantee that these generators correspond to the same $\spinc$ structure, $\mfs_0$, on $Y_1 \cup Y_2$.    

Let $B_0 \in \pi_2(\bfx_0, \bfy_0)$. Then we can decompose $B_0$ as the union of $B_1 \subset \cH_1$ and $B_2 \subset \cH_2$. More generally, given $B_i \in \pi_2(\bfx_i, \bfy_i)$ such that $B_1$ and $B_2$ agree along their boundary, we write $B_1 \natural B_2$ to denote the associated homology class  in $\pi_2(\bfx_1 \otimes \bfx_2, \bfy_1 \otimes \bfy_2)$, which is guaranteed to exist by \cite[Lemma 4.32]{LOT}.

By \cite[Proposition 22]{Petkovadecat} we have that
	\begin{align} \label{eqn:indf}
		M(\bfx_0) - M(\bfy_0) = f_0^{\cZ} \circ g(B_1) + f_0^{\cZ} \circ R \circ g(B_2) \pmod 2,
	\end{align}
where $M$ denotes the Maslov grading modulo $2$. We have that
	\begin{align*}
		 M(\bfx_1 \otimes \bfx_2) - M(\bfy_1 \otimes \bfy_2) &= \gr_A(\bfx_1) - \gr_A(\bfy_1) + \gr_D(\bfx_2) - \gr_D(\bfy_2) \\
			&= f_0^{\cZ} \circ g(B_1) + \gr_D(\bfx_2) - \gr_D(\bfy_2) \pmod 2,
	\end{align*}
where the first equality follows from Proposition \ref{prop:respectspairing} and the second equality follows from Proposition \ref{prop:diffidem} and the discussion preceding it.  (Here, we are using the fact that each $\beta$-circle intersects every $\alpha$-arc, so there is necessarily a generator of  $\CFA(\cH_1)$ with associated idempotent given by the base idempotent.)   Combining this with \eqref{eqn:indf} yields
\[ \gr_D(\bfy_2) - \gr_D(\bfx_2) = f_0^{\cZ} \circ R \circ g(B_2) \pmod 2. \]
Note that this equation in fact holds for any $B_2 \in \pi_2(\bfx_2, \bfy_2)$ since any two elements in $\pi_2(\bfx_2, \bfy_2)$ differ by a periodic domain, which by \cite[Theorem 13]{Petkovadecat} has image zero under the map $f_0^{\cZ} \circ R \circ g$.  This completes the proof.    
\end{proof}


\subsubsection{The general case of Theorem \ref{thm:gradingsagree}\eqref{gradingsagreethm:1}}\label{subsec:generalCFAagree}
We now complete the proof of Theorem \ref{thm:gradingsagree}\eqref{gradingsagreethm:1}, that the two $\Ztwo$-gradings on $\CFA$ agree, without the hypothesis that we have a generator in the base idempotent.

\begin{proof}[Proof of Theorem~\ref{thm:gradingsagree}\eqref{gradingsagreethm:1}]
We repeat the proof of Theorem~\ref{thm:gradingsagree}\eqref{gradingsagreethm:2}, reversing the roles of $\CFA$ and $\CFD$. Namely, we know that the two $\Ztwo$-gradings agree on $\CFD$ as well as on $\widehat{CF}$ for any choices of $\cH_2$ and $\cH_0$ respectively. Note that in the proof of Theorem~\ref{thm:gradingsagree}\eqref{gradingsagreethm:2}, we did not need a generator $\bfx$ of $\CFD(\cH_2)$ with $I_D(\bfx) = I(\bfs_0)$.  Combined with the relevant pairing theorems, we conclude that the two $\Ztwo$-gradings agree on all $\CFA$.
\end{proof}

\subsection{Bimodules} \label{sec:grequivbimodules}
The non-commutative gradings on $\CFAA$ and $\CFDD$ are induced by the restriction and induction functors respectively, as in \cite[Subsection 2.4.2]{LOTbimodules}.  Similarly, the gradings $\gr_{AA}$ and $\gr_{DD}$ defined in Section~\ref{sec:background} are induced from $\gr_A$ and $\gr_D$.  Since $\gr_A$ and $f^\cZ_0 \circ g$ (respectively $\gr_D$ and $f^\cZ_0  \circ R \circ g$) agree on $\CFA$ (respectively $\CFD$) by Theorem~\ref{thm:gradingsagree}, it follows that the induced gradings on $\CFAA$ and $\CFDD$ agree as well.  

We now generalize Propositions \ref{prop:respectspairing} and \ref{prop:CFDApairing} to show that the pairing of $\CFAA$ and $\CFDD$ respects the $\Ztwo$-grading in an appropriate manner.  Let $\cH_{12}$ (respectively $\cH_{23}$) be an arced bordered Heegaard diagram for a manifold with boundary parameterized by $\cZ_1$ and $\cZ_2$ (respectively $\cZ_2$ and $\cZ_3$).  Let $g_1$ (respectively $g_2$) denote the genus of $\cH_{12}$ (respectively $\cH_{23}$) and let $k_i$ denote the genus of $F(\cZ_i)$.   



\begin{proposition} \label{prop:CFAACFDDgrpairing}
Given $\cH_{12}$, $\cH_{23}$ as above, we have 
\begin{equation}\label{eqn:CFAACFDDpairing}
	 \CFDA(\cH_{12} \cup \cH_{23}) \simeq \CFAA(\cH_{12}) \boxtimes \CFDD(\cH_{23})
\end{equation}
and given $\bfx \in \CFAA(\cH_{12})$ and $\bfy \in \CFDD(\cH_{23})$, we have that 
\begin{equation} \label{eqn:DADDAAgr}
	 \gr_{DA}(\bfx \otimes \bfy) = \gr_{AA}(\bfx) + \gr_{DD}(\bfy) + g_1(k_1+k_3)+t(k_1+k_2)  \pmod 2,
\end{equation}
where $t = |\bfs|$ and $I(\bfs)$ is the minimal idempotent in $\cA(\cZ_1)$ such that $\bfx \cdot (I(\bfs), \bfI)= \bfx$. In particular, if $k_1=k_2=k_3$, then
\[ \gr_{DA}(\bfx \otimes \bfy) = \gr_{AA}(\bfx) + \gr_{DD}(\bfy) \pmod 2.\]
\end{proposition}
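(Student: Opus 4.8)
The statement \eqref{eqn:CFAACFDDpairing} is an instance of the pairing theorem for bimodules of \cite{LOTbimodules} (the analogue of \cite[Theorem 12]{LOTbimodules} relating $\CFDA$ to $\CFAA\boxtimes\CFDD$), so nothing new is required there; I would simply cite it. All the content is in the grading formula \eqref{eqn:DADDAAgr}, and the plan is to reduce it to the combinatorial addition lemmas already available. Recall that $\gr_{AA}$ is defined as the grading induced on $\CFAA(\cH_{12})$ from $\gr_A$ on $\CFA((\cH_{12})_\dr)$ by the restriction functor along $\varphi_\dr\co\cA(F_{1})\otimes\cA(F_{2})\to\cA(F_{1}\#F_{2})$, and similarly $\gr_{DD}$ is induced from $\gr_D$ on $\CFD((\cH_{23})_\dr)$ by induction along $\varphi^{\textup{split}}$. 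At the level of the underlying generating sets the drilling operation changes nothing, so $\gr_{AA}(\bfx)$ and $\gr_{DD}(\bfy)$ may be computed by the very same formulas $\sgn_A(\sigma_\bfx)+\sum o$ and $\sgn_D(\sigma_\bfy)+\sum o$ once we record the orders and orientations dictated by the connected-sum pointed matched circles. Likewise $\gr_{DA}(\bfx\otimes\bfy)$ on $\CFDA(\cH_{12}\cup\cH_{23})$ is $\sgn_{DA}(\sigma_{\bfx\otimes\bfy})+\sum o$ for the glued generator.

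\textbf{Key steps, in order.} First I would fix compatible orders and orientations on $\bfalpha$ and $\bfbeta$ for the glued diagram $\cH_{12}\cup\cH_{23}$ so that, exactly as in the proof of Proposition~\ref{prop:respectspairing} and Proposition~\ref{prop:CFDApairing}, the partial permutation $\sigma_{\bfx\otimes\bfy}$ equals the combinatorial sum $\sigma_{\bfx}+\sigma_{\bfy}$ of a type DA partial permutation with a type DD one (equivalently, regard $\CFDD(\cH_{23})$ as a type DA bimodule with empty type A side and invoke Remark~\ref{rem:CFDApairing}). Second, the orientation contributions split additively, $\sum_{p\in\bfx\otimes\bfy}o(p)=\sum_{p\in\bfx}o(p)+\sum_{p\in\bfy}o(p)\pmod 2$, by the same argument as in Proposition~\ref{prop:respectspairing}: the orientation of an intersection point is intrinsic and the identification $H_1(F(\cZ_2))\cong H_1(-F(\cZ_2))$ preserves orientations of the relevant basis. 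Third — this is where the correction terms enter — I would apply Lemma~\ref{lem:CFDAsignaddition} (and the bookkeeping underlying Definition~\ref{defn:sgnDA}) to get $\sgn_{DA}(\sigma_\bfx+\sigma_\bfy)=\sgn_{DA}(\sigma_\bfx)+\sgn_{DA}(\sigma_\bfy)+(k_R+k_\textmid)(g_1+k_L+k_\textmid)$ in the notation there; specializing to the present situation $(k_L,k_\textmid,k_R)=(k_1,k_2,k_3)$ and matching the $t(g-k_L-k_R)$ terms in $\sgn_{DA}$ against the $t(g_1-k_1-k_2)$ term implicit in $\gr_{AA}$ (via $\sgn_A$ through the drilled diagram) produces precisely the asserted defect $g_1(k_1+k_3)+t(k_1+k_2)$. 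Fourth, I would check that the grading on $\CFAA$ induced from $\gr_A$ through $\varphi_\dr$, and the one on $\CFDD$ induced from $\gr_D$ through $\varphi^{\textup{split}}$, match up with the claimed signs when one tracks the ordering $-\cZ_L\#-\cZ_R$ versus $\cZ_R\#\cZ_L$ conventions fixed in Section~\ref{sec:catofmod} (this is the reason for the asymmetric $g_1(k_1+k_3)$ factor). Finally, setting $k_1=k_2=k_3$ kills both correction terms mod $2$ (since $k_1+k_3\equiv 0$ and $k_1+k_2\equiv 0$), giving the clean statement.

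\textbf{Main obstacle.} The genuine difficulty is the third and fourth steps: correctly tracking the several sign conventions — the order of $F_L,F_R$ used to define $\gr_{AA}$ and $\gr_{DD}$, the drilling of the arc $\bfz$, the $\cZ_R\#\cZ_L$ versus $-\cZ_L\#-\cZ_R$ asymmetry, and the $t(g-k_L-k_R)$ term built into Definition~\ref{defn:sgnDA} — so that they assemble into exactly $g_1(k_1+k_3)+t(k_1+k_2)$ rather than some other representative of the same class. The homological input (orientations adding, $\sigma$ splitting as a sum) is routine given Propositions~\ref{prop:respectspairing} and \ref{prop:CFDApairing}; everything else is a careful but mechanical extension of Lemma~\ref{lem:CFDAsignaddition}, which is why I would present the computation as a ``straightforward generalization'' of that lemma's proof, checking the correction term explicitly on elementary idempotent pieces and extending, exactly as in Remark~\ref{rem:CFDApairing}.
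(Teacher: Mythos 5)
Your proposal matches the paper's approach: the paper's actual proof is a one-liner, citing \cite[Theorem 12]{LOTbimodules} for the ungraded identification and declaring the graded statement a ``straightforward modification of the proof of Proposition~\ref{prop:respectspairing}, where the curves in $\cH_{12}\cup\cH_{23}$ naturally inherit orientations and orders from $\cH_{12}$ and $\cH_{23}$.'' You are supplying the elaboration, and your skeleton — split $\sigma_{\bfx\otimes\bfy}$ as a combinatorial sum using the inherited orders, note that orientation contributions add, then bookkeep the various $\sgn$ correction terms — is exactly what the authors meant.

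One small inaccuracy in your step three: you attribute a ``$t(g_1-k_1-k_2)$ term implicit in $\gr_{AA}$ (via $\sgn_A$ through the drilled diagram)''. But by Definition~\ref{defn:Asgn}, $\sgn_A(\sigma)=\inv(\sigma)$ with no correction term, so $\gr_{AA}$ (which is $\gr_A$ on the drilled diagram by definition of the restriction functor) contributes no $t(\cdot)$ term. The $t(g-k_L-k_R)$ correction sits entirely in $\sgn_{DA}$ of the glued object (Definition~\ref{defn:sgnDA}), and the rest of $g_1(k_1+k_3)+t(k_1+k_2)$ arises from comparing the $\sum_{i\in\Im(\sigma)}\#\{\cdots\}$ term in $\sgn_D$ of the drilled $\CFDD$ diagram (where $D$ has size $2(k_2+k_3)$) against the analogous term in $\sgn_{DA}$ of the glued diagram (where $D$ has size $2k_1$), together with the cross-inversions as in Lemma~\ref{lem:sgnsum}. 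Also, your parenthetical ``regard $\CFDD(\cH_{23})$ as a type DA bimodule with empty type A side'' isn't quite right as stated — a type DD structure has two left actions, not a left D action with a trivial right A action — though Remark~\ref{rem:CFDApairing} does cover the relevant sum of a DA with a D partial permutation, which is what you actually need once you view the DD generators appropriately. Neither slip affects the soundness of the plan; they only concern which line of the bookkeeping each summand comes from.
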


\begin{proof}
The ungraded version of \eqref{eqn:CFAACFDDpairing} is \cite[Theorem 12]{LOTbimodules}.  The graded statement is a straightforward modification of the proof of Proposition \ref{prop:respectspairing}, where the curves in $\cH_{12} \cup \cH_{23}$ naturally inherit orientations and orders from $\cH_{12}$ and $\cH_{23}$.
\end{proof}

\begin{remark}\label{rmk:pairingorderchoices}
More generally, one may pair elements in $\sfMod_{\cA(\cZ_1),\cA(\cZ_2)}$ and ${}^{\cA(\cZ_3),\cA(\cZ_2)}\sfMod$, or $\sfMod_{\cA(\cZ_2),\cA(\cZ_1)}$ and either ${}^{\cA(\cZ_2),\cA(\cZ_3)}\sfMod$ or ${}^{\cA(\cZ_3),\cA(\cZ_2)}\sfMod$. Analogous results to \eqref{eqn:DADDAAgr} holds, i.e., $\gr_{DA}(\bfx \otimes \bfy) = \gr_{AA}(\bfx) + \gr_{DD}(\bfy)$, up to a shift which depends on $k_1, k_2, k_3, g_1, g_2,$ and $t$. Similarly, one may pair other types of modules, and analogous results hold.
\end{remark}

We turn our attention to $\CFDA$.  First, we will extend Petkova's mod 2 reduction of the non-commutative grading to $\CFDA$.  Then, we will show that this agrees with the grading $\gr_{DA}$ defined in Section~\ref{sec:background}.    

We begin with some general discussion about non-commutative group gradings.  Let $(G_1, \lambda_1)$ and $(G_2, \lambda_2)$ each be groups with distinguished central elements. As in \cite[Definition 2.5.9]{LOTbimodules}, we define the group
	\[ G_1 \times_\lambda G_2 = G_1 \times G_2 / (\lambda_1=\lambda_2) \]
with the distinguished central element $\lambda=[\lambda_1]=[\lambda_2]$.

We use the notation of \cite[Section 6.5]{LOTbimodules}. Let $\cH$ be an arced bordered Heegaard diagram.  Define
	\[ G'_{AA}(\d \cH) = G'(\cZ_L) \times_\lambda G'(\cZ_R) \quad \textup{ and } \quad G_{AA}(\d \cH) = G(\cZ_L) \times_\lambda G(\cZ_R), \]
where $\cZ_L = \d_L \cH$ and $\cZ_R = \d_R \cH$ and we have fixed refinement data $\psi_{L,A}$ and $\psi_{R,A}$ for $\cA(\cZ_L)$ and $\cA(\cZ_R)$, respectively.

We will often write an element of $G'_{AA}(\d \cH)$ as $(n, \vec{h}^L, \vec{h}^R)$ where $n \in \frac{1}{2}\Z$, $\vec{h}^L \in H_1(Z'_L, \bfa_L)$, and $\vec{h}^R \in H_1(Z'_R, \bfa_R)$, and similarly for $G_{AA}(\d \cH)$.

Define a map
	\[ g': \pi_2(\bfx, \bfy) \rightarrow G'_{AA}(\d \cH) \]
by
	\[ g'(B) = (-e(B) -n_\bfx(B) -n_\bfy(B), \d^{\d_L}B, \d^{\d_R}B). \]
Given $B \in \pi_2(\bfx, \bfy)$, we define the refined grading $g(B) \in G_{AA}(\d \cH)$ to be
	\[ g(B) = \psi_{AA}(\bfx) \cdot g'(B) \cdot \psi_{AA}(\bfy)^{-1} \]
where
	\[ \psi_{AA}(\bfx) = (\psi_{L, A}(I_{L, A}(\bfx)), \psi_{R, A}(I_{R, A}(\bfx))) \in G'(\cZ_L) \times_\lambda G'(\cZ_R). \]
Recall from \cite[Definition 3.19]{LOTbimodules} that given refinement data $\psi$ for $\cA(\cZ)$, the \emph{reverse} of $\psi$, denoted $\overline{\psi}$ is given by
\begin{equation}\label{eqn:reverserefinement}
	\overline{\psi}(\bfsbar) = R(\psi(\bfs))^{-1} \quad \textup{ where } \ \bfsbar=[2k]\setminus \bfs,
\end{equation}
and is refinement data for $\cA(-\cZ)$ \cite[Lemma 3.20]{LOTbimodules}. Having fixed refinement data for $\cA(\cZ)$, we will always use the reverse $\overline{\psi}$ as our refinement data for $\cA(-\cZ)$. We will write $\psi_{L,D}$ for the reverse of $\psi_{L, A}$; in particular, $\psi_{L, D}$ is grading refinement data for $\cA(-\cZ_L)$.

Define 
	\[ G_{DA}(\d \cH) = G(-\cZ_L)^\op \times_\lambda G(\cZ_R). \]
Recall from \cite[Subsection 6.5]{LOTbimodules} that $\CFDA$ is graded by a right $G_{DA}(\d \cH)$-set; we review the construction.  Recall that $R$ defines an isomorphism between $G(\cZ)$ and $G(-\cZ)^\op$. Define $\Rtilde$ to be the canonical isomorphism
	\[ R \times_\lambda \bbI : G_{AA}(\d \cH) \rightarrow G_{DA}(\d \cH). \]
Then $\CFDA(\cH, \mfs)$ is graded by the right $G_{DA}(\d \cH)$-set
\[ S_{DA}(\cH, \bfx_0) = \Rtilde(P_{\bfx_0}) \backslash G_{DA}(\d \cH) \]
where $\bfx_0 \in \mfS(\cH, \mfs)$ is a fixed reference point and $P_{\bfx_0} = g(\pi_2(\bfx_0, \bfx_0)) \subset G_{AA}(\d \cH)$. Namely,
	\[ g(\bfx) = \Rtilde(P_{\bfx_0}) \cdot g(B) \quad \textup{ for } B \in \pi_2(\bfx_0, \bfx). \]

\subsubsection{Extending Petkova's homomorphism to $\CFDA$}
We begin by extending Petkova's homomorphism defined in \cite[Section 3]{Petkovadecat} to a homomorphism from $G_{AA}(\partial \cH)$ to $\Ztwo$. In fact, we will define such a homomorphism for each $\t$, $-(k_L+k_R) \leq \t \leq k_L+k_R$. Recall that we have the splitting
	\[ \CFDA(\cH) = \bigoplus^{k_L + k_R}_{\t = -(k_L + k_R)} \CFDA(\cH, \t). \]
We point this out since we will now be using all of the strands gradings, as opposed to the case of a single boundary component.


For each $\t$, we have that $G_{AA}(\d \cH)$ is generated by $\{ \lambda, \mu_{\t,1}, \dots, \mu_{\t,2k_L}, \nu_{\t,1}, \dots , \nu_{\t,2k_R} \}$
subject to the relations 
	\[ \mu_{\t,i} \mu_{\t,{j}} = \mu_{\t,{j}} \mu_{\t,{i}} \lambda^{2 (\rho^L_i \cap \rho^L_{j})}, \quad \nu_{\t,i} \nu_{\t,{j}} = \nu_{\t,{j}} \nu_{\t,{i}} \lambda^{2 (\rho^R_i \cap \rho^R_{j})}, \quad \mu_{\t,i}\nu_{\t,{j}} = \nu_{\t,{j}} \mu_{\t,i}, \quad \textup{and} \quad \lambda \textup{ central}, \]
where 
\begin{enumerate}
	\item $\rho^L_i \in H_1(F(\cZ_L))$ (respectively $\rho^R_i \in H_1(F(\cZ_R))$) denotes the cycle corresponding to $\alpha^a_{L,i}$ (respectively $\alpha^a_{R,i}$) and $\rho^L_i \cap \rho^L_{j}$ (respectively $\rho^R_i \cap \rho^R_{j}$) indicates the signed intersection number
	\item $\mu_{\t,i} = g(\bfI^L_{k_L+k_R-\t} \cdot a(\rho^L_i))$ and $\nu_{\t,i} = g(\bfI^R_{\t} \cdot a(\rho^R_i))$ with $\bfI^L_{k_L+k_R-\t}$ (respectively $\bfI^R_{\t}$) the unit in $\cA(\cZ_L, k_L+k_R-\t)$ (respectively $\cA(\cZ_R, \t)$).
\end{enumerate}

Define 
	\[ f_\t^{\cZ_L, \cZ_R} : G_{AA}(\d \cH) \rightarrow \Ztwo \]
by sending 
	\begin{align*}
		\lambda &\mapsto 1 \\
		\mu_{\t,i} &\mapsto 1 \\
		\nu_{\t,i} &\mapsto 1.
	\end{align*}
It follows from the group relations of $G_{AA}(\d \cH)$ that the above map is well-defined.  

We use the $\rho^L_i$ (respectively $\rho^R_i$) for an ordered basis of $H_1(F(\cZ_L))$ (respectively $H_1(F(\cZ_R))$).  With this, we can represent an element of $G_{AA}(\d \cH)$ by $(n,h^L_1,\ldots,h^L_{2k_L},h^R_1,\ldots,h^R_{2k_R})$, for $n \in \frac{1}{2}\Z$, $\vec{h}^L=(h^L_1,\ldots,h^L_{2k_L}) \in H_1(F(\cZ_L))$, and $\vec{h}^R=(h^R_1,\ldots,h^L_{2k_R}) \in H_1(F(\cZ_R))$. We let $\t' = k_L+k_R -\t$.

\begin{proposition}\label{prop:fZLZRgamma}
The homomorphism $f_{\t}^{\cZ_L, \cZ_R}: G_{AA}(\d \cH) \rightarrow \Ztwo$ satisfies
\begin{align*}
	 f_\t^{\cZ_L, \cZ_R}(n, h^L_1, \dots, h^L_{2k_L}, h^R_1, \dots, h^R_{2k_R}) &= n - \frac{1}{2} \sum_{i \in \bfs_{0, \t'}^L} h^L_i  + \frac{1}{2} \sum_{i \not\in \bfs_{0, \t'}^L} h^L_i + \sum_{i<j}h^L_i h^L_j L(\rho^L_i, \rho^L_j) \\
	& \quad - \frac{1}{2} \sum_{i \in \bfs_{0, \t}^R} h^R_i  + \frac{1}{2} \sum_{i \not\in \bfs_{0, \t}^R} h^R_i + \sum_{i<j}h^R_i h^R_j L(\rho^R_i, \rho^R_j),
\end{align*}
where $\bfs_{0, \t'}^L$ (respectively $\bfs_{0, \t}^R$) denotes the base idempotent in $\cA(\cZ_L, \t')$ (respectively $\cA(\cZ_R, \t)$).
\end{proposition}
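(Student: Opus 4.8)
The plan is to reduce the computation of $f_\t^{\cZ_L,\cZ_R}$ to Petkova's already-known formula for the single-boundary homomorphisms $f_{\t'}^{\cZ_L}$ and $f_\t^{\cZ_R}$ via the product structure of $G_{AA}(\d\cH) = G(\cZ_L)\times_\lambda G(\cZ_R)$. Recall from \cite[Proposition 15]{Petkovadecat} (and as used already in \eqref{eqn:inaB}) that for a single pointed matched circle $\cZ$ with base idempotent $\bfs_{0,\t}$, the homomorphism $f_\t^{\cZ}\co G(\cZ)\to\Ztwo$ satisfies
\[
f_\t^{\cZ}(n,h_1,\dots,h_{2k}) = n - \tfrac12\sum_{i\in\bfs_{0,\t}} h_i + \tfrac12\sum_{i\notin\bfs_{0,\t}} h_i + \sum_{i<j} h_i h_j L(\rho_i,\rho_j) \pmod 2.
\]
So the content of the proposition is precisely that $f_\t^{\cZ_L,\cZ_R}$ decomposes as the ``sum'' of $f_{\t'}^{\cZ_L}$ on the $L$-factor and $f_\t^{\cZ_R}$ on the $R$-factor, where the $\lambda$-coordinate is shared. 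First I would record that under the quotient defining $\times_\lambda$, an element $(n,\vec h^L,\vec h^R)$ can be written as a product of an element $(n,\vec h^L,0)$ coming from $G(\cZ_L)$ (after relabeling $\lambda$) and $(0,0,\vec h^R)$ coming from $G(\cZ_R)$ — here one must be slightly careful about how the half-integral $\epsilon$-shift in the first coordinate is distributed between the two factors, but since $f$ is valued in $\Ztwo$ and the generators $\mu,\nu,\lambda$ all map to $1$, this ambiguity is invisible.

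The key steps, in order: (i) verify that $f_\t^{\cZ_L,\cZ_R}$ as defined on generators $\{\lambda,\mu_{\t,i},\nu_{\t,i}\}$ restricts, on the subgroup generated by $\lambda$ and the $\mu_{\t,i}$, to (a relabeling of) Petkova's $f_{\t'}^{\cZ_L}$ on $G(\cZ_L)$, using that $\mu_{\t,i} = g(\bfI^L_{\t'}\cdot a(\rho^L_i))$ is exactly Petkova's $\mu_{\t',i}$ for $\cA(\cZ_L,\t')$ and that the strands grading on the $L$-side of $\CFDA(\cH,\t)$ is $\t' = k_L+k_R-\t$ (this is the reason the $t(g-k_L-k_R)$-type bookkeeping appears and why the base idempotent on the left is $\bfs^L_{0,\t'}$); (ii) symmetrically, the subgroup generated by $\lambda$ and the $\nu_{\t,i}$ gives $f_\t^{\cZ_R}$ on $G(\cZ_R)$ with base idempotent $\bfs^R_{0,\t}$; (iii) since $\mu$'s and $\nu$'s commute in $G_{AA}(\d\cH)$ and $\lambda$ is central, any element factors (up to $\lambda$-powers, which are harmless) as a product of an $L$-element and an $R$-element, so $f_\t^{\cZ_L,\cZ_R}$ on a general element is the sum of the two single-sided values; (iv) substitute Petkova's explicit formula into each summand and combine, which yields exactly the displayed formula. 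Step (iii) uses the well-definedness already guaranteed by the group relations, so no new relation-checking is needed beyond what was done when $f_\t^{\cZ_L,\cZ_R}$ was defined.

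The main obstacle I anticipate is purely bookkeeping rather than conceptual: matching the strands-grading conventions so that the left factor genuinely sees the grading $\t'=k_L+k_R-\t$ and the correct base idempotent $\bfs^L_{0,\t'}$, and confirming that the definition of $\mu_{\t,i}$ via $\bfI^L_{k_L+k_R-\t}\cdot a(\rho^L_i)$ lines up with Petkova's $\mu$ for that strands grading (this is the content of \cite[Lemma 14]{Petkovadecat} applied at an arbitrary strands grading, which the excerpt has already noted holds). One should also double-check the half-integer coefficient $\epsilon$ of the first coordinate is consistent between $G'(\cZ_L)$, $G'(\cZ_R)$, and $G'_{AA}(\d\cH)$ so that the ``$n$'' appearing in the formula is unambiguous; but again, because everything is reduced mod $2$ and $\lambda\mapsto 1$, these subtleties do not affect the final statement. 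Consequently the proof is short: it is essentially ``apply \cite[Proposition 15]{Petkovadecat} twice and add,'' with the only real work being the careful translation of $\t$-to-$\t'$ conventions on the left boundary.
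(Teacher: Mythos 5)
Your proof is correct, but it takes a genuinely different route from the paper's. The paper uses the drilling identification: it quotes \cite[Remark 6.34]{LOTbimodules} to identify $G_{AA}(\d\cH)=G(\cZ_L)\times_\lambda G(\cZ_R)$ with the grading group $G(\cZ_L\#\cZ_R)$ of the drilled diagram, observes that under this identification $f_\t^{\cZ_L,\cZ_R}$ becomes $f_0^{\cZ_L\#\cZ_R}$, and then applies \cite[Proposition 15]{Petkovadecat} \emph{once}, to the connected-sum pointed matched circle (so a single base idempotent appears, which then splits into $\bfs^L_{0,\t'}\cup\bfs^R_{0,\t}$). You instead work directly with the $\times_\lambda$-product: you factor an arbitrary element of $G_{AA}$ into an $L$-piece and an $R$-piece and apply Petkova's formula \emph{twice}, once to each single-boundary factor, then add. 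Your step (iii) — the factorization up to powers of $\lambda$, and the observation that the $\epsilon$-constraint on the half-integer coordinate can be distributed across the two factors with only a $\lambda$-power ambiguity that is killed mod $2$ — is the part that does the work of the paper's restriction-functor citation. Both proofs bottom out at the same tool; the paper's is shorter because the restriction/connected-sum compatibility is already set up elsewhere, while yours is more self-contained and makes explicit the bookkeeping about how the two boundary components contribute additively, at the cost of having to verify the $\epsilon$-split works (which you do address, correctly, and which is the only place a gap could have crept in).
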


\begin{proof}
The group $G_{AA}(\d \cH)$ agrees with the grading group obtained by the restriction functor applied to $\CFA(\d \cH_{\dr})$ \cite[Remark 6.34]{LOTbimodules}. Moreover, under this identification, the homomorphism $f^{\cZ_L, \cZ_R}_\t$ agrees with $f^{\cZ_L \# \cZ_R}_0$. The result now follows from \cite[Proposition 15]{Petkovadecat}.
\end{proof}

Since $G(\d \cH) = G(\cZ_L) \times_\lambda G(\cZ_R)$, we may consider $f_{\t}^{\cZ_L, \cZ_R}$ as a homomorphism $f_{\t}^{\cZ_L, \cZ_R}\co G(\cZ_L) \times_\lambda G(\cZ_R) \rightarrow \Ztwo$. Clearly, we have an induced set map 
\begin{equation}\label{eqn:fop}
f_{\t}^{\cZ_L, \cZ_R}\co G(\cZ_L)^\op \times_\lambda G(\cZ_R) \rightarrow \Ztwo,
\end{equation}
which, by abuse of notation, we also denote by $f_{\t}^{\cZ_L, \cZ_R}$. Moreover, since $\Ztwo$ is abelian, the aforementioned set map is in fact a homomorphism. In particular, if $\d_R \cH = \cZ_R$ and $\d_L \cH = -\cZ_L$, then $G_{DA}(\d \cH) = G(\cZ_L)^\op \times_\lambda G(\cZ_R)$, and so \eqref{eqn:fop} gives a homorphism
\[ 
f_{\t}^{\cZ_L, \cZ_R}\co G_{DA}(\d \cH) \rightarrow \Ztwo.
\]

\begin{lemma}
\label{lem:fsum}
Let $\cH_1$ and $\cH_2$ be arced bordered Heegaard diagrams with two boundary components such that $\d_R \cH_1 = \cZ_M = -\d_L \cH_2$, where $M$ stands for middle. Let $\cZ_L = -\d_L \cH_1$, $\cZ_R = \d_R \cH_2$, and $\cH = \cH_1 \cup \cH_2$. Let $B_i \in \pi_2(\bfx_i, \bfy_i)$ for $\bfx_i, \bfy_i \in \mfS(\cH_i, \mfs_i)$ such that $\d^\d_R B_1 = -\d^\d_L B_2$ and both $\bfx=\bfx_1 \otimes \bfx_2 \neq 0$ and $\bfy = \bfy_1 \otimes \bfy_2 \neq 0$ in $\mfS(\cH)$. Suppose that $I_{R, A}(\bfx_2) \in \cA(\cZ_R, \t)$ and let $B = B_1 \natural B_2 \in \pi_2(\bfx, \bfy)$. Then
\begin{align*}
	f_\t^{\cZ_L, \cZ_R} &\co G_{DA}(\d \cH) \rightarrow \Ztwo \\
	f_\t^{\cZ_L, \cZ_M} &\co G_{DA}(\d \cH_1) \rightarrow \Ztwo \\
	f_\t^{\cZ_M, \cZ_R} &\co G_{DA}(\d \cH_2) \rightarrow \Ztwo
\end{align*}
satisfy
		\[ f_\t^{\cZ_L, \cZ_R} \circ \Rtilde \circ g(B)= f_\t^{\cZ_L, \cZ_M} \circ \Rtilde \circ g(B_1) + f_\t^{\cZ_M, \cZ_R} \circ \Rtilde \circ g(B_2). \]
Analogous statements hold for $G_{AA}$ and $G_{DD}$, appropriate combinations thereof, and when one or both of $\cH_i$ has a single boundary component.
\end{lemma}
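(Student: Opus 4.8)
\textbf{Proof plan for Lemma~\ref{lem:fsum}.}

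The plan is to reduce the statement to the additivity properties of the constituent pieces that have already been established. First I would unravel the definition of $g(B)$ for $B = B_1 \natural B_2$. By \cite[Lemma 4.32]{LOT} (the analogue for bimodules of the gluing lemma used in Proposition~\ref{prop:respectspairing}), the unrefined grading is additive in the sense that $g'(B) = g'(B_1) \cdot g'(B_2)$ once we account for the matching of the boundary components along $\cZ_M$; here one must be careful that the Euler measures and the multiplicities $n_{\bfx_i}(B_i)$ add up, which follows because $B$ is literally the union of $B_1$ and $B_2$ glued along their common boundary in the middle, so no new corners or regions are created in the interior. Passing to the refined grading, the refinement data $\psi_{AA}$ contributes $\psi(\bfx_1)$, $\psi(\bfx_2)$ and their inverses, and the middle refinement terms $\psi_{R,A}(I_{R,A}(\bfx_1))$ and $\psi_{L,A}(I_{L,A}(\bfx_2))$ cancel in $G_{AA}$ because $\d_R \cH_1 = -\d_L \cH_2$ and we have chosen the reverse refinement data on $-\cZ_M$ (cf.\ \eqref{eqn:reverserefinement} and \cite[Lemma 3.20]{LOTbimodules}); this cancellation is exactly what makes $g(B) = \Rtilde$-compatibly decompose. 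After applying $\Rtilde$, which is the canonical isomorphism $R \times_\lambda \bbI$ and hence respects products, we obtain $\Rtilde(g(B)) = \Rtilde(g(B_1)) \cdot \Rtilde(g(B_2))$ in an appropriate fibered-product group, where the $\cZ_M$-components appear with opposite signs and therefore pair to the identity.

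Next I would invoke the fact that each $f_\t^{\cZ_?, \cZ_?}$ is a group homomorphism (into the abelian group $\Ztwo$, which is why the passage to $\op$ in \eqref{eqn:fop} causes no trouble), so that applying $f_\t^{\cZ_L, \cZ_R}$ to the product $\Rtilde(g(B_1)) \cdot \Rtilde(g(B_2))$ yields the sum of the two images — \emph{provided} we know that $f_\t^{\cZ_L, \cZ_R}$, when restricted to the subgroup generated by the $\cZ_L$- and $\cZ_M$-data, agrees with $f_\t^{\cZ_L, \cZ_M}$, and similarly on the other side with $f_\t^{\cZ_M, \cZ_R}$, and that the $\cZ_M$-contributions from the two sides cancel. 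This is precisely a bookkeeping consequence of Proposition~\ref{prop:fZLZRgamma}: the explicit formula there expresses $f_\t^{\cZ_L,\cZ_R}$ as a sum of a ``$\cZ_L$-part'' and a ``$\cZ_R$-part,'' each of the same shape as the single-boundary formula from \cite[Proposition 15]{Petkovadecat}, with no cross terms between the left and right homology classes. So evaluating on the glued class, the $\cZ_M$-part appearing in $f_\t^{\cZ_L,\cZ_M}\circ\Rtilde\circ g(B_1)$ is computed from $\d^\d_R B_1$ and the $\cZ_M$-part in $f_\t^{\cZ_M,\cZ_R}\circ\Rtilde\circ g(B_2)$ is computed from $\d^\d_L B_2 = -\d^\d_R B_1$; since the relevant formula is odd in the homology class up to terms that vanish mod $2$ (the linear terms $\tfrac12\sum h_i$ flip sign, the quadratic linking terms $\sum_{i<j} h_i h_j L(\rho_i,\rho_j)$ are unchanged mod $2$ under $h\mapsto -h$, and the base-idempotent splitting is the same), these middle contributions cancel modulo $2$, leaving exactly the $\cZ_L$-part plus the $\cZ_R$-part, i.e.\ $f_\t^{\cZ_L,\cZ_R}\circ\Rtilde\circ g(B)$.

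The main obstacle I expect is the careful sign/idempotent bookkeeping in the middle: one has to check that the strands grading $\t$ is consistent across the decomposition (that is, $I_{R,A}(\bfx_2) \in \cA(\cZ_R,\t)$ forces the middle idempotent of $\bfx_1$ to lie in $\cA(\cZ_M, \t_M)$ with the matching complementary strands grading, so that the homomorphisms $f_\t$ on the pieces are the correct ones), and that the reverse-refinement-data convention makes the middle $\psi$-terms cancel rather than contribute a shift. Once this is set up correctly, the rest is formal: additivity of $g'$, functoriality of $\Rtilde$, and the homomorphism property of $f_\t$. Finally, I would remark that the analogous statements for $G_{AA}$, $G_{DD}$, mixed pairings, and the single-boundary-component cases follow by the identical argument, since in each case the relevant grading group is a fibered product, the relevant map is a homomorphism to $\Ztwo$, and the explicit grading formula (Proposition~\ref{prop:fZLZRgamma} together with \cite[Proposition 15]{Petkovadecat}) splits along boundary components with no cross terms. $\qedhere$
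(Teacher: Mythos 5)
Your approach is essentially the same as the paper's: compute the refined gradings $g(B_i)$ explicitly, use the reverse refinement-data convention \eqref{eqn:reverserefinement} to identify the middle $\psi$-terms (giving $g(B_1) = (n_1,\vec{h}^L,\vec{h}^M)$, $g(B_2) = (n_2,-\vec{h}^M,\vec{h}^R)$ with $n = n_1 + n_2$), and then plug into Proposition~\ref{prop:fZLZRgamma} and observe that the $\cZ_M$-contributions cancel because the linear terms change sign under $\vec h\mapsto-\vec h$ while the quadratic linking terms are fixed and hence contribute $0 \pmod 2$ (since $L$ is integral). The paper does exactly this.

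One caution about the framing in your middle paragraph: the phrase ``applying $f_\t^{\cZ_L,\cZ_R}$ to the product $\Rtilde(g(B_1))\cdot\Rtilde(g(B_2))$'' is not literally well-posed, since $\Rtilde(g(B_1)) \in G_{DA}(\d\cH_1)$ and $\Rtilde(g(B_2)) \in G_{DA}(\d\cH_2)$ live in different groups and $f_\t^{\cZ_L,\cZ_R}$ is defined only on $G_{DA}(\d\cH)$. There is no ambient group in which the product lives and on which all three homomorphisms are simultaneously defined, so the ``multiplicativity'' heuristic should not be leaned on; the paper instead evaluates each of $g(B)$, $g(B_1)$, $g(B_2)$ separately via the explicit formula and matches terms by hand. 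In the same spirit, the middle refinement terms do not ``cancel in $G_{AA}$''; rather one has the identity $\psi_{M,D}(I_{L,A}(\bfx_2)) = R(\psi_{M,A}(I_{R,A}(\bfx_1)))^{-1}$ from \eqref{eqn:reverserefinement}, which is what forces the middle slot of $g(B_2)$ to carry $-\vec{h}^M$. You do correctly identify the remaining bookkeeping (matching the strands grading $\t$ across the three maps and the base idempotents on the $\cZ_M$-slot) as the real content of the verification, and that matches the paper's proof. So: same route, with a slightly misleading homomorphism-style shorthand for what is, in the paper, a termwise computation.
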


\begin{proof}
Note that 
\begin{align*}
	g'(B_1) &\in G'_{AA}(\d \cH_1)=G'(-\cZ_L) \times_\lambda G'(\cZ_M) \\
	g'(B_2) &\in G'_{AA}(\d \cH_2)=G'(-\cZ_M) \times_\lambda G'(\cZ_R). \\
\end{align*}

Let $\psi_{L, A}$ be refinement data for $\cZ_L$ (with reverse $\psi_{L,D}$), $\psi_{M, A}$ refinement data for $\cZ_M$ (with reverse $\psi_{M,D}$), and $\psi_{R, A}$ refinement data for $\cZ_R$. Since $I_{R, A}(\bfx_1) = I_{L, D}(\bfx_2)$, we have that 
\begin{align*}
	\psi_{M,D}(I_{L,A}(\bfx_2) &= R(\psi_{M,A}(I_{L,D}(\bfx_2)))^{-1}\\
		&= R(\psi_{M,A}(I_{R,A}(\bfx_1)))^{-1}.
\end{align*}
Similarly, $I_{R, A}(\bfy_1) = I_{L, D}(\bfy_2)$ and $\psi_{M,D}(I_{L,A}(\bfy_2))=R(\psi_{M,A}(I_{R,A}(\bfy_1)))^{-1}$. Then
\begin{align*}
	g(B_1) &= (\psi_{L,D}(I_{L,A}(\bfx_1)), \psi_{M,A}(I_{R,A}(\bfx_1)))  \cdot g'(B_1) \cdot (\psi_{L,D}(I_{L,A}(\bfy_1)), \psi_{M,A}(I_{R,A}(\bfy_1)))^{-1} \\
	g(B_2) &= (\psi_{M,D}(I_{L,A}(\bfx_2)), \psi_{R,A}(I_{R,A}(\bfx_2)))  \cdot g'(B_2) \cdot (\psi_{M,D}(I_{L,A}(\bfy_2)), \psi_{R,A}(I_{R,A}(\bfy_2)))^{-1} \\
		&= (R(\psi_{M,A}(I_{R,A}(\bfx_1)))^{-1}, \psi_{R,A}(I_{R,A}(\bfx_2)))  \cdot g'(B_2) \cdot (R(\psi_{M,A}(I_{R,A}(\bfy_1)))^{-1}, \psi_{R,A}(I_{R,A}(\bfy_2)))^{-1},
\end{align*}
where $g'(B_i) =  (-e(B_i) - n_{\bfx_i}(B_i) - n_{\bfy_i}(B_i), \d^\d_L B_i, \d^\d_R B_i)$. Note that $\d_L \cH_1 = -\cZ_L$, so we use the refinement data $\psi_{L,D}$, and similarly $\d_L \cH_2 = -\cZ_M$, so we use $\psi_{M,D}$. We also have that
	\[ g(B) = (\psi_{L,D}(I_{L,A}(\bfx_1)), \psi_{R,A}(I_{R,A}(\bfx_2)))  \cdot g'(B) \cdot (\psi_{L,D}(I_{L,A}(\bfy_1)), \psi_{R,A}(I_{R,A}(\bfy_2)))^{-1} \]
 since
 \begin{align*}
	I_{L,A}(\bfx)&=I_{L,A}(\bfx_1) & I_{L,A}(\bfy)&=I_{L,A}(\bfy_1) \\
	I_{R,A}(\bfx)&=I_{R,A}(\bfx_2) & I_{R,A}(\bfy)&=I_{R,A}(\bfy_2).
 \end{align*}
Then since $\d^\d_R B_1 = - \d^\d_L B_2$, we can write 
\begin{align*}
	g(B_1) &= (n_1, \vec{h}^L, \vec{h}^M) =  (n_1, h^L_1, \dots, h^L_{2k_L}, h^M_1, \dots, h^M_{2k_M}) \\
	g(B_2) &= (n_2, -\vec{h}^M, \vec{h}^R) = (n_2, -h^M_1, \dots, -h^M_{2k_M}, h^R_1, \dots, h^R_{2k_R}) \\
	g(B) &= (n, \vec{h}^L, \vec{h}^R) = (n, h^L_1, \dots, h^L_{2k_L}, h^R_1, \dots, h^R_{2k_R}),
\end{align*}
for some $n=n_1+n_2$, $\vec{h}^L$, $\vec{h}^M$, and $\vec{h}^R$. Here, we have chosen our bases for $H_1(F(\pm \cZ_M))$ such that $R(n, \vec{h}^M)=(n, \vec{h}^M)$.

It follows that
\begin{align*}
	f_\t^{\cZ_L, \cZ_R} \circ \Rtilde \circ g(B) &= n - \frac{1}{2} \sum_{i \in \bfs_{0, \t'}^L} h^L_i  + \frac{1}{2} \sum_{i \not\in \bfs_{0, \t'}^L} h^L_i + \sum_{i<j}h^L_i h^L_j L(\rho^L_i, \rho^L_j) \\
			& \quad - \frac{1}{2} \sum_{i \in \bfs_{0, \t}^R} h^R_i  + \frac{1}{2} \sum_{i \not\in \bfs_{0, \t}^R} h^R_i + \sum_{i<j}h^R_i h^R_j L(\rho^R_i, \rho^R_j) \\
		&= f_\t^{\cZ_L, \cZ_M} \circ \Rtilde (n_1, \vec{h}^L, \vec{h}^M) + f_\t^{\cZ_M, \cZ_R} \circ \Rtilde (n_2, -\vec{h}^M, \vec{h}^R) \\
		&= f_\t^{\cZ_L, \cZ_M} \circ \Rtilde \circ g(B_1) + f_\t^{\cZ_M, \cZ_R} \circ \Rtilde \circ g(B_2),
\end{align*}
where $\bfs_{0, \t'}^L$ (respectively $\bfs_{0, \t}^R$) is the base idempotent of $\cA(\cZ^L, \t')$ (respectively $\cA(\cZ^R, \t)$).

The proofs in the other cases are similar.
\end{proof}

\begin{lemma}
\label{lem:CFAAperiodicdomain}
Let $\cH$ be an arced bordered Heegaard diagram with $\d_L \cH = \cZ_L$ and $\d_R \cH = \cZ_R$. Suppose that $B \in \pi_2(\bfx, \bfx)$ and $I_{R, A}(\bfx) \in \cA(\cZ_R, \t)$. Then
	\[ f_\t^{\cZ_L, \cZ_R} \circ g(B) = 0. \]
\end{lemma}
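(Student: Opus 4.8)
\textbf{Proof plan for Lemma~\ref{lem:CFAAperiodicdomain}.}

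The statement asserts that for a periodic domain $B \in \pi_2(\bfx,\bfx)$ in an arced bordered Heegaard diagram with two boundary components, the $\Ztwo$-valued homomorphism $f_\t^{\cZ_L,\cZ_R}$ kills $g(B)$. Since $g(B) = \psi_{AA}(\bfx)\cdot g'(B)\cdot \psi_{AA}(\bfx)^{-1}$ and, as $B$ is a periodic domain connecting $\bfx$ to itself, $g(B)$ lies in the group $G_{AA}(\d\cH)$ (rather than merely the torsor $S_{AA}$), it makes sense to apply the homomorphism $f_\t^{\cZ_L,\cZ_R}$ directly. The plan is to reduce this to the known single-boundary-component statement of Petkova, namely \cite[Theorem 13]{Petkovadecat}, which says that periodic domains have image zero under the analogous homomorphism $f^{\cZ}_0 \circ R \circ g$ in the one-boundary setting; this is precisely the fact already invoked at the end of the proof of Theorem~\ref{thm:gradingsagree}\eqref{gradingsagreethm:2}.

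First I would pass to the drilled diagram $\cH_{\dr}$, which has a single boundary component parameterized by $\cZ_L \# \cZ_R$. As recorded in the proof of Proposition~\ref{prop:fZLZRgamma} (citing \cite[Remark 6.34]{LOTbimodules}), the grading group $G_{AA}(\d\cH) = G(\cZ_L)\times_\lambda G(\cZ_R)$ is identified with the grading group of $\CFA(\d\cH_{\dr})$, and under this identification the homomorphism $f_\t^{\cZ_L,\cZ_R}$ agrees with $f_0^{\cZ_L\#\cZ_R}$. Moreover, drilling does not change the underlying generators or the set $\pi_2(\bfx,\bfx)$ of periodic domains, and it does not change the Euler measure, the point multiplicities $n_\bfx$, or the boundary classes $\d^\d B$ (the drilling operation removes a neighborhood of the connecting arc $\bfz$, which is disjoint from $B$ up to the obvious identification of boundary classes $H_1(Z_L',\bfa_L)\oplus H_1(Z_R',\bfa_R) \cong H_1((Z_L\#Z_R)', \bfa_L\sqcup\bfa_R)$). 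Hence $g'(B)$, and therefore $g(B)$, is carried to the corresponding refined grading of the periodic domain $B$ viewed in $\cH_{\dr}$. Here I should be mildly careful that the refinement data $\psi_{L,A}, \psi_{R,A}$ that were fixed on $\cZ_L, \cZ_R$ combine into a valid choice of refinement data on $\cZ_L\#\cZ_R$ restricted to the relevant strands grading $\t$ — but this compatibility is exactly what underlies the identification in \cite[Remark 6.34]{LOTbimodules}, so I may assume it.

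With that identification in place, the lemma follows immediately: by \cite[Theorem 13]{Petkovadecat}, any periodic domain in a bordered Heegaard diagram with one boundary component has image zero under $f_0^{\cZ}\circ R \circ g$, and hence also under $f_0^{\cZ}\circ g$ (the map $R$ is a group isomorphism $G(\cZ)\to G(-\cZ)^\op$ and the statement is symmetric under it; alternatively one applies Petkova's result directly in the form $f_0^{\cZ}\circ g$ as used in \cite[Definition 13]{Petkova}). Applying this with $\cZ = \cZ_L\#\cZ_R$ and transporting back along the identification above gives $f_\t^{\cZ_L,\cZ_R}\circ g(B) = 0$. The only genuine obstacle I anticipate is the bookkeeping in the previous paragraph — verifying that the drilling identification is compatible with the fixed refinement data at the level of the refined gradings, not just the unrefined ones — but since both Proposition~\ref{prop:fZLZRgamma} and the surrounding discussion already rely on precisely this compatibility, it can be cited rather than re-derived. (A fully self-contained alternative would be to use Proposition~\ref{prop:fZLZRgamma} to write $f_\t^{\cZ_L,\cZ_R}\circ g(B)$ explicitly in terms of $e(B)$, $n_\bfx(B)$, $\d^\d B$, and $L$, and then invoke the formula of \cite[Corollary 4.10]{Lipshitz} together with the vanishing of the relevant quantities on a periodic domain after doubling, exactly mirroring the proof of \cite[Theorem 13]{Petkovadecat}; but the reduction to the drilled diagram is cleaner.)
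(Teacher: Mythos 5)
Your proposal is correct and takes essentially the same approach as the paper's (very terse) proof: both pass to the drilled diagram via the identification from Proposition~\ref{prop:fZLZRgamma} / \cite[Remark 6.34]{LOTbimodules}, under which $f_\t^{\cZ_L,\cZ_R}$ becomes $f_0^{\cZ_L\#\cZ_R}$, and then invoke \cite[Theorem 13]{Petkovadecat} in the single-boundary setting. You have simply spelled out the details that the paper leaves implicit.
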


\begin{proof}
The result follows from \cite[Theorem 13]{Petkovadecat} and the compatibility between $f^{\cZ_L,\cZ_R}_\gamma$ and $f^{\cZ_L \# \cZ_R}_0$ under the restriction functor applied to $\CFA$, as in Proposition~\ref{prop:fZLZRgamma}.
\end{proof}

\begin{lemma}\label{lem:fRtilde}
Let $\bfx_2 \in \mfS(\cH_2)$, where $\cH_2$ is an arced bordered Heegaard diagram, $B_2 \in \pi_2(\bfx_2, \bfx_2)$, $I_{R, A}(\bfx_2) \in \cA(\cZ_R, \t)$, $\d_L \cH_2 = -\cZ_M$, and $\d_R \cH_2 = \cZ_R$. Then 
\[ f_\t^{\cZ_M, \cZ_R} \circ \Rtilde \circ g(B_2) = 0. \]
Similarly, $f_\t^{\cZ_M, -\cZ_R} \circ RR \circ g(B_2) =0$ where $RR$ denotes the map $G(-\cZ_M) \times_\lambda G(\cZ_R) \rightarrow G(\cZ_M)^\op \times_\lambda G(-\cZ_R)^\op$ obtained by applying $R$ to each factor.
\end{lemma}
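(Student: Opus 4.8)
The plan is to reduce this to Lemma~\ref{lem:CFAAperiodicdomain}, since the statement is essentially that lemma transported through the various canonical isomorphisms of grading groups. First I would unwind the definitions: a periodic domain $B_2 \in \pi_2(\bfx_2, \bfx_2)$ gives $g(B_2) \in G_{AA}(\d \cH_2) = G(-\cZ_M) \times_\lambda G(\cZ_R)$ (this is the grading group for the type AA structure, before applying $\Rtilde$). The map $\Rtilde = R \times_\lambda \bbI$ carries this to $G_{DA}(\d \cH_2) = G(\cZ_M)^\op \times_\lambda G(\cZ_R)$, and $f_\t^{\cZ_M, \cZ_R}$ is the homomorphism on this group defined via \eqref{eqn:fop}. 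So I want $f_\t^{\cZ_M, \cZ_R} \circ \Rtilde \circ g(B_2) = 0$.

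The key observation is that $f_\t^{\cZ_M, \cZ_R}$ on $G(\cZ_M)^\op \times_\lambda G(\cZ_R)$ is, by construction (the discussion around \eqref{eqn:fop}), the same underlying set map as $f_\t^{\cZ_M, \cZ_R}$ on $G(\cZ_M) \times_\lambda G(\cZ_R) = G_{AA}(\d \cH_2')$ where $\cH_2'$ is $\cH_2$ with the left boundary reparameterized so that $\d_L \cH_2' = \cZ_M$; equivalently, $f_\t^{\cZ_M, \cZ_R} \circ \Rtilde$ and $f_\t^{\cZ_M, \cZ_R} \colon G_{AA}(\d\cH_2) \to \Ztwo$ differ only by the identification $R$ on the first factor, which does not change the value since $\Ztwo$ is abelian and $R$ is a group isomorphism sending the generators $\mu_{\t,i}$ and $\lambda$ to the corresponding generators. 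Concretely, writing $g(B_2) = (n, \vec h^M, \vec h^R)$ in the bases dual to the $\rho^M_i$ and $\rho^R_i$, the value $f_\t^{\cZ_M,\cZ_R}\circ \Rtilde \circ g(B_2)$ is given by the explicit formula of Proposition~\ref{prop:fZLZRgamma}, and this same formula computes $f_\t^{\cZ_M,\cZ_R}\circ g(B_2)$ viewed as an element of $G_{AA}$. By Lemma~\ref{lem:CFAAperiodicdomain}, applied to the arced bordered Heegaard diagram $\cH_2$ (with its AA structure, strands grading $\t$), we have $f_\t^{\cZ_M, \cZ_R} \circ g(B_2) = 0$ for any periodic domain $B_2 \in \pi_2(\bfx_2, \bfx_2)$. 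Hence $f_\t^{\cZ_M, \cZ_R} \circ \Rtilde \circ g(B_2) = 0$.

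For the second assertion, $RR \colon G(-\cZ_M) \times_\lambda G(\cZ_R) \to G(\cZ_M)^\op \times_\lambda G(-\cZ_R)^\op$ applies $R$ to both factors. The argument is identical: $RR$ is a group isomorphism carrying generators to generators and $\lambda$ to $\lambda$, so $f_\t^{\cZ_M, -\cZ_R} \circ RR$ agrees as a set map with $f_\t^{\cZ_M, \cZ_R}$ on $G_{AA}(\d \cH_2)$; again by Lemma~\ref{lem:CFAAperiodicdomain} this vanishes on $g(B_2)$ for $B_2$ periodic. (Alternatively, one notes $RR\circ g(B_2) = \Rtilde(RR'(g(B_2)))$ for the appropriate reparameterization and reduces to the first part.)

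I expect the only real subtlety — and hence the main thing to get right — is the bookkeeping of \emph{which} homomorphism $f_\t^{\cZ_M,\cZ_R}$ is being applied on which grading group, and verifying that passing through $R$, $\Rtilde$, or $RR$ genuinely does not alter the $\Ztwo$-value. This is where one must be careful about the choice of bases for $H_1(F(\pm\cZ_M))$ and $H_1(F(\pm\cZ_R))$ (as fixed in Section~\ref{subsec:kernel}), since $R$ acts on these bases by a sign; but because $f_\t^{\cZ_M,\cZ_R}$ is built to be invariant under exactly such changes (compare Remark~\ref{rem:idemconj}), and because it factors through $f_0^{\cZ_M \# \cZ_R}$ by the restriction-functor compatibility used in Proposition~\ref{prop:fZLZRgamma} and Lemma~\ref{lem:CFAAperiodicdomain}, the value is unchanged. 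Everything else is a direct citation of Lemma~\ref{lem:CFAAperiodicdomain} together with \cite[Theorem 13]{Petkovadecat}.
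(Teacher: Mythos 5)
Your approach genuinely differs from the paper's. The paper proves this lemma by pairing: it glues $\cH_2$ to an auxiliary diagram $\cH_1$ (taken to be $-\cH_2$ after isotopy) so that $B = B_1 \natural B_2$ is a periodic domain in the closed-up diagram $\cH = \cH_1\cup\cH_2$, then applies the AA$\otimes$DA version of Lemma~\ref{lem:fsum} together with two invocations of Lemma~\ref{lem:CFAAperiodicdomain} (to $\cH_1$ and to $\cH$, both as type AA structures) and subtracts. You instead try to apply Lemma~\ref{lem:CFAAperiodicdomain} directly to $\cH_2$ viewed as a type AA structure and then ``translate across $\Rtilde$.''

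The translation step is where there is a real gap. Lemma~\ref{lem:CFAAperiodicdomain} applied to $\cH_2$ (with $\d_L\cH_2 = -\cZ_M$, $\d_R\cH_2 = \cZ_R$) yields $f_\t^{-\cZ_M,\cZ_R}\circ g(B_2)=0$, \emph{not} $f_\t^{\cZ_M,\cZ_R}\circ g(B_2)=0$ as you wrote. What you actually need, then, is the compatibility $f_\t^{\cZ_M,\cZ_R}\circ\Rtilde = f_\t^{-\cZ_M,\cZ_R}$ as functions on $G_{AA}(\d\cH_2) = G(-\cZ_M)\times_\lambda G(\cZ_R)$. This is not a formal consequence of ``$R$ is an isomorphism onto the opposite group and $\Ztwo$ is abelian'': the two homomorphisms are built from \emph{different} refinement data (for $\cA(\cZ_M)$ on one side, for $\cA(-\cZ_M)$ --- the \emph{reverse} $\psi_{M,D} = \overline{\psi_{M,A}}$ --- on the other), and consequently the explicit formula of Proposition~\ref{prop:fZLZRgamma} involves different base idempotents $\bfs_0$ and different generators $\mu_{\t,i}$ in the two cases. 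Remark~\ref{rem:idemconj}, which you invoke, treats reordering and reorienting $\alpha$-arcs on a fixed $\cZ$, not the passage $\cZ \leftrightarrow -\cZ$ realized by $R$; it does not supply the needed identity. The ``factors through $f_0^{\cZ_M\#\cZ_R}$'' point has the same problem: the AA structure on $\cH_2$ factors through $f_0^{-\cZ_M\#\cZ_R}$, not $f_0^{\cZ_M\#\cZ_R}$. The interaction between $R$, the reverse refinement data, and the coordinate changes is precisely the bookkeeping that the paper's proof of Lemma~\ref{lem:fsum} carries out explicitly (note the line ``Here, we have chosen our bases for $H_1(F(\pm\cZ_M))$ such that $R(n,\vec h^M)=(n,\vec h^M)$'' and the subsequent computation). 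If you want to keep the direct route, you would need to repeat that computation to establish $f_\t^{\cZ_M,\cZ_R}\circ\Rtilde = f_\t^{-\cZ_M,\cZ_R}$; the pairing argument in the paper is designed so that one only ever applies Lemma~\ref{lem:CFAAperiodicdomain} to honest AA structures and never has to prove this compatibility separately.
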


\begin{proof}
We prove the lemma using the pairing theorem, together with Lemma \ref{lem:CFAAperiodicdomain}. That is, we pair some $\CFAA(\cH_1)$ with $\CFDA(\cH_2)$ to obtain $\CFAA(\cH_1 \cup \cH_2)$ and a corresponding periodic domain in $\cH$, and then appeal to Lemma \ref{lem:CFAAperiodicdomain}.

Let $\cH_1$ be an arced bordered Heegaard diagram with $\d_R \cH_1 = \cZ_M$ such that there exists $\bfx_1 \in \mfS(\cH_1)$ and $B_1 \in \pi_2(\bfx_1 , \bfx_1)$ such that $\bfx = \bfx_1 \otimes \bfx_2 \neq 0$ and $\d^\d_R B_1 = -\d^\d_L B_2$, i.e., $B = B_1 \natural B_2 \in \pi_2(\bfx, \bfx)$. For example, one could choose $\cH_1$ to be $-\cH_2$, where we perform an isotopy to guarantee the existence of a generator $\bfx_1$ in $\mfS(\cH_1)$ with $I_{R, A}(\bfx_1) = I_{L, D}(\bfx_2)$, and $B_1$ the image of $B_2$ under the obvious map from $\cH_2$ to $\cH_1 = -\cH_2$ (followed by the map induced by isotopy, if necessary). Note that since $B_2$ is a periodic domain, $B_1$ is as well, and so it follows that $B_1 \in \pi_2(\bfx_1 , \bfx_1)$.

Let $\cZ_L = \d_L \cH_1$. By the version of Lemma \ref{lem:fsum} for pairing $\CFAA$ with $\CFDA$, we have
	\[ f_\t^{\cZ_L, \cZ_R} \circ g(B) = f_\t^{\cZ_L, \cZ_M} \circ g(B_1) + f_\t^{\cZ_M, \cZ_R} \circ \Rtilde \circ g(B_2). \]
Moreover, by Lemma \ref{lem:CFAAperiodicdomain}
\begin{align*}
	f_\t^{\cZ_L, \cZ_R} \circ g(B)  &= 0 \\
	f_\t^{\cZ_L, \cZ_M} \circ g(B_1)  &= 0,
\end{align*}
implying that
	 \[ f_\t^{\cZ_M, \cZ_R} \circ \Rtilde \circ g(B_2) = 0, \]
as desired.

The other case is similar.
\end{proof}

We are now ready to define the mod 2 reduction of the non-commutative gradings for type DA structures.  
\begin{definition}
Let $\cH$ be an arced bordered Heegaard diagram with $\d_L \cH = -\cZ_L$ and $\d_R \cH= \cZ_R$.  Fix $\bfx \in \mfS(\cH, \mfs)$ and suppose $I_{R, A}(\bfx) \in \cA(\cZ_R, \t)$. Then we define a relative $\Ztwo$-grading on $\CFDA(\cH, \mfs)$ by
\begin{equation}\label{eqn:DAgeometricalgebraic}
m_{DA}(\bfy) - m_{DA} (\bfx) = f_\t^{\cZ_L, \cZ_R} \circ \Rtilde \circ g(B)
\end{equation}
for $\bfy \in \mfS(\cH, \mfs)$ and $B \in \pi_2(\bfx, \bfy)$.
\end{definition}

\begin{remark}
Note that \eqref{eqn:DAgeometricalgebraic} is well-defined, by Lemma~\ref{lem:fRtilde}.
\end{remark}


\subsubsection{Equivalence of the two $\Ztwo$-gradings on $\CFDA$}

\begin{theorem}\label{thm:DAgradingsagree}
Let $\cH_2$ be an arced bordered Heegaard diagram with $\d_L \cH_2 = -\cZ_M$, $\d_R \cH_2 = \cZ_R$, $\bfx_2, \bfy_2 \in \mfS(\cH_2, \mfs_2)$, and $I_{R, A}(\bfx_2) \in \cA(\cZ_R, \t)$. Then
\[ \gr_{DA}(\bfy_2) - \gr_{DA}(\bfx_2) = f_\t^{\cZ_M, \cZ_R} \circ \Rtilde \circ g (B_2), \]
for $B_2 \in \pi_2(\bfx_2, \bfy_2)$.
\end{theorem}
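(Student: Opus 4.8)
The plan is to mimic the proof of Theorem~\ref{thm:gradingsagree}\eqref{gradingsagreethm:2} — that is, to prove the statement for $\CFDA$ by pairing with auxiliary modules for which the analogous statement is already known, and using the fact that both $\Ztwo$-gradings on the resulting objects respect the pairing theorem. The cleanest route is to pair $\cH_2$ on the left with a type $AA$ diagram to turn the type D side into a type A side, producing an honest $\CFA$ (or a type A structure over a tensor product) to which we may apply Theorem~\ref{thm:gradingsagree}\eqref{gradingsagreethm:1}. Concretely, I would choose an arced bordered Heegaard diagram $\cH_1$ with $\d_R\cH_1 = \cZ_M$ presenting (a thickened copy of) $F(\cZ_M)$, isotoped so that every $\beta$-circle meets every $\alpha$-arc (so that a generator in any prescribed idempotent exists), and take $\cH = \cH_1 \cup \cH_2$. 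Generators $\bfx_2,\bfy_2 \in \mfS(\cH_2,\mfs_2)$ with matching left idempotents to chosen $\bfx_1,\bfy_1 \in \mfS(\cH_1)$ give rise to $\bfx = \bfx_1\otimes\bfx_2$, $\bfy = \bfy_1\otimes\bfy_2 \in \mfS(\cH,\mfs_0)$, and a domain $B_2 \in \pi_2(\bfx_2,\bfy_2)$ together with a domain $B_1 \in \pi_2(\bfx_1,\bfy_1)$ (which exists by the construction, arranging $\d^\d_R B_1 = -\d^\d_L B_2$) assemble into $B = B_1\natural B_2 \in \pi_2(\bfx,\bfy)$.

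The argument then proceeds as follows. First, by Proposition~\ref{prop:CFAACFDDgrpairing} (the $\CFAA$-with-$\CFDA$ version, as in Remark~\ref{rmk:pairingorderchoices}) we have, up to an overall idempotent-dependent shift that we can absorb since we are comparing relative gradings,
\[
\gr_A(\bfx) - \gr_A(\bfy) = \gr_{AA}(\bfx_1) - \gr_{AA}(\bfy_1) + \gr_{DA}(\bfx_2) - \gr_{DA}(\bfy_2) \pmod 2,
\]
where here $\bfx$ is a generator of a type A structure over $\cA(-\cZ_L)\otimes\cA(\cZ_R)$; care is needed with the shift terms, but since $\bfx_1,\bfy_1$ and $\bfx_2,\bfy_2$ have the same left/right idempotents respectively, the $g_1(k_1+k_3)$ and $t(k_1+k_2)$ type corrections in \eqref{eqn:DADDAAgr} depend only on the fixed idempotents and cancel in the relative grading. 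Second, by the already-established Theorem~\ref{thm:gradingsagree}\eqref{gradingsagreethm:1} applied to the $\CFA$-type structure on $\cH$ (whose boundary is $-\cZ_L\#\cZ_R$, equivalently the tensor-product algebra), we have $\gr_A(\bfx)-\gr_A(\bfy) = f^{\,\cZ_L,\cZ_R}_\t \circ g(B)$ where $f^{\,\cZ_L,\cZ_R}_\t$ is Petkova's reduction of the $G_{AA}$-grading (identified with $f^{\,\cZ_L\#\cZ_R}_0$ via the restriction functor, as in Proposition~\ref{prop:fZLZRgamma}). Third, by the $\CFAA$-version of Theorem~\ref{thm:gradingsagree}\eqref{gradingsagreethm:1} (the non-commutative $\CFAA$-grading equals its $\Ztwo$-reduction by $f$, already deduced at the start of Section~\ref{sec:grequivbimodules}) applied to $\cH_1$: $\gr_{AA}(\bfx_1)-\gr_{AA}(\bfy_1) = f^{\,\cZ_L,\cZ_M}_\t \circ g(B_1)$. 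Fourth, Lemma~\ref{lem:fsum} gives the additivity $f^{\,\cZ_L,\cZ_R}_\t\circ\Rtilde\circ g(B) = f^{\,\cZ_L,\cZ_M}_\t\circ\Rtilde\circ g(B_1) + f^{\,\cZ_M,\cZ_R}_\t\circ\Rtilde\circ g(B_2)$ — I would use the variant of this lemma matching the chosen pairing conventions ($\CFAA$ with $\CFDA$), and note that since $\cH_1$ carries a genuine $\CFAA$, $\Rtilde$ acts trivially on its factor so $f^{\,\cZ_L,\cZ_M}_\t\circ\Rtilde\circ g(B_1) = f^{\,\cZ_L,\cZ_M}_\t\circ g(B_1)$, and similarly the ``L'' factor of $B$ carries no reversal. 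Combining the four displays, all terms involving $B_1$ and the $\cZ_L$-data cancel, leaving
\[
\gr_{DA}(\bfy_2) - \gr_{DA}(\bfx_2) = f^{\,\cZ_M,\cZ_R}_\t \circ \Rtilde \circ g(B_2),
\]
which is exactly the claim. Finally, I would remark that this holds for \emph{any} $B_2 \in \pi_2(\bfx_2,\bfy_2)$ because any two such domains differ by a periodic domain, on which $f^{\,\cZ_M,\cZ_R}_\t\circ\Rtilde\circ g$ vanishes by Lemma~\ref{lem:fRtilde}.

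The main obstacle I anticipate is bookkeeping of the grading-shift and orientation/reversal conventions: getting all the $R$, $\Rtilde$, $RR$, op-structures, reverse-refinement-data, and the $\psi$'s on the $\cZ_M$ (middle) boundary to line up so that the shift terms genuinely cancel in the relative grading, rather than leaving an unwanted residual constant. The correct organizing principle is that everything is being compared \emph{relatively} (a difference of gradings of two generators in the same $\spinc$-structure), so any shift that depends only on the fixed left and right idempotents of $\bfx_2$ (equivalently of $\bfx_1$) drops out; I would make this precise by tracking, through each of the four displays above, that the only $\bfx$-dependence is through $I_{L}(\bfx_2)$ and $I_{R}(\bfx_2)$, which agree for $\bfx_2$ and $\bfy_2$ in the relevant factors. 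A secondary point is verifying that the auxiliary $\cH_1$ can always be built with the genericity needed (a generator in the prescribed idempotent and an available periodic/connecting domain), but this is the same standard construction already used in the proof of Theorem~\ref{thm:gradingsagree}\eqref{gradingsagreethm:2} (cf.\ Figure~\ref{fig:CFDCFA-example}) and in Lemma~\ref{lem:fRtilde}, so it should go through verbatim. With Theorem~\ref{thm:DAgradingsagree} in hand, together with the corresponding statements for $\cA(\cZ)$, $\CFA$, $\CFD$, $\CFAA$, $\CFDD$, the invariance half of Theorem~\ref{thm:allthegradings} follows immediately from the invariance of the non-commutative gradings.
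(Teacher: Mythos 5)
Your proposal is correct and follows the same route as the paper's proof: pair $\cH_2$ with an arced bordered Heegaard diagram $\cH_1$ for the identity mapping cylinder of $F(\cZ_M)$ (your ``thickened copy'' of $F(\cZ_M)$, which is exactly the diagram of Lemma~\ref{lem:CFAAidgr} used in the paper), then combine Proposition~\ref{prop:CFAACFDDgrpairing}/Remark~\ref{rmk:pairingorderchoices}, the already-established equality of gradings on $\CFAA$ via Theorem~\ref{thm:gradingsagree}\eqref{gradingsagreethm:1} and the restriction functor, the additivity Lemma~\ref{lem:fsum}, and the well-definedness Lemma~\ref{lem:fRtilde}. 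The only cosmetic difference is writing $\gr_A$ over $\cA(-\cZ_L)\otimes\cA(\cZ_R)$ where the paper writes $\gr_{AA}$ for the same object; your concern about shift constants is handled precisely as you anticipate, since the fixed idempotents of $\bfx_2,\bfy_2$ agree and the shifts cancel in the relative grading.
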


\begin{proof}
The proof is similar to the proof of Theorem \ref{thm:gradingsagree}\eqref{gradingsagreethm:2}. Since we know that the two $\Ztwo$-gradings agree on $\CFAA$, we will pair our given diagram $\cH_2$ (thought of as a type DA structure) with some $\cH_1$ (thought of as a type AA structure) to obtain $\cH=\cH_1 \cup \cH_2$ (thought of as a type AA structure). Since we know the two $\Ztwo$-gradings agree on $\cH$ and $\cH_1$ (thought of as type AA structures), it will follow that they agree on $\cH_2$ as well.  Finally, we note that it suffices to prove the result for a single choice of $B_2 \in \pi_2(\bfx_2,\bfy_2)$; indeed, since any two elements in $\pi_2(\bfx_2, \bfy_2)$ differ by a periodic domain, it follows from Lemma \ref{lem:fRtilde} that $f_\t^{\cZ_M, \cZ_R} \circ \Rtilde \circ g (B_2)$ is well-defined. 

We choose an arced bordered Heegaard diagram $\cH_1$ with $\d_R \cH_1 = -\d_L \cH_2$ and generators $\bfx_1, \bfy_1 \in \mfS(\cH_1)$ such that $\bfx = \bfx_1 \otimes \bfx_2$ and $\bfy = \bfy_1 \otimes \bfy_2$ are generators in $\mfS(\cH_1 \cup \cH_2, \mfs)$ for some $\mfs$. One such choice of $\cH_1$ is the arced bordered Heegaard diagram in Lemma~\ref{lem:CFAAidgr} for the mapping cylinder of the identity on $F(\cZ_M)$. We have that $\cZ_M = \d_R \cH_1 = -\d_L \cH_2$, and let $\d_L \cH_1 = \cZ_L$.

Let $B \in \pi_2(\bfx, \bfy)$. Then we can decompose $B$ as the union of $B_1 \in \pi_2(\bfx_1,\bfy_1)$ and $B_2 \in \pi_2(\bfx_2,\bfy_2)$. By Lemma \ref{lem:fsum},
\[ f_\t^{\cZ_L, \cZ_R} \circ g(B) = f_\t^{\cZ_L, \cZ_M} \circ g (B_1) + f_\t^{\cZ_M, \cZ_R} \circ \Rtilde \circ g(B_2). \]
By Theorem \ref{thm:gradingsagree}\eqref{gradingsagreethm:1} together with the restriction functor,
\begin{align*}
	\gr_{AA}(\bfy) - \gr_{AA}(\bfx) &= f_\t^{\cZ_L, \cZ_R} \circ g(B)\\
	\gr_{AA}(\bfy_1) - \gr_{AA}(\bfx_1) &= f_\t^{\cZ_L, \cZ_M} \circ g (B_1).
\end{align*}
Further, by Proposition \ref{prop:CFAACFDDgrpairing} together with Remark \ref{rmk:pairingorderchoices}
\[ \gr_{AA}(\bfy) - \gr_{AA}(\bfx) = \gr_{AA}(\bfy_1) - \gr_{AA}(\bfx_1) + \gr_{DA}(\bfy_2) - \gr_{DA}(\bfx_2). \]
Combining these equations yields the desired result.
\end{proof}

We are now ready to prove Theorem \ref{thm:allthegradings}.

\begin{proof}[Proof of Theorem \ref{thm:allthegradings}]
Theorem \ref{thm:allthegradings} follows from Theorems \ref{thm:alggradingsagree}, \ref{thm:gradingsagree}, and \ref{thm:DAgradingsagree}, which equate our $\Ztwo$-gradings $\gr$, $\gr_D$, $\gr_A$, and $\gr_{DA}$ with (a generalization of) Petkova's $\Ztwo$ reduction of the $G$-set gradings of \cite[Chapter 10]{LOT} and \cite[Section 6.5]{LOTbimodules}. Since the $G$-set gradings are differential gradings (alternatively, type A or DA gradings, in the cases of $\CFA$ and $\CFDA$ respectively) and invariant up to the appropriate notion of equivalence (\cite[Theorem 10.39]{LOT}, \cite[Theorem 10]{LOTbimodules}), it follows that $\gr_D, \gr_A$, and $\gr_{DA}$ are as well. 

Note that although Theorems \ref{thm:gradingsagree} and \ref{thm:DAgradingsagree} rely on a particular choice of order and orientation on the $\alpha$-arcs (namely, that which is induced by the orientation of $Z$), in light of Remark \ref{rem:idemconj}, it follows that Theorem \ref{thm:allthegradings} holds for any choice of order and orientation on the $\alpha$-arcs.

The desired results hold for $\CFDD$ and $\CFAA$ by induction and restriction, respectively.
\end{proof}

\subsubsection{Hochschild homology and the $\Ztwo$-gradings on $\CFDA$}\label{app-sub-Hoch}
Suppose that $\cH$ is an arced bordered Heegaard diagram of genus $g$ for a three-manifold $W$ with $\d W = -F \amalg F$.  As usual, we suppose the genus of $F$ is $k$.  By identifying the two components of $\d W$ via the parameterizations, and doing 0-surgery on the closure of $\bfz$, we obtain a closed manifold, knot pair  $(W^\circ,K)$.   Further suppose that $-\d_L \cH = \cZ = \d_R \cH$.  Recall from Section~\ref{sec:background} that we can close up $\cH$ into a doubly-pointed Heegaard diagram $\cH^\circ$ for a knot $K$ in the three-manifold $W^\circ$.  Given a generator $\bfx \in \CFDA(\cH, \t)$ with $\iota \cdot \bfx \cdot \iota =\bfx$ for some minimal idempotent $\iota$, we let $\bfx^\circ$ denote the induced generator of $CH_*(\CFDA(\cH,\t))$ as defined in Section~\ref{sub:HH}.  We now can understand the behavior of $\gr_{DA}$ under Hochschild homology. 

Recall from Definition \ref{defn:sgnDA} that for $(g, k, k, D, A, \sigma)$ a type DA partial permutation with $|\Im(\sigma) \cap A| = \t + k$,
	\[  \sgn_{DA}(\sigma) = \inv(\sigma) + \sum_{i \in \Im(\sigma)} \# \{ j \ | \ j>i, j \notin \Im(\sigma), j \in D \} + (\t+k)(g-2k) \pmod 2. \]

\begin{proposition}
\label{prop:Hochschildgr}
The $\Ztwo$-grading $ \gr_{DA}$ on $\CFDA$ behaves well with respect to Hochschild homology; that is,
\[  \gr_{DA}(\bfy) -  \gr_{DA}(\bfx) +\t_\bfy -\t_\bfx = M(\bfy^\circ) - M(\bfx^\circ) \pmod 2, \]
where $I_{R,A}(\bfx) \in \cA(\cZ, \t_\bfx)$ and $I_{R,A}(\bfy) \in \cA(\cZ, \t_\bfy)$.
\end{proposition}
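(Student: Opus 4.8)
The strategy is to pass, as in the proofs of Theorem~\ref{thm:gradingsagree}\eqref{gradingsagreethm:1} and Section~\ref{subsec:CFAagree}, from the arced bordered Heegaard diagram $\cH$ to a closed doubly-pointed Heegaard diagram $\cH^\circ$ for $(W^\circ,K)$, and to compare the relative $\Ztwo$-grading $\gr_{DA}$ on $\CFDA(\cH)$ with the relative Maslov grading (mod $2$) on $\CFhat(W^\circ)$ computed from $\cH^\circ$. First I would recall that forming $\cH^\circ$ from $\cH$ amounts to gluing the two boundary components of $\Sigma$ together (identifying $\partial_L \Sigma$ with $-\partial_R \Sigma$ via the parameterizations), so that each pair of matched $\alpha$-arcs $\alpha^{a}_{L,j}$ and $\alpha^{a}_{R,j}$ becomes a closed $\alpha$-circle $\alpha^\circ_j$, and placing the two basepoints $w,z$ on either side of the resulting arc; a generator $\bfx$ of $\CFDA(\cH,\t)$ with $\iota\cdot\bfx\cdot\iota=\bfx$ is precisely a generator $\bfx^\circ$ of $\CFhat(\cH^\circ)$ (in the appropriate $\spinc$-structure). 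The desired identity is relative, so I would fix two such generators $\bfx,\bfy$ and a domain; then $M(\bfy^\circ)-M(\bfx^\circ)=\inv(\sigma_{\bfy^\circ}\sigma_{\bfx^\circ}^{-1})+\sum o(\cdot)$ by \eqref{eq:closed-z2-grading}, while $\gr_{DA}(\bfy)-\gr_{DA}(\bfx)$ is governed by $\sgn_{DA}$ and the orientation terms; the whole content is a combinatorial comparison of $\sgn_{DA}(\sigma_\bfx)+\sgn_{DA}(\sigma_\bfy)$ with $\inv(\sigma_{\bfx^\circ}\sigma_{\bfy^\circ}^{-1})$, together with the bookkeeping of $\t_\bfx,\t_\bfy$.

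The key steps, in order, are: (1) set up an explicit ordering and orientation of the $\alpha$- and $\beta$-curves of $\cH^\circ$ induced from those of $\cH$ — the $\beta$-circles of $\cH$ carry over unchanged, and the $\alpha^\circ_j$ are ordered using the matching, with orientations inherited from the $\alpha$-arcs per Remark~\ref{rmk:footnote4}; (2) observe that the sum of intersection-point orientations $\sum_{x\in\bfx}o(x)$ is unchanged upon closing up (an intersection point on $\alpha^{a}_{L,j}$ or $\alpha^{a}_{R,j}$ sits on $\alpha^\circ_j$ with the same sign), so the orientation contributions to the two relative gradings match; (3) carry out the heart of the argument: relate $\inv(\sigma_\bfx)+\sigma$-type correction terms of Definition~\ref{defn:sgnDA} to $\inv$ of the honest permutation $\sigma_{\bfx^\circ}\sigma_{\bfy^\circ}^{-1}$. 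Here the subtlety is that closing up identifies the index block $D=[2k_L]$ used for the type D side with (part of) the block $A=[g+k_L-k_R+1,g+k_L+k_R]$ used for the type A side; the second term in $\sgn_{DA}$, namely $\sum_{i\in\Im(\sigma)}\#\{j\in D\mid j>i,\ j\notin\Im(\sigma)\}$, is exactly what accounts for the inversions between a "right-occupied'' $\alpha$-arc of $\bfx$ and a complementary "unoccupied'' arc that becomes "occupied'' in the closed diagram (cf.\ Lemma~\ref{lem:sgnsum} and its use in Proposition~\ref{prop:respectspairing}). The term $t(g-k_L-k_R)=( \t+k)(g-2k)$ in $\sgn_{DA}$ — which Remark preceding Definition~\ref{defn:grDA} flags as being there precisely for this purpose — is then what absorbs the shift by $\t_\bfy-\t_\bfx$ appearing on the left-hand side of the proposition, since passing from $\t_\bfx$ to $\t_\bfy$ changes $t$ and hence this term by $(\t_\bfy-\t_\bfx)(g-2k)$, and modulo $2$ one checks $(g-2k)\equiv 1$ is not forced, so the $\t_\bfy-\t_\bfx$ term must be tracked explicitly — this is where the precise statement of the proposition, with that correction on the left, comes from.

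Concretely I would proceed by induction/reduction exactly as in the algebra case: using Lemma~\ref{lem:smoothings}-style moves (the differential changes $\inv$ by one and $\gr_{DA}$ by one, compatibly on both sides since the Maslov grading also changes by one under the differential on $\cH^\circ$), reduce to generators differing in a controlled way, and then verify the identity for these model configurations by a direct count, analogous to the case analysis in Lemma~\ref{lem:grAMsame} and Lemma~\ref{lem:nxydiff}. Alternatively — and this is probably cleaner — I would deduce the result from the already-established pairing behavior: write $\CFDA(\cH)$ via $\CFAA(\cH)\boxtimes\CFDD(\mathbb{I}_\cZ)$ (Proposition~\ref{prop:CFAACFDDgrpairing}), recall that $HH_*(\CFDA(W))\cong\CFhat(W^\circ)$ up to grading is \cite[Theorem 14]{LOTbimodules}, and that the $G$-set noncommutative gradings are known to behave correctly under the self-gluing/Hochschild construction by \cite[Section 6.5]{LOTbimodules}; then invoke Theorem~\ref{thm:DAgradingsagree} (which identifies $\gr_{DA}$ with the $\Ztwo$-reduction $m_{DA}$ of the $G$-set grading) and Theorem~\ref{thm:gradingsagree} for the closed diagram to transport the known statement for $m_{DA}$ to $\gr_{DA}$. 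The main obstacle is step (3): getting the correction terms $\sum_{i\in\Im\sigma}\#\{j\in D\mid j>i,\ j\notin\Im\sigma\}$ and $t(g-k_L-k_R)$ to combine correctly with the change of index conventions between the type D block and the type A block when the two boundaries are identified — in particular keeping careful track of how the ordering of the $\alpha$-arcs $(\bfalpha^a_L,\bfalpha^c,\bfalpha^a_R)$ used for $\gr_{DA}$ relates to the ordering of the closed-up $\alpha$-circles used for \eqref{eq:closed-z2-grading}, since a reordering shifts both gradings in an idempotent-dependent way (Remark~\ref{rem:idemconj}) and one must confirm these shifts cancel. Everything else is routine bookkeeping of the kind already carried out in this appendix.
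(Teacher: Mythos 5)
Your primary plan is essentially the paper's proof: close up $\cH$ to $\cH^\circ$, observe that the orientation contributions $\sum o(x)$ persist unchanged, and reduce the claim to a combinatorial identity relating $\sgn_{DA}(\sigma_\bfx)$ to $\inv(\sigma_{\bfx^\circ})$, which the paper establishes by directly counting the inversions introduced when each $\sigma_\bfx(\ell) = j \in A$ is replaced by $j - g \in D$. One small correction to your narrative: the $\t_\bfy - \t_\bfx$ term on the left-hand side of the proposition does \emph{not} arise from the $(g-2k)$ factor in $\sgn_{DA}$. That term is precisely what cancels the $(\t_\bfx+k)(g-2k)$ contribution in the inversion count, so that nothing depending on $g$ survives. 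The leftover $\t_\bfx + k$ in the identity $\sgn_{DA}(\sigma_\bfx) + k + \t_\bfx = \inv(\sigma_{\bfx^\circ}) \pmod 2$ comes instead from the symmetric-sum identity
\[
\sum_{i \in \Im(\sigma_\bfx) \cap D} \#\{j \in D \mid j>i,\ j\notin\Im(\sigma_\bfx)\} + \sum_{i \in \Im(\sigma_\bfx) \cap D} \#\{j \in D \mid j<i,\ j\notin\Im(\sigma_\bfx)\} = (k-\t_\bfx)(\t_\bfx+k),
\]
since $\sgn_{DA}$ uses only the first of these two sums while the reindexing step naturally produces the second. Your alternative route — transporting the statement from the noncommutative $G$-set gradings via Theorems \ref{thm:gradingsagree} and \ref{thm:DAgradingsagree} and the Hochschild grading of \cite[Section 6.5]{LOTbimodules} — is plausible and would be genuinely different, but the paper does not pursue it; the direct count is short enough once the correct reindexing of $\sigma_{\bfx^\circ}$ is written down.
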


\begin{proof}
For each $\bfx \in \mfS(\cH)$ which induces a generator $\bfx^\circ$, we consider the associated (partial) permutations $\sigma_\bfx$ and $\sigma_{\bfx^\circ}$. The $\beta$-circles in $\cH^\circ$ are exactly the $\beta$-circles in $\cH$, and we order them the same way in $\cH^\circ$ and $\cH$. We order the $\alpha$-circles in $\cH^\circ$ as the closed up $\alpha$-arcs followed by the $\alpha$-circles, where the order on the set of $\alpha$-arcs and the set of $\alpha$-circles agrees with their order in $\cH$.

By Definition~\ref{defn:grDA}, it suffices to show that
\begin{equation}\label{eqn:sgnDAHH} 
\sgn_{DA}(\sigma_\bfx) + k + \t_\bfx = \inv (\sigma_{\bfx^\circ}) \pmod 2, 
\end{equation}
since $\sum_{x \in \bfx} o(x)  = \sum_{x \in \bfx^\circ} o(x)$ and $M(\bfx^\circ) = \inv (\sigma_{\bfx^\circ})+ \sum_{x \in \bfx^\circ} o(x) \pmod 2$.

We begin by considering how to obtain $\sigma_{\bfx^\circ}$ from $\sigma_\bfx$. For each $\ell \in [g]$ with $\sigma_\bfx(\ell)=j \in A$, we define $\sigma_{\bfx^\circ}(\ell)=j-g$ and set $\sigma_{\bfx^\circ}(\ell) = \sigma_\bfx(\ell)$ otherwise. We consider the change in the number of inversions. We measure these changes in ascending order of $j \in \Im(\sigma_\bfx) \cap A$. For each $j$ the number of inversions changes by
	\[ q=g-2k + \# \{ i \in D \mid i>j-g, i \in \Im(\sigma_\bfx) \} \pmod 2, \]
since there are exactly $q$ elements in the image of $\sigma_\bfx$ between $j-g$ and $j$. These $q$ elements consist of the $g-2k$ elements in $[g+2k] \setminus (D \cup A)$ and the elements in $D \cup \Im(\sigma_\bfx)$ which are greater than $j-g$. Recall that, by definition, $\t_\bfx +k =\# \{ j \in \Im(\sigma_\bfx) \cap A\}$.

Thus, the total change in inversions is
\begin{align*}
	\inv(\sigma_{\bfx^\circ}) - \inv (\sigma_\bfx) &= (\t_\bfx+k)(g-2k) + \sum_{j \in \Im(\sigma_\bfx) \cap A} \# \{ i \in D \mid i>j-g, i \in \Im(\sigma_\bfx) \} \\
		&= (\t_\bfx+k)(g-2k) + \sum_{\substack{j \notin \Im(\sigma_\bfx) \\ j \in D}} \# \{ i \in D \mid i>j, i \in \Im(\sigma_\bfx) \} \\
		&= (\t_\bfx+k)(g-2k) + \sum_{i \in \Im(\sigma_\bfx) \cap D} \# \{ j \in D \mid i>j, j \notin \Im(\sigma_\bfx) \} \\
		&= (\t_\bfx+k)(g-2k) + (\t_\bfx + k)(k - \t_\bfx) + \sum_{i \in \Im(\sigma_\bfx) \cap D} \# \{ j \in D \ | \ j>i, j \notin \Im(\sigma_\bfx) \} \\
		&= (\t_\bfx+k)(g-2k) + \t_\bfx + k + \sum_{i \in \Im(\sigma_\bfx) \cap D} \# \{ j \in D \ | \ j>i, j \notin \Im(\sigma_\bfx) \} \pmod 2, 
\end{align*}
where the penultimate equality follows from the reduction modulo 2 of the expression
\[ 
\sum_{i \in \Im(\sigma_\bfx) \cap D} \# \{ j \in D \ | \ j>i, j \notin \Im(\sigma_\bfx) \} + \sum_{i \in \Im(\sigma_\bfx) \cap D} \# \{ j \in D \mid i>j, j \notin \Im(\sigma_\bfx) \}  = (k-\t_\bfx) (\t_\bfx+k) 
\]
since
\[ k-\t_\bfx = \# \{i \in \Im(\sigma_\bfx) \cap D\} \quad \textup{ and } \quad  \t_\bfx+k=\#\{ j \in D \mid j \notin \Im(\sigma_\bfx)\}. \]
This establishes \eqref{eqn:sgnDAHH} which, as discussed, completes the proof. 
\end{proof}


\bibliographystyle{amsalpha}
\bibliography{references}

\end{document}